\def\grd@save@target#1{%
  \def\grd@target{#1}}
\def\grd@save@start#1{%
  \def\grd@start{#1}}
\tikzset{
  grid with coordinates/.style={
    to path={%
      \pgfextra{%
        \edef\grd@@target{(\tikztotarget)}%
        \tikz@scan@one@point\grd@save@target\grd@@target\relax
        \edef\grd@@start{(\tikztostart)}%
        \tikz@scan@one@point\grd@save@start\grd@@start\relax
        \draw[minor help lines] (\tikztostart) grid (\tikztotarget);
        \draw[major help lines] (\tikztostart) grid (\tikztotarget);
        \grd@start
        \pgfmathsetmacro{\grd@xa}{\the\pgf@x/1cm}
        \pgfmathsetmacro{\grd@ya}{\the\pgf@y/1cm}
        \grd@target
        \pgfmathsetmacro{\grd@xb}{\the\pgf@x/1cm}
        \pgfmathsetmacro{\grd@yb}{\the\pgf@y/1cm}
        \pgfmathsetmacro{\grd@xc}{\grd@xa + \pgfkeysvalueof{/tikz/grid with coordinates/major step}}
        \pgfmathsetmacro{\grd@yc}{\grd@ya + \pgfkeysvalueof{/tikz/grid with coordinates/major step}}
        \foreach \x in {\grd@xa,\grd@xc,...,\grd@xb}
        \node[anchor=north] at (\x,\grd@ya) {\pgfmathprintnumber{\x}};
        \foreach \y in {\grd@ya,\grd@yc,...,\grd@yb}
        \node[anchor=east] at (\grd@xa,\y) {\pgfmathprintnumber{\y}};
      }
    }
  },
  minor help lines/.style={
    help lines,
    step=\pgfkeysvalueof{/tikz/grid with coordinates/minor step}
  },
  major help lines/.style={
    help lines,
    line width=\pgfkeysvalueof{/tikz/grid with coordinates/major line width},
    step=\pgfkeysvalueof{/tikz/grid with coordinates/major step}
  },
  grid with coordinates/.cd,
  minor step/.initial=.2,
  major step/.initial=1,
  major line width/.initial=0.25mm,
}
\tikzset{
  % style to apply some styles to each segment of a path
  on each segment/.style={
    decorate,
    decoration={
      show path construction,
      moveto code={},
      lineto code={
        \path [#1]
        (\tikzinputsegmentfirst) -- (\tikzinputsegmentlast);
      },
      curveto code={
        \path [#1] (\tikzinputsegmentfirst)
        .. controls
        (\tikzinputsegmentsupporta) and (\tikzinputsegmentsupportb)
        ..
        (\tikzinputsegmentlast);
      },
      closepath code={
        \path [#1]
        (\tikzinputsegmentfirst) -- (\tikzinputsegmentlast);
      },
    },
  },
  % style to add an arrow in the middle of a path
  mid arrow/.style={postaction={decorate,decoration={
        markings,
        mark=at position .5 with {\arrow[#1]{stealth}}
      }}},
  rmid arrow/.style={postaction={decorate,decoration={
        markings,
        mark=at position .5 with {\arrowreversed[#1]{stealth}}
      }}},
  end arrow/.style={postaction={decorate,decoration={
        markings,
        mark=at position 1 with {\arrow[#1]{stealth}}
      }}},
  start arrow/.style={postaction={decorate,decoration={
        markings,
        mark=at position 0 with {\arrow[#1]{stealth}}
      }}},
   mid3 arrow/.style={postaction={decorate,decoration={
        markings,
        mark=at position .3 with {\arrow[#1]{stealth}}
      }}},
    rmid3 arrow/.style={postaction={decorate,decoration={
        markings,
        mark=at position .7 with {\arrowreversed[#1]{stealth}}
      }}},
      mid4 arrow/.style={postaction={decorate,decoration={
        markings,
        mark=at position .4 with {\arrow[#1]{stealth}}
      }}},
    rmid4 arrow/.style={postaction={decorate,decoration={
        markings,
        mark=at position .4 with {\arrowreversed[#1]{stealth}}
      }}},
    mid6 arrow/.style={postaction={decorate,decoration={
        markings,
        mark=at position .65 with {\arrow[#1]{stealth}}
      }}},
}
\tikzset{every state/.style={minimum size=0pt}}
\tikzset{
    mark position/.style args={#1(#2)}{
        postaction={
            decorate,
            decoration={
                markings,
                mark=at position #1 with \coordinate (#2);
            }
        }
    }
}
\tikzset{middle segment/.style={decoration={middle},decorate, segment length=#1}}
\newtheorem{thm}{Theorem}[section]
\newtheorem{prop}[thm]{Proposition}
\newtheorem{lem}[thm]{Lemma}
\newtheorem{cor}[thm]{Corollary}
\theoremstyle{definition}
\newtheorem{definition}[thm]{Definition}
\numberwithin{equation}{section}
\theoremstyle{remark}
\DeclareMathOperator{\ai}{Ai}
\DeclareMathOperator{\re}{Re}
\DeclareMathOperator{\im}{Im}
\DeclareMathOperator{\Tr}{Tr}
\DeclareMathOperator*{\res}{Res}
\DeclareMathOperator{\Li}{Li}
\newcommand{\Boh}{\mathcal{O}}
\newcommand{\mb}{\mathbb}
\newcommand{\bm}{\mathbf}
\newcommand{\R}{{\mathbb R}}
\newcommand{\C}{{\mathbb C}}
\newcommand{\Z}{{\mathbb Z}}
\newcommand{\K}{{\mathbb K}}
\newcommand{\I}{\mathbb I}
\newcommand{\D}{\mathbb D}
\renewcommand{\P}{\mathbb P}
\DeclarePairedDelimiter{\norm}{\lVert}{\rVert}
\newcommand{\URS}{\mathsf{U}}
\newcommand{\LRS}{\mathsf{L}}
\newcommand{\ff}{f}
\newcommand{\EE}{E}
\newcommand{\ffRS}{\mathsf{\ff}}
\newcommand{\EERS}{\mathsf{\EE}}
\newcommand{\ggRS}{\mathsf{g}}
\newcommand{\Fp}{F}
\newcommand{\Fk}{F_{\mathrm{KPZ}}}
\newcommand{\Kk}{\mb K_{\mathrm{KPZ}}}
\newcommand{\Qp}{Q_{\mathrm{per}}}
\newcommand{\Up}{U_{\mathrm{per}}}
\newcommand{\Pp}{P_{\mathrm{per}}}
\newcommand{\Rp}{R_{\mathrm{per}}}
\newcommand{\Qk}{Q_{\mathrm{KPZ}}}
\newcommand{\Uk}{U_{\mathrm{KPZ}}}
\newcommand{\Pk}{P_{\mathrm{KPZ}}}
\newcommand{\Rk}{R_{\mathrm{KPZ}}}
\newcommand{\Fic}{F_{\mathrm{ic}}}
\newcommand{\Kzic}{\mathbb K_z^{\mathrm{ic}}}
\newcommand{\bKzic}{\bm K_z^{\mathrm{ic}}}
\newcommand{\FF}{F_{\mathrm{1}}}
\newcommand{\FGOE}{F_{\mathrm{GOE}}}
\newcommand{\RS}{\mathcal{R}}
\newcommand{\rx}{\mathrm{x}}
\newcommand{\CB}{\mathcal{B}}
\newcommand{\CT}{\mathcal{T}}
\newcommand{\Tgamma}{T_\gamma}
\newcommand{\Tngamma}{T_{-\gamma}}
\newcommand{\Thalf}{T_{\mathrm{1/2}}}
\newcommand{\Tnhalf}{T_{\mathrm{-1/2}}}
\newcommand{\out}{\mathrm{out}}
\newcommand{\inn}{\mathrm{in}}
\newcommand{\RA}{\mathrm{A}}
\newcommand{\fe}{f}
\newcommand{\Clambda}{\Lambda}
\newcommand{\beq}{ \begin{equation} }
\newcommand{\eeq}{ \end{equation} }
\newcommand{\beqq}{ \begin{equation*} }
\newcommand{\eeqq}{ \end{equation*} }
\newcommand{\mJ}{\mb J}
\newcommand{\bJ}{\bm J}
\newcommand{\cb}{b}
\newcommand\restr[2]{{% we make the whole thing an ordinary symbol
  \left.\kern-\nulldelimiterspace % automatically resize the bar with \right
  #1 % the function
  \vphantom{\big|} % pretend it's a little taller at normal size
  \right|_{#2} % this is the delimiter
  }}
\newcommand{\cm}{C}
\title{Limiting one-point distribution of periodic TASEP}
\author{Jinho Baik\footnote{Department of Mathematics, University of Michigan,
Ann Arbor, MI 48109, USA,  %\newline 
\texttt{baik@umich.edu}} \and Zhipeng Liu\footnote{Department of Mathematics, University of Kansas, Lawrence, KS 66045, USA, 
\texttt{zhipeng@ku.edu}} \and Guilherme L.~F.~Silva\footnote{Instituto de Ci\^encias Matem\'aticas e de Computa\c{c}\~ao, Universidade de S\~ao Paulo, S\~ao Carlos, SP, 13566-590, Brazil, \texttt{silvag@usp.br}}}
\date{\today}
\begin{document}

\maketitle

\begin{abstract}
The relaxation time limit of the one-point distribution of the spatially periodic totally asymmetric simple exclusion process is expected to be the universal one point distribution for the models in the KPZ universality class 
in a periodic domain. 
Unlike the infinite line case,  the limiting one point distribution depends non-trivially on the scaled time parameter.  
We study several properties of this distribution for the case of the periodic step and flat initial conditions.
We show that the distribution changes from a Tracy-Widom distribution in the small time limit to the Gaussian distribution in the large time limit, and also obtain right tail estimate for all time. 
Furthermore, we establish a connection to integrable differential equations such as the KP equation, coupled systems of mKdV  and nonlinear heat equations, and the KdV equation. 
\end{abstract}

\setcounter{tocdepth}{1}
\tableofcontents

\section{Introduction}

The scaled height field of the models in the KPZ universality class is expected to converge to a universal field in the large time limit. The limit, the KPZ fixed point, was constructed recently by Matetski, Quastel and Remenik, \cite{Matetski-Quastel-Remenik17} and also by Dauvergne, Ortmann and Virag \cite{Dauvergne-Ortmann-Virag18}. For the so-called step initial condition, the one-point marginal $F_{\mathrm{KPZ}}$ of the KPZ fixed point is given by (see, for example, \cite{Baik-Deift-Johansson99, Johansson00, Tracy-Widom09, Amir-Corwin-Quastel11, Borodin-Corwin-Ferrari14}) the scaled GUE Tracy-Widom distribution
\begin{equation}\label{eq:KPZ_GUE}
F_{\mathrm{KPZ}}(x;\tau,\gamma)=F_{\mathrm{GUE}}\left(\frac{x}{\tau^{1/3}}+ \frac{\gamma^2}{4\tau^{4/3}} \right)
\end{equation}
where $\tau, \gamma, x$ denote the time, location, and height parameters, respectively. 
From its expression it is apparent that $F_{\mathrm{KPZ}}$ is invariant under the KPZ re-scaling 
$(\tau, \gamma, x) \mapsto (\alpha^3\tau, \alpha^2\gamma, \alpha x)$ for all $\alpha>0$. 

In this paper, we study the analogue of the GUE Tracy-Widom distribution which arises when the infinite line is changed to a periodic domain.
The spatial periodicity introduces the following new feature. 
Let $L$ denote the period and $t$ the time. 
For the models in the KPZ universality class, the height functions at two locations at the same time are critically correlated if the distance is $O(t^{2/3})$. 
Hence, in the periodic case, all points are critically correlated if $t=O(L^{3/2})$, which is called the relaxation time scale. Since the periodicity effect should diminish when $t\ll L^{3/2}$ and amplify when $t\gg L^{3/2}$, the scaled time parameter $\tau=t L^{-3/2}$ measures the effect of the periodicity. 
As a result, the relaxation time limit of the one-point distribution depends on $\tau$ non-trivially and it is not invariant under the KPZ re-scaling.

The relaxation time limit (for both one-point and multi-point distributions) was evaluated\footnote{See \cite{Gwa-SpohnBethe, Derrida-Lebowitz98, Priezzhev2003, Golinelli-Mallick04, Golinelli-Mallick05, Brankov-Papoyan-Poghosyan-Priezzhev06, Prolhac14} for other properties of periodic models in the KPZ universality class.} for the periodic totally asymmetric simple exclusion process (TASEP) 
in \cite{Prolhac16, Baik-Liu16, Liu16, Baik-Liu19, Baik-Liu19b}. 
Assume the periodic step initial condition; this means that for the fixed density of particles $\rho\in (0,1)$, we impose at time $0$ that consecutive $\rho L$ are sites occupied, the next $(1-\rho)L$ unoccupied sites, and this pattern repeats, see Figure \ref{fig:density}. 
The one-point distribution converges, as $t=\Boh(L^{3/2})\to \infty$, to a distribution $\Fp$ independent of $\rho$. 
This limit was computed explicitly in a physics work \cite{Prolhac16} and a rigorous work \cite{Baik-Liu16}, and it takes the form\footnote{We follow the formula of \cite{Baik-Liu16}. The formula obtained in \cite{Prolhac16} has a different form but recently \cite{Prolhac19} verified that the two formulas are equivalent.}
\beq \label{eq:Fdeffirst}
	\Fp(x;\tau,\gamma)=\oint e^{xA_1(z)+\tau A_2(z)+2B(z)} \det(\mb I-\mb K_z)\frac{d z}{2\pi i z}.
\eeq 
The contour is a small circle enclosing the origin in the complex plane. 
The functions $A_1, A_2, B$ are given in terms of polylog functions, and they are independent of the physical parameters $\tau,\gamma$ and $x$. 
It was shown that $\Fp$ is a distribution function. 
Since it is a limit for a model in a periodic domain, the distribution function $\Fp(x;\tau,\gamma)$ is periodic in $\gamma$, and its period is normalized so that $F(x; \tau, \gamma+1)=F(x; \tau, \gamma)$. 
The function $\Fp$ is expected to be the universal relaxation time limit of the one-point distribution of the KPZ universality in a periodic domain for the periodic step initial condition. 
In other words, it is expected to be the marginal of the periodic KPZ fixed point. 
The goal of this paper is to study several properties of $F(x;\tau,\gamma)$. 

\begin{figure}
    \centering
    \begin{minipage}{0.4\textwidth}\centering
    \includegraphics[scale=.6]{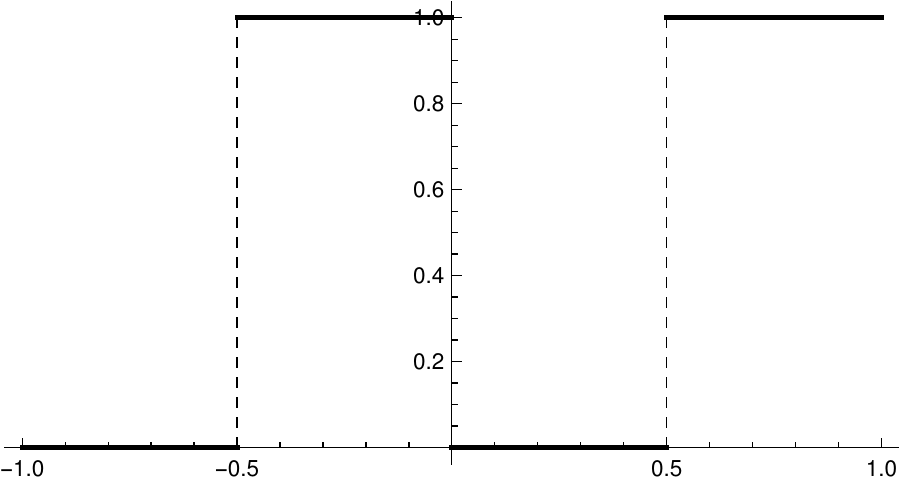}
    \end{minipage}
    \begin{minipage}{0.4\textwidth}\centering
    \includegraphics[scale=.6]{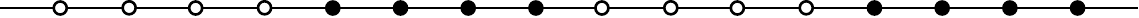}
    \end{minipage}
    \caption{The density profile and particle configuration for periodic step initial condition when $\rho=1/2$.}
    \label{fig:density}
\end{figure}

 %%%%%%%%%%%%%%%%%%%%%%%%%%%%%%%%%%%%%%%%%%%%%%%%
\subsection{Analytic structure of $\Fp$ and $\Fk$} 
 
The invariance of $\Fk$ through the KPZ re-scaling tells us that $\Fk$ can be recast as a function of one single variable, namely \eqref{eq:KPZ_GUE}, and all one needs to know about $\Fk$ is encoded in $F_{\mathrm{GUE}}$. In contrast, its periodic counterpart $\Fp$ does not enjoy the same invariance and truly depends on the three parameters $(\tau,\gamma,x)$.

For a better comparison between $\Fp$ and $\Fk$, we unwrap the dependence of $\Fk$ on the parameters $(\tau,\gamma,x)$ from $F_{\mathrm{GUE}}$. Using the representation \eqref{eq:KPZ_GUE} the distribution $\Fk$ takes the form 
\begin{equation}\label{eq:kpAgamma}
	\Fk(x;\tau,\gamma)=\det(\mb I-\Kk)_{L^2(0,\infty)} =\det(\mb I-\mb A_{-\gamma}\mb A_\gamma)_{L^2(0,\infty)}
\end{equation}
where $\Kk=\mb A_{-\gamma}\mb A_\gamma$, with $\mb A_\gamma$ being the operator acting on $L^2(0,\infty)$ with 
the kernel $\bm A_{\gamma}$,
\begin{equation*}
	\bm A_{\gamma}(s,t)=\mathcal A_\gamma(s+x+t)%,\quad 
%	\mathcal A_\gamma(s)=\mathcal A_\gamma(s;\tau):=\frac{e^{\frac{\gamma^3}{12\tau}+\frac{\gamma s}{2\tau}}}{\tau^{1/3}}\ai\left(\frac{s}{\tau^{1/3}}+\frac{\gamma^2}{4\tau^{4/3}}\right)
%	=\frac{1}{2\pi i}\int_{\Lambda_-}e^{-\frac{\tau}{3}u^3+\frac{\gamma}{2}u^2+su}du.
\end{equation*}
and
\begin{equation}\label{eq:KPZ_shiftedairykernel}
	\mathcal A_\gamma(s)=\mathcal A_\gamma(s;\tau):=\frac{e^{\frac{\gamma^3}{12\tau}+\frac{\gamma s}{2\tau}}}{\tau^{1/3}}\ai\left(\frac{s}{\tau^{1/3}}+\frac{\gamma^2}{4\tau^{4/3}}\right)
	=\frac{1}{2\pi i}\int_{\Lambda_-}e^{-\frac{\tau}{3}u^3+\frac{\gamma}{2}u^2+su}du.
\end{equation}
Here $\ai$ denotes the Airy function.
Note that the function $\mathcal{A}_\gamma$ satisfies the KPZ scaling invariance: $\mathcal A_\gamma(s;\tau) =\alpha^{-1}\mathcal A_{\alpha^2\gamma}(\alpha s;\alpha^3\tau)$. The kernel for the product operator $\Kk$ is
\begin{equation*}
\bm K_{\mathrm{KPZ}}(s,t)= \int_0^\infty \mathcal A_{-\gamma}(s+x+u)\mathcal A_{\gamma}(u+x+t)du,\quad s,t\geq 0.
\end{equation*}
In \eqref{eq:KPZ_shiftedairykernel}, $\Lambda_-$ is any unbounded oriented contour from $e^{-i \theta_1}\infty$ to $e^{i \theta_2}\infty$ for some $\theta_1, \theta_2\in (\pi/2, 5\pi/6)$; see Figure \ref{fig:LambdaandS}.

Let us now turn to a representation of $\Fp$. The definition \eqref{eq:Fdeffirst} obtained in \cite{Baik-Liu16} involves the operator $\mb K_z$ which acts on the discrete space $\ell^2(\mathcal S_-)$, with (see Figure \ref{fig:LambdaandS})\footnote{The elements of the larger set $\mathcal S(z):=\{w\in \C\mid e^{-w^2/2}=z\}$ that contains $\mathcal S_-$ are called Bethe roots since they arise in the Bethe ansatz analysis for the periodic TASEP.}

\begin{equation}\label{def:bethe_root_set}
	\mathcal S_-=\mathcal S_-(z):=\{w\in \C\mid e^{-w^2/2}=z, \; \re w<0\}=\{w= -\sqrt{-2\log z +4\pi i k}, \; k\in \Z\}.
\end{equation}
This set is a discrete subset of the hyperbola 
\beqq
	\Lambda_-=\Lambda_-(z) =\{u\in \C \mid |e^{-w^2/2}|=|z|, \; \re w<0 \}
	=\{u\in \C\mid \re(w^2)=-2\log |z|, \; \re w<0 \}
\eeqq
which we could use for \eqref{eq:KPZ_shiftedairykernel}. Our first result is the following. 

\begin{figure}\centering
\begin{minipage}{.4\textwidth}
\centering
\begin{tikzpicture}[scale=0.5]
\begin{scope}
	\clip (0, 0) circle (4.1);
	\path[fill=gray!20] (-0.1,0) -- (-0.1,5) -- (-6.92,4) -- cycle;
	\path[fill=gray!20] (-0.1,0) -- (-0.1,-5) -- (-6.92,-4) -- cycle;
\end{scope}

\draw [line width=0.4mm,gray!50] (-4.3,0)--(4.3,0);
\draw [line width=0.4mm,gray!50] (0,-4.3)--(0,4.3);

\draw[domain=-2.8:2.8,smooth,variable=\y,black,thick]  plot ({-(\y*\y+1)^(1/2)},{\y});
\end{tikzpicture}
\end{minipage}%
\begin{minipage}{.4\textwidth}
\centering
\begin{tikzpicture}[scale=0.5]

\draw [line width=0.3mm,lightgray] (-4.3,0)--(4.3,0);
\draw [line width=0.3mm,lightgray] (0,-4.3)--(0,4.3);
\draw[thick,domain=-3:3,smooth,variable=\y,lightgray,thick]  plot ({-(\y*\y+1)^(1/2)},{\y});
\fill (-1.06573,-0.368479) circle[radius=2.5pt] node [above,shift={(0pt,0pt)}] {};
\fill (-2.0173,-1.75199) circle[radius=2.5pt] node [above,shift={(0pt,0pt)}] {};
\fill (-2.68227,-2.48889) circle[radius=2.5pt] node [above,shift={(0pt,0pt)}] {};
\fill (-2.53213,2.3263) circle[radius=2.5pt] node [above,shift={(0pt,0pt)}] {};
\fill (-1.81494,1.51459) circle[radius=2.5pt] node [above,shift={(0pt,0pt)}] {};
\end{tikzpicture}
\end{minipage}
\caption{The left picture is an example of the contour $\Lambda_-$. The right picture is the discrete set $\mathcal S_-$.}
\label{fig:LambdaandS}
\end{figure}
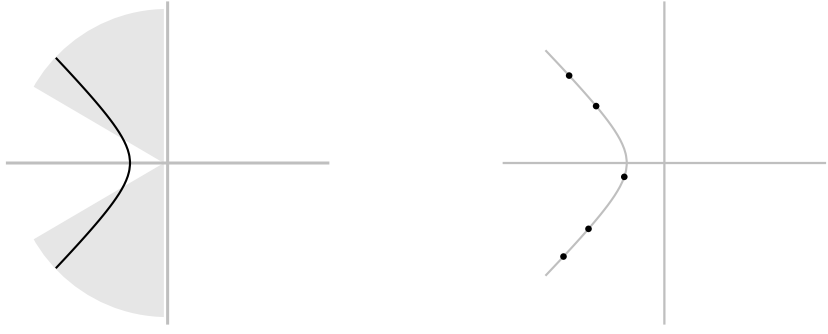	
	
\begin{lem}\label{lem:kernelK_kernelT}
The identity
$$
\det(\mb I-\mb K_z)_{\ell^2(\mathcal S_-)}=\det(\mb I-\mb T_{-\gamma}\mb T_{\gamma})_{L^2(0,\infty)}
$$
holds true, where $\mb T_{\gamma}:L^2(0,\infty)\to L^2(0,\infty)$ is the trace class operator defined by its kernel
\beq\label{eq:defTgh}
	\bm T_\gamma(s,t):=\mathcal T_\gamma(s+x+t),\quad \mathcal T_\gamma(s)
	=\mathcal T_{\gamma}(s;\tau,z):=\sum_{\xi\in \mathcal S_-}\frac{e^{-\frac{\tau}{3}\xi^3+\frac{\gamma}{2}\xi^2+s\xi-Q(\xi)}}{-\xi},
\eeq
where $Q(\xi)$ is a polylog integral whose formula is given in \eqref{def:Q_function}.
\end{lem}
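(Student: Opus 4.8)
\emph{Proof strategy.} The plan is to recognise both sides as the Fredholm determinant of one and the same trace-class operator, passing between the discrete space $\ell^2(\mathcal S_-)$ and the continuous space $L^2(0,\infty)$ through the cyclic invariance $\det(\mb I-AB)=\det(\mb I-BA)$ of Fredholm determinants. The device that converts a sum over $\mathcal S_-$ into an integral over $(0,\infty)$ is the elementary identity
\[
\int_0^\infty e^{(\xi+\eta)u}\,du=\frac{1}{-\xi-\eta},\qquad \re\xi,\ \re\eta<0 ,
\]
available because $\re w<0$ for every $w\in\mathcal S_-$ (see \eqref{def:bethe_root_set}). Conceptually this repeats, with the hyperbola $\Lambda_-$ of \eqref{eq:KPZ_shiftedairykernel} replaced by its discrete subset $\mathcal S_-$ and the kernel $\mathcal A_\gamma$ by its Bethe-root analogue $\mathcal T_\gamma$, the passage to the continuous formula \eqref{eq:kpAgamma}.

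\emph{Step 1: rewrite $\mb K_z$ as a product through $L^2(0,\infty)$.} Set $\phi_\xi(s):=e^{s\xi}$ and $w_\gamma(\xi):=e^{-\frac{\tau}{3}\xi^{3}+\frac{\gamma}{2}\xi^{2}+x\xi-Q(\xi)}/(-\xi)$ for $\xi\in\mathcal S_-$, and introduce $\mathsf A\colon\ell^2(\mathcal S_-)\to L^2(0,\infty)$, $(\mathsf Ab)(s)=\sum_{\xi\in\mathcal S_-}b_\xi\phi_\xi(s)$; $\mathsf B\colon L^2(0,\infty)\to\ell^2(\mathcal S_-)$, $(\mathsf Bf)_\eta=\int_0^\infty\phi_\eta(t)f(t)\,dt$; and the matrix $\mathsf C=(\mathsf C_{\xi\eta})_{\xi,\eta\in\mathcal S_-}$ with $\mathsf C_{\xi\eta}=w_{-\gamma}(\xi)\,w_\gamma(\eta)/(-\xi-\eta)$. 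I would then recall the kernel $\mb K_z$ of \cite{Baik-Liu16} and check that, after matching conventions — using the Bethe relation $e^{-\xi^{2}/2}=z$ to move $z$-dependent factors (note $\re(\xi^{2})=-2\log|z|$ is constant along $\mathcal S_-$) and the functional identities of the polylog integral $Q$ from \eqref{def:Q_function} — the determinant $\det(\mb I-\mb K_z)$ coincides with $\det(\mb I-\mathsf C\,\mathsf B\,\mathsf A)$, the composition $\mathsf B\mathsf A$ producing exactly the factor $1/(-\xi-\eta)$ via the identity above. Carrying out this identification is where the bulk of the work lies.

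\emph{Step 2: identify the cyclic companion with $\mb T_{-\gamma}\mb T_\gamma$.} By the cyclic invariance of Fredholm determinants,
\[
\det(\mb I-\mathsf C\mathsf B\mathsf A)_{\ell^2(\mathcal S_-)}=\det(\mb I-\mathsf A\mathsf C\mathsf B)_{L^2(0,\infty)} .
\]
On the other hand, writing $\mb T_\gamma=\sum_{\xi\in\mathcal S_-}w_\gamma(\xi)\,\phi_\xi\otimes\phi_\xi$ with $(\phi_\xi\otimes\phi_\xi)f(s)=e^{s\xi}\int_0^\infty e^{t\xi}f(t)\,dt$ — which is precisely \eqref{eq:defTgh} — and using $\int_0^\infty\phi_\zeta\phi_\xi=1/(-\xi-\zeta)$, one finds
\[
\mb T_{-\gamma}\mb T_\gamma=\sum_{\xi,\eta\in\mathcal S_-}\frac{w_{-\gamma}(\xi)\,w_\gamma(\eta)}{-\xi-\eta}\,\phi_\xi\otimes\phi_\eta=\mathsf A\,\mathsf C\,\mathsf B ,
\]
the last equality because $\mathsf A\mathsf C\mathsf B$ has kernel $\sum_{\xi,\eta}\mathsf C_{\xi\eta}e^{s\xi}e^{t\eta}$. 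Combining the two displays with Step 1 gives the asserted identity $\det(\mb I-\mb K_z)_{\ell^2(\mathcal S_-)}=\det(\mb I-\mb T_{-\gamma}\mb T_\gamma)_{L^2(0,\infty)}$.

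\emph{Analytic points and main obstacle.} Two things require care. The main obstacle is Step 1 — reconciling the kernel $\mb K_z$ exactly as recorded in \cite{Baik-Liu16} with $\mathsf C\mathsf B\mathsf A$, that is, tracking where the $x$-, $\tau$-, $\gamma$- and $z$-dependent pieces sit and the exact role of $Q$ (including any identities tying it to the polylog functions $A_1,A_2,B$ of \eqref{eq:Fdeffirst}). Secondly, the synthesis map $\mathsf A$ is unbounded on its own (the Cauchy matrix $(-\xi-\bar\eta)^{-1}$ on $\mathcal S_-$ fails the Carleson condition), so the cyclic manipulation must be justified by other means: along $\mathcal S_-=\{-\sqrt{-2\log z+4\pi ik}:k\in\Z\}$ the real parts satisfy $\re\xi\to-\infty$ and $\re(\xi^{3})\to+\infty$ at rate $|k|^{3/2}$, so $|w_\gamma(\xi)|$ decays faster than $e^{-c|k|^{3/2}}$ (these estimates are available in \cite{Baik-Liu16}); this super-exponential decay makes $\mb T_\gamma$ trace class — proving the assertion in the statement — and lets one run the whole argument rigorously either by first truncating $\mathcal S_-$ to a finite set and letting it exhaust $\mathcal S_-$, or by expanding both Fredholm determinants into their defining series and matching them term by term via Andr\'eief's identity; in either route the decay of $w_\gamma$ controls all convergence and legitimises the interchanges of $\sum_{\mathcal S_-}$ with $\int_0^\infty$.
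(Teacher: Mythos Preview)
Your approach is correct and is essentially the same as the paper's: both rest on the identity $\int_0^\infty e^{(\xi+\eta)u}\,du = 1/(-\xi-\eta)$ for $\xi,\eta\in\mathcal S_-$ together with the cyclic invariance $\det(\mb I-AB)=\det(\mb I-BA)$ to pass between $\ell^2(\mathcal S_-)$ and $L^2(0,\infty)$.

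The paper's packaging is a bit cleaner and dissolves what you call the ``main obstacle''. From the symmetrised kernel \eqref{eq:initial_kernel2} one reads off $\mb K_z=\mJ\mJ^T$ with $\bJ(\xi,\eta)=e^{\frac12\Phi(\xi)+\frac12\Phi(\eta)+\frac\gamma4(\xi^2-\eta^2)}\big/\!\sqrt{-\xi}\,(\xi+\eta)\sqrt{-\eta}$; one application of the exponential identity gives $\mJ=-\mb G_{-\gamma}\mb G_\gamma^T$ with $\mb G_\gamma\colon L^2(0,\infty)\to\ell^2(\mathcal S_-)$ having kernel $e^{\frac12\Phi(\xi)-\frac\gamma4\xi^2+s\xi}/\sqrt{-\xi}$; cycling then yields $\det(\mb I-\mb K_z)=\det(\mb I-\mb G_\gamma^T\mb G_\gamma\,\mb G_{-\gamma}^T\mb G_{-\gamma})$, and $\mb T_\gamma:=\mb G_{-\gamma}^T\mb G_{-\gamma}$ has exactly the kernel \eqref{eq:defTgh}. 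In particular, your Step~1 needs no ``matching of conventions'' and no functional identities for $Q$ or any link with $A_1,A_2,B$ --- the function $Q$ is simply carried along inside $\Phi(\xi)=-\frac\tau3\xi^3+x\xi-Q(\xi)$ throughout. (A small bookkeeping point: with your choice of $\mathsf C_{\xi\eta}=w_{-\gamma}(\xi)w_\gamma(\eta)/(-\xi-\eta)$ the product $\mathsf C\mathsf B\mathsf A$ actually reproduces $\bm K_z$ with $\gamma$ replaced by $-\gamma$; this is harmless for the determinant by one more cycling, but it illustrates why the symmetric $\mJ\mJ^T$ route keeps the signs straight.) Your remarks on trace-class justification via the superexponential decay of the weights along $\mathcal S_-$ are correct; in the paper this is phrased as $\mb G_\gamma$ being Hilbert--Schmidt.
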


We emphasize that $Q(\xi)$ does not depend on any of the parameters $\tau,\gamma,x$ and $z$. Its exact expression is not relevant at the moment. We also note that $\mathcal T_\gamma$ depends on $z$ through the set $\mathcal S_-=\mathcal S_-(z)$. Using the above lemma, \eqref{eq:Fdeffirst} updates to
\begin{equation}\label{eq:FdefTgamma}
\Fp(x;\tau,\gamma)=\oint e^{xA_1(z)+\tau A_2(z)+2B(z)} \det(\mb I-\mb T_{-\gamma}\mb T_{\gamma})_{L^2(0,\infty)} \frac{d z}{2\pi i z}
\end{equation}
where the kernel for the product $\mb T_{-\gamma}\mb T_{\gamma}$ is
\begin{equation}\label{eq:pKPZ_prodkernel}
(s,t)\mapsto \int_0^\infty \mathcal T_{-\gamma}(s+x+u)\mathcal T_{\gamma}(u+x+t)du,\quad s,t\geq 0.
\end{equation}

If we were to neglect the term $Q$ in \eqref{eq:defTgh}, the formula of $\mathcal T_{\gamma}(s)$ would be
\begin{equation}\label{eq:kernelT_noQ}
	\sum_{\xi\in \mathcal S_-} \frac{e^{-\frac{\tau}{3}\xi^3+\frac{\gamma}{2}\xi^2+s\xi}}{-\xi}.
\end{equation}
The set $\mathcal S_-$ is a discrete subset of $\Lambda_-$. The spacing between two neighboring points of $\mathcal S_-$ is approximately $\frac{2\pi i}{-\xi}$. Hence, \eqref{eq:kernelT_noQ} is a discretization of the rescaled Airy function $\mathcal A_\gamma$ in \eqref{eq:KPZ_shiftedairykernel}, and this discretization depends on the variable $z$ because $\mathcal S_-= \mathcal S_-(z)$. Consequently, the formula \eqref{eq:FdefTgamma} of $\Fp$ is a linear superposition of modified (due to $Q$) discrete versions of $\Fk(x;\tau,\gamma)$. 

The formula \eqref{eq:kernelT_noQ} is a sum over a discrete set, so it is not invariant under the KPZ rescaling. However, the discrete set explains the spatial periodicity: observing that $e^{-\xi^2/2}=z$ for $\xi \in \mathcal S_-$ the identities 
\begin{equation*}%\label{eq:periodic_prop_Tgamma}
\mathcal T_{\gamma+1}(s)=z^{-1}\mathcal T_{\gamma}(s) \quad \text{and}\quad \mathcal T_{-\gamma-1}(s)=z\mathcal T_{\gamma}(s),
\end{equation*}
follow, and hence the product kernel \eqref{eq:pKPZ_prodkernel} is invariant under $\gamma\mapsto \gamma+1$. 

%%%%%%%%%%%%%%%%%%%%%%%%%%%%%%%%%%%%%%%%%%%%%%%%
\subsection{Integrable differential equations} \label{sec:intropde}

In the previous subsection, we discussed Airy-like operator formulas for $\det(\mb I-\mb K_z)$ and $\det(\mb I-\Kk)$.
In Lemmas~\ref{lem:integrability_operator_H} and \ref{lem:integrability_operator_F} we will show that both determinants also admit a representation in terms of so-called IIKS-integrable operators, and these in turn can be canonically associated to Riemann-Hilbert problems. Exploring the connection of Riemann-Hilbert problems with integrable differential equations we obtain the next result.

\begin{thm}\label{thm:integrable_systems}
The second log derivatives of the Fredholm determinants of $\mb K_z$ and $\Kk$ decompose as
$$
\partial_{xx} \log \det(\mb I-\mb \Kk)=\Pk(\tau,\gamma,x)\Rk(\tau,\gamma,x) \quad \text{and}\quad \partial_{xx} \log \det(\mb I-\mb K_z)=\Pp(\tau,\gamma,x)\Rp(\tau,\gamma,x),
$$
where $(p,r)=(\Pk,\Rk)$ or $(\Pp,\Rp)$ is a solution to the following two systems of differential equations.
\begin{enumerate}[(a)]
\item As a function of $\tau$ and $x$, they satisfy a system of coupled modified Korteweg--de Vries (mKdV) equations\footnote{The scale $\tau\mapsto 3\tau$ changes the equations to more standard form $p_{\tau}+ p_{xxx} +  6 prp_x=0$ and $r_{\tau} + r_{xxx} + 6 pr r_x =0$.}
\begin{equation} \label{eq:coupledMKdV}
\begin{cases}
	&3p_{\tau}+ p_{xxx} +  6 prp_x=0 , \\
	&3 r_{\tau} + r_{xxx} + 6 pr r_x =0 . 
\end{cases} 
\end{equation}
\item As a function of $\gamma$ and $x$, they satisfy a  system of coupled nonlinear heat equations
\begin{equation}  \label{eq:coupledNLS}
\begin{cases}
	&p_\gamma = - p_{xx} - 2p^2r  , \\
	&r_\gamma =  r_{xx} + 2pr^2 . 
\end{cases} 
\end{equation}
\end{enumerate}
Finally, the symmetry property
\begin{equation}\label{eq:prgammaminus}
    p(\gamma)=-r(-\gamma)
\end{equation}
also holds.
\end{thm}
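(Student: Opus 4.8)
The plan is to exploit the integrable‑operator (IIKS) representations of $\det(\mb I-\mb K_z)$ and $\det(\mb I-\Kk)$ supplied by Lemmas~\ref{lem:integrability_operator_H} and~\ref{lem:integrability_operator_F}, together with the Riemann--Hilbert problems (RHPs) canonically attached to them. In both cases the only way the parameters $(\tau,\gamma,x)$ enter the RHP data is through a conjugation $J_0(u)\mapsto e^{-\theta(u)\sigma_3}J_0(u)e^{\theta(u)\sigma_3}$ by the cubic phase
\begin{equation*}
	\theta(u)=\theta(u;\tau,\gamma,x)=-\tfrac{\tau}{3}u^3+\tfrac{\gamma}{2}u^2+xu,\qquad \sigma_3=\diag(1,-1),
\end{equation*}
which is exactly the phase appearing in \eqref{eq:KPZ_shiftedairykernel} and \eqref{eq:defTgh}; in the periodic case the residual data, including the weights $1/(-\xi)$ and the factor $e^{-Q(\xi)}$ in \eqref{eq:defTgh}, is \emph{frozen} because $Q$ is independent of $\tau,\gamma,x$ and $z$. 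Hence both RHPs are of the type associated with the Ablowitz--Kaup--Newell--Segur (AKNS) hierarchy, with three ``times'' $x,\gamma,\tau$ coupled to the $u^1$-, $u^2$- and $u^3$-terms of the phase. Writing $Y(u)=\mb I+Y_1 u^{-1}+Y_2 u^{-2}+\cdots$ for the RHP solution normalized at $u=\infty$, I would \emph{define} the pair $(p,r)$ (namely $(\Pk,\Rk)$ or $(\Pp,\Rp)$) from the off‑diagonal entries of $Y_1$, fixing the normalizing constants so that the standard tau‑function identity reads $\partial_{xx}\log\det(\mb I-K)=p\,r$. The point is that this is \emph{the same formula} for the KPZ operator $\Kk$ and for $\mb K_z$, so a single argument handles both.

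To extract the Lax pair I would use that the jump of $\Psi(u):=Y(u)e^{\theta(u)\sigma_3}$ is independent of $u,x,\gamma,\tau$; therefore each logarithmic derivative $(\partial_\bullet\Psi)\Psi^{-1}$ is entire in $u$, hence a polynomial by the $u\to\infty$ asymptotics of $Y$. A short computation gives $U_x:=(\partial_x\Psi)\Psi^{-1}=u\sigma_3+[Y_1,\sigma_3]$, of degree $1$, while $U_\gamma$ and $U_\tau$ have degrees $2$ and $3$. The matrix $[Y_1,\sigma_3]$ equals, up to the chosen constants, $\left(\begin{smallmatrix}0 & p\\ r & 0\end{smallmatrix}\right)$, which identifies $(p,r)$ as the AKNS potentials. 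Differentiating the Fredholm determinant with respect to the symbol expresses $\partial_x\log\det(\mb I-K)$ as a fixed linear functional of $Y_1$; differentiating once more and using the $x$‑equation $\partial_xY=U_xY-uY\sigma_3$ yields $\partial_{xx}\log\det(\mb I-K)=pr$. (For $K=\Kk$ and $\gamma=0$ this degenerates to $p=-r$, recovering Tracy--Widom's $\partial_{xx}\log\det=-q^2$ with $q$ the Hastings--McLeod solution of Painlev\'e~II.)

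The two differential systems are then pure compatibility. The zero‑curvature relation $\partial_\gamma U_x-\partial_x U_\gamma+[U_x,U_\gamma]=0$ is a polynomial identity in $u$; matching powers of $u$ pins down every coefficient of $U_\gamma$ in terms of $p,r$ and their $x$‑derivatives, and the $u^0$‑term is precisely the coupled nonlinear heat system \eqref{eq:coupledNLS}. The relation $\partial_\tau U_x-\partial_x U_\tau+[U_x,U_\tau]=0$ is treated identically and produces the coupled mKdV system \eqref{eq:coupledMKdV}; these are nothing but the second and third AKNS flows written in the potentials $(p,r)$. The genuinely delicate step --- the one I would budget the most effort for --- is making the Lax‑pair derivation rigorous: one must verify that the unbounded contour (the hyperbola $\Lambda_-$, or the discrete Bethe‑root set $\mathcal S_-$ in the periodic case) together with the cubic growth of $\theta$ still forces the Liouville/polynomial structure of the $U_\bullet$, and that $Y$ is controlled uniformly enough near $u=\infty$ to differentiate the RHP in $u,x,\gamma,\tau$ and match asymptotic coefficients. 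Once this is in place the rest is a finite, mechanical elimination.

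For the symmetry \eqref{eq:prgammaminus}, note that $\det(\mb I-\mb T_{-\gamma}\mb T_{\gamma})=\det(\mb I-\mb T_{\gamma}\mb T_{-\gamma})$ (and likewise for $\mb A_{\pm\gamma}$) by the cyclic invariance of Fredholm determinants, so $\gamma\mapsto-\gamma$ merely interchanges the two Hankel factors. Using $\theta(-u;\tau,-\gamma,x)=-\theta(u;\tau,\gamma,x)$, this interchange is realized on the RHP by the substitution $Y(u;\tau,\gamma,x)\mapsto \sigma_1\,(Y(-u;\tau,-\gamma,x)^{-1})^{\mathsf T}\,\sigma_1$, with $\sigma_1$ the off‑diagonal Pauli matrix, which exchanges the two off‑diagonal entries of $Y_1$ and reverses their sign. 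Reading $(p,r)$ off the transformed matrix then gives $p(\gamma)=-r(-\gamma)$, which is \eqref{eq:prgammaminus}.
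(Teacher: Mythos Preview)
Your overall strategy --- IIKS representation, RHP with parameter dependence only through conjugation by $e^{\pm V\sigma_3/2}$, Lax pair via Liouville, compatibility yielding the AKNS flows --- is exactly the paper's approach (Section~\ref{sec:integrable_structure} and Appendix~\ref{sec:iiks_general}). The derivation of $\partial_{xx}\log\det = pr$ from the deformation formula $\partial_x\log\det = q$ together with $q_x = pr$ is also the paper's route (Lemma~\ref{prop:RHPforH} and equation~\eqref{eq:2qxpr2}). So for parts~(a) and~(b) there is nothing to correct beyond bookkeeping of factors of $\tfrac12$ in the conjugation.

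The symmetry argument, however, has a genuine error. Your proposed involution $Y(u;\gamma)\mapsto \sigma_1\bigl(Y(-u;-\gamma)^{-1}\bigr)^{\mathsf T}\sigma_1$ is, for $\det Y\equiv 1$, equal to $\sigma_3\,Y(-u;-\gamma)\,\sigma_3$ (use $(M^{-1})^{\mathsf T}=\sigma_2 M\sigma_2^{-1}$ and $\sigma_1\sigma_2=\sigma_3$). This map does \emph{not} preserve the jump: for $u\in\Lambda_+$ one has $J_Y(u;\gamma)=\left(\begin{smallmatrix}1 & -e^{-V(u)}\\ 0 & 1\end{smallmatrix}\right)$, while $\sigma_3 J_Y(-u;-\gamma)\sigma_3=\left(\begin{smallmatrix}1 & 0\\ -e^{-V(u)} & 1\end{smallmatrix}\right)$, which is neither $J_Y(u;\gamma)$ nor its inverse. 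Even if it were a symmetry, matching $Y_1$ would give $\widehat Y_1(\gamma)=\sigma_1 Y_1(-\gamma)^{\mathsf T}\sigma_1=\left(\begin{smallmatrix}-q & p\\ r & q\end{smallmatrix}\right)(-\gamma)$, i.e.\ $p(\gamma)=p(-\gamma)$ and $r(\gamma)=r(-\gamma)$ --- not $p(\gamma)=-r(-\gamma)$. Your description ``exchanges the two off-diagonal entries of $Y_1$ and reverses their sign'' is therefore wrong for this map.

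The correct involution is the simpler one used in the paper: $\widehat Y(u;\gamma):=\sigma_1\,Y(-u;-\gamma)\,\sigma_1$ (and analogously $\widehat X$). One checks directly that $\sigma_1 J_Y(-u;-\gamma)\sigma_1 = J_Y(u;\gamma)^{-1}$ (and $R_X(-s;-\gamma)=-\sigma_1 R_X(s;\gamma)\sigma_1$ in the discrete case), so $\widehat Y$ satisfies the same RHP and uniqueness gives $Y(u;\gamma)=\sigma_1 Y(-u;-\gamma)\sigma_1$. Expanding at $u=\infty$ yields $Y_1(\gamma)=-\sigma_1 Y_1(-\gamma)\sigma_1$, whose off-diagonal part is precisely $p(\gamma)=-r(-\gamma)$.
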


The coupled systems and the symmetry \eqref{eq:prgammaminus} imply that $p$%=\Pk,\Pp$ 
itself solves non-local differential equations
\begin{equation}\label{eq:nonlocal_1}
	3p_\tau(\gamma)+p_{xxx}(\gamma)-6p(\gamma)p(-\gamma)p_x(\gamma)=0
\end{equation}
and
\begin{equation}\label{eq:nonlocal_2}
	p_\gamma(\gamma)+p_{xx}(\gamma)-2p(\gamma)^2p(-\gamma)=0.
\end{equation}
For the particular case when $\gamma=0$, which will be discussed in Theorem~\ref{thm:intflat} for the periodic TASEP with flat initial condition, Equation \eqref{eq:nonlocal_1} becomes the defocusing mKdV equation\footnote{The scale $\tau\mapsto 3\tau$ transform the equation to the standard form $p_\tau +p_{xxx}-6p^2p_x=0$. If the sign of the nonlinear term changes, $p_\tau +p_{xxx} + 6p^2p_x=0$, the equation becomes the focusing mKdV equation instead of the defocusing mKdV equation.} $3p_\tau +p_{xxx}-6p^2p_x=0$.

It is simple to show (see Section \ref{sec:kp_equation}) that the compatibility of the two systems \eqref{eq:coupledMKdV}--\eqref{eq:coupledNLS} yield
\begin{cor}[KP equation]\label{thm:integrable_structure}
The second log derivative of the Fredholm determinants 
$$
	\Up(\tau, \gamma, x)=\partial_{xx}\log \det(\mb I-\mb K_z), \quad \text\quad \Uk(\tau, \gamma, x)=\partial_{xx}\log \det(\mb I-\Kk)
$$
are solutions to the second Kadomtsev-Petviashvili (KP-II) equation\footnote{The scaled function $v(\tau, \gamma, x)=2 u(12\tau, \pm 2\gamma, x)$ changes the equation to more standard form $3 \lambda v_{\gamma\gamma}+(v_{\tau}+ 6vv_x + v_{xxx})_x$ with $\lambda=1$.  The equation with $\lambda=-1$ is called the KP-I equation.}
\begin{equation}\label{kp_equation}
	12u_{\gamma\gamma}+\left(12u_{\tau}+12uu_x+u_{xxx} \right)_x=0.
\end{equation}
\end{cor}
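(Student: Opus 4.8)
The plan is to deduce Corollary~\ref{thm:integrable_structure} directly from Theorem~\ref{thm:integrable_systems} by cross‑differentiating the two coupled systems \eqref{eq:coupledMKdV} and \eqref{eq:coupledNLS}; this is the ``compatibility of the two systems'' alluded to in the statement. Write $u=pr$, where $(p,r)=(\Pp,\Rp)$ (resp.\ $(\Pk,\Rk)$) is the pair produced by the theorem, so that $u=\partial_{xx}\log\det(\mb I-\mb K_z)=\Up$ (resp.\ $u=\partial_{xx}\log\det(\mb I-\Kk)=\Uk$). Since the argument uses only the two systems and no finer information about $p$ and $r$, it handles the periodic and the KPZ cases simultaneously.

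The heart of the matter is a single auxiliary bilinear, the Wronskian‑type quantity $W:=pr_x-p_xr$. First I would differentiate $u=pr$ in $\gamma$ and substitute the nonlinear heat system \eqref{eq:coupledNLS}: the two cubic contributions $\pm 2p^2r^2$ cancel, leaving $u_\gamma=pr_{xx}-p_{xx}r=\partial_x W$, so that $u_\gamma$ is automatically a total $x$‑derivative of $W$. Differentiating $W$ in $\gamma$ and again eliminating every $\gamma$‑derivative of $p$ and $r$ via \eqref{eq:coupledNLS}, the cubic terms reorganize into $4uu_x$, and, using the Leibniz expansion $u_{xxx}=p_{xxx}r+3p_{xx}r_x+3p_xr_{xx}+pr_{xxx}$, the remaining quadratic terms collapse to
$$
W_\gamma=u_{xxx}-4\bigl(p_{xx}r_x+p_xr_{xx}\bigr)+4uu_x .
$$
At this point the mKdV system enters: differentiating $u=pr$ in $\tau$ and using \eqref{eq:coupledMKdV} gives $3u_\tau=-(p_{xxx}r+pr_{xxx})-6uu_x$, which combined once more with the Leibniz expansion of $u_{xxx}$ yields the identity $p_{xx}r_x+p_xr_{xx}=u_\tau+\tfrac13 u_{xxx}+2uu_x$. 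Substituting this into the display above expresses $W_\gamma$ purely in terms of $u,\ u_x,\ u_{xxx}$ and $u_\tau$, and then
$$
u_{\gamma\gamma}=\partial_\gamma\bigl(\partial_x W\bigr)=\partial_x\bigl(W_\gamma\bigr)
$$
is the required combination of $u_{xxxx}$, $u_{\tau x}$ and $(uu_x)_x$, i.e.\ the KP‑II equation \eqref{kp_equation}. The same chain of identities applied verbatim with $(p,r)=(\Pk,\Rk)$ gives the statement for $\Uk$.

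The only genuine difficulty is the algebraic bookkeeping: in computing $W_\gamma$ one must track roughly a dozen monomials in $p,r$ and their $x$‑derivatives up to fourth order and check that, after imposing \eqref{eq:coupledNLS}, everything cubic in $(p,r)$ packages into $uu_x$ while everything quadratic reduces to $u_{xxx}$ together with the single ``derivative‑imbalanced'' term $p_{xx}r_x+p_xr_{xx}$ — the role of \eqref{eq:coupledMKdV} being precisely to trade that last term for $u_\tau$. No individual step is conceptually deep; the structural reason the computation closes is that $u_\gamma$ is a perfect $x$‑derivative of $W$, which is a consequence of \eqref{eq:coupledNLS} alone. I would also note that the symmetry \eqref{eq:prgammaminus} plays no role here: it is needed only afterwards, to pass from the coupled systems to the scalar non‑local equations \eqref{eq:nonlocal_1}--\eqref{eq:nonlocal_2} and to the special case $\gamma=0$.
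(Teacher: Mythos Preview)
Your proposal is correct and follows essentially the same route as the paper's proof in Section~\ref{sec:kp_equation}: compute $u_\gamma$ and then $u_{\gamma\gamma}$ from the coupled heat system, then use the coupled mKdV system together with the Leibniz expansion of $u_{xxx}$ to eliminate the residual bilinears in $p,r$. The only cosmetic difference is that you name the Wronskian $W=pr_x-p_xr$ and package the elimination through $p_{xx}r_x+p_xr_{xx}$, whereas the paper works directly with $pr_{xxx}+p_{xxx}r$ and $(p_xr_x)_x$; the underlying identities are identical.
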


For $u=\Uk$, Corollary~\ref{thm:integrable_structure} was obtained recently by Quastel and Remenik \cite{Quastel-Remenik19b}. The relation \eqref{eq:KPZ_GUE} tells us that $\Uk$ is a self-similar solution to \eqref{kp_equation},
\begin{equation*}
\Uk(\tau,\gamma,x)=\tau^{-3/2}\phi_0\left(\frac{x}{\tau^{1/3}}+\frac{\gamma^2}{4\tau^{4/3}}\right),
\end{equation*}
for some function $\phi_0$. Equation~\eqref{kp_equation}, in turn, gives us that $\phi_0=-\phi^2$, where $\phi$ solves the second Painlev\'e equation, from which one can recover the Painlev\'e formula of $F_{\mathrm{GUE}}$ \cite{Tracy-Widom94}.

The above result is new for $u=\Up$. In this case, the results should be interpreted as that the equations hold for $(\tau, \gamma, x)$ and $z$ in a neighborhood of $(\tau_0,\gamma_0,x_0)$ and $z_0$ in which $\det (\I -\mb K_{z}) \neq 0$. 
The complement of the zero set of this Fredholm determinant is an open set of full measure since the Fredholm determinant is an analytic function of these four variables. 
Assuming that the solution exists for all $x$ and decays to $0$ sufficiently fast as $x\to\infty$, the function $F$ can be written as 
\beqq
	F(x;\tau,\gamma)=\oint e^{xA_1(z)+\tau A_2(z)+2B(z)} E(x, \tau, \gamma; z)\frac{d z}{2\pi i z}, 
	\quad E(\tau, \gamma,x; z):= \exp\left(\int_x^\infty (y-x) \Up(\tau, \gamma,y; z) d y\right)
\eeqq
where $\Up(\tau, \gamma, x; z)$ is a \emph{complex} solution of the KP equation with the initial condition determined by $z$. 
This is a periodic analogue of the Painlev\'e formula of $F_{\mathrm{GUE}}$. 

The solutions $\Up,\Pp$ and $\Rp$ above are infinite dimensional solitons (with complex velocities) since the associated Riemann-Hilbert problem is discrete; see Section \ref{sec:integrable_structure}. This way, $F$ is a superposition of (the tau function of the) solutions integrated over a parameter $z$ which determines the initial/boundary condition of the solution. In contrast, the purely continuous nature of the associated Riemann-Hilbert problem for $\Uk,\Pk,\Rk$ indicate that these solutions are not solitons. 

The KP equation was first introduced by Kadomtsev and Petviashvili \cite{KP-1970} in 1970 and it is an universal model for the study of two-dimensional shallow water waves that generalize the mKdV equation to two spatial dimensions. The connection between the KP equation and the KPZ universality class was first observed by Quastel and Remenik in \cite{Quastel-Remenik19b}. They considered the one-time/multi-location marginals of the KPZ fixed point on the infinite line with general initial condition, 
and proved that a log derivative of the distribution can be expressed in terms of a matrix KP equation. 
This result was used to find large deviation results and also further extended in \cite{LeDoussal-2020}.  
As mentioned before, if we consider the one-point function and step initial condition, Quastel-Remenik's result is Corollary~\ref{thm:integrable_structure} for $u=\Uk$.\footnote{The paper \cite{Quastel-Remenik19b} used the notation $\gamma\mapsto 2\gamma$ so that the KP equation takes the form $3u_{\gamma\gamma}+\left(12u_{\tau}+12uu_x+u_{xxx} \right)_x=0$.} In the context of the periodic TASEP, Prolhac \cite{Prolhac19} noticed that $\partial_{xx}\det(\mb I-\mb K_z)$ is ``a reminiscent of soliton solutions for the KP equation''. Corollary~\ref{thm:integrable_structure} establishes a precise connection. 

We used the connection to an IIKS-integrable operator to derive integrable differential equations for the step initial condition. 
This connection extends to the flat initial condition as well. 
However, it is not clear if it extends to general initial conditions. 
Nonetheless, the authors in \cite{Quastel-Remenik19b, LeDoussal-2020} used differential identities of the kernel to derive the KP equation for general initial conditions. 
As pointed out in \cite{Quastel-Remenik19b}, this method had appeared in several papers in the past, among which are the papers  \cite{PoppeSattinger-1988,Poppe1989} that derive the KP equation for a general class of kernels.
It is straightforward to check that the general result of \cite{Poppe1989} is applicable to the 
kernel $\mb T_{-\gamma}\mb T_{\gamma}$ for the periodic case and gives an alternative proof for Corollary \ref{thm:integrable_structure}. Indeed, it can be used to derive the KP equation for general initial conditions for the periodic case and we obtain the following result. 

\begin{thm}\label{thm:kp_general}
Let $\Kzic$ be the kernel for a general initial condition\footnote{The initial condition is assumed to satisfy certain technical conditions.} for the periodic case obtained in \cite{Baik-Liu19}; see Section \ref{sec:generalIC}. 
The function
$$
u=U_{\mathrm{ic}}(\tau,\gamma,x):=\partial_{xx}\det\log(\mb I-\Kzic)
$$
solves the KP equation \eqref{kp_equation}.
\end{thm}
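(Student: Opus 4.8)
The plan is to reduce the statement to the general kernel-based criterion of Pöppe \cite{Poppe1989}, which asserts that if an integral operator $\mb L$ acting on $L^2$ of a half-line has a kernel of the ``separable-exponential'' type---built from functions that evolve in three auxiliary parameters $(\tau,\gamma,x)$ according to the linear dispersion relations $\partial_x$, $\partial_\gamma = \pm\partial_x^2$, $\partial_\tau = \mp\tfrac13\partial_x^3$ (up to the normalization constants appearing in \eqref{kp_equation})---then $u = \partial_{xx}\log\det(\mb I - \mb L)$ solves the KP equation. So the task is not to run a new Riemann--Hilbert argument but to \emph{verify the hypotheses of that criterion} for the explicit kernel $\Kzic$ of \cite{Baik-Liu19}.

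The key steps, in order, are as follows. First, recall from \cite{Baik-Liu19} the explicit form of $\Kzic$ and, as in Lemma \ref{lem:kernelK_kernelT}, rewrite $\det(\mb I - \Kzic)$ (for $z$ on the integration contour, fixed) as a Fredholm determinant on $L^2(0,\infty)$ of an operator of the form $\mb I - \mb T^{\out}_z \mb T^{\inn}_z$, where $\mb T^{\out}_z,\mb T^{\inn}_z$ have kernels $\mathcal T^{\out}_z(s+x+t)$, $\mathcal T^{\inn}_z(s+x+t)$ given by Bethe-root sums analogous to \eqref{eq:defTgh}; the general-initial-condition factor modifies only the $\xi$-independent-of-$(\tau,\gamma,x)$ weight inside the sum, exactly as $Q(\xi)$ does in the step case. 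Second, check that the building-block functions carry the required flows: each summand has the factor $e^{-\frac{\tau}{3}\xi^3 + \frac{\gamma}{2}\xi^2 + s\xi}$, so differentiating in $x$ brings down $\xi$, in $\gamma$ brings down $\tfrac12\xi^2$, and in $\tau$ brings down $-\tfrac13\xi^3$; hence $\mathcal T_z$ satisfies the linear PDEs $\partial_\gamma \mathcal T_z = \tfrac12\partial_x^2\mathcal T_z$ (with the opposite sign for the ``$-\gamma$'' block) and $\partial_\tau\mathcal T_z = -\tfrac13\partial_x^3\mathcal T_z$, which is precisely the input Pöppe's theorem needs. Third, confirm the analytic prerequisites: the Bethe-root sums converge (the cubic term $e^{-\frac\tau3\xi^3}$ dominates along $\mathcal S_-$ for $\re\tau>0$, using $|\xi_k|\sim\sqrt{4\pi k}$), the operators are trace class with Schatten norms locally uniformly bounded, and $\det(\mb I-\Kzic)$ is analytic in $(\tau,\gamma,x)$ and in $z$, so the differentiations and the contour integral over $z$ commute, giving the equation for $u=U_{\mathrm{ic}}$ in the open full-measure set where the determinant is nonzero. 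Finally, match constants: Pöppe's normalization yields $12u_\tau + 12uu_x + u_{xxx}$ inside the $\partial_x$ and $12u_{\gamma\gamma}$, i.e. exactly \eqref{kp_equation}, after tracking the factors $\tfrac13$ and $\tfrac12$ above; this is the same bookkeeping already done for Corollary \ref{thm:integrable_structure}.

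The main obstacle I expect is the second and third steps taken together: one must make sure the ``technical conditions'' on the initial condition from \cite{Baik-Liu19} are exactly what is needed to (i) keep the Bethe-root series absolutely convergent and differentiable term-by-term in all three parameters, and (ii) put the resulting operator genuinely in the class for which \cite{Poppe1989} applies---in particular that the kernel depends on $(s,t)$ only through $s+x+t$ (a ``Hankel/shift'' structure), that it factors through the half-line in the product form required, and that the determinant is differentiable under the integral sign. Writing $\Kzic$ in the $\mb T^{\out}_z\mb T^{\inn}_z$ form with the right Hankel dependence is the crux; once that is in place, invoking \cite{Poppe1989} and collecting the constants is essentially automatic.
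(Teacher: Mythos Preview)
Your overall strategy---reduce to P\"oppe's criterion \cite{Poppe1989}---is exactly what the paper does. The gap is in the structural step you yourself flag as ``the crux'': for a general initial condition the operator does \emph{not} factor as $\mb T^{\out}_z\mb T^{\inn}_z$ with each factor of Hankel type. The initial-condition data enters the kernel \eqref{eq:kernel_ic} through a weight $\chi_{\mathrm{ic}}(-\eta,\xi)$ that couples the two Bethe roots; it is a genuine function of the pair $(\eta,\xi)$ and does not split as a product of single-variable weights the way $e^{-Q(\xi)}$ does. Consequently the $L^2(0,\infty)$ operator one obtains,
\[
\bm T^{\mathrm{ic}}(u,v)=-\sum_{\xi,\eta\in\mathcal S_-}\frac{\chi_{\mathrm{ic}}(-\eta,\xi)}{\xi(\xi+\eta)\eta}\,e^{\Phi(\xi)+\Phi(\eta)+\frac{\gamma}{2}(\xi^2-\eta^2)+u\eta+v\xi},
\]
depends on $u$ and $v$ separately, not through $u+v$, and cannot be written as the composition of two Hankel-type blocks.

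The repair is that P\"oppe's theorem does not require a Hankel or product structure: it requires only the differential identities
\[
\partial_x\bm T=\partial_u\bm T+\partial_v\bm T,\qquad
\partial_\gamma\bm T=\partial_{uu}\bm T-\partial_{vv}\bm T,\qquad
\partial_\tau\bm T=\partial_{uuu}\bm T+\partial_{vvv}\bm T
\]
(after a rescaling $(\tau,\gamma)\mapsto(-3\tau,2\gamma)$ to match P\"oppe's normalization). These hold for the kernel above precisely because all dependence on $(\tau,\gamma,x)$ sits in the exponential $e^{\Phi(\xi)+\Phi(\eta)+\frac{\gamma}{2}(\xi^2-\eta^2)+u\eta+v\xi}$, while $\chi_{\mathrm{ic}}$ carries none; differentiating in the parameters brings down the appropriate symmetric or antisymmetric combinations of powers of $\xi$ and $\eta$, which match the $u,v$ derivatives. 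So drop the attempted factorization, verify these three identities directly on the single non-Hankel kernel, apply \cite[Theorem~3.1]{Poppe1989}, and then undo the rescaling to land on \eqref{kp_equation}. That is the paper's proof.
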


The analysis in this subsection can be extended to multi-time, multi-location distributions and the results will be announced elsewhere.

 %%%%%%%%%%%%%%%%%%%%%%%%%%%%%%%%%%%%%%%%%%%%%%%%
\subsection{Asymptotic properties of $\Fp$} 

We also study in detail the distribution \eqref{eq:Fdeffirst} in various asymptotic regimes.

 %%%%%%%%%%%%%%%%%%%%%%%%%%%%%%%%%%%%%%%%%%%%%%%%
\subsubsection{Large time limit}

The function $F$ depends non-trivially on $\tau$. 
We consider the large $\tau$ limit first. 

\begin{thm}[Large $\tau$ limit]\label{thm:gaussian_degeneration}
For each fixed $x\in \R$ and $\gamma\in \R$,
$$
\lim_{\tau \to \infty} F\left(-\tau + \frac{\pi^{1/4}}{\sqrt{2}}x\tau^{1/2};\tau,\gamma\right)=\frac{1}{\sqrt{2\pi}}\int_{-\infty}^x e^{-\frac{y^2}{2}}dy.
$$
\end{thm}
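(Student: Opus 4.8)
The plan is a steepest-descent analysis of the contour integral \eqref{eq:Fdeffirst} (equivalently \eqref{eq:FdefTgamma}). Renaming the variable called $x$ in the statement as $w$, set the first argument of $F$ equal to $x:=-\tau+\frac{\pi^{1/4}}{\sqrt 2}\,w\,\tau^{1/2}$ and abbreviate the exponent as $G(z):=xA_1(z)+\tau A_2(z)+2B(z)$. The functions $A_1,A_2$ are polylog-type, analytic on the unit disk, vanishing at $0$, and --- crucially --- share the same linear Taylor coefficient there, $A_1'(0)=A_2'(0)$. Hence
\[
G(z)=\frac{\pi^{1/4}}{\sqrt 2}\,w\,\tau^{1/2}\,A_1(z)+\tau\bigl(A_2(z)-A_1(z)\bigr)+2B(z),
\]
in which $A_2-A_1$ has a double zero at $z=0$; thus $G$ carries no term of order $\tau z$, the critical equation $G'(z)=0$ is solved by some $z_*=O(\tau^{-1/2})$ collapsing onto the branch point $z=0$, and the natural change of variables is $z=\tau^{-1/2}\zeta$. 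The centering at $-\tau$ and the constant $\pi^{1/4}/\sqrt 2$ are exactly those that normalize the quadratic form emerging below to the standard Gaussian.

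\textbf{Reduction near the saddle and control of the determinant.} First I would substitute $z=\tau^{-1/2}\zeta$ and Taylor-expand using the explicit coefficients of $A_1,A_2,B$ at the origin: uniformly on compacts, $G(\tau^{-1/2}\zeta)$ converges to an explicit quadratic polynomial, and along the vertical line $\zeta=w+i\sigma$ through the (real) saddle one gets $G(\tau^{-1/2}\zeta)\to-\tfrac12(w^2+\sigma^2)$, i.e.\ Gaussian decay away from $z_*$. In parallel one must control $\det(\mb I-\mb K_z)$ on the rescaled contour. For this I would use Lemma~\ref{lem:kernelK_kernelT} to write $\det(\mb I-\mb K_z)=\det(\mb I-\mb T_{-\gamma}\mb T_{\gamma})_{L^2(0,\infty)}$, expand $\mathcal T_{\pm\gamma}$ as in \eqref{eq:defTgh} over the Bethe-root set $\mathcal S_-(z)$, and track it under the shift $s+x+t$ (now of order $s+t-\tau$); the goal is to show that $e^{G(z)}\det(\mb I-\mb K_z)$ stays bounded and converges along the steepest-descent contour, and in particular that its $\gamma$-dependence --- which enters only through $\mb K_z$ --- washes out in the limit, consistent with the $\gamma$-independence of the answer.

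\textbf{Deformation and identification of the limit.} Next I would deform the small circle in \eqref{eq:Fdeffirst} to a loop around the origin on which $\re G$ is maximized at $z_*$ and whose portion there runs in the steepest-descent (vertical) direction --- the integrand is analytic and single-valued in the punctured disk $0<|z|<1$ with an essential singularity at $z=0$, so the contour must stay a loop. On the rest of the loop $\re G$ is strictly smaller, so that part is asymptotically negligible; after $z=\tau^{-1/2}\zeta$ the surviving contribution becomes a Gaussian contour integral. Evaluating it --- keeping track of the winding of the contour about $z=0$ --- and using a standard contour representation of the normal distribution function such as
\[
\frac{1}{\sqrt{2\pi}}\int_{-\infty}^{w}e^{-y^2/2}\,dy=\frac{1}{2\pi i}\int_{c-i\infty}^{c+i\infty}\frac{e^{\zeta^2/2-w\zeta}}{\zeta}\,d\zeta\qquad(c>0),
\]
one recovers the claimed limit; the cases $w>0$ and $w<0$ use the same deformation with this book-keeping adjusted (or an analyticity-in-$w$ argument).

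\textbf{Main obstacle.} The principal difficulty is the uniform control of $\det(\mb I-\mb K_z)$ along the deformed contour, most delicate near the branch point $z=0$: there the Bethe roots in $\mathcal S_-(z)$ run off to infinity, the summands defining $\mathcal T_{\pm\gamma}$ in \eqref{eq:defTgh} grow, and the required decay must be extracted from cancellations among them combined with the prefactor $e^{xA_1(z)+\tau A_2(z)}$. Supporting technical points: exponential smallness of the integrand away from the saddle, so that the rest of the contour may be discarded; single-valuedness of $z\mapsto\mathcal S_-(z)$ on the punctured disk, legitimizing the deformations; and a priori trace-norm bounds on $\mb K_z$ allowing the exchange of $\tau\to\infty$ with the $z$-integration and the Fredholm expansion.
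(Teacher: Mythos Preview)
Your overall saddle-point heuristic is natural, and you have correctly spotted that $A_2-A_1$ has a double zero at the origin so that under $z=\tau^{-1/2}\zeta$ the exponent $G(z)=xA_1(z)+\tau A_2(z)+2B(z)$ converges to a quadratic. But the proposal has a genuine gap in its treatment of the Fredholm determinant, and this gap is not a technicality: it is where the real content of the proof lies.

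Your plan is to show that $e^{G(z)}\det(\mb I-\mb K_z)$ ``stays bounded and converges'' near the putative saddle $z_*=O(\tau^{-1/2})$. This fails. With $x=-\tau+O(\tau^{1/2})$ and $|z|\sim\tau^{-1/2}$, the dominant Bethe root is $u_0(z)=-(-2\log z)^{1/2}\sim-\sqrt{\log\tau}$, and the exponent in the kernel satisfies $\Phi_\tau(u_0)=-\tfrac{\tau}{3}u_0^3-\tau u_0+O(\tau^{1/2}|u_0|)\sim \tfrac{\tau}{3}(\log\tau)^{3/2}$. Hence already the single term $e^{2\Phi_\tau(u_0)}/(4u_0^4)$ in $\Tr\mb K_z$ blows up super-exponentially; the determinant is nowhere near $1$ (or any bounded limit) on your contour. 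Meanwhile $G(z)$ is $O(1)$ there, so $e^{G(z)}$ cannot compensate. The ``main obstacle'' you flag is not a mere uniform bound to be established but a divergence that makes the direct approach break down.

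What the paper actually does is the opposite of treating $\det(\mb I-\mb K_z)$ as a perturbation: it works on a contour with $|z|$ close to $1$, proves $\det(\mb I-\mb K_z)=1-\Tr\mb K_z+O(e^{-c\tau})$ there, and then shows the trace is dominated by the single term $e^{2\Phi_\tau(u_0(z))}/(4u_0(z)^4)$. Combining this with $e^{G(z)}$ produces a new effective integrand $E(z)\,e^{\tau f_1(z)+\tau^{1/2}\hat x f_2(z)}/(z u_0(z)^4)$, where $f_1,f_2,E$ involve $u_0(z)$ and are therefore \emph{multi-valued} on the punctured disk (branch cut on $(-\infty,0]$). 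The paper analytically continues these functions to a two-sheeted Riemann surface glued along $[1,\infty)$; on the second sheet the $u_0$-terms in $f_1,f_2$ cancel and one is left with exactly the polylog difference $\Li_{3/2}-\Li_{5/2}$ --- which is your $G$ up to $2B$. The steepest-descent contour is then deformed through the cut to a neighbourhood of $z=0^{(2)}$ on the second sheet, and only there does the Gaussian emerge via the change of variables $z=\tau^{-1/2}\zeta$. In other words, the quadratic you found is correct, but it lives on the second sheet and is reached only after the trace has been extracted and absorbed; it is not visible from a direct expansion of the original integrand near $z=0$ on the first sheet.
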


This result was conjectured in \cite[(4.14)]{Baik-Liu16} and the above theorem confirms it. 
This is a natural result to expect since heuristically
the model degenerates to a one dimensional random growth model along the time direction  
when $t\gg L^{3/2}$ in the periodic TASEP. 
However, the proof using the formula of $F$ turns out to be technical. 
The leading non-trivial contribution to the Fredholm determinant comes from the trace of $\mb K_z$. 
We then still need to analyze the integral over $z$. 
To evaluate this integral asymptotically using the method of steepest-descent, it turned out that we need to consider an analytic continuation of polylog functions on a Riemann surface and the main contribution comes from a boundary point on a new sheet of the Riemann surface. 
Recently motivated by the same function $F(x;\tau,\gamma)$, Prolhac \cite{Prolhac19} studied Riemann surfaces associated to general polylog functions.
In this paper, we carry out the analytic continuation directly for polylog functions of positive half integer index.

%%%%%%%%%%%%%%%%%%%%%%%%%%%%%%%%%%%%%%%%%%%%%%%%
\subsubsection{Small time limit}

Next result is the small $\tau$ limit. 

\begin{thm}[Small $\tau$ limit for $\gamma=0$] \label{thm:tracy_widom_degeneration}
For every $x_0\in \R$,
there exist constants $C,c, \epsilon>0$ and $\tau_0>0$ such that
$$
\left|F (\tau^{1/3}x;\tau, 0 )-F_{\mathrm{GUE}}(x)\right|\leq C e^{-\epsilon \tau^{-2/3}}e^{-c(x-x_0)},\quad 0<\tau <\tau_0, \; x\geq x_0.
$$
In particular,
$$
\lim_{\tau \to 0} F (\tau^{1/3}x;\tau, 0 ) = F_{\mathrm{GUE}}(x)
$$
uniformly for $x\geq x_0$.
\end{thm}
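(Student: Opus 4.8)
The plan is to compare $F(\tau^{1/3}x;\tau,0)$ with $\Fk(\tau^{1/3}x;\tau,0)$, which by \eqref{eq:kpAgamma} and \eqref{eq:KPZ_GUE} equals $F_{\mathrm{GUE}}(x)$ exactly. I start from \eqref{eq:FdefTgamma} at $\gamma=0$ (so the product operator is $\mb T_0\mb T_0$) with $x$ replaced by $\tau^{1/3}x$, and conjugate $\mb T_0\mb T_0$ by the unitary rescaling $(Vf)(s)=\tau^{1/6}f(\tau^{1/3}s)$ on $L^2(0,\infty)$; this does not change the Fredholm determinant and turns $\mathcal T_0(\,\cdot\,;\tau,z)$ into $\widetilde{\mathcal T}_0(s):=\tau^{1/3}\mathcal T_0(\tau^{1/3}s;\tau,z)$ and the shift $\tau^{1/3}x$ into $x$. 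Writing the contour variable as $z=e^{-\tau^{-2/3}w^2/2}$ and substituting $\xi=\tau^{-1/3}\eta$ in \eqref{eq:defTgh}, the Bethe roots become points $\eta_k$ with $\eta_k^2=w^2+4\pi i\tau^{2/3}k$ --- a lattice of mesh $O(\tau^{2/3})$ on the hyperbola $\{\re(\eta^2)=\re(w^2),\ \re\eta<0\}$ --- and $\widetilde{\mathcal T}_0(s)=\tau^{2/3}\sum_{k}(-\eta_k)^{-1}e^{-\eta_k^3/3+s\eta_k-Q(\tau^{-1/3}\eta_k)}$. Accordingly I fix a constant $c_0>0$ and take the (free) radius of the contour in \eqref{eq:FdefTgamma} to be $|z|=e^{-\tau^{-2/3}c_0/2}$ --- legitimate because the Bethe-root sum converges for all $|z|<1$, so the integrand is holomorphic in the punctured disc $0<|z|<1$ --- i.e.\ let $w$ run over $\{\re(w^2)=c_0,\ \re w<0\}$. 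The same rescaling turns $\Fk(\tau^{1/3}x;\tau,0)=\det(\mb I-\mb A_0\mb A_0)$ into $\det(\mb I-K_{\mathrm{Ai}})_{L^2(x,\infty)}=F_{\mathrm{GUE}}(x)$ (the operator with kernel $\mathcal A_0(\,\cdot\,;\tau)$ becoming the one with kernel $\ai(s+x+t)$), a quantity independent of $c_0$ by Cauchy's theorem.

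\textbf{The core estimate.} The heart of the proof is the bound, uniform in $w$ on the hyperbola and in $x\ge x_0$,
$$
\det(\mb I-\mb T_0\mb T_0)=\det(\mb I-\widetilde{\mb T}_0\widetilde{\mb T}_0)=F_{\mathrm{GUE}}(x)+O\!\left(e^{-\epsilon\tau^{-2/3}}e^{-c(x-x_0)}\right),
$$
where $\widetilde{\mb T}_0$ has kernel $\widetilde{\mathcal T}_0(s+x+t)$. By $|\det(\mb I-K)-\det(\mb I-L)|\le\|K-L\|_1\,e^{1+\|K\|_1+\|L\|_1}$ and the Airy-type decay of the kernels for $x\ge x_0$, this follows from a pointwise estimate $|\widetilde{\mathcal T}_0(u)-\ai(u)|\le Ce^{-\epsilon\tau^{-2/3}}e^{-\kappa u}$ (the factor $e^{-\kappa u}$ simultaneously makes the Hilbert--Schmidt norms finite and produces the $e^{-c(x-x_0)}$ after the $x$-shift). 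The key point is that a mesh-$O(\tau^{2/3})$ sum over a lattice does \emph{not} merely approximate its integral to order $\tau^{2/3}$: writing $\widetilde{\mathcal T}_0(u)=\tau^{2/3}\sum_k G(\tau^{2/3}k)$ with $G(v)=-\eta(v)^{-1}e^{-\eta(v)^3/3+u\eta(v)-Q(\tau^{-1/3}\eta(v))}$ and $\eta(v)^2=w^2+4\pi i v$, Poisson summation gives $\widetilde{\mathcal T}_0(u)=\sum_m\widehat G(2\pi m\tau^{-2/3})$. Since $G$ continues holomorphically, with $w$-uniform bounds, to a fixed strip $|\im v|\le\delta$ (the branch point $\eta=0$ lies at $\im v=c_0/(4\pi)$, $\re(\eta^2)$ stays positive on the strip, and $Q$ is holomorphic there because $|e^{-\xi^2/2}|<1$), one gets $|\widehat G(2\pi m\tau^{-2/3})|\le Ce^{-2\pi\delta|m|\tau^{-2/3}}e^{-\kappa u}$ for $m\ne0$, while $\widehat G(0)=\frac1{2\pi i}\int_{\Lambda_-}e^{-\eta^3/3+u\eta-Q(\tau^{-1/3}\eta)}\,d\eta$. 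Finally, on the contour $|e^{-\xi^2/2}|=e^{-\tau^{-2/3}c_0/2}$, so --- using that the polylog formula \eqref{def:Q_function} has no constant term --- $|Q(\tau^{-1/3}\eta)|\le Ce^{-c_0\tau^{-2/3}/2}$ there, whence $\widehat G(0)=\ai(u)+O(e^{-c_0\tau^{-2/3}/2}e^{-\kappa u})$. Collecting the three contributions gives the pointwise estimate with $\epsilon=\min(2\pi\delta,c_0/2)$.

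\textbf{The contour integral.} Substituting the core estimate into \eqref{eq:FdefTgamma},
$$
F(\tau^{1/3}x;\tau,0)=F_{\mathrm{GUE}}(x)\oint e^{\tau^{1/3}xA_1(z)+\tau A_2(z)+2B(z)}\frac{dz}{2\pi i z}+O\!\left(e^{-\epsilon\tau^{-2/3}}e^{-c(x-x_0)}M_\tau(x)\right),
$$
with $M_\tau(x)=\sup_{|z|=e^{-\tau^{-2/3}c_0/2}}|e^{\tau^{1/3}xA_1(z)+\tau A_2(z)+2B(z)}|$. Because $A_1,A_2,B$ are polylog functions, holomorphic on $|z|<1$ with $A_1(0)=A_2(0)=B(0)=0$, the integrand $z^{-1}e^{\tau^{1/3}xA_1+\tau A_2+2B}$ is meromorphic in $|z|<1$ with a simple pole only at the origin, so the integral equals the residue there, namely $1$. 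Moreover $|A_1(z)|,|A_2(z)|,|B(z)|\le C|z|=Ce^{-\tau^{-2/3}c_0/2}$ on the contour, so $M_\tau(x)\le\exp\!\big(C\tau^{1/3}x\,e^{-\tau^{-2/3}c_0/2}\big)$; since $C\tau^{1/3}e^{-\tau^{-2/3}c_0/2}\to0$ as $\tau\to0$, this factor is harmlessly absorbed into $e^{-c(x-x_0)}$ for $0<\tau<\tau_0$ (after halving $c$ and enlarging $C$). Hence $|F(\tau^{1/3}x;\tau,0)-F_{\mathrm{GUE}}(x)|\le Ce^{-\epsilon\tau^{-2/3}}e^{-c(x-x_0)}$ for $0<\tau<\tau_0$ and $x\ge x_0$, and the claimed uniform limit follows at once.

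\textbf{Main obstacle.} All the substance is in the core estimate, and specifically in obtaining the genuinely exponential rate $e^{-\epsilon\tau^{-2/3}}$ rather than the $O(\tau^{2/3})$ that a crude Riemann-sum comparison would give; this is what forces the Poisson-summation route. The delicate work there is the holomorphic continuation of the summand $G(v)$ into a fixed complex strip with explicit growth bounds that are uniform in $w$ along the \emph{unbounded} hyperbola --- controlling the branch point at $\eta=0$, the region where $\re(\eta^3)$ might change sign, and the disc of convergence of the polylog defining $Q$ --- together with the behaviour of $Q$ near the origin, which must be extracted from \eqref{def:Q_function} to ensure its contribution is exponentially small in $\tau^{-2/3}$, and with propagating the $e^{-\kappa u}$ decay through the Poisson tails uniformly in $w$. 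Once these are in hand, checking that the Fredholm and Hilbert--Schmidt manipulations remain uniform over the whole contour and over $x\ge x_0$ is routine.
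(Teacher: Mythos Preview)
Your overall strategy --- rescale, choose the contour radius $|z|=e^{-c_0/(2\tau^{2/3})}$, reduce to a pointwise kernel estimate with rate $e^{-\epsilon\tau^{-2/3}}$, pass to Fredholm determinants, and handle the $z$-integral by Cauchy's theorem --- is exactly the paper's. The difference is in how you obtain the kernel estimate. The paper does not use Poisson summation; instead it writes the Bethe-root sum as a residue sum and expresses it as a difference of two contour integrals $\int_{\widehat\Lambda_{\inn}}-\int_{\widehat\Lambda_{\out}}$, where the (rescaled) contours squeeze the rescaled Bethe roots from either side. The integrand carries the factor $z/(z-e^{-\zeta^2/(2\tau^{2/3})})$, which equals $1+O(e^{-\varepsilon/(2\tau^{2/3})})$ on $\widehat\Lambda_{\inn}$ and $O(e^{-\varepsilon/(2\tau^{2/3})})$ on $\widehat\Lambda_{\out}$; thus the inner integral gives the Airy function plus an exponentially small correction and the outer integral is exponentially small outright. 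Your Poisson-summation route extracts the same exponential rate from the same source (analyticity in a strip of fixed width around a lattice of mesh $O(\tau^{2/3})$) and is a valid alternative; indeed, expanding the paper's geometric factor as a series produces precisely your Fourier modes $\widehat G(2\pi m\tau^{-2/3})$. The paper's contour trick is slightly more direct in that it bypasses checking the Poisson hypotheses (integrability of $G$ along shifted lines) and packages all the $m\neq0$ terms into a single estimate; your approach makes the ``Riemann sum of an analytic function converges exponentially'' mechanism more explicit.

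One minor imprecision: you worry about uniformity in $w$ ``along the unbounded hyperbola'', but as $z$ traverses the circle once, $w=\tau^{1/3}u_0(z)$ stays in a bounded arc near $-\sqrt{c_0}$ (since $|\im(w^2)|\le 2\pi\tau^{2/3}$). The genuine unboundedness is in the summation variable $v$ (equivalently $k$), and what you must check is that the cubic decay $\re(-\eta(v)^3/3)\to-\infty$ survives the shift to $\im v=\pm\delta$ so that $\int|G(t\pm i\delta)|\,dt\le Ce^{-\kappa u}$ holds; this is routine once $\delta<c_0/(4\pi)$.
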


When $\tau$ is small, the periodicity effect becomes small, and hence in the limit $\tau\to 0$ we expect that the periodic model reduces to the non-periodic model on the line. 
Recall that the initial condition is given by \emph{periodic} step initial condition. 
Hence, if the model were the TASEP on the infinite line, 
the locations at which the initial density profile changes from $0$ to $1$ generate shocks. 
At a shock location, the large time limit of the one-point distribution is not given by $F_{\mathrm{GUE}}$ but $F_{\mathrm{GUE}}^2$ (see, for example, \cite{Ferrari-Nejjar15}). 
Based on this observation, it was conjectured in \cite[(4.13)]{Baik-Liu16} that 
\begin{equation}\label{eq:TW_limit_conj} 
	\lim_{\tau\to 0} F\left(\tau^{1/3}x - \frac{\gamma^2}{4\tau};\tau,\gamma\right) = \begin{cases} F_{\mathrm{GUE}}(x), \qquad &-\frac12<\gamma< \frac12, \\
	(F_{\mathrm{GUE}}(x))^2, \qquad &\gamma=\frac12. 
	\end{cases}
\end{equation}
The above result confirms this conjecture when $\gamma=0$. 
The proof consists of showing that a conjugated version of the operator $\mb K_z$ converges to the Airy operator, and that $e^{xA_1(z)+\tau A_2(z)+2B(z)}$ converges to $1$ when $z$ is scaled appropriately. 
The convergence to the Airy kernel becomes substantially complicated when $\gamma\neq 0$. 
Although we believe that our arguments could be improved to analyze the case when $\gamma$ is sufficiently small, it is not yet clear how to extend the analysis to the case when $|\gamma|$ is close to $1/2$.

%%%%%%%%%%%%%%%%%%%%%%%%%%%%%%%%%%%%%%%%%%%%%%%%
\subsubsection{Right tail estimate} 

We also study the right tail of the distribution function $F$. Recall that it is periodic in $\gamma$ with period $1$, so it is enough to consider $\gamma\in (-1/2,1/2]$.

\begin{thm}[Right tail estimate]\label{thm:large_tail}
For every fixed $\tau>0$ and $\gamma$, there is a constant $c>0$ such that 
	\beqq
	1-F(x; \tau, \gamma) = \left( 1-F_{\mathrm{GUE}} \left( \frac{x}{\tau^{1/3}}+\frac{\gamma^2}{4\tau^{4/3} }\right)  \right) \left( 1+ \Boh(e^{-cx^{1/2}} ) \right) \quad \text{for $-\frac12 <\gamma<\frac12$}
	\eeqq
and 
	\beqq
	1-F(x; \tau, \gamma) = 2\left( 1-F_{\mathrm{GUE}} \left( \frac{x}{\tau^{1/3}}+\frac{\gamma^2}{4\tau^{4/3} }\right)  \right) \left( 1+ \Boh(e^{-cx^{1/2}} ) \right) \quad \text{for $\gamma= \frac12$}
	\eeqq
as $x\to \infty$. 
\end{thm}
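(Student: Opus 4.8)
The plan is to reduce both sides, in the regime $x\to\infty$, to a single trace and then to show that the periodic trace is an ``image sum'' over $\gamma\mapsto\gamma+m$ of the Airy trace governing $F_{\mathrm{GUE}}$.

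\emph{Reduction to traces.} Since $A_1,A_2,B$ are holomorphic on $\{|z|<1\}$ and vanish at $z=0$, $\oint e^{xA_1(z)+\tau A_2(z)+2B(z)}\tfrac{dz}{2\pi i z}=1$ (also forced by $F(x;\tau,\gamma)\to1$ as $x\to+\infty$), so by Lemma~\ref{lem:kernelK_kernelT}
\[
1-F(x;\tau,\gamma)=\oint e^{xA_1(z)+\tau A_2(z)+2B(z)}\bigl(1-\det(\mb I-\mb T_{-\gamma}\mb T_\gamma)_{L^2(0,\infty)}\bigr)\,\frac{dz}{2\pi i z}.
\]
The bound $|\det(\mb I-\mb L)-1+\Tr\mb L|\le\tfrac12\|\mb L\|_1^2 e^{\|\mb L\|_1}$, with uniform Hilbert--Schmidt estimates $\|\mb T_{\pm\gamma}\|_2^2=\Boh(e^{-cx^{3/2}})$ on $\{|z|=r_0\}$ coming from the super-exponential decay in $x$ of $\mathcal T_{\pm\gamma}$, shows that $1-\det(\mb I-\mb T_{-\gamma}\mb T_\gamma)$ may be replaced by $\Tr(\mb T_{-\gamma}\mb T_\gamma)$ at the cost of a relative error $\Boh(e^{-cx^{3/2}})$, far below the claimed $\Boh(e^{-cx^{1/2}})$; likewise $1-F_{\mathrm{GUE}}(X)=\Tr(\mb A_{-\gamma}\mb A_\gamma)_{L^2(0,\infty)}\bigl(1+\Boh(e^{-cx^{3/2}})\bigr)$ with $X=x\tau^{-1/3}+\gamma^2(4\tau^{4/3})^{-1}$ by \eqref{eq:kpAgamma}. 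It thus suffices to prove $\oint e^{xA_1+\tau A_2+2B}\Tr(\mb T_{-\gamma}\mb T_\gamma)\tfrac{dz}{2\pi i z}=c_\gamma\,\Tr(\mb A_{-\gamma}\mb A_\gamma)\bigl(1+\Boh(e^{-cx^{1/2}})\bigr)$ with $c_\gamma=1$ for $|\gamma|<\tfrac12$ and $c_{1/2}=2$.

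\emph{Transforming the $z$-integral.} Inserting \eqref{eq:pKPZ_prodkernel} and integrating out the two $[0,\infty)$ variables (legitimate as $\re(\xi+\eta)<0$ on $\mathcal S_-$),
\[
\Tr(\mb T_{-\gamma}\mb T_\gamma)=\sum_{\xi,\eta\in\mathcal S_-(z)}\frac{e^{-\frac{\tau}{3}(\xi^3+\eta^3)+\frac{\gamma}{2}(\eta^2-\xi^2)-Q(\xi)-Q(\eta)+x(\xi+\eta)}}{\xi\,\eta\,(\xi+\eta)^2},\quad
\Tr(\mb A_{-\gamma}\mb A_\gamma)=\frac{1}{(2\pi i)^2}\iint_{\Lambda_-\times\Lambda_-}\frac{e^{-\frac{\tau}{3}(u^3+v^3)+\frac{\gamma}{2}(v^2-u^2)+x(u+v)}}{(u+v)^2}\,du\,dv
\]
(the second from \eqref{eq:KPZ_shiftedairykernel}). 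I would then use that the branches $w=-\sqrt{-2\log z+4\pi i j}$, $j\in\Z$, cover the hyperbola $\Lambda_-(r_0)$ exactly once as $z$ runs over $\{|z|=r_0\}$, and that $\mathcal S_-(e^{-w^2/2})=\{-\sqrt{w^2+4\pi i k}:k\in\Z\}$ (cf.~\eqref{def:bethe_root_set}). Substituting $z=e^{-w^2/2}$ (so $\tfrac{dz}{z}=-w\,dw$) and resumming one root-sum over $j$ collapses the $z$-integral to an integral over $\Lambda_-(r_0)$: with $\eta_k=\eta_k(w):=-\sqrt{w^2+4\pi i k}$ and $\phi(w):=xA_1(e^{-w^2/2})+\tau A_2(e^{-w^2/2})+2B(e^{-w^2/2})$,
\[
\oint e^{xA_1+\tau A_2+2B}\Tr(\mb T_{-\gamma}\mb T_\gamma)\frac{dz}{2\pi i z}=\sum_{k\in\Z}e^{2\pi i\gamma k}\,I_k,\qquad I_k:=\frac{1}{2\pi i}\int_{\Lambda_-(r_0)}(-w)\,e^{\phi(w)}\,\frac{e^{-\frac{\tau}{3}(w^3+\eta_k^3)-Q(w)-Q(\eta_k)+x(w+\eta_k)}}{w\,\eta_k\,(w+\eta_k)^2}\,dw,
\]
and the entire $\gamma$-dependence has become the phase $e^{2\pi i\gamma k}=e^{\frac{\gamma}{2}(\eta_k^2-w^2)}$. (Resumming the $k$-sum by residues turns this into a double contour integral over $\Lambda_-(r_0)$ and a loop around $\mathcal S_-(e^{-w^2/2})$, directly comparable with the Airy double integral.)

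\emph{Steepest descent and the image sum.} For large $x$ the saddle of the $w$-integrand of $I_k$ is at $w\eta_k=x/\tau$, i.e. $w_c^{(k)}\sim-\sqrt{x/\tau}$, so $e^{-(w_c^{(k)})^2/2}=(-1)^ke^{-x/(2\tau)+\Boh(x^{-1/2})}\to0$; hence $\phi$ and the $Q$'s contribute only $\exp(\Boh(xe^{-x/(2\tau)}))=1+\Boh(e^{-cx^{1/2}})$. Deforming $\Lambda_-(r_0)$ onto $\Lambda_-(e^{-x/(2\tau)})$ — which stays inside $\{|z|<1\}$ (no polylog branch cut) and meets no pole of $(w\eta_k(w+\eta_k)^2)^{-1}$ — Laplace's method gives $I_k=A(x;\tau)\,e^{-\frac43 x^{3/2}\tau^{-1/2}}e^{-2\pi^2\tau^{3/2}k^2/\sqrt x}\bigl(1+\Boh(k^2/x)+\Boh(e^{-cx^{1/2}})\bigr)$ with $A(x;\tau)\asymp x^{-7/4}$. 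Summing against $e^{2\pi i\gamma k}$ and using Poisson summation, $\sum_k e^{2\pi i\gamma k}e^{-2\pi^2\tau^{3/2}k^2/\sqrt x}=\sqrt{\sqrt x/(2\pi\tau^{3/2})}\,\sum_m e^{-(m-\gamma)^2\sqrt x/(2\tau^{3/2})}$, so the left-hand side becomes the image sum $\sum_{m\in\Z}\Tr(\mb A_{-(\gamma-m)}\mb A_{\gamma-m})\bigl(1+\Boh(e^{-cx^{1/2}})\bigr)$, the $m$-th term having tail $\sim1-F_{\mathrm{GUE}}\bigl(x\tau^{-1/3}+(\gamma-m)^2(4\tau^{4/3})^{-1}\bigr)$ by \eqref{eq:kpAgamma}--\eqref{eq:KPZ_GUE}. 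For $|\gamma|<\tfrac12$ the term $m=0$ strictly dominates (next one smaller by $e^{-\frac{1-2|\gamma|}{2\tau^{3/2}}\sqrt x}$); for $\gamma=\tfrac12$ the terms $m=0$ and $m=1$ coincide (next one smaller by $e^{-\tau^{-3/2}\sqrt x}$). This yields the asserted formulas, with $c$ determined by the gap to the nearest competing image.

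\emph{Main difficulty.} The last step is where the work lies. First, the steepest descent must be uniform over the $\sim x^{1/4}$ values of $k$ that actually feed the Poisson sum, and in the double-integral picture the relevant $\zeta$-saddle sits at the edge of — and, for $\gamma=0$, exactly on — the pole cluster $\mathcal S_-(e^{-w^2/2})$ of the amplitude; this forces one to peel off the diagonal $k=0$ term (a genuine one-dimensional integral) and estimate $\sum_{k\ne0}$ separately, or to perform a pinching analysis. Second, and more delicate, one must upgrade the per-image error from polynomial to exponential: the naive Laplace expansion of $I_k$ carries $k$-dependent corrections of relative size $\Boh(k^2/x)$ which, after the Poisson sum, would only give $\Boh(x^{-1/2})$. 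Getting $\Boh(e^{-cx^{1/2}})$ requires showing that these corrections are precisely the Poisson duals of the subleading terms in the steepest-descent expansions of the shifted Airy traces $\Tr(\mb A_{-(\gamma-m)}\mb A_{\gamma-m})$ — i.e.\ that the image identity is compatible with the full asymptotic series — or, preferably, establishing the identity $\oint e^{xA_1+\tau A_2+2B}\Tr(\mb T_{-\gamma}\mb T_\gamma)\tfrac{dz}{2\pi i z}=\sum_m\Tr(\mb A_{-(\gamma-m)}\mb A_{\gamma-m})+(\text{error})$ at the level of the exact contour integrals before any expansion, with the only genuine errors being the near-saddle factors $e^{-Q},e^{\phi}$ (controlled as above) and the off-image ($m\ne0$) contributions.
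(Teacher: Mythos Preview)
Your reduction to traces contains an error: on a \emph{fixed} circle $|z|=r_0$ the bound $\|\mb T_{\pm\gamma}\|_2^2=\Boh(e^{-cx^{3/2}})$ is false. For fixed $z$ the function $\mathcal T_\gamma(y)$ is a sum over the discrete set $\mathcal S_-(z)$ and is dominated as $y\to\infty$ by the root $u_0(z)$ nearest the imaginary axis, giving only $\Boh(e^{-cy})$ decay, hence $\|\mb T_\gamma\|_2^2=\Boh(e^{-cx})$. The stretched-exponential rate requires the \emph{shrinking} contour $|z|=e^{-x/(2\tau)}$, on which $u_0(z)=-\sqrt{x/\tau}$ sits at the Airy saddle; this is precisely the contour the paper uses throughout (Propositions~\ref{prop:LT_HSnorm_est}--\ref{prop:LT_trace_asympt}). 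Since you deform there later anyway, this is repairable by starting on that contour.

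The substantive gap is the one you flag yourself: steepest descent on each $I_k$ followed by Poisson summation yields only a relative error $\Boh(x^{-1/2})$, not $\Boh(e^{-cx^{1/2}})$, and your proposed cure (prove the image identity before expanding) is not carried out. The paper avoids Poisson summation entirely and gets the exponential error directly. On $|z|=e^{-x/(2\tau)}$ it writes $\mathcal T_\gamma(y)$ as a truncated Laurent series $\sum_{k=-K}^{\infty} z^{-k}\RA(y;\gamma-k)+\Boh(e^{-cKx})$ (Corollary~\ref{lem:CAseriesf}), and then uses a concavity argument on an explicit phase function (Lemmas~\ref{Claim:asymptotics_RA}--\ref{lem:Agk}, Proposition~\ref{prop:asymptotics_CA}) to show that for $y\in[x,x+c\sqrt{x}]$ only the $k=0$ term survives when $|\gamma|<\tfrac12$, and only $k\in\{0,\pm1\}$ when $\gamma=\pm\tfrac12$, every discarded term being smaller by a factor $e^{-cy^{1/2}}$. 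This makes $\Tr(\mb T_{-\gamma}\mb T_\gamma)$ a short Laurent polynomial in $z$ times $(1+\Boh(e^{-cx^{1/2}}))$, and the remaining $z$-integral against $E(z)=e^{xA_1+\tau A_2+2B}$ is evaluated by residues at $z=0$: only the $z^0$ and (for $\gamma=\pm\tfrac12$) $z^{\pm1}$ Taylor coefficients of $E$ enter, and these are read off explicitly. Your image-sum picture is the Fourier dual of this Laurent picture and is conceptually correct, but turning it into a proof with the stated error would require Poisson-summing the exact $I_k$ (extending $k$ continuously via $\eta_t(w)=-\sqrt{w^2+4\pi it}$ and integrating out $t$ to recover the second Airy contour), with all the attendant contour-deformation bookkeeping --- effectively a second proof that you have not supplied.
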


The above result implies that
\begin{equation*}
\begin{split}
&1-F\left(\tau^{1/3}x-\frac{\gamma^2}{4\tau};\tau,\gamma\right)
	=\begin{dcases}
\left( 1-F_{\mathrm{GUE}} \left( x\right)  \right) \left( 1+ \Boh(e^{-cx^{1/2}} ) \right), & -\frac12 <\gamma<\frac12,\\
\left( 1-F_{\mathrm{GUE}} \left( x\right)^2  \right) \left( 1+ \Boh(e^{-cx^{1/2}} ) \right), &\gamma=\frac12,
\end{dcases}
\end{split}
\end{equation*}
as $x\to\infty$.
This result is consistent with the conjectured small $\tau$ limit~\eqref{eq:TW_limit_conj}.

At this stage we cannot compare the above result with the large $\tau$ limit in Theorem \ref{thm:gaussian_degeneration},
as in order to make the parameters in the same form we would need to consider the left tail as $x\to -\infty$, which we do not pursue in this paper.

%%%%%%%%%%%%%%%%%%%%%%%%%%%%%%%%%%%%%%%%%%%%%%%%
\subsection{Flat initial condition}

If the periodic TASEP starts with the flat initial condition, the limit of the one point distribution takes a slightly different form. 
All of the above results have an analogue for the flat initial condition which we discuss in Section~\ref{sec:flat_IC}.

%%%%%%%%%%%%%%%%%%%%%%%%%%%%%%%%%%%%%%%%%%%%%%%%
\subsection{Organization of the paper}

In Section~\ref{sec:main} we state the definition of the function $F(x; \tau, \gamma)$ and obtain alternative %different 
representations for it. Lemma~\ref{lem:kernelK_kernelT} and Theorem~\ref{thm:kp_general} are also proven in Section~\ref{sec:main}.
Section~\ref{section:polylogs} discusses several analytic continuation properties of the polylog function, which we use in Section~\ref{sec:gaussian} to evaluate the large $\tau$ limit. These two sections are the most technical part of this paper. 
We obtain the small $\tau$ limit in Section~\ref{sec:tracy_widom} and the right tail estimate in Section~\ref{sec:large_tail}. 
The connection to integrable differential equations is discussed in Section~\ref{sec:integrable_structure}. 
The flat initial condition is discussed in Section \ref{sec:flat_IC}. 
Some proofs for auxiliary results used in Section~\ref{sec:integrable_structure} are given in Appendix \ref{sec:proofofdeformation} and \ref{sec:iiks_general}. 

%%%%%%%%%%%%%%%%%%%%%%%%%%%%%%%%%%%%%%%%%%%%%%%

\subsection*{Acknowledgments}

We would like to thank Peter Miller and Andrei Prokhorov for useful comments in earlier versions of this manuscript. The work of J.B. was supported in part by NSF grant DMS-1664531, DMS-1664692, and DMS-1954790.
The work of Z.L. was supported by the University
of Kansas Start Up Grant, the University of Kansas New Faculty General Research Fund, Simons
Collaboration Grant No. 637861, and NSF grant DMS-1953687.  The work of G.S. is supported by S\~ao
Paulo Research Foundation (FAPESP) under grants $\# 2019/16062-1$ and  $\# 2020/02506-2$.

%%%%%%%%%%%%%%%%%%%%%%%%%%%%%%%%%%%%%%%%%%%%%%%%
\section{One point distribution} \label{sec:main}

%%%%%%%%%%%%%%%%%%%%%%%%%%%%%%%%%%%%%%%%%%%%%%%%
\subsection{Periodic TASEP with step initial condition}

We first review the limit theorem of the one point distribution of the periodic TASEP.
Let $N$ and $L$ be the number of particles and fundamental period, respectively, where $0<N<L$.
Consider the situation when $L,N\to \infty$ with density of particles $N/L\to \rho \in (0,1)$.  
We take the periodic step initial condition; at time zero, \cb{each of the sites $-N+1, \cdots, -1, 0$ are occupied by a particle} and the sites $1, \cdots, L-N$ are empty, and this pattern is extended periodically with period $L$. 
We denote by $\mathcal{H}(j,t)$ the height function at site $j$ at time $t$. 
At $j=0$, it is defined to be $2$ times the number of particles that have moved from the site $0$ to $1$ up to time $t$. 
For $j\ge 1$, it is defined to be $\mathcal{H}(j,t)= \mathcal{H}(0,t) + \sum_{k=1}^j (1-2\eta_k(t))$ where $\eta_k(t)$ is the occupation variable that takes value $1$ if the site $j$ is occupied and value $0$ if it is empty at time $t$. 
For $j\le -1$, it is defined to be $\mathcal{H}(j,t)= \mathcal{H}(0,t)- \sum_{k=j+1}^{0} (1-2\eta_k(t))$. 

It was shown\footnote{It was also shown in a non-rigorous paper \cite{Prolhac16}.} in  \cite[Theorem 3.4]{Baik-Liu16}\footnote{Theorem 3.4 of \cite{Baik-Liu16} states the result in terms of the current, but it is easy to translate the results in terms of the height function. Furthermore, the limit in Theorem 3.4 of \cite{Baik-Liu16} is given as $F(\tau^{1/3} x;\tau,\gamma)$, but in this paper, we use a slightly different scaling, which has the effect of changing $\tau^{1/3}x$ to $x$. See Theorem 2.1 of \cite{Baik-Liu19} for the explicit statement in terms of the height function.} that with the scale of the position and time given by 
$$
s=\gamma L,\quad t=\tau \frac{L^{3/2}}{\sqrt{\rho(1-\rho)}}
$$
for parameters $\gamma\in (-\frac12,\frac12]$ and $\tau >0$, we have the limit theorem  
\begin{equation}\label{eq:limit_height_distr}
	\lim_{L\to\infty}\P\left( \frac{\mathcal{H}(t(1-2\rho)+ s,t)-(1-2\rho)s-(1-2\rho+2\rho^2)t  }{-2\rho^{1/2}(1-\rho)^{1/2}L^{1/2}}\leq x\right)=F(x;\tau,\gamma) 
\end{equation}
for any fixed $x\in \R$. The function $F(x; \tau, \gamma)$ is the one in \eqref{eq:Fdeffirst} and we describe it explicitly in the next subsection. 
Note that the height at time $t$ is evaluated at the location $j=t(1-2\rho) + s$, where $s$ measures the relative distance from the location $t(1-2\rho)$. 
The reason for this shift is that the periodic step initial condition generate shocks that travel at speed $1-2\rho$, and we consider the moving frame along the shock direction.
Also note that the time, relative position, and average-adjusted height have orders $\Boh(L^{3/2})$, $\Boh(L)$, and $\Boh(L^{1/2})$, which are consistent with the $3:2:1$ KPZ scale, and the time and period are related as $t=\Boh(L^{3/2})$ which is the relaxation time scale.

%%%%%%%%%%%%%%%%%%%%%%%%%%%%%%%%%%%%%%%%%%%%%%%%
\subsection{Definition of the one point distribution function}

We define the function $F(x; \tau, \gamma)$ in \eqref{eq:Fdeffirst} explicitly. 
Let $\Li_s$ be the polylog function; see Section~\ref{section:polylogs} for their properties.
Set 
\begin{equation}\label{def:functions_A} 
	A_1(z):=-\frac{1}{\sqrt{2\pi}}\Li_{3/2}(z),\quad A_2(z):=-\frac{1}{\sqrt{2\pi}}\Li_{5/2}(z), 
\quad B(z):=\frac{1}{4\pi}\int_0^z \frac{(\Li_{1/2}(y))^2}{y}dy.
\end{equation}

To introduce the operator $\mb K_z$, first define the function 
\begin{equation}\label{def:Q_function}
Q(\xi):=\sqrt{\frac{2}{\pi}} \int_{-\infty}^\xi \Li_{1/2} (e^{-s^2/2}) ds \quad \text{for} \quad \frac{3\pi}{4}<\arg \xi<\frac{5\pi}{4}, 
\end{equation}
where the path of integration is contained in the sector $\arg(s)\in (3\pi/4,5\pi /4)$ so that $e^{-s^2/2}\in \C\setminus [1,\infty)$, and as such the polylog is well defined. 
Alternatively, the representation 
\begin{equation}\label{eq:alternative_definition_q}
Q(\xi)=-\int_{-i\infty}^{i\infty} \frac{\log(1-e^{-\xi^2/2}e^{u^2/2})}{u-\xi}\frac{du}{\pi i}, \quad \frac{3\pi}{4}<\arg \xi<\frac{5\pi}{4},
\end{equation}
also holds (see \cite[(4.8)]{Baik-Liu16}). The function $Q$ is analytic in the sector. 

We set, for $3\pi/4<\arg \xi<5\pi/4$, 
\begin{equation}\label{def:phi_function}
	\Phi(\xi)=\Phi(\xi; \tau, x)=-\frac{1}{3}\tau \xi^3+x\xi -Q(\xi)
\end{equation}
and define the integral operator $\mb K_z$ acting on functions over $\mathcal S_-$ via the kernel 
\begin{equation}\label{eq:initial_kernel}
	\bm K_z(\xi_1,\xi_2)
	=\bm K_z(\xi_1,\xi_2; \tau,\gamma, x)=\sum_{\eta \in \mathcal S_-} \frac{e^{\Phi(\xi_1)+\Phi(\eta)+\frac{\gamma}{2}(\xi_1^2-\eta^2)}}{\xi_1\eta (\xi_1+\eta)(\eta+\xi_2)}, 
	\qquad \xi_1, \xi_2\in \mathcal S_-, 
\end{equation}
where $\mathcal S_-$ is the discrete set defined in \eqref{def:bethe_root_set}. 
Note that the kernel depends on $z$ since the set $\mathcal S_-$ depends on $z$. 
Due to the growth properties of $\Phi$, it is direct to check that the series definition of the Fredholm determinant $\det(\mb I-\mb K_z)$ is well-defined even though the kernel does not decay fast enough as $\xi_2\to \infty$ on $\mathcal S_-$. 
It is also direct to check that $\det(\mb I - \mb K_z)$ is analytic in $0<|z|<1$. 

\begin{definition}
The function in \eqref{eq:Fdeffirst} is defined by 
\begin{equation}\label{def:limiting_distribution}
	F(x;\tau,\gamma)=\oint e^{xA_1(z)+\tau A_2(z)+2B(z)}\det(\mb I-\mb K_z)\frac{dz}{2\pi i z}
\end{equation}
where the contour is a circle $|z|=R$ with $0<R<1$, oriented counterclockwise. 
\end{definition}

It was shown in \cite{Baik-Liu16} that $F(x; \tau, \gamma)$ is a distribution function, and it satisfies the spatial periodicity $F(x; \tau, \gamma+1)=F(x; \tau, \gamma)$. 

The above kernel can be conjugated to a symmetric kernel (using the same notation)
\begin{equation}\label{eq:initial_kernel2}
	\bm K_z(\xi_1,\xi_2)
	=\sum_{\eta \in S_-} \frac{e^{\frac12 \Phi(\xi_1)+\Phi(\eta)+\frac12\Phi(\xi_2)+\frac{\gamma}{4}(\xi_1^2-2\eta^2+ \xi_2^2)}}{\sqrt{-\xi_1}\sqrt{-\eta} (\xi_1+\eta)(\eta+\xi_2)\sqrt{-\eta}\sqrt{-\xi_2}}, \qquad \xi_1, \xi_2\in \mathcal S_-,
\end{equation}
without changing the Fredholm determinant, and this kernel properly defines a trace class operator $\mb K_z:\ell^2(\mathcal S_-)\to \ell^2(\mathcal S_-)$ (see Subsection~\ref{subsection:prooflemma}).
This kernel is symmetric but not self-adjoint since it is complex-valued. 

%%%%%%%%%%%%%%%%%%%%%%%%%%%%%%%%%%%%%%%%%%%%%%%%
\subsection{Proof of Lemma~\ref{lem:kernelK_kernelT}}\label{subsection:prooflemma}

Lemma \ref{lem:kernelK_kernelT} claims that $\det(\mb I-\mb K_z)_{\ell^2(\mathcal S_-)}=\det(\mb I-\mb T_{-\gamma}\mb T_{\gamma})_{L^2(0,\infty)}$ with $\mb T_{\gamma}$ being the operator on $L^2(0,\infty)$ defined by the kernel \eqref{eq:defTgh}. 
The rest of this subsection proves this claim. 

From the formula \eqref{eq:initial_kernel2}, $\mb K_z$ is the product of two operators 
\begin{equation}\label{eq:split_kernel_1}
	\mb K_z=\mJ \mJ^T
\end{equation}
where $\mJ:\ell^2(\mathcal S_-)\to \ell^2(\mathcal S_-)$ has the kernel 
\begin{equation}\label{eq:kernel_K1}
	\bJ(\xi,\eta)=\frac{e^{\frac{1}{2}(\Phi(\xi)+\Phi(\eta))+\frac{\gamma}{4}(\xi^2-\eta^2)}}{\sqrt{-\xi} (\xi+\eta)\sqrt{-\eta}} ,\qquad \xi, \eta\in \mathcal S_-.
\end{equation}
The operator $\mJ^T:\ell^2(\mathcal S_-)\to \ell^2(\mathcal S_-)$ is obtained from $\mJ$ from the kernel
\begin{equation} \label{eq:Jsymr}
	\bJ^T(\eta, \xi; \gamma)= \bJ(\xi, \eta; \gamma) = \bJ(\eta, \xi; -\gamma) .
\end{equation}
Using the identity $\frac{1}{\eta+\xi}=- \int_0^\infty e^{s(\eta+\xi)}ds $ that holds for $\eta, \xi\in \mathcal S_-$, we can %further 
write 
\begin{equation}\label{def:operators_K1_K2}
	\mJ=-\mb G_{-\gamma} \mb G_{\gamma}^T,
\end{equation}
where $\mb G_\gamma: L^2(0,\infty)\to \ell^2(\mathcal S_-)$ is the operator defined by the kernel
\begin{equation*}
	\bm G_{\gamma}(\xi,s)  =\frac{e^{\frac{1}{2}\Phi(\xi)-\frac{\gamma}{4}\xi^2+s\xi}}{\sqrt{-\xi}}
	\qquad \text{for $\xi\in \mathcal S_-$ and $s\in (0,\infty)$. }
\end{equation*}
It is immediate that $\mb G_\gamma$ is Hilbert-Schmidt. Consequently, $\mJ,\mJ^T$ and $\mb K_z$ are all trace class operators. 
From the decompositions above, 
$$
	\det(\mb I-\mb K_z) = \det(\mb I - \mJ \mJ^T)
	=\det (\mb I - \mb G_{-\gamma}\mb G_{\gamma}^T \mb G_{\gamma} \mb G_{-\gamma}^T)
	=\det (\mb I - \mb G_{\gamma}^T \mb G_{\gamma}\mb G_{-\gamma}^T \mb G_{-\gamma} ).
$$
Setting $\mb T_\gamma=\mb G_{-\gamma}^T\mb G_{-\gamma}$, we find that $\mb T_\gamma$ is trace class and its kernel is \eqref{eq:defTgh}. This proves Lemma~\ref{lem:kernelK_kernelT}.

%%%%%%%%%%%%%%%%%%%%%%%%%%%%%%%%%%%%%%%%%%%%%%%%
\subsection{Integrable operator formula}

There is another representation for $\det(\mb I-\Kk)$ as well as $\det(\mb I-\mb K_z)$. 
These representations will be used in Section~\ref{sec:integrable_structure} when we derive integrable differential equations. 

For $|z|<1$, recall the set $\mathcal S_-$ in \eqref{def:bethe_root_set}, and also let  $\Lambda_-$ be an unbounded oriented contour from $e^{-i \theta_1}\infty$ to $e^{i \theta_2}\infty$ for some $\theta_1, \theta_2\in (\pi/2, 5\pi/6)$. 
Define (see Figure \ref{fig:bethe_roots})
\begin{equation}\label{def:Lambda_bethe} \begin{split}
	&\mathcal S=\mathcal S(z):=\mathcal S_+\cup\mathcal S_- \quad \text{with $\mathcal S_+:=-\mathcal S_-$, and}\\
	&\Lambda:=\Lambda_+\cup\Lambda_- \quad \text{with $\Lambda_+:=-\Lambda_-$.} \\
\end{split}\end{equation}
\begin{figure}\centering
\begin{minipage}{.4\textwidth}
\centering
\begin{tikzpicture}[scale=0.5]
\begin{scope}
	\clip (0, 0) circle (4.1);
	\path[fill=gray!20] (-0.1,0) -- (-0.1,5) -- (-6.92,4) -- cycle;
	\path[fill=gray!20] (-0.1,0) -- (-0.1,-5) -- (-6.92,-4) -- cycle;
	\path[fill=gray!20] (0.1,0) -- (0.1,5) -- (6.92,4) -- cycle;
	\path[fill=gray!20] (0.1,0) -- (0.1,-5) -- (6.92,-4) -- cycle;
\end{scope}

\draw [line width=0.4mm,gray!50] (-4.3,0)--(4.3,0);
\draw [line width=0.4mm,gray!50] (0,-4.3)--(0,4.3);

\draw[domain=-2.8:2.8,smooth,variable=\y,black,thick]  plot ({-(\y*\y+1)^(1/2)},{\y});
\draw[domain=-2.8:2.8,smooth,variable=\y,black,thick]  plot ({(\y*\y+1)^(1/2)},{\y});
\end{tikzpicture}
\end{minipage}%
\begin{minipage}{.4\textwidth}
\centering
\begin{tikzpicture}[scale=0.5]

\draw [line width=0.3mm,lightgray] (-4.3,0)--(4.3,0);
\draw [line width=0.3mm,lightgray] (0,-4.3)--(0,4.3);

\draw[thick,domain=-3:3,smooth,variable=\y,lightgray,thick]  plot ({(\y*\y+1)^(1/2)},{\y});
\draw[thick,domain=-3:3,smooth,variable=\y,lightgray,thick]  plot ({-(\y*\y+1)^(1/2)},{\y});

\fill[shift={(-2.5pt,-2.5pt)}] (1.06573,0.368479) rectangle ++(5pt,5pt) node [above,shift={(0pt,0pt)}] {};
\fill[shift={(-2.5pt,-2.5pt)}] (2.0173,1.75199) rectangle ++(5pt,5pt) node [above,shift={(0pt,0pt)}] {};
\fill[shift={(-2.5pt,-2.5pt)}] (2.68227,2.48889) rectangle ++(5pt,5pt) node [above,shift={(0pt,0pt)}] {};

\fill[shift={(-2.5pt,-2.5pt)}] (2.53213,-2.3263) rectangle ++(5pt,5pt) node [above,shift={(0pt,0pt)}] {};
\fill[shift={(-2.5pt,-2.5pt)}] (1.81494,-1.51459) rectangle ++(5pt,5pt) node [above,shift={(0pt,0pt)}] {};

\fill (-1.06573,-0.368479) circle[radius=2.5pt] node [above,shift={(0pt,0pt)}] {};
\fill (-2.0173,-1.75199) circle[radius=2.5pt] node [above,shift={(0pt,0pt)}] {};
\fill (-2.68227,-2.48889) circle[radius=2.5pt] node [above,shift={(0pt,0pt)}] {};
\fill (-2.53213,2.3263) circle[radius=2.5pt] node [above,shift={(0pt,0pt)}] {};
\fill (-1.81494,1.51459) circle[radius=2.5pt] node [above,shift={(0pt,0pt)}] {};
\end{tikzpicture}
\end{minipage}
\caption{The left picture is the set $\Lambda=\Lambda_+\cup\Lambda_-$. In the right picture, the sets $\mathcal S_+$ and $\mathcal S_-$ are represented by the squares and the dots, respectively. The dashed curves are the hyperbolas. Compare this figure with Figure~\ref{fig:LambdaandS}.}
\label{fig:bethe_roots}
\end{figure}
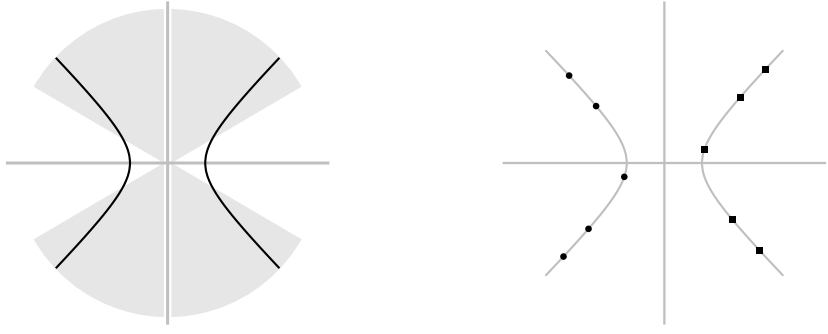
In addition, set
\begin{equation} \label{def:functionV}
V(u)=V(u;\tau,\gamma,x):=-\frac{\tau}{3}u^3+\frac{\gamma}{2}u^2+xu,\quad u\in \C,
\end{equation}
and denote by $\sigma_3$ the third Pauli matrix, 
$$
\sigma_3:=\begin{pmatrix}
1 & 0 \\ 0 & -1
\end{pmatrix}, \quad \text{so in particular, }\quad
e^{\pm \frac{1}{2}V(u)\sigma_3}=
\begin{pmatrix}
e^{\pm\frac{1}{2}V(u)} & 0 \\ 0 & e^{\mp\frac{1}{2}V(u)}
\end{pmatrix}.
$$

\begin{lem} \label{lem:integrability_operator_H}
Let $\chi_+$ and $\chi_-$ be the indicator functions of $\mathcal S_+$ and $\mathcal S_-$, respectively, and define vector functions 
$$
\vec f(u)=e^{-\frac{1}{2}V(u)\sigma_3}\vec f_0(u) \quad \text{and}\quad \vec g(u)=e^{\frac{1}{2}V(u)\sigma_3}\vec g_0(u),
$$
with
$$
\vec f_0(u)=
\begin{pmatrix}
\dfrac{e^{\frac{1}{2}Q(u)}}{\sqrt{u}}\chi_+(u) \\ 
\dfrac{e^{-\frac{1}{2}Q(u)}}{\sqrt{-u}}\chi_-(u)
\end{pmatrix}
\quad \text{and}\quad 
\vec g_0(u)=
\begin{pmatrix}
-\dfrac{e^{-\frac{1}{2}Q(u)}}{\sqrt{-u}}\chi_-(u) \\ 
\dfrac{e^{\frac{1}{2}Q(u)}}{\sqrt{u}}\chi_+(u)
\end{pmatrix} .
$$
Then
$$
\det(\mb I-\mb K_z)_{\ell^2(\mathcal S_-)}=\det (\mb I-\mb H)_{\ell^2(\mathcal S)}
$$ 
where $\mb H$ is the trace class operator acting on $\ell^2(\mathcal S)$ with kernel
\begin{equation}\label{def:kernel_H}
\bm H(u,v)=\frac{\vec f(u)^T \vec g(v)}{u-v}, \quad u\neq v, \quad \text{and}\quad \bm H(u,u)=0.
\end{equation}
\end{lem}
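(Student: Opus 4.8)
The strategy is to recognize the symmetric kernel \eqref{eq:initial_kernel2} for $\mb K_z$ as (the nonzero block of) an integrable operator in the sense of Its--Izergin--Korepin--Slavnov, by writing $\mb K_z$ as the compression to $\ell^2(\mathcal S_-)$ of an operator $\mb H$ on the doubled set $\ell^2(\mathcal S)=\ell^2(\mathcal S_+\cup\mathcal S_-)$ built from the rank-type numerator $\vec f(u)^T\vec g(v)$. First I would exhibit $\mb H$ in block form with respect to the splitting $\ell^2(\mathcal S)=\ell^2(\mathcal S_+)\oplus\ell^2(\mathcal S_-)$. Because $\vec f_0$ has its first component supported on $\mathcal S_+$ and its second on $\mathcal S_-$ (and similarly, with a swap, for $\vec g_0$), the inner product $\vec f(u)^T\vec g(v)$ vanishes unless exactly one of $u,v$ lies in $\mathcal S_+$ and the other in $\mathcal S_-$. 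Hence $\mb H$ is block-antidiagonal, $\mb H=\begin{pmatrix}0 & \mb B\\ \mb C & 0\end{pmatrix}$, and a direct computation of the off-diagonal entries using $V(u)=-\tfrac\tau3 u^3+\tfrac\gamma2 u^2+xu$ and $\Phi(\xi)=-\tfrac13\tau\xi^3+x\xi-Q(\xi)$ shows that $\mb B$ and $\mb C$ are, up to the elementary sign $\tfrac1{u-v}$ versus $-\tfrac1{\eta+\xi}$ and the parity $\mathcal S_+=-\mathcal S_-$, precisely the operators $\mJ$ and $\mJ^T$ from \eqref{eq:kernel_K1}--\eqref{eq:Jsymr}. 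Concretely, for $u\in\mathcal S_-$, $v\in\mathcal S_+$ one gets a factor $e^{-\frac12 V(u)+\frac12 V(v)}\cdot(\text{algebraic})$ and the odd part of $V$ in $u$ reconstructs the $\gamma$-dependence seen in $\bJ$, while the $-\tfrac13\tau u^3$ and $xu$ terms reconstruct $\Phi$ after using $Q$.

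The second step is to feed this block structure into the standard determinant identity
$$
\det\begin{pmatrix}\mb I & -\mb B\\ -\mb C & \mb I\end{pmatrix}=\det(\mb I-\mb B\mb C)=\det(\mb I-\mb C\mb B),
$$
valid for trace-class $\mb B\mb C$. Identifying $\mb B\mb C$ (or $\mb C\mb B$) with $\mJ\mJ^T=\mb K_z$ from \eqref{eq:split_kernel_1}, modulo the parity identification $\mathcal S_+\cong\mathcal S_-$ via $u\mapsto -u$ (which is a unitary, hence determinant-preserving, change of variables), yields $\det(\mb I-\mb H)_{\ell^2(\mathcal S)}=\det(\mb I-\mb K_z)_{\ell^2(\mathcal S_-)}$. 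To justify that $\mb H$ is trace class on $\ell^2(\mathcal S)$, I would observe that $\mb H$ factors through $L^2(0,\infty)$ exactly as $\mb K_z$ did in Section~\ref{subsection:prooflemma}: using $\tfrac1{u-v}=\mp\int_0^\infty e^{\mp s(u-v)}ds$ on the appropriate half-plane for each off-diagonal block, $\mb B$ and $\mb C$ split as Hilbert--Schmidt operators $\ell^2(\mathcal S_\pm)\rightleftarrows L^2(0,\infty)$ analogous to the $\mb G_{\pm\gamma}$, so $\mb B\mb C$ and $\mb C\mb B$, hence $\mb H^2$ and $\mb H$, are trace class. The convergence of the defining series for the off-diagonal blocks is controlled by the cubic decay $\re(-\tfrac\tau3\xi^3)\to-\infty$ along $\mathcal S_-$, just as for the original kernel, and the $Q$-terms are bounded since $Q$ is analytic and of controlled growth in the relevant sector.

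The main obstacle I anticipate is bookkeeping of signs, square-root branches, and the placement of the factors $e^{\pm\frac12 Q}$, $\sqrt{\pm u}$ between $\vec f_0$ and $\vec g_0$: the claim is an \emph{exact} identity of Fredholm determinants, so the conjugation implicit in passing from \eqref{eq:initial_kernel} to \eqref{eq:initial_kernel2} and then to the IIKS form must be tracked carefully to confirm that all conjugating diagonal factors cancel and that the branch of $\sqrt{\cdot}$ on $\mathcal S_\pm$ is consistent with $\arg\xi\in(3\pi/4,5\pi/4)$ on $\mathcal S_-$ and its negative on $\mathcal S_+$. A secondary but routine point is verifying that the diagonal value $\bm H(u,u)=0$ is consistent with $\vec f(u)^T\vec g(u)$: indeed $\vec f_0(u)^T\vec g_0(u)= -\tfrac{e^{Q}}{\ ?\ }+\tfrac{e^{Q}}{\ ?\ }$ telescopes to $0$ because the cross terms $\chi_+\chi_-$ vanish pointwise on $\mathcal S$, so no principal-value subtlety arises. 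Once these are pinned down the determinant identity is immediate from the block computation above.
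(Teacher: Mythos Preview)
Your proposal is correct and follows essentially the same approach as the paper: exhibit the block-antidiagonal structure of $\mb H$ with respect to $\ell^2(\mathcal S_+)\oplus\ell^2(\mathcal S_-)$, use the standard determinant identity $\det(\mb I-\begin{smallmatrix}0&\mb B\\ \mb C&0\end{smallmatrix})=\det(\mb I-\mb B\mb C)$, and identify the off-diagonal blocks with $\mJ$, $\mJ^T$ via the reflection $\mathcal S_+=-\mathcal S_-$. The paper runs the argument in the opposite direction (starting from $\mb K_z=\mJ\mJ^T$ and introducing the reflection $\mb W$ explicitly to build $\mb H$), and obtains trace class of $\mb H$ simply from trace class of $\mJ$ composed with the bounded reflection rather than refactoring through $L^2(0,\infty)$, but the content is the same.
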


\begin{proof}
Recall that $\mb K_z= \mJ \mJ^T$ in \eqref{eq:split_kernel_1} where $\mJ :\ell^2(\mathcal S_-)\to \ell^2(\mathcal S_-)$ has the kernel $\bJ$ given by \eqref{eq:kernel_K1}. 
Noting that $\mathcal S_+=-\mathcal S_-$, we set $\mb W: \ell^2(\mathcal S_+)\to \ell^2(\mathcal S_-)$ to be the reflection operator defined by $(\mb W h)(u)=h(-u)$
and write $\mb K_z= \mJ_1 \mJ_2$, where $\mJ_1=\mJ \mb W:\ell^2(\mathcal S_+)\to \ell^2(\mathcal S_-)$ and $\mJ_2= \mb W^T\mJ^T: \ell^2(\mathcal S_-)\to \ell^2(\mathcal S_+)$. Since $\mathcal S_-$ and $\mathcal S_+$ are disjoint, we have $\ell^2(\mathcal S_-)\oplus \ell^2(\mathcal S_+)= \ell^2(\mathcal S_-\cup \mathcal S_+)$, and 
\begin{align*}
	\det(\mb I- \mb K_z)_{\ell^2(\mathcal S_-)}
	= \det\left( \mb I-\begin{pmatrix} 0 & \mJ_1 \\ \mJ_2 & 0 \end{pmatrix}\right)_{\ell^2(\mathcal S_-)\oplus \ell^2(\mathcal S_+)} 
	= \det \left( \mb I - \mb H \right)_{\ell^2(\mathcal S_-\cup \mathcal S_+)}
\end{align*}
where $\mb H$ is the operator on $\ell^2(\mathcal S_-\cup \mathcal S_+)$ with the kernel 
\beqq \begin{split}
	\bm H(u,v)
	&=  \chi_+(u) \bJ_2(u,v) \chi_-(v)  + \chi_-(u) \bJ_1(u,v) \chi_+(v) \\
	&= \chi_+(u) \frac{e^{\frac12 \Phi(v) -\frac12 \Phi(u) + \frac{\gamma}4 (v^2-u^2)} }{\sqrt{u}\sqrt{-v}(v-u) } \chi_-(v)  + \chi_-(u) \frac{e^{\frac12 \Phi(u) - \frac12 \Phi(v) + \frac{\gamma}4 (u^2-v^2)}}{\sqrt{-u}\sqrt{v}(u-v) } \chi_+(v) 
\end{split} \eeqq
for $u, v\in \mathcal S_+\cup \mathcal S_-$. Because $\mJ$ is trace class and $\mb W$ is bounded, the operator $\mb H$ is trace class.
The result now follows by inserting the formula $\Phi(\xi)=-\frac{1}{3}\tau \xi^3+x\xi -Q(\xi)$ (see \ref{def:phi_function}). 
\end{proof}

There is an analogue to the previous result for $\det(\mb I-\Kk)$. For the case when $\tau=1$ and $\gamma=0$, the next result is a particular instance of a known representation for %the multi-time multi-interval distribution of 
the Airy$_2$ process \cite{Bertola-Cafasso-2012b, Bertola-Cafasso-2012a}.

\begin{lem}\label{lem:integrability_operator_F}
Let $\chi_{\Lambda_+}$ and $\chi_{\Lambda_-}$ be the characteristic functions of the contours $\Lambda_+$ and $\Lambda_-$ in \eqref{def:Lambda_bethe}. Define the vector functions
$$
\vec a(u)=e^{-\frac{1}{2}V(u)\sigma_3}\vec a_0(u),\quad \vec b(u)=e^{\frac{1}{2}V(u)\sigma_3}\vec b_0(u),
\quad \text{with}\quad
a_0(u)=
\begin{pmatrix}
\chi_{\Lambda_+}(u) \\ 
-\chi_{\Lambda_-}(u)
\end{pmatrix},
\quad 
b_0(u)=
\begin{pmatrix}
\chi_{\Lambda_-}(u) \\ 
\chi_{\Lambda_+}(u)
\end{pmatrix}.
$$
Then,
$$
\det(\mb I-\Kk)_{L^2(0,\infty)}=\det(\mb I-\mb F)_{L^2(\Lambda)}
$$
where $\mb F:L^2(\Lambda)\to L^2(\Lambda)$ acts with kernel
\begin{equation}\label{def:kernel_F}
\bm F(u,v)=\frac{1}{2\pi i}\frac{\vec f(u)^Tg(v)}{u-v},\; u\neq v \quad \text{and}\quad \bm F(u,u)=0.
\end{equation}
\end{lem}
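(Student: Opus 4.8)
The plan is to mirror the proof of Lemma~\ref{lem:integrability_operator_H}, replacing the discrete set $\mathcal S_-$ by the contour $\Lambda_-$ throughout. First I would recall from \eqref{eq:kpAgamma} that $\det(\mb I-\Kk)_{L^2(0,\infty)}=\det(\mb I-\mb A_{-\gamma}\mb A_\gamma)_{L^2(0,\infty)}$, and that by \eqref{eq:KPZ_shiftedairykernel} the kernel of $\mb A_\gamma$ admits the contour-integral representation $\mathcal A_\gamma(s)=\frac{1}{2\pi i}\int_{\Lambda_-}e^{V(u)+xu'\dots}$; more precisely, writing $V(u)=-\tfrac{\tau}{3}u^3+\tfrac{\gamma}{2}u^2+xu$ as in \eqref{def:functionV}, one has $\mathcal A_\gamma(s+x+t)=\frac{1}{2\pi i}\int_{\Lambda_-}e^{V(u)}e^{(s+t)u}\,du$ with the appropriate placement of the $x$-shift absorbed into $V$. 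This lets me factor $\mb A_\gamma=\mb G_\gamma \mb G_\gamma^{\sharp}$ through an intermediate $L^2(\Lambda_-)$, exactly as the identity $\frac{1}{\eta+\xi}=-\int_0^\infty e^{s(\eta+\xi)}ds$ was used in Section~\ref{subsection:prooflemma}: the operator $L^2(0,\infty)\to L^2(\Lambda_-)$ has kernel $e^{\frac12 V(u)+su}$ and is Hilbert--Schmidt because of the cubic decay of $\re V$ along $\Lambda_-$ (this is why $\theta_1,\theta_2\in(\pi/2,5\pi/6)$).

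Next I would collapse the two Airy operators. Using $\mb A_{-\gamma}\mb A_\gamma$ and the cyclic property of the Fredholm determinant, $\det(\mb I-\mb A_{-\gamma}\mb A_\gamma)=\det(\mb I-\mb R)$ where $\mb R$ acts on $L^2(\Lambda_-)\oplus L^2(\Lambda_+)$ in block anti-diagonal form, $\mb R=\left(\begin{smallmatrix}0 & \mb R_1\\ \mb R_2 & 0\end{smallmatrix}\right)$, with the reflection operator $\mb W:L^2(\Lambda_+)\to L^2(\Lambda_-)$, $(\mb W h)(u)=h(-u)$, used to identify $\Lambda_+=-\Lambda_-$ just as in the previous lemma. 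Carrying out the Gaussian integral $\int_0^\infty e^{s(u+v)}ds=-\frac{1}{u+v}$ for $u\in\Lambda_-$, $v\in\Lambda_+$ (equivalently $u,v$ on opposite branches so that $\re(u+v)<0$), the entries of $\mb R$ become Cauchy-type kernels $\frac{1}{2\pi i}\frac{e^{\pm\frac12 V(u)\mp\frac12 V(v)}}{u-v}$ up to the correct signs and the $\sqrt{\cdot}$-free prefactors that distinguish the continuous case from the discrete one. Assembling the two blocks into a single kernel on $L^2(\Lambda)$ and matching against the definition \eqref{def:kernel_F}, one reads off that the resulting operator is exactly $\mb F$ with the stated $\vec a,\vec b$ (the $a_0,b_0$ vectors encode precisely which branch of $\Lambda$ each component lives on, with the sign $-\chi_{\Lambda_-}$ coming from the $-\frac{1}{u+v}$). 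Trace-class-ness of $\mb F$ follows since each factor is Hilbert--Schmidt and $\mb W$ is bounded, as before.

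The main obstacle, and the only point where genuine care is needed, is bookkeeping the placement of the shift variable $x$ and the $\gamma$-dependence so that the conjugation by $e^{\pm\frac12 V(u)\sigma_3}$ comes out with $V$ as in \eqref{def:functionV} rather than some $x$-shifted variant; equivalently, one must check that the $e^{\gamma s/(2\tau)}$ and $e^{\gamma^3/(12\tau)}$ prefactors in \eqref{eq:KPZ_shiftedairykernel} reorganize correctly under the factorization into the $\frac{\gamma}{2}u^2$ term of $V$ and are not left as spurious scalar factors (they are not, by the same similarity-transformation argument that makes $\mathcal A_\gamma$ satisfy its scaling identity). A secondary technical point is justifying the interchange of the $s$-integral with the $\Lambda_-$-integral and the convergence of $\int_0^\infty e^{s(u+v)}ds$; this is immediate once one notes $\re(u+v)<0$ for $u\in\Lambda_-,v\in\Lambda_+$, which holds because both contours asymptote into the left and right half-planes respectively near the real axis while the cubic term in $V$ controls the behavior at infinity. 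Finally I would remark, as the statement already does, that for $\tau=1,\gamma=0$ this recovers the Bertola--Cafasso integrable-operator representation of the Airy$_2$ process \cite{Bertola-Cafasso-2012b, Bertola-Cafasso-2012a}, which serves as a consistency check on all the signs.
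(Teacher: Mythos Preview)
Your approach is essentially the paper's: factor $\mb A_\gamma=\mb B_\gamma^T\mb B_\gamma$ through $L^2(\Lambda_-)$ via $\bm B_\gamma(\eta,v)=\frac{1}{\sqrt{2\pi i}}e^{\frac12 V(\eta)+\eta v}$, cycle the determinant to $\det(\mb I-\mb B\mb B^T)_{L^2(\Lambda_-)}$ with $\mb B=\mb B_\gamma\mb B_{-\gamma}^T$, and then repeat the reflection argument of Lemma~\ref{lem:integrability_operator_H} with $\mb B$ playing the role of $\mJ$.

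One bookkeeping slip to fix: the exponential (not Gaussian) integral $\int_0^\infty e^{s(u+v)}\,ds=-\tfrac{1}{u+v}$ must be performed with \emph{both} $u,v\in\Lambda_-$, where $\re u,\re v<0$ and hence $\re(u+v)<0$; this produces the kernel of $\mb B$ on $L^2(\Lambda_-)$. Only \emph{afterwards} does the reflection $\mb W$ send one argument to $\Lambda_+$, turning $\tfrac{1}{u+v}$ into $\tfrac{1}{u-v}$. Your phrasing ``for $u\in\Lambda_-$, $v\in\Lambda_+$ \dots\ so that $\re(u+v)<0$'' is incorrect, since $\Lambda_+=-\Lambda_-$ lies in the right half-plane and $\re(u+v)$ has no sign there; written in that order the integral diverges. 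Reordering these two steps resolves the issue and the rest of your outline goes through.
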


\begin{proof}
The definition of the kernel $\bm A_\gamma$ in \eqref{eq:KPZ_shiftedairykernel} gives the decomposition
$\mb A_{\gamma}=\mb B_\gamma^T\mb B_\gamma$, 
where $\mb B_\gamma:L^2(0,\infty)\to L^2(\Lambda_-)$ acts with kernel
$$
\bm B_\gamma(\eta,v)=\frac{1}{\sqrt{2\pi i}}e^{\frac{1}{2}V(\eta)+\eta v},\quad \eta\in \Lambda_-, v>0,
$$
where $V$ is given in \eqref{def:functionV}. 
Manipulating the Fredholm determinants,
$$
	\det(\mb I-\Kk )_{L^2(0,\infty)} =\det(\mb I-\mb A_{-\gamma}\mb A_\gamma)_{L^2(0,\infty)} 
	=\det(\mb I- \mb B_\gamma \mb B_{-\gamma}^T\mb B_{-\gamma}\mb B_\gamma^T )_{L^2(\Lambda)} 
	= \det(\mb I-\mb B\mb B^T)_{L^2(\Lambda)},
$$
where $\mb B=\mb B_{\gamma}\mb B_{-\gamma}^T$. The rest of the proof follows exactly as in the proof of Lemma~\ref{lem:integrability_operator_H}: the role of $\mb K_z$ is played by $\mb B\mb B^T$ and the operator $\mb J$ is now $\mb B$.
\end{proof}

The form \eqref{def:kernel_H} or \eqref{def:kernel_F} of the kernels is saying that the operators $\mb F$ and $\mb H$ are IIKS-integrable, which is a class of operators that was first singled out by Its, Izergin, Korepin and Slavnov \cite{IIKS}. In Section~\ref{sec:integrable_structure} we discuss and explore this structure in detail.

%%%%%%%%%%%%%%%%%%%%%%%%%%%%%%%%%%%%%%%%%%%%%%%%%%%%%%%%%%%%%%%%%
\subsection{The periodic TASEP with general initial condition: proof of Theorem~\ref{thm:kp_general}} \label{sec:generalIC}

In \cite{Baik-Liu19b} an extension of \eqref{eq:limit_height_distr} was obtained for the periodic TASEP with general initial condition satisfying certain technical assumptions, which were verified for step, flat, and step-flat initial conditions. 
The limiting distribution takes the form
\begin{equation*}
\Fic(x;\tau,\gamma)=\oint E_{\mathrm{ic}}(z) e^{xA_1(z)+\tau A_2(z)+2B(z)}\det(\mb I-\Kzic) \frac{dz}{2\pi i z}
\end{equation*}
for a prefactor $E_{\mathrm{ic}}(z)$ which depends on the initial condition and also on the variable $z$ but is independent of the parameters $x,\tau$ and $\gamma$. The operator $\Kzic:\ell^2(\mathcal S_-)\to \ell^2(\mathcal S_-)$ is a generalization of $\mb K_z$ and acts with kernel\footnote{The original formulation in \cite{Baik-Liu19b} is a modified version of \eqref{eq:initial_kernel}, which can then be conjugated to \eqref{eq:kernel_ic}.}
\begin{equation}\label{eq:kernel_ic}
	\bKzic(\xi_1,\xi_2)
	=\sum_{\eta \in S_-} \chi_{\mathrm{ic}}(-\eta,\xi_2)\frac{e^{\frac12 \Phi(\xi_1)+\Phi(\eta)+\frac12\Phi(\xi_2)+\frac{\gamma}{4}(\xi_1^2-2\eta^2+ \xi_2^2)}}{\sqrt{-\xi_1}\sqrt{-\eta} (\xi_1+\eta)(\eta+\xi_2)\sqrt{-\eta}\sqrt{-\xi_2}}, \qquad \xi_1, \xi_2\in \mathcal S_-,
\end{equation}
where $\chi_{\mathrm{ic}}(-\eta,\xi_2)=\chi_{\mathrm{ic}}(-\eta,\xi_2;z)$ depends on the initial condition and on the parameter $z$ but not on $x,\gamma,\tau$.

Similarly as in \eqref{eq:split_kernel_1}, we write
$$
\Kzic=\mb J\mb J^{\mathrm{ic}},
$$
where $\mb J:\ell^2(\mathcal S_-)\to \ell^2(\mathcal S_-)$ is as in \eqref{eq:split_kernel_1} and $\mb J^{\mathrm{ic}}:\ell^2(\mathcal S_-)\to\ell^2(\mathcal S_-)$ is defined by the kernel
$$
\bm J^{\mathrm{ic}}(\xi,\eta)=\chi_{\mathrm{ic}}(-\xi,\eta)\bm J^T(\xi,\eta)=\chi_{\mathrm{ic}}(-\xi,\eta)\bm J(\eta,\xi),\qquad \xi,\eta\in \mathcal S_-.
$$
We take advantage of the same decomposition \eqref{def:operators_K1_K2} and manipulate
$$
\det(\mb I-\Kzic)=\det(\mb I+\mb G_{-\gamma}\mb G_\gamma^T \mb J^{\mathrm{ic}})=\det(\mb I-\mb T^{\mathrm{ic}}),
$$
where $\mb T^{\mathrm{ic}}:L^2(0,\infty)\to L^2(0,\infty)$ is the product $\mb T^{\mathrm{ic}}=-\mb G_\gamma^T \mb J^{\mathrm{ic}}\mb G_{-\gamma}$ and has kernel
$$
\bm T^{\mathrm{ic}}(u,v)=-\sum_{\xi,\eta\in \mathcal S_-}\frac{\chi_{\mathrm{ic}}(-\eta,\xi)}{\xi(\xi+\eta)\eta} e^{\Phi(\xi)+\Phi(\eta)+\frac{\gamma}{2}(\xi^2-\eta^2)+u\eta+v\xi  },\qquad u,v>0.
$$

It is convenient to rescale the kernel and consider the auxiliary operator  
\begin{equation}\label{eq:kernel_ic_changevar}
\widehat{\mb T}^{\mathrm{ic}}=\widehat{\mb T}^{\mathrm{ic}}(\tau,\gamma,x)=-\mb T^{\mathrm{ic}}(-3\tau,2\gamma,x)
\end{equation}
which has explicit kernel
$$
\widehat{\bm T}^{\mathrm{ic}}(u,v) =\widehat{\bm T}^{\mathrm{ic}}(u,v;\tau,\gamma,x)=\sum_{\eta,\xi\in \mathcal S_-}\frac{\chi_{\mathrm{ic}}(-\xi,\eta)e^{Q(\xi)+Q(\eta)}}{\xi(\xi+\eta)\eta}e^{\tau(\xi^3+\eta^3)+\gamma(\xi^2-\eta^2)+(u+x)\xi + (v+x)\eta},\quad u,v>0.
$$
In particular, the differential identities
$$
\partial_{x}\widehat{\bm T}^{\mathrm{ic}}=\partial_u \widehat{\bm T}^{\mathrm{ic}}+\partial_v \widehat{\bm T}^{\mathrm{ic}},\quad 
\partial_{\gamma}\widehat{\bm T}^{\mathrm{ic}}=\partial_{uu}\widehat{\bm T}^{\mathrm{ic}}-\partial_{vv}\widehat{\bm T}^{\mathrm{ic}}, \quad
\partial_{\tau}\widehat{\bm T}^{\mathrm{ic}}=\partial_{uuu}\widehat{\bm T}^{\mathrm{ic}}+\partial_{vvv}\widehat{\bm T}^{\mathrm{ic}}.
$$
are of straightforward verification. By \cite[Theorem~3.1]{Poppe1989}, these identities imply that $\widehat u=\partial_{xx}\log \det(\mb I+\widehat{\mb T}^{\mathrm{ic}} )$ satisfies the KP equation in the form
\begin{equation}\label{eq:rescaled_KP}
3\widehat u_{\gamma\gamma}+\left(\widehat u_{xxx}+12\widehat u\widehat u_x-4\widehat u_\tau  \right)_x=0.
\end{equation}

In the definition of $\widehat{\bm T}^{\mathrm{ic}}(u,v;\tau,\gamma,x)$ we should only consider $\tau<0$ to ensure the series is convergent. On the other hand, the KP equation is an evolution equation in the time variable $\tau$, so naturally with $\tau>0$. Nevertheless, we are safe to apply \cite[Theorem~3.1]{Poppe1989} here also for $\tau<0$, as the arguments therein are of algebraic nature, relying on $\tau$ solely as a variable without physical meaning and no restriction on its sign.
Having in mind \eqref{eq:kernel_ic_changevar}, Theorem~\ref{thm:kp_general} now follows from \eqref{eq:rescaled_KP}.

%%%%%%%%%%%%%%%%%%%%%%%%%%%%%%%%%%%%%%%%%%%%%%%%%%%%%%%%%%%%%%%%%
\section{Analytic continuation of polylog functions}\label{section:polylogs}

The formula \eqref{def:limiting_distribution} of $F(x; \tau, \gamma)$ involves polylog functions $\Li_{1/2}(z), \Li_{3/2}(z)$ and $\Li_{5/2}(z)$. 
In the first subsection, we summarize some basic properties of polylog functions used in this paper. 
In the remaining subsections, we establish some analytic continuation properties of polylog functions and their combinations. 
The results of this section will be used only in Subsection \ref{sec:asymppolyintg} for the large $\tau$ limit of $F(x; \tau, \gamma)$.

Throughout this paper, $\log z$ denotes the principal branch of the logarithmic function with branch cut along $(-\infty, 0]$, and $z^\alpha$ denotes $z^\alpha=e^{\alpha \log z}$. 

%%%%%%%%%%%%%%%%%%%%%%%%%%%%%%%%%%%%%%%%%%%%%%%%%%%%%%%%%%%%%%%%%
\subsection{Polylog functions}

For $\re s>0$, the polylog function is defined by the series
\begin{equation}\label{def:series_polylog}
\Li_s(z):=\sum_{k=1}^\infty \frac{z^k}{k^s},\quad |z|<1,
\end{equation}
and it extends analytically using the integral representation
\begin{equation}\label{eq:integral_repr_polylog}
\Li_s(z)=\frac{z}{\Gamma(s)}\int_0^\infty \frac{t^{s-1}}{e^t-z}dt,\quad z\in \C\setminus [1,\infty).
\end{equation}
From this representation, Paulsen \cite[Proposition~3]{paulsen_polylog} showed that if $s$ is not an integer, then 
\begin{equation}\label{eq:expansion_polylog}
\Li_s(z)=\Gamma(1-s)(-\log  z)^{s-1}+\zeta(s)+\sum_{n=1}^\infty (z-1)^n \sum_{m=1}^{n} \frac{S_n^{(m)}\zeta(s-m)}{n!}\quad  \text{as } z\to 1,
\end{equation}
where $\zeta$ is the Riemann zeta function and $S_n^{(m)}$ are the Stirling numbers of the first kind.

Observe that the function $f(t)= (-t)^{s-1}$ has branch cut along $(0,+\infty)$ and it satisfies the identity 
$$
(-t)^{s-1}_+-(-t)^{s-1}_-=2i |t|^{s-1}\sin(\pi s),\quad t>0,
$$
where $f_{\pm}(t)$ denotes the limit of $f(t\pm i\epsilon)$ as $\epsilon\searrow 0$. 
Thus, for $s\in \C$ satisfying $s\notin\Z$ and $\re(s)>0$, we can rewrite the integral representation \eqref{eq:integral_repr_polylog} as a complex contour integral given by 
\begin{equation}\label{eq:integral_repr_polylog_2}
\Li_s(z)=-\frac{\Gamma(1-s)z}{2\pi i}\int_\Clambda \frac{(-t)^{s-1}}{e^t-z}dt,\quad z\in \C\setminus [1,\infty), 
\end{equation}
where we used the reflection formula $\Gamma(s)\sin(\pi s)=\frac{\pi}{\Gamma(1-s)}$. 
Here, $\Clambda$ is an unbounded contour that starts and ends at $+\infty$, lies in the strip $\{t\in \C: |\im t|<\pi\}$,  encircles the positive axis with counterclockwise orientation, and separates the point $\log z$ from the interval $[0,\infty)$, see  Figure~\ref{fig:contours_analytic_continuation}. 

\begin{figure}[t]
		\centering
		\begin{tikzpicture}[scale=0.7]
\draw [line width=0.4mm,lightgray] (-2,0)--(5,0) node [pos=1,right,black] {$\R$};
\draw [line width=0.4mm,lightgray] (1,-3)--(1,3) node [pos=1,above,black] {$i\R$};
\draw [line width=0.4mm,lightgray,dashed] (-2,-2.5)--(5,-2.5) node [pos=1,right,black] {$\R-\pi i$};
\draw [line width=0.4mm,lightgray,dashed] (-2,2.5)--(5,2.5) node [pos=1,right,black] {$\R+\pi i$};
\fill (1,0) circle[radius=2.5pt] node [below right,shift={(0pt,0pt)}] {$0$};
\fill (1.5,1) circle[radius=2.5pt] node [above right,shift={(-2pt,-2pt)}] {$\log z$};

\draw [line width=0.45mm] (1,0) to (5,0) node (b) {};
\path [draw=black,thick,postaction={mid3 arrow={black,scale=1.5}}]	(5,0.5) 
	to [out=180,in=-10] (1.5,0.5)
	to [out=-10+180,in=90] (0.3,0)
	to [out=90+180,in=180,edge node={node [pos=0.8,below] {$\Clambda$}}] (5,-0.6);
\path [draw=black,thick,postaction={mid3 arrow={black,scale=1.5}}]	(5,1.5) 
    to [out=180,in=20,edge node={node [pos=0.4,above] {$\widetilde\Clambda$}}] (0,1.5)
	to [out=20+180,in=90] (-1.5,0)
	to [out=90+180,in=180] (5,-1.5);

		\end{tikzpicture}
		\caption{The contours $\Clambda$ and $\widetilde\Clambda$ used in the integral representations of $\Li_s$ and $F_s$.}\label{fig:contours_analytic_continuation}
	\end{figure}
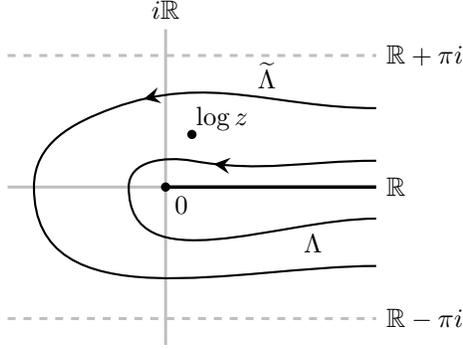

%%%%%%%%%%%%%%%%%%%%%%%%%%%%%%%%%%%%%%%%%%%%%%%%%%%%%%%%%%%%%%%%%
\subsection{Analytic continuation}\label{subsec:riemann_surface}

Define the set 
\begin{equation*}
 \RS_1=\C\setminus \left((-\infty,0]\cup [1,\infty) \right).
\end{equation*}
Let $\RS_2$ be another copy of $\RS_1$ and define a (non-compact two sheeted) Riemann surface $\RS$ by gluing $\RS_1$ and $\RS_2$ along {\it only} $[1,\infty)$, see Figure~\ref{fig:riemann_surface_S}. 
The only branch point\footnote{The Riemann surface $\RS$ here is bordered. We could extend $\RS$ to a non-compact Riemann surface without boundaries which is the fundamental covering surface for the polylog. In this extended surface $0^{(2)}$ would also be a branch point. Hence we sometimes refer to $0^{(2)}$ loosely as a branch point.} of the surface is $z=1$ and the local parameter near $z=1$ is given by $z-1=w^{2}$, $w\in \C$. For $j=1,2,$ we occasionally use the notation $p^{(j)}$ to denote the point on the sheet $\mathcal R_j$ that projects onto $p\in \mathcal \C$. This notion is well defined as long as $p\notin (-\infty,0)\cup (1,\infty)$. In particular,  $1^{(1)}=1^{(2)}$ so we simply write $1^{(j)}=1\in \mathcal R$ in this case.

\begin{figure}\centering
\begin{tikzpicture}
[x  = {(1cm,0cm)},
                    y  = {(0.707107cm,0.707107cm )},
                    z  = {(0 cm,1 cm)},
                    scale = 0.6,
                    color = {black}]

%
% drawing the basic coordinate system
%
\begin{scope}[canvas is xy plane at z=0]
  % x-axis
  \draw[white,->] (-5,0) -- (5,0) node (x-axis) [right] {$x$};
  % y-axis
  \draw[white,->] (-5,0) -- (-5,2.8) node (y-axis) [above] {$y$};
\end{scope}
 
\begin{scope}[canvas is yz plane at x=-5]
  % z-axis
  \draw[white,->] (0,-5) -- (0,3) node (z-axis) [above] {$z$};
\end{scope} 
%
%
% info on the first sheet
% 
\begin{scope}[canvas is xy plane at z=0]
\path[draw,line width=0.4mm] (-4,-1.8) to (4,-1.8) to [edge node={node [pos=0.4,right,shift={(4pt,0pt)}] {}}] (4,1.8);
%
% drawing real cuts
%
\path [draw,line width=0.6mm,red] (-4,0) to [edge node={node [pos=0.5,above,shift={(2pt,-2pt)}] {}}] (0,0);
\path [draw,line width=0.6mm,blue] (1.5,0) to [edge node={node [pos=0.5,above,shift={(2pt,-2pt)}] {}}] (4,0);

\node at (4.5,0) [above right] {$\RS_1$};
\end{scope}
%

%
% info on the second sheet
% 
\begin{scope}[canvas is xy plane at z=-3]
\path[draw,line width=0.4mm,black] (-4,-1.8) to (4,-1.8) to [edge node={node [pos=0.4,right,shift={(4pt,0pt)}] {}}] (4,1.8);
%
% drawing real cuts
%
\path [draw,line width=0.6mm,red] (-4,0) to [edge node={node [pos=0.25,above,shift={(2pt,-2pt)}] {}}] (0,0);
\path [draw,line width=0.6mm,blue] (1.5,0) to [edge node={node [pos=0.5,above,shift={(2pt,-2pt)}] {}}] (4,0);

\node at (4.5,0) [above right] {$\RS_2$};
\end{scope}
%
%
% lines to mark connections
%
\foreach \x in {0.5,1,...,3}
\draw[line width=0.3mm,dashed,lightgray] (1+\x,0,-3)--(1+\x,0,0);

%drawing relevant points
\fill [black] (0,0,-3) circle[radius=3pt] node [below] {$0$}; 
\fill [black] (0,0,0) circle[radius=3pt] node [below] {$0$};
\fill [black] (1.5,0,-3) circle[radius=3pt] node [below] {$1$}; 
\fill [black] (1.5,0,0) circle[radius=3pt] node [below] {$1$};
%drawing relevant points
\fill (0,0,0) circle[radius=3pt] node [below] {$0$};

% drawing labels of sheets
%

\end{tikzpicture}
\caption{The two sheets $\RS_1$ and $\RS_2$ that constitute the Riemann surface $\RS$. They are glued along the interval $[1,+\infty)$ but not along $(-\infty,0]$.}
\label{fig:riemann_surface_S}
\end{figure}
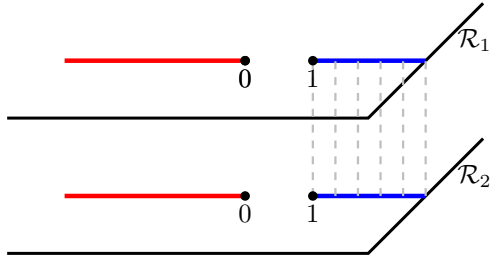

Define the function  
\begin{equation}\label{def:coordinate_u0}
u_0(z):=-(-2\log z)^{1/2}%= -(-2\log|z|-2i\arg z)^{1/2}
,\quad z\in\RS_1 .% \C\setminus \left((-\infty,0]\cup [1,\infty)\right).
\end{equation}
From the choice of the branch of the logarithm and the square root, $u_0(z)$ is analytic in $\RS_1$. 
This function is one of the Bethe roots, 
see \eqref{def:bethe_root_set}. 
This particular Bethe root will play an important role in the large $\tau$ limit.
Note that $u_{0+}(x)=-u_{0-}(x)$ for $x>1$. 

\begin{lem}\label{lem:URS0zero}
The function 
\begin{equation}\label{def:coordinate_u0_sheet_2}
	\URS_0(z):=
\begin{cases}
u_0(z),& z\in \RS_1, \\
-u_0(z),& z\in \RS_2 \\
\end{cases}
\end{equation}
is analytic on $\RS$. 
The only zero of $\URS_0$ is the branch point $z=1$, which is a simple zero. As a consequence, $\frac{1}{\URS_0(z)}$ is a meromorphic function on $\RS$ with a simple pole at $z=1$ and no other poles.
\end{lem}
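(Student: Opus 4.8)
The plan is to verify holomorphy of $\URS_0$ in three regions whose union is $\RS$ --- the interiors of the two sheets, the gluing locus $(1,\infty)$, and the branch point $z=1$ --- and then simply read off the location and order of the zeros. On $\RS_1=\C\setminus\big((-\infty,0]\cup[1,\infty)\big)$ the quantity $-2\log z$ never meets $(-\infty,0]$ (this is exactly why $[1,\infty)$ has been deleted: $-2\log z\in(-\infty,0]\iff\log z\in[0,\infty)\iff z\in[1,\infty)$), so $u_0(z)=-(-2\log z)^{1/2}$ is holomorphic there; moreover it is nowhere zero since $\log z=0$ only at $z=1\notin\RS_1$. The same holds for $-u_0$ on $\RS_2$, so $\URS_0$ is holomorphic and zero-free on the interiors of $\RS_1$ and $\RS_2$.

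Next I would treat the slit $(1,\infty)$ along which the two sheets are glued. For $x>1$, as $z\to x\pm i0$ one has $-2\log z\to-2\log x\mp i0$, a point just below (resp.\ above) the negative real axis, whose principal square root is $\mp i\sqrt{2\log x}$; hence $u_{0+}(x)=i\sqrt{2\log x}$ and $u_{0-}(x)=-i\sqrt{2\log x}$, consistent with the already noted relation $u_{0+}=-u_{0-}$ on $(1,\infty)$. Since $\RS$ is the branched double cover (confirmed by the local parameter $z-1=w^2$), the upper edge of the slit on $\RS_1$ is identified with the lower edge on $\RS_2$ and conversely; because $\URS_0$ equals $u_0$ on $\RS_1$ and $-u_0$ on $\RS_2$, the boundary values match across the slit, $u_{0+}(x)=-u_{0-}(x)$ and $u_{0-}(x)=-u_{0+}(x)$. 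Thus $\URS_0$ is continuous, hence holomorphic across $(1,\infty)$ by Morera's theorem (removability of an analytic arc), and it does not vanish there.

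At the branch point I would use the local parameter $w$ with $z=1+w^{2}$. On either sheet $\URS_0(w)^{2}=u_0(z)^{2}=-2\log(1+w^{2})=-2w^{2}+O(w^{4})$ is an even holomorphic function of $w$ near $w=0$ with a double zero, so $\URS_0$ is bounded near $w=0$ and extends holomorphically there by Riemann's removable-singularity theorem, with $\URS_0(0)=0$; writing $\URS_0(w)=w\,h(w)$ with $h$ holomorphic gives $h(0)^{2}=-2\neq0$, so the zero at $z=1$ is simple. Combining the three steps, $\URS_0$ is holomorphic on all of $\RS$ and its unique zero is the simple zero at $z=1$; consequently $1/\URS_0$ is meromorphic on $\RS$ with a single simple pole at $z=1$ and no other poles. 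The only delicate point is the bookkeeping in the second step --- correctly pairing the two branches of the square root on the two edges of $(1,\infty)$ with the gluing convention of $\RS$; the removable-singularity argument at $z=1$ and everything else are routine.
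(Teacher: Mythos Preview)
Your argument is correct and follows essentially the same approach as the paper, which simply asserts that the analytic properties are ``straightforward to check'' and that the simple zero at $z=1$ is ``easily verified using the local coordinates $z-1=w^2$.'' You have just written out in full the details the paper omits --- the Morera argument across the slit and the removable-singularity computation in the $w$-coordinate --- and your bookkeeping of the boundary values $u_{0\pm}(x)=\pm i\sqrt{2\log x}$ and their matching under the cross-gluing is accurate.
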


\begin{proof}
The analytic properties of $\URS_0$ are straightforward to check.
The fact that $\URS_0$ has a simple zero at $z=1$ is easily verified using the local coordinates $z-1=w^2$.
\end{proof}

Now consider the polylog $\Li_s$. 
For the rest of this section, we assume that $s$ is a positive half integer,
$$	s\in \Z+ \frac12 , \qquad s>0, $$
which is the case relevant to our purposes. 
We use the formula \eqref{eq:integral_repr_polylog_2} to extend the polylog to the Riemann surface $\RS$. 
To this end, we introduce an auxiliary function 
$$
P_s(z)=\int_{\widetilde \Clambda}\frac{(-t)^{s-1}}{e^t-z}dt, \quad z\in \C\setminus (-\infty,0],
$$
where $\widetilde \Clambda$ is a contour going from $+\infty +i0$ to $+\infty-i0$, encircling both the segment $[0,+\infty)$ and the point $\log z$, and being contained in the horizontal strip $|\im t|<\pi$; see Figure~\ref{fig:contours_analytic_continuation}. 
The condition $z\notin (-\infty, 0]$ implies that $\log z$ is in the horizontal strip $|\im t|<\pi$, and hence we can indeed choose such a contour $\widetilde \Clambda$ contained in the horizontal strip $|\im t|<\pi$.
Note that unlike  $\Li_s(z)$, the function $P_s(z)$ is analytic across the segment $[1,\infty)$.
Given a point $z\in \RS_1$, deforming the contour $\Clambda$ in \eqref{eq:integral_repr_polylog_2} to $\widetilde \Clambda$ we obtain the identity  
$$
\int_\Clambda \frac{(-t)^{s-1}}{e^t-z}dt=P_s(z)-2\pi i \res_{t=\log z}\left( \frac{(-t)^{s-1}}{e^t-z}\right) = P_s(z)-2\pi i \frac{(-\log z)^{s-1}}{z}.
$$
Hence, writing $\log z$ in terms of $u_0(z)$ in \eqref{def:coordinate_u0}, we get 
\begin{equation}\label{eq:identity_analytic_continuation_0}
\Li_s(z)=-\frac{\Gamma(1-s)}{2\pi i} zP_s(z)+2^{1-s} \Gamma(1-s)(- u_0(z))^{2s-2} \quad \text{for $z\in \RS_1$.}
\end{equation}
This formula implies the following extension result. 

\begin{lem}
Assume that $2s$ is a positive odd integer. The function 
\begin{equation} \label{eq:LRSdef}
	\LRS_s(z) := \begin{cases}
	\Li_{s}(z) \quad &\text{for $z\in \RS_1$,} \\
	\Li_s(z) - 2^{2-s}\Gamma(1-s)(-u_0(z))^{2s-2} \quad &\text{for $z\in \RS_2$,}
	\end{cases}
\end{equation}
is analytic on $\RS$ for $s\neq \frac12$. For $s=\frac12$,  $\LRS_{1/2}$ is meromorphic with a simple pole at $z=1$ and no other poles.
\end{lem}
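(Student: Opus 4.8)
The plan is to verify directly that the two branches in \eqref{eq:LRSdef} glue analytically across the cut $[1,\infty)$, and that the resulting function is analytic (resp.\ meromorphic for $s=1/2$) everywhere on $\RS$. The starting point is identity \eqref{eq:identity_analytic_continuation_0}, which rewrites $\Li_s$ on $\RS_1$ in terms of the manifestly $[1,\infty)$-analytic function $z\mapsto zP_s(z)$ plus the explicit ``singular'' term $2^{1-s}\Gamma(1-s)(-u_0(z))^{2s-2}$. Since $2s$ is an odd positive integer, $2s-2$ is an odd integer, so $(-u_0(z))^{2s-2}$ is an odd function of $u_0(z)$; this is the structural reason the jump of $\Li_s$ across $[1,\infty)$ is controlled purely by how $u_0$ jumps there.

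First I would record that $P_s(z)$ is analytic on $\C\setminus(-\infty,0]$ (the contour $\widetilde\Clambda$ can always be chosen in the strip $|\im t|<\pi$ as long as $\log z$ lies there, i.e.\ as long as $z\notin(-\infty,0]$), hence $zP_s(z)$ is analytic in a full neighborhood of every point of $[1,\infty)$, viewed as a point of either sheet. Next I would compute the boundary values of $u_0$ on $[1,\infty)$: from \eqref{def:coordinate_u0} and the stated relation $u_{0+}(x)=-u_{0-}(x)$ for $x>1$, the function on $\RS_2$ near $x>1$ with value $-u_0$ has upper boundary value $-u_{0+}(x)=u_{0-}(x)$, which matches the lower boundary value of $u_0$ used on $\RS_1$; likewise the other pair matches. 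Therefore the continuation of the analytic germ of $\Li_s|_{\RS_1}$ across $[1,\infty)$ is exactly $-\tfrac{\Gamma(1-s)}{2\pi i}zP_s(z)+2^{1-s}\Gamma(1-s)(-(-u_0(z)))^{2s-2}$ on the other side; using oddness of the power, $(-(-u_0))^{2s-2}=(u_0)^{2s-2}=-( -u_0)^{2s-2}$ only if $2s-2$ is odd, which it is, so this equals $-\tfrac{\Gamma(1-s)}{2\pi i}zP_s(z)-2^{1-s}\Gamma(1-s)(-u_0(z))^{2s-2}=\Li_s(z)-2^{2-s}\Gamma(1-s)(-u_0(z))^{2s-2}$. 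That is precisely the formula for $\LRS_s$ on $\RS_2$ in \eqref{eq:LRSdef}, so the two pieces are analytic continuations of one another across $[1,\infty)$, i.e.\ $\LRS_s$ is single-valued and analytic in a neighborhood of that locus on $\RS$.

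It then remains to check analyticity at the remaining points of $\RS$. On the interior of $\RS_1$ this is the classical analyticity of $\Li_s$ on $\C\setminus[1,\infty)$; on the interior of $\RS_2$ it follows because $\Li_s$ is analytic there and $u_0(z)$ is analytic on $\RS_1$ (hence the subtracted term is analytic on the open set $\RS_2$), noting that $2s-2\ge -1$ so the only potential issue of the power is at $u_0=0$, i.e.\ $z=1$. At the branch point $z=1$ I would use the local coordinate $z-1=w^2$ and Lemma~\ref{lem:URS0zero}: $\URS_0$ vanishes simply at $z=1$, so $u_0(z)\sim c\,w$ near $z=1$ on $\RS_1$ and $-u_0(z)\sim c\,w$ on $\RS_2$, i.e.\ $(-u_0(z))^{2s-2}$ is, up to an analytic unit, $w^{2s-2}$. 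For $s>1/2$ (so $2s-2\ge 1$) this is analytic in $w$, and combined with the expansion \eqref{eq:expansion_polylog} of $\Li_s$ near $z=1$ — whose only non-analytic piece is $\Gamma(1-s)(-\log z)^{s-1}$, proportional to $(u_0(z)^2/2)^{s-1}=2^{1-s}u_0^{2s-2}$, which is exactly cancelled on $\RS_2$ by the subtracted term and is itself analytic in $w$ on $\RS_1$ since $2s-2$ is a nonnegative even... \emph{no}: $(-\log z)^{s-1}$ as a function of $w$ is $w^{2s-2}$ up to units, analytic because $2s-2\ge1$ — we conclude $\LRS_s$ extends analytically through $z=1$. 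For $s=1/2$ we have $2s-2=-1$, so $(-u_0(z))^{-1}=1/\URS_0(z)$ is, by Lemma~\ref{lem:URS0zero}, meromorphic on $\RS$ with a simple pole at $z=1$ and no other pole; on $\RS_1$, $\Li_{1/2}$ itself already has the $(-\log z)^{-1/2}\sim 2^{1/2}/\URS_0$ singularity by \eqref{eq:expansion_polylog}, so $\LRS_{1/2}=\Li_{1/2}$ on $\RS_1$ has a simple pole at $z=1$; on $\RS_2$ the definition subtracts a multiple of $1/\URS_0$, and one checks (by matching residues via \eqref{eq:identity_analytic_continuation_0}, where $zP_{1/2}(z)$ is analytic at $z=1$) that the combination still has at worst a simple pole there and no other poles. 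The main obstacle is this bookkeeping at $z=1$: getting the constants and the sheet conventions right so that the residue/singularity statement comes out as claimed, especially reconciling \eqref{eq:expansion_polylog} (which is stated on $\RS_1$) with the $\RS_2$-formula through the local coordinate $w$.
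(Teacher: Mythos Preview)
Your argument is correct and rests on the same two ingredients as the paper's: identity \eqref{eq:identity_analytic_continuation_0}, which isolates the analytic piece $-\tfrac{\Gamma(1-s)}{2\pi i}zP_s(z)$, and the fact that $2s-2$ is an odd integer so that $(-u_0)^{2s-2}$ flips sign across the cut. The paper, however, packages these into a single line: it writes down the global formula
\[
\LRS_s(z)=-\frac{\Gamma(1-s)z}{2\pi i}\,P_s(z)+2^{1-s}\Gamma(1-s)\,(-\URS_0(z))^{2s-2},
\]
which is manifestly analytic on $\RS$ (since $zP_s$ is analytic on $\C\setminus(-\infty,0]$ and $\URS_0$ is analytic on $\RS$ by Lemma~\ref{lem:URS0zero}), and then simply checks, using $(-1)^{2s-2}=-1$, that this agrees with the piecewise definition \eqref{eq:LRSdef} on each sheet. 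This single formula makes your explicit boundary-value matching and your separate local-coordinate analysis at $z=1$ unnecessary: analyticity at $z=1$ for $s>1/2$ and the simple pole for $s=1/2$ are read off directly from the simple zero of $\URS_0$ there. Your route is a correct verification; the paper's route is the same argument, compressed by using $\URS_0$ from the outset.
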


\begin{proof}
We set $\LRS_s(z)= -\frac{\Gamma(1-s)z}{2\pi i} P_s(z)+2^{1-s} \Gamma(1-s)(- \URS_0(z))^{2s-2}$ where $\URS_0(z)$ given by \eqref{def:coordinate_u0_sheet_2}. Using $(-1)^{2s-2}=-1$ and \eqref{eq:identity_analytic_continuation_0}, we observe that this function is equal to \eqref{eq:LRSdef} on each sheet. For $s=1/2$, we note that $(- \URS_0(z))^{2s-2}= - \frac1{\URS_0(z)}$. 
\end{proof}

%%%%%%%%%%%%%%%%%%%%%%%%%%%%%%%%%%%%%%%%%%%%%%%%%%%%%%%%%%%%%%%%%
\subsection{Extension of the functions $\ff_1$ and $\ff_2$} \label{sec:f12exRS}

In the asymptotic analysis of $F(x; \tau, \gamma)$ as $\tau\to \infty$ in the next section, we need to extend several functions involving polylog to the Riemann surface $\RS$. These functions are defined below as $\ff_1$, $\ff_2$, and $\EE$ and appear later in \eqref{eq:ff12E}. 
In this subsection we discuss the analytic extension and properties of $\ff_1$ and $\ff_2$, and postpone the discussion of $\EE$ to the next subsection.

The functions $\ff_1(z)$ and $\ff_2(z)$ are given by 
\begin{equation*} \begin{split}
	&\ff_1(z) := \frac1{\sqrt{2\pi}} \Li_{3/2}(z) - \frac1{\sqrt{2\pi}} \Li_{5/2}(z)  -2 u_0(z) - \frac23 u_0(z)^3, 
	\quad \ff_2(z) :=   2u_0(z) - \frac1{\sqrt{2\pi}} \Li_{3/2}(z) .
\end{split} \end{equation*}
Define 
\begin{equation*}
\begin{split}
	\ffRS_1(z) := \frac1{\sqrt{2\pi}} \LRS_{3/2}(z) - \frac1{\sqrt{2\pi}} \LRS_{5/2}(z)  -2 \URS_0(z) - \frac23 \URS_0(z)^3
\end{split} 
\end{equation*}
and
\begin{equation*}
\begin{split}
	\ffRS_2(z) :=  2\URS_0(z) - \frac1{\sqrt{2\pi}} \LRS_{3/2}(z) .
\end{split} 
\end{equation*}
These functions are analytic on $\RS$.
They satisfy $\ffRS_1=\ff_1$ and $\ffRS_2=\ff_2$ on the first sheet $\RS_1$. 
For later use, we state the formula on the second sheet explicitly. 
Using \eqref{def:coordinate_u0_sheet_2} and \eqref{eq:LRSdef}, and $\Gamma(-1/2)=-2\sqrt{\pi}$ and $\Gamma(-3/2)=4\sqrt{\pi}/3$, we find the following.

\begin{lem} \label{lem:ffRS12second}
The functions $\ffRS_1$ and $\ffRS_2$ are analytic on $\mathcal R$ and satisfy
\begin{equation*}\begin{split}
	\ffRS_1(z) = \frac1{\sqrt{2\pi}} \Li_{3/2}(z) - \frac1{\sqrt{2\pi}} \Li_{5/2}(z) 
	\quad \text{and}\quad 
	\ffRS_2(z)=   - \frac1{\sqrt{2\pi}} \Li_{3/2}(z) 
	\quad \text{for $z\in \RS_2$.} 
\end{split} \end{equation*}
In addition, the functions $\ffRS_1$ and $\ffRS_2$ have continuous boundary values $\ffRS_1(z\pm i0)$ and $\ffRS_2(z\pm i0)$ for $z$ on the interval $(-\infty,0)$ in the boundary of $\mathcal R_1$.
\end{lem}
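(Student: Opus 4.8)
The plan is to reduce everything to the analytic-continuation facts already established for $\URS_0$ (Lemma~\ref{lem:URS0zero}) and for $\LRS_s$ (the extension lemma preceding this subsection), then to carry out a short bookkeeping computation on the second sheet, and finally to inspect a branch cut near $(-\infty,0)$.

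First I would record analyticity on $\RS$. By Lemma~\ref{lem:URS0zero}, $\URS_0$ is analytic on $\RS$, and by the extension lemma for $\LRS_s$, applied with $s=\tfrac32$ and $s=\tfrac52$ (neither equal to $\tfrac12$), the functions $\LRS_{3/2}$ and $\LRS_{5/2}$ are analytic on all of $\RS$, including the branch point $z=1$. Since $\ffRS_1$ and $\ffRS_2$ are by definition finite polynomial combinations of $\LRS_{3/2}$, $\LRS_{5/2}$ and $\URS_0$ with constant coefficients, analyticity on $\RS$ is immediate.

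Next I would verify the second-sheet formulas. On $\RS_2$ one has $\URS_0=-u_0$ by \eqref{def:coordinate_u0_sheet_2}, and from \eqref{eq:LRSdef}, using $(-u_0)^{2s-2}=-u_0$ for $s=\tfrac32$ and $(-u_0)^{2s-2}=-u_0^3$ for $s=\tfrac52$ together with $\Ga(-\tfrac12)=-2\sqrt{\pi}$ and $\Ga(-\tfrac32)=\tfrac{4\sqrt{\pi}}{3}$, one obtains on $\RS_2$
\[
\tfrac1{\sqrt{2\pi}}\LRS_{3/2}(z)=\tfrac1{\sqrt{2\pi}}\Li_{3/2}(z)-2u_0(z),\qquad
\tfrac1{\sqrt{2\pi}}\LRS_{5/2}(z)=\tfrac1{\sqrt{2\pi}}\Li_{5/2}(z)+\tfrac23 u_0(z)^3 .
\]
Substituting these, and $\URS_0=-u_0$, into the definitions of $\ffRS_1$ and $\ffRS_2$, the terms involving $u_0$ and $u_0^3$ cancel identically, leaving the claimed expressions $\ffRS_1=\tfrac1{\sqrt{2\pi}}\Li_{3/2}-\tfrac1{\sqrt{2\pi}}\Li_{5/2}$ and $\ffRS_2=-\tfrac1{\sqrt{2\pi}}\Li_{3/2}$ on $\RS_2$. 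I expect this cancellation to be the one place requiring genuine care: one must keep track of the constants $2^{2-s}\Ga(1-s)$ and of the fact that $2s-2$ is odd for both values of $s$, so that the extra monomials in $u_0$ in $\LRS_s$ exactly match the $-2\URS_0-\tfrac23\URS_0^3$ and $2\URS_0$ contributions.

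Finally I would address the boundary values along $(-\infty,0)\subset\partial\RS_1$. On $\RS_1$ we have $\ffRS_j=\ff_j$, built from $\Li_{3/2}$, $\Li_{5/2}$, $u_0$ and $u_0^3$. The polylogs $\Li_{3/2},\Li_{5/2}$ are analytic on $\C\setminus[1,\infty)$ by \eqref{eq:integral_repr_polylog}, hence analytic in a neighborhood of $(-\infty,0)$ and a fortiori have continuous boundary values there. The only source of a jump is $u_0(z)=-(-2\log z)^{1/2}$: as $z\to x<0$ from the upper (resp. lower) half-plane, $-2\log z\to -2\log|x|\mp 2\pi i$, which has nonzero imaginary part and therefore stays off the branch cut $(-\infty,0]$ of $w\mapsto w^{1/2}$; hence $u_0$, and with it $u_0^3$, extends continuously to each side of $(-\infty,0)$. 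Consequently $\ffRS_1$ and $\ffRS_2$ admit continuous boundary values $\ffRS_1(z\pm i0)$ and $\ffRS_2(z\pm i0)$ for $z\in(-\infty,0)$. I do not anticipate any serious obstacle: the entire argument is bookkeeping, with the second-sheet cancellation being the only spot where a stray sign or constant could creep in.
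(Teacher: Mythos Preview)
Your proposal is correct and follows exactly the approach indicated in the paper: use the definitions \eqref{def:coordinate_u0_sheet_2} and \eqref{eq:LRSdef} together with the values $\Gamma(-1/2)=-2\sqrt{\pi}$ and $\Gamma(-3/2)=4\sqrt{\pi}/3$ to compute the second-sheet expressions, with analyticity inherited from that of $\URS_0$ and $\LRS_s$. You simply supply more detail than the paper (which records only the hint preceding the lemma), including an explicit check of the continuous boundary values of $u_0$ along $(-\infty,0)$; the cancellation you flag is exactly the computation the paper leaves to the reader.
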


\subsection{Extension of the function $\EE$} \label{sec:EextRS}

We now consider the function 
$$
	\EE(z):= e^{2B(z) -2Q(u_0(z))}.
$$
Changing the variables in the definition \eqref{def:Q_function} of $Q$ and using $u'_0(t)=-1/(tu_0(t))$,
$$
	Q(u_0(z))=\sqrt{\frac{2}{\pi}}\int_{-\infty}^{u_0(z)}\Li_{1/2}(e^{-y^2/2})dy
	= - \sqrt{\frac{2}{\pi}}\int_0^{z} \frac{\Li_{1/2}(t)}{t u_0(t) } dt \quad \text{for $0<|z|<1$ with $z\notin(-1,0)$,} 
$$
where the integration contour for the second integral is in the same domain as $z$. 
Combined with the definition \eqref{def:functions_A} of $B$, we have 
\begin{equation}\label{eq:BmQf}
	2B(z)-2Q(u_0(z)) 
	= \frac{1}{2\pi}\int_0^{z} \frac{\Li_{1/2}(t)}{t} \left(\Li_{1/2}(t)+\frac{2^{5/2}\sqrt{\pi}}{u_0(t)}\right)dt 
\end{equation}
for $0<|z|<1$ with $z\notin(-1,0)$.

Define the function 
\begin{equation}\label{eq:BQexpRS}
	\ggRS(z):= \frac{\LRS_{1/2}(z)}{2\pi z} \left(\LRS_{1/2}(z)+\frac{2^{5/2}\sqrt{\pi}}{\URS_0(z)}\right) \quad \text{for } z\in \RS.
\end{equation}
On the first sheet $\mathcal R_1$ this function agrees with the integrand of \eqref{eq:BmQf}. 
It is analytic on $\RS$ except possibly at the branch point $z=1$ (note that $z=0$ is not on $\RS$). 

We look at the limit as $z\to 1$. Using \eqref{eq:expansion_polylog} and \eqref{eq:LRSdef}, we find that 
\begin{equation} \label{eq:ggRSasn1}
	\ggRS(z)= - \frac{3}{u_0(z)^2} + \Boh(u_0(z)^{-1}) = \frac{3}{2(z-1)} + \Boh((z-1)^{-1/2}) 
\end{equation}
as $z\to 1$. In terms of the local coordinates $z-1=w^2$, 
$$
	\ggRS(z) dz = \left( \frac{3}{w} + \Boh(1) \right) dw, 
$$
which implies that $\res_{z=1} \ggRS(z)dz =3$. 
Due to the pole of $\ggRS$, the integral $\int_{0^{(1)}}^z \ggRS(t) dt$ depends on the choice of the path on $\RS$. 
However, since the residue of $\ggRS$ at the pole is an integer, $\exp\left(\int_{0^{(1)}}^z \ggRS(t) dt\right)$ does not depend on the choice of path.

\begin{lem}\label{lem:EERSzero}
The function
\begin{equation*}
	\EERS(z):= \exp \left( \int_{0^{(1)}}^z \ggRS(t) dt \right) = \exp \left( \int_{0^{(1)}}^z \frac{\LRS_{1/2}(z)}{2\pi z} \left(\LRS_{1/2}(z)+\frac{2^{5/2}\sqrt{\pi}}{\URS_0(z)}\right) dt \right) 
\end{equation*}
is well defined and analytic on $\RS$.
We have $\EERS(z)= E(z)$ on $z\in \RS_1$ satisfying $0<|z|<1$. % with $z\notin(-1,0)$. 
Furthermore, $\EERS$ has a zero of order $3$ at the branch point $z=1$. 
Finally, $\EERS(z)$ has continuous boundary values $\EERS(z\pm i0)$ for $z$ on the interval $(-\infty,0)$ in the boundary of $\mathcal R_1$.
\end{lem}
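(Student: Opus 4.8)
The plan is to verify the four assertions of the lemma in turn; apart from a short local analysis at the branch point $z=1$, everything reduces to combining the expansion \eqref{eq:ggRSasn1} with the identity \eqref{eq:BmQf}. The well-definedness of $\EERS$ on $\RS\setminus\{1\}$ is already contained in the discussion preceding the statement, and I would simply record the ingredients: each building block of $\ggRS$ is holomorphic off $z=1$ ($\LRS_{1/2}$ has its only pole at $z=1$, $\URS_0$ has its only zero at $z=1$ by Lemma~\ref{lem:URS0zero}, and $z\mapsto 1/z$ is holomorphic on $\RS$ since $0\notin\RS$), the integrand decays at $0^{(1)}$ (because $\Li_{1/2}(t)=\Boh(t)$ and $\URS_0(t)^{-1}\to0$ as $t\to0$ on $\RS_1$), and the meromorphic $1$-form $\ggRS\,dz$ has a simple pole at $z=1$ with the integer residue $3$; hence the improper integral from $0^{(1)}$ converges and $\exp(\int_{0^{(1)}}^z\ggRS\,dt)$ is path-independent. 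Being locally the exponential of a primitive of the holomorphic form $\ggRS\,dz$, $\EERS$ is then analytic on $\RS\setminus\{1\}$.

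Next I would treat the branch point. In the local coordinate $w$ with $z-1=w^2$, the displayed consequence of \eqref{eq:ggRSasn1} gives $\ggRS\,dz=(\tfrac3w+h(w))\,dw$ with $h$ holomorphic near $w=0$ — indeed $\ggRS$ is meromorphic on $\RS$, so in the $w$-chart $\ggRS\,dz$ is a meromorphic $1$-form, and subtracting its polar part $\tfrac3w\,dw$ leaves a form holomorphic at $w=0$. Integrating from $0^{(1)}$ to a fixed point near $z=1$ and then along $w$, we get $\int_{0^{(1)}}^z\ggRS\,dt=3\log w+H(w)$ with $H$ holomorphic near $w=0$; replacing $\log w$ by $\log w+2\pi i$ shifts this by $6\pi i$, which disappears after exponentiation (consistently with path-independence). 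Therefore, near $z=1$, $\EERS=w^3e^{H(w)}$, a holomorphic function of $w$ with a nowhere-vanishing factor $e^{H(w)}$. This simultaneously shows that $\EERS$ extends analytically across $z=1$, hence is analytic on all of $\RS$, and that it vanishes there to order exactly $3$.

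For the identification $\EERS=\EE$ on $\{0<|z|<1\}\cap\RS_1$, I would note that on the first sheet $\ggRS$ is precisely the integrand appearing on the right of \eqref{eq:BmQf}, and that the set $\{0<|t|<1\}\cap\RS_1=\{0<|t|<1\}\setminus(-1,0]$ is simply connected and contains neither $0$ nor $1$; hence the path in $\int_{0^{(1)}}^z\ggRS\,dt$ may be taken inside it, and \eqref{eq:BmQf} gives $\int_{0^{(1)}}^z\ggRS\,dt=2B(z)-2Q(u_0(z))$. Exponentiating yields $\EERS(z)=e^{2B(z)-2Q(u_0(z))}=\EE(z)$ on this set, and since both sides are analytic there the identity holds throughout it.

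Finally, for the continuous boundary values on $(-\infty,0)\subset\partial\RS_1$, it is enough to show that $\ggRS$ extends continuously to $x\pm i0$ for $x<0$ from within $\RS_1$: a point $x\pm i0$ is joined to $0^{(1)}$ by a path avoiding $z=1$ on which $\ggRS$ is bounded, so $\int_{0^{(1)}}^{x\pm i0}\ggRS\,dt$, and hence $\EERS$, then has continuous one-sided limits. On $\RS_1$ one has $\LRS_{1/2}=\Li_{1/2}$, which is analytic across $(-\infty,0)$, and $1/z$ is continuous there; the only term deserving a remark is $1/\URS_0$, and for $z=x\pm i0$ with $x<0$ one has $-2\log z=-2\log|x|\mp2\pi i\notin(-\infty,0]$, so $\URS_0(z)=-(-2\log z)^{1/2}$ has continuous, nonzero boundary values and $1/\URS_0$ is continuous. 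The step demanding the most care is the local analysis at $z=1$: one must make sure that the $\Boh(1)$ term in the $w$-expansion of \eqref{eq:ggRSasn1} is a genuinely \emph{holomorphic} $h(w)\,dw$ and not merely a bounded remainder, since only then is the primitive $H$ holomorphic and $e^{H(w)}$ non-vanishing — which is exactly what pins the order of the zero at $z=1$ to be $3$.
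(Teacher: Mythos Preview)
Your proof is correct and follows essentially the same approach as the paper's: path-independence from the integer residue at $z=1$, the local expansion \eqref{eq:ggRSasn1} in the coordinate $w$ to handle the branch point, the identification with $\EE$ via \eqref{eq:BmQf} on the first sheet, and continuity of $\Li_{1/2}$ and $u_0$ for the boundary values. Your treatment is in fact slightly more careful than the paper's at one point: you correctly insist that the remainder in the $w$-expansion of $\ggRS\,dz$ be genuinely holomorphic (not merely $\Boh(1)$), since this is what guarantees $e^{H(w)}$ is analytic and non-vanishing and hence pins the order of the zero at exactly $3$; the paper's proof glosses over this by writing $\frac{3}{2}\log(z-1)+\Boh(1)$ and then asserting the cofactor $f$ is analytic.
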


\begin{proof}
For $z\neq 1$, 
the value of $\EERS(z)$ does not depend on the choice of path of the integral on the Riemann surface since the residue of $\ggRS$ at the pole $1$ is an integer. Being an integral of an analytic function, it is analytic possibly except at $z=1$. 
As $z\to 1$, we have from \eqref{eq:ggRSasn1} 
$$
\int_{0^{(1)}}^z \ggRS(t) dt  = \frac{3}{2}\log(z-1)+\Boh(1), 
$$
which implies that $\EERS(z) = (z-1)^{3/2} f(z) = w^3 f(1+w^2)$ for an analytic function $f$ and the local coordinates $z-1=w^2$. 
This analysis shows that $\EERS$ is analytic near $z=1$ and it has a zero of order $3$ at $z=1$.

The continuity of the boundary values of $\EERS$ follow from continuity of the boundary values for $\Li_{1/2}$ and $u_0$ away from $z=0$.
\end{proof}

\subsection{Behavior of the function $\EERS(z)$ near $z=0$ on the second sheet}

We conclude this section with the behavior of $\EERS(z)$ as $z\to 0^{(2)}$ on the second sheet $\RS_2$. 
The following result will be used in the proof of Lemma~\ref{lem:integral_gamma5} in the next section. 

\begin{lem}\label{prop:local_expansion_prefactor_gaussian}
We have 
$$
	\EERS(z) =- 4u_0(z)^4 (1+\Boh(z^2)) \quad \text{as $z\to 0^{(2)}$.} 
$$
\end{lem}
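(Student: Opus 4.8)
The plan is to track the definition $\EERS(z) = \exp\left(\int_{0^{(1)}}^z \ggRS(t)\,dt\right)$ around a path that starts on the first sheet near the origin, crosses to the second sheet through the cut along $[1,\infty)$, and returns near $0^{(2)}$. The residue of $\ggRS\,dz$ at $z=1$ is $3$, so the precise homotopy class of the path is irrelevant; what matters is computing the line integral along one convenient representative. First I would split the integral as $\int_{0^{(1)}}^z = \int_{0^{(1)}}^{z_0} + \int_{z_0}^{z}$ for a fixed base point $z_0$ on the first sheet (say $z_0$ small real positive), where the first piece is a fixed constant and the second piece is taken along a path that loops once through the branch cut to reach the point $z$ lying over a small neighborhood of $0$ on the second sheet.

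The key computation is the difference between $\ggRS$ on the second sheet and its first-sheet counterpart. Using Lemma on $\LRS_s$, on $\RS_2$ we have $\LRS_{1/2}(z) = \Li_{1/2}(z) - 2^{3/2}\Gamma(1/2)(-u_0(z))^{-1} = \Li_{1/2}(z) + 2^{3/2}\sqrt{\pi}/u_0(z)$ (using $(-u_0)^{-1} = -1/u_0$ and $\Gamma(1/2)=\sqrt\pi$), while $\URS_0(z) = -u_0(z)$ on the second sheet. Substituting these into \eqref{eq:BQexpRS} and comparing with the first-sheet integrand, one finds that $\ggRS$ on the second sheet equals $\frac{\LRS_{1/2}(z)}{2\pi z}\left(\LRS_{1/2}(z) + \frac{2^{5/2}\sqrt\pi}{\URS_0(z)}\right)$ with the above substitutions; the combination inside simplifies because the $u_0^{-1}$ correction terms partially cancel. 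The upshot should be an identity of the form $\ggRS^{(2)}(z) = -\ggRS^{(1)}(z) + (\text{logarithmic-derivative term})$, where the extra term is $\frac{d}{dz}\log\big(u_0(z)^{4}\big)$ up to a constant, since $u_0' = -1/(zu_0)$ gives $\frac{d}{dz}\log u_0 = -1/(zu_0^2) = 1/(2z\log z)$, and one expects a term proportional to $\Li_{1/2}(z)/(z u_0(z))$ reorganizing into such a logarithmic derivative. Integrating this term contributes the factor $u_0(z)^4$; the loop around $z=1$ contributes a factor $e^{2\pi i \cdot 3} = 1$ (consistent with well-definedness); and the overall constant $-4$ must be pinned down by evaluating the ``reflected'' first-sheet contribution, i.e. by relating $\EERS(z^{(2)})$ to $1/\EERS(z^{(1)})$ or to $E(z)^{-1}$ near a reference point, then using the known behavior of $E(z) = e^{2B(z)-2Q(u_0(z))}$ and of $u_0(z) \sim -(-2\log z)^{1/2}$ as $z\to 0$.

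Concretely, I would derive from \eqref{eq:BmQf} and \eqref{eq:BQexpRS} the relation $\ggRS^{(2)} + \ggRS^{(1)} = \frac{1}{\pi z}\left(\frac{2^{5/2}\sqrt\pi \,\Li_{1/2}(z)}{u_0(z)} + \frac{4\pi}{u_0(z)^2} + \frac{2^{5/2}\sqrt\pi}{u_0(z)}\cdot\frac{2^{3/2}\sqrt\pi}{u_0(z)}\cdot\frac{1}{(\cdots)}\right)$ — i.e. collect all terms, and recognize the right side as $-2\,\frac{d}{dz}\log\big(u_0(z)^2\big) \cdot(\text{const}) + (\text{lower order})$, using $u_0 u_0' = -1/z$. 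Integrating, $\exp\left(\int (\ggRS^{(2)}+\ggRS^{(1)})\right)$ produces $c\, u_0(z)^{4}$ for a constant $c$; then $\EERS(z^{(2)}) = c\,u_0(z)^4 / \EERS(z^{(1)})$, and as $z\to 0$ we have $\EERS(z^{(1)}) = E(z) \to 1$ (since $B(z)\to 0$ and $Q(u_0(z))\to 0$ as $u_0 \to -\infty$ with $\Li_{1/2}(e^{-y^2/2})$ integrable — this needs a short check using $\Li_{1/2}(w) = O(\sqrt{-1/\log(1-w)})$ type bounds, or directly that the integrand in \eqref{eq:BmQf} is $O(z^{1/2}/\sqrt{|\log z|})$ hence integrable near $0$, and in fact $2B(z)-2Q(u_0(z)) = O(z^{1/2})$), giving $\EERS(z) \sim c\, u_0(z)^4$. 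Matching the constant $c=-4$ and the error order $O(z^2)$ then follows from a more careful expansion: $\Li_{1/2}(z) = z + O(z^2)$ near $z=0$, $u_0(z)^2 = -2\log z$, so the correction terms in the integrand expand in integer powers of $z$ times powers of $\log z$, and after integration the next correction to $u_0(z)^4$ is $O(z^2)$ (not $O(z)$, because the $O(z)$ contributions integrate against $1/z$ to give $O(z)$ in the exponent, hence $O(z)$ multiplicatively — so I should double-check whether the claim is $1+O(z^2)$ or $1+O(z)$; if the paper asserts $O(z^2)$ there is presumably a parity cancellation, since $u_0$ and the polylog corrections are even/odd in a suitable local variable).

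The main obstacle I anticipate is bookkeeping the various $u_0^{-1}$ and $u_0^{-2}$ correction terms when passing $\LRS_{1/2}$ to the second sheet and showing they collapse into an exact logarithmic derivative $\frac{d}{dz}\log(u_0^4)$ plus a genuinely $O(z^2)$-integrable remainder; getting the constant $-4$ right (rather than $+4$ or $-2$) requires careful sign tracking of $(-u_0)^{2s-2} = -u_0^{2s-2}$ for half-integer $s$ and of $\Gamma(1-s)$ at $s=1/2, 3/2, 5/2$, together with the orientation of the loop around $z=1$. The cleanest route may be to avoid the loop entirely: define the second-sheet value via the explicit analytic continuation formulas already recorded (Lemma on $\ffRS_{1,2}$ gives the analogous second-sheet formulas), write $\EERS$ on $\RS_2$ in closed form using $\LRS_{1/2}$ on $\RS_2$, and then Taylor expand in $z$ at $z=0^{(2)}$ directly, absorbing the constant by comparison at a single convenient point such as evaluating along the negative real axis where $u_0$ and the boundary values are explicit.
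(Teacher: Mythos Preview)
Your overall framework---tracking $\int_{0^{(1)}}^{z}\ggRS(t)\,dt$ along a path that crosses to the second sheet and ends near $0^{(2)}$, using the explicit sheet-by-sheet formulas for $\ggRS$---is exactly what the paper does. But two of your steps are wrong, and they are precisely the steps that produce the constant $-4$.

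First, the branch-point contribution. Passing from $\RS_1$ to $\RS_2$ through the cut near $z=1$ is a \emph{half}-loop in the local coordinate $w$ (where $z-1=w^2$), not a full loop. With $\res_{z=1}\ggRS\,dz=3$ one picks up $e^{\pm 3\pi i}=-1$, not $e^{2\pi i\cdot 3}=1$. Your remark ``consistent with well-definedness'' confuses the (true) fact that a full loop gives $1$ with the sheet-change, which is half a loop. This is the sole source of the minus sign in $-4u_0^4$.

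Second, and more seriously, the hope that $\ggRS^{(1)}\pm\ggRS^{(2)}$ collapses to an exact logarithmic derivative of a power of $u_0$ is false. From \eqref{eq:ssRSsh1}--\eqref{eq:ssRSsh2} one has
\[
\ggRS^{(1)}(t)-\ggRS^{(2)}(t)=\frac{2^{3/2}}{\sqrt{\pi}}\Big(\Li_{1/2}(t)+\frac{\sqrt{2\pi}}{u_0(t)}\Big)\frac{1}{t\,u_0(t)}.
\]
Only the piece $\frac{\sqrt{2\pi}}{u_0}$ yields a logarithmic derivative (it integrates to $-2\log(-\log t)$, producing the $u_0^4$ power). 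The remaining piece $\frac{2^{3/2}}{\sqrt\pi}\frac{\Li_{1/2}(t)}{t\,u_0(t)}$ is \emph{not} lower order: it is $O\!\big((-\log t)^{-1/2}\big)$ near $t=0$, integrable, and its integral on $(0,1)$ is a specific nonzero constant. The paper devotes a separate lemma to this, computing
\[
\lim_{\delta\to 0^+}\int_\delta^\infty \Li_{1/2}(e^{-s^2/2})\,ds+\sqrt{2\pi}\log\delta=-\sqrt{\tfrac{\pi}{2}}\log 2
\]
via harmonic numbers and the Euler--Mascheroni constant, which is what forces the overall constant to be exactly $4\log 2$ in the exponent, i.e. the factor $16$ that combines with $(-\log z)^2=\tfrac14 u_0(z)^4$ to give $4u_0(z)^4$. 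Your sketch treats this term as a perturbation to be absorbed into ``a more careful expansion''; it cannot be---without this explicit evaluation the constant is undetermined.

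The $\Boh(z^2)$ error, about which you express doubt, is correct and does follow from a parity-free argument: on $\RS_2$ near $0$ one has $\ggRS(z)=\tfrac{2}{z\log z}+\Boh(z)$ (since $\Li_{1/2}(z)^2/(2\pi z)=\Boh(z)$), and the $\Boh(z)$ remainder integrates to $\Boh(z^2)$.
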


\begin{proof}
Fix a small real number $\epsilon\in (0,1)$. 
We evaluate the exponent $\int_{0^{(1)}}^z \ggRS(t) dt$ of $\EERS$ using a specific path that consists of several pieces: 
\begin{equation*}
	\int_{0^{(1)}}^z \ggRS(t) dt = \int_{0^{(1)}}^{\epsilon^{(1)}} \ggRS(t) dt + \int_{\epsilon^{(1)}}^{(1-\epsilon)^{(1)}} \ggRS(t) dt+\int_{(1-\epsilon)^{(1)}}^{(1-\epsilon)^{(2)}} \ggRS(t) dt+\int_{(1-\epsilon)^{(2)}}^{\epsilon^{(2)}} \ggRS(t) dt +\int_{\epsilon^{(2)}}^z \ggRS(t) dt.
\end{equation*}
The contours for the second and fourth integrals are straight line segments. 
The first and last integrals are over contours such that $|t|$ stays small. 
The third integral is over a contour that satisfies that $|t-1|$ stays small. 
Note that the first four integrals do not depend on $z$. 

Consider the third integral. By \eqref{eq:BQexpRS} and using the local coordinates $1-t=w^2$, 
\begin{equation*}
	\lim_{\epsilon\to 0} \int_{(1-\epsilon)^{(1)}}^{(1-\epsilon)^{(2)}} \ggRS(t) dt = \lim_{\epsilon\to 0}  \int_{\sqrt{\epsilon}}^{-\sqrt{\epsilon}}  \frac{3}{w}  dw = \pm 3 \pi i, 
\end{equation*}
where the sign depends on the direction that the path goes around the branch point $z=1$. Since $\EERS(z)$ is the exponential of the integral and $e^{3\pi i}= e^{-3\pi i}=-1$, both signs give the same contribution to $\EERS(z)$. 

Now consider the first and the last integrals. From \eqref{eq:BQexpRS} and \eqref{eq:LRSdef}, 
\begin{equation} \label{eq:ssRSsh1}
	\ggRS(z)= \frac{1}{2\pi z}  \Li_{1/2}(z)  \left( \Li_{1/2}(z) +  \frac{2^{5/2}\sqrt{\pi}}{u_0(z)}  \right) \quad \text{for $z\in \RS_1$}
\end{equation}
and 
\begin{equation}\label{eq:ssRSsh2}
	\ggRS(z)= \frac{1}{2\pi z} \left( \Li_{1/2}(z) +  \frac{2^{3/2}\sqrt{\pi}}{u_0(z)}  \right) \left( \Li_{1/2}(z) -  \frac{2^{3/2}\sqrt{\pi}}{u_0(z)}  \right)
	 \quad \text{for $z\in \RS_2$.}
\end{equation}
Since $\Li_{1/2}(z)= z+ O(z^2)$ (see \eqref{def:series_polylog}), 
\begin{equation*}
	\ggRS(z)
	= -\frac2{\sqrt{\pi}(-\log z)^{1/2}} + \Boh(1)\quad \text{as } z\to 0^{(1)}
\end{equation*}
and 
$$
	\ggRS(z)=- \frac{4}{z u_0(z)^2} + \frac{\Li_{1/2}(z)^2}{2\pi z} =\frac{2}{z\log z}+\Boh(z) \quad\text{as } z\to 0^{(2)}.
$$
Hence, 
\begin{equation*}
	\lim_{\epsilon\to 0} \int_{0^{(1)}}^{\epsilon^{(1)}} \ggRS(t) dt
	= -\frac{2}{\sqrt{\pi}} \lim_{\epsilon\to 0}  \int_0^{\epsilon} \frac{1}{(-\log z)^{1/2}} dz  = 0, 
\end{equation*}
and for small enough $|z|$ and $\epsilon$, 
$$
	\int_{\epsilon^{(2)}}^z \ggRS(t) dt = 2\int_{\epsilon}^z \frac{dt}{t\log t}+ \Boh(z^2) +\Boh(\epsilon^2)
	= 2\log(-\log z)-2\log(-\log \epsilon)+ \Boh(z^2) +\Boh(\epsilon^2) .
$$

Finally, for the second and the fourth integrals, we have from \eqref{eq:ssRSsh1} and \eqref{eq:ssRSsh2}, 
$$
	 \int_{\epsilon^{(1)}}^{(1-\epsilon)^{(1)}} \ggRS(t) dt+\int_{(1-\epsilon)^{(2)}}^{\epsilon^{(2)}} \ggRS(t) dt 
	 = \frac{2^{3/2}}{\sqrt{\pi}}\int_\epsilon^{1-\epsilon} \left(\Li_{1/2}(t)+\frac{\sqrt{2\pi}}{u_0(t)}\right)\frac{dt}{tu_0(t)} .
$$
A direction computation shows that 
\begin{equation*} \begin{split}
	 \frac{2^{3/2}}{\sqrt{\pi}}\int_\epsilon^{1-\epsilon} \frac{\sqrt{2\pi}}{t u_0(t)^2} dt
	 &= -2 \int_{\epsilon}^{1-\epsilon} \frac{dt}{t\log t} = -2 \log(-\log(1-\epsilon))+ 2\log (-\log \epsilon)) .
\end{split} \end{equation*} 
We postpone the computation of the remaining part of the integral. 

Combining all together and taking $\epsilon\to 0$ first, we find that 
$$
	\EERS (z) =-  (-\log z)^2 (1+\Boh(z^2)) e^{C} 
$$
as $z\to 0^{(2)}$, where
$$
	C= \lim_{\epsilon\to 0} \left[ \frac{2^{3/2}}{\sqrt{\pi}}\int_\varepsilon^{1-\varepsilon} \Li_{1/2}(t) \frac{dt}{tu_0(t)} - 2 \log(-\log(1-\epsilon)) \right].
$$

We now find the value of $C$ using the lemma below. 
We may replace $\log(-\log(1-\epsilon))$ in the integral in the formula by $\log (\epsilon)$, and replace $\epsilon$ in the lower limit of the integral by $0$ because the integral is convergent at $t=0$. After that we change the variable $t$ to $u_0(t)= -(-2\log t)^{1/2}$ and find 
$$
	C= \lim_{\epsilon\to 0} \left[ - \frac{2^{3/2}}{\sqrt{\pi}}\int_{-\infty}^{-(-2\log(1-\varepsilon))^{1/2}} \Li_{1/2}(e^{-t^2/2}) dt - 2 \log \epsilon \right]. 
$$
Since $\Li_{1/2}(e^{-t^2/2})=\Boh(t^{-1})$ as $t\to 0$, which follows from \eqref{eq:expansion_polylog}, we may replace the upper limit of the integral by $-\sqrt{2\epsilon}$. 
Changing $\sqrt{2\epsilon}=\delta$, and using the evenness of the integrand, we find that $C= 4\log 2$ from the lemma below. This completes the proof. 
\end{proof}

\begin{lem} We have 
$$
	\lim_{\delta\to 0^+}\int_{\delta}^\infty \Li_{1/2}(e^{-t^2/2})dt+\sqrt{2\pi} \log(\delta)=- \sqrt{\frac{\pi}{2}}\log 2.
$$
\end{lem}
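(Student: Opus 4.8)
The plan is to peel off the logarithmic divergence using the known behaviour of $\Li_{1/2}$ near $z=1$, reducing the statement to the evaluation of a convergent constant, and then to compute that constant from the Mellin transform of $\Li_{1/2}(e^{-t^2/2})$, which is an elementary combination of $\Gamma$ and $\zeta$. First I would record the local behaviour of the integrand at $t=0$: setting $s=1/2$ and $z=e^{-t^2/2}$ in \eqref{eq:expansion_polylog}, so that $-\log z=t^2/2$, and using $\Gamma(1/2)=\sqrt\pi$, gives $\Li_{1/2}(e^{-t^2/2})=\sqrt{2\pi}/t+\zeta(1/2)+\Boh(t^2)$ as $t\to 0^+$. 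Since the integrand is continuous on $(0,\infty)$ and decays super-exponentially at $+\infty$, this shows that the quantity in the statement converges, as $\delta\to 0^+$, to
$$
L:=\int_0^1\!\left(\Li_{1/2}(e^{-t^2/2})-\frac{\sqrt{2\pi}}{t}\right)dt+\int_1^\infty \Li_{1/2}(e^{-t^2/2})\,dt,
$$
so it remains only to prove $L=-\sqrt{\pi/2}\,\log 2$.

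To compute $L$ I would introduce the Mellin transform $M(s):=\int_0^\infty t^{s-1}\Li_{1/2}(e^{-t^2/2})\,dt$. Using the series $\Li_{1/2}(e^{-t^2/2})=\sum_{k\ge 1}k^{-1/2}e^{-kt^2/2}$ (valid for $t>0$), the elementary evaluation $\int_0^\infty t^{s-1}e^{-kt^2/2}\,dt=\tfrac12\,2^{s/2}k^{-s/2}\Gamma(s/2)$, and interchanging sum and integral (legitimate for $\re s>1$ by Tonelli), one finds
$$
M(s)=\frac{2^{s/2}\Gamma(s/2)}{2}\,\zeta\!\left(\frac{s+1}{2}\right),\qquad \re s>1 .
$$
On the other hand, the regularized transform $\widetilde M(s):=\int_0^1 t^{s-1}\big(\Li_{1/2}(e^{-t^2/2})-\sqrt{2\pi}/t\big)dt+\int_1^\infty t^{s-1}\Li_{1/2}(e^{-t^2/2})\,dt$ is analytic for $\re s>0$ (the first integrand is bounded near $0$, the second decays super-exponentially) and equals $L$ at $s=1$; and for $\re s>1$ one has $\widetilde M(s)=M(s)-\sqrt{2\pi}/(s-1)$ after subtracting $\sqrt{2\pi}\int_0^1 t^{s-2}\,dt$. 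By analytic continuation this identity persists for all $\re s>0$, hence $L=\widetilde M(1)=\lim_{s\to1}\big(M(s)-\sqrt{2\pi}/(s-1)\big)$.

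Finally I would expand $M(s)$ about $s=1+\varepsilon$, using $2^{s/2}=\sqrt2(1+\tfrac\varepsilon2\log 2+\Boh(\varepsilon^2))$, $\Gamma(s/2)=\sqrt\pi\,(1+\tfrac\varepsilon2\psi(1/2)+\Boh(\varepsilon^2))$ with $\psi=\Gamma'/\Gamma$ the digamma function, and $\zeta(\tfrac{s+1}{2})=\tfrac2\varepsilon+\gamma_{\mathrm E}+\Boh(\varepsilon)$ (from $\zeta(1+x)=x^{-1}+\gamma_{\mathrm E}+\Boh(x)$, $\gamma_{\mathrm E}$ Euler's constant). This gives
$$
M(1+\varepsilon)=\frac{\sqrt{2\pi}}{\varepsilon}+\frac{\sqrt{2\pi}}{2}\big(\gamma_{\mathrm E}+\log 2+\psi(1/2)\big)+\Boh(\varepsilon),
$$
so $L=\tfrac{\sqrt{2\pi}}{2}\big(\gamma_{\mathrm E}+\log 2+\psi(1/2)\big)$. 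Since $\psi(1/2)=-\gamma_{\mathrm E}-2\log 2$, the bracket collapses to $-\log 2$ and $L=-\tfrac{\sqrt{2\pi}}{2}\log 2=-\sqrt{\pi/2}\,\log 2$, as claimed.

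I expect the only genuinely delicate point to be the middle step: justifying that $M$ has a simple pole at $s=1$ with residue exactly $\sqrt{2\pi}$ and that its finite part there is precisely $L$, i.e. the identity $\widetilde M(s)=M(s)-\sqrt{2\pi}/(s-1)$ together with the analytic continuation that lets one evaluate it at $s=1$. Everything else is routine bookkeeping with standard values of $\Gamma$, $\psi$ and $\zeta$.
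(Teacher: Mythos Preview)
Your proof is correct and complete; the Mellin computation, the regularization $\widetilde M(s)=M(s)-\sqrt{2\pi}/(s-1)$, and the final collapse via $\psi(1/2)=-\gamma_{\mathrm E}-2\log 2$ all check out.

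Your route is genuinely different from the paper's. The paper proceeds by expanding $\Li_{1/2}(e^{-t^2/2})=\sum_{k\ge1}k^{-1/2}e^{-kt^2/2}$, integrating termwise over $[\delta,\infty)$, and then reorganizing the resulting sum $\sum_k k^{-1}\int_{\sqrt k\,\delta}^\infty e^{-t^2/2}\,dt$ as $\int_\delta^\infty e^{-t^2/2}h(\lfloor t^2/\delta^2\rfloor)\,dt$ with $h(n)$ the $n$-th harmonic number; the asymptotic $h(n)=\log n+\gamma+O(1/n)$ then reduces everything to the two Gaussian integrals $\int_0^\infty e^{-t^2/2}\,dt=\sqrt{\pi/2}$ and $\int_0^\infty e^{-t^2/2}\log(t^2)\,dt=-\sqrt{\pi/2}(\gamma+\log 2)$. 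So the paper's argument is elementary and self-contained, relying only on the harmonic-number expansion and one explicit Gaussian moment, whereas yours is more structural: you recognize the integral as a Mellin transform that factors as $\Gamma\cdot\zeta$, and read off the constant as the regular part at the pole. Your method is cleaner and generalizes immediately (to $\Li_\alpha(e^{-t^2/2})$ or other kernels), at the cost of invoking the Laurent expansion of $\zeta$ at $1$ and the value $\psi(1/2)$; the paper's method avoids analytic continuation entirely but is more ad hoc. The underlying special-function content is the same---the paper's second Gaussian integral is exactly the statement $\Gamma'(1/2)=\sqrt\pi\,\psi(1/2)$ in disguise.
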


\begin{proof}
Using the power series formula \eqref{def:series_polylog} for the polylog and changing variables, 
\begin{equation*}
\begin{split}
	\int_{\delta}^\infty \Li_{1/2}(e^{-t^2/2}) dt
	=\sum_{k= 1}^\infty \frac{1}{k}\int_{\sqrt{k}\delta}^\infty e^{-t^2/2}dt
	= \int_\delta^\infty e^{-t^2/2}h\left(\left[ t^2\delta^{-2}\right]\right)dt
\end{split}
\end{equation*}
where $h(n):=1+\frac12+\cdots+\frac1{n}$ is the $n$-th partial sum of the harmonic series. 
We have 
\begin{equation*}
	h(n)=\log n +\gamma +\epsilon_n ,
\end{equation*}
where $\gamma$ is the Euler-Mascheroni constant, and $\epsilon_n= O(n^{-1})$ as $n\to \infty$. 
Set $\tilde{\epsilon}_y:=\epsilon_{[y]}+\log[y]-\log y$ and note that there is a constant $C>0$ such that $|\tilde{\epsilon}_y|\le \frac{C}{y}$ for all $y\ge 1$. Hence, 
$$
	\int_{\delta}^\infty \Li_{1/2}(e^{-t^2/2}) dt = \int_{\delta}^\infty e^{-t^2/2} \left( \log (t^2 \delta^{-2})  +\gamma +\tilde\epsilon_{t^2\delta^{-2}} \right)  dt.
$$
Note that
$$
	\int_{0}^\infty e^{-t^2/2} d t= \sqrt{ \frac{\pi}{2}} \qquad\text{and}\qquad
	\int_{0}^{\infty}e^{-t^2/2} \log (t^2)dt =-\sqrt{ \frac{\pi}{2}} (\gamma+\log 2).
$$
On the other hand, 
\begin{equation*}
	\left| \int_{\delta}^{\infty}e^{-t^2/2} \tilde\epsilon_{t^2\delta^{-2}} dt \right|
	\le C\delta^2 \left( \int_\delta^1 \frac1{t^2} dt + \int_1^\infty e^{-t^2/2} dt \right) = O(\delta). 
\end{equation*}
Combining the above equations, we obtain the result. 
\end{proof}

%%%%%%%%%%%%%%%%%%%%%%%%%%%%%%%%%%%%%%%%%%%%%%%%%
\section{Large time limit}\label{sec:gaussian}

Recall the distribution function $F$ in \eqref{def:limiting_distribution}. In this section we consider the large $\tau$ limit of $F(x_\tau; \tau, \gamma)$ with
\begin{equation}\label{eq:x_scaling_large_tau}
	x_\tau= -\tau +\frac{\pi^{1/4}}{\sqrt{2}}\rx \tau^{1/2}, \qquad \rx\in \R \; \mbox{fixed}, 
\end{equation}
and prove Theorem \ref{thm:gaussian_degeneration}. 
This section is split into four subsections. 
In the first subsection, we analyze the asymptotics of the Fredholm determinant in the integrand of \eqref{def:limiting_distribution} and show that all but the first two terms of the Fredholm series expansion are exponentially small. 
In the second subsection, we show that the main contribution to $F(x_\tau; \tau, \gamma)$ comes from the integral involving the trace of the operator $\mb K_z$ and then show that the integral can be expressed as a single integral involving polylog functions. 
In the third subsection we evaluate the integral using the method of steepest descent and, finally, in the fourth subsection we combine all the ingredients to prove Theorem \ref{thm:gaussian_degeneration}.

%%%%%%%%%%%%%%%%%%%%%%%%%%%%%%%%%%%%%%%%%%%%%%%%%%%%%%%%%%%%%%%%%%%%%%%%%%%%%%%%%%%%%%%%%%%%%%%%%%%%%%%%%%%%%%%%%%%%%%%%%%%%%%%%%%
\subsection{Asymptotic analysis of the Fredholm determinant}

The formula \eqref{def:limiting_distribution} of $F(x_\tau; \tau, \gamma)$ is given by an integral of the variable $z\in\D\setminus\{0\}$ where $\D$ denotes the open unit disk. 
It is convenient to write
$$
	z=e^{\log|z|} e^{i\theta_0(z)/2}, \quad \theta=\theta_0(z)=\begin{dcases}
	2\arg z, & \arg z \in [0,\pi],\\
	-2(2\pi -\arg z),& \arg z\in (\pi,2\pi).
	\end{dcases}
$$
Note that $\theta_0(z)\in (-2\pi,2\pi]$. Using the above notation, the set $\mathcal S_-$ (recall \eqref{def:bethe_root_set}), which is the set of solutions of $e^{-\xi^2/2}=z$ with $\re \xi<0$, can be enumerated as $\mathcal S_-=\{u_k\}_{k\in \Z}$ where 
\begin{equation*}
 	u_k= u_k(z) 
 	=-(-2\log|z|+i\theta_k)^{1/2},\qquad \theta_k:=-\theta_0(z)+4\pi k.
\end{equation*}
In particular
$$
	u_0(z)= -(-2\log|z|+i\theta_0)^{1/2} =  -(-2\log z)^{1/2}
$$
agrees with the previous definition in \eqref{def:coordinate_u0}.
We denote the real part and the imaginary part of $u_k$ by $x_k$ and $y_k$: 
\begin{equation*}
	u_k= u_k(z)=x_k+iy_k = x_k(z)+iy_k(z) .
\end{equation*}
These values $x_k$ and $y_k$ satisfy the relations
\begin{equation}\label{eq:relations_real_imaginary_u}
	x_k^2-y_k^2=-2\log|z|, \qquad 2x_ky_k=\theta_k.
\end{equation}
Solving for $x_k^2$, we have 
\begin{equation}\label{eq:identity_xk}
	\left(x_k(z)\right)^2=-\log|z| +\sqrt{(\log|z|)^2+\theta_k^2/4}.
\end{equation}
Also note that 
\begin{equation}\label{eq:GL_ineq_thetak}
	\theta_k^2\ge 4\left(2k-\frac{\theta_0}{2\pi}\right)^2\pi^2\geq  4(2|k|-1)^2\pi^2 \qquad \text{for $k\neq 0$.} 
\end{equation}

Recall the formula \eqref{def:phi_function} of the function $\Phi$ appearing in \eqref{eq:initial_kernel}. With the change $x\mapsto x_\tau$ in \eqref{eq:x_scaling_large_tau}, it becomes 
\begin{equation} \label{eq:leading_term_Phi}
	\Phi_\tau(\xi) := \Phi(\xi; z, x_\tau, \tau) = -\tau \left( \frac13 \xi^3+\xi\right) + \tau^{1/2} \frac{\pi^{1/4} \rx}{\sqrt{2}} \xi - Q(\xi)
\end{equation}
where $Q$ is defined in \eqref{def:Q_function}. Using~\eqref{eq:relations_real_imaginary_u}, we write
\begin{equation}\label{eq:relation_phi_p}
\re \Phi_\tau(u_k)=-\frac{\tau}{3}p_z(x_k)+\tau^{1/2} \frac{\pi^{1/4}\rx }{\sqrt{2}}x_k-\re Q(u_k)
\end{equation}
for $u_k=x_k+iy_k\in \mathcal S_-$, where
$$
p_z(w):=-2w^3+3w(1-2\log|z|). 
$$

In the next few results, we estimate $\re \Phi_\tau(u)$ for $u$ on $\mathcal S_-$. These results involve constants which are given numerically to several digits and will be later combined to estimate a Fredholm determinant from its series. The exact value of these constants, as they appear here, could be relaxed at the cost of estimating the very first few terms of the series for the Fredholm determinant separately, taking advantage of the alternating signs that appear when computing determinants. However, we opt for giving these precise values of the constants and use only the first term of the series.

\begin{lem}
There is a constant $\delta\in (0,1)$ such that  
\begin{equation*} 
	p_z(x_k(z))\ge \begin{dcases}
	- 1.42,&k=0,\\
	5.81|k|^{3/2}, & k\ne 0,\\
	\end{dcases}
\end{equation*}
for $1-\delta\le |z|\le 1$. 
\end{lem}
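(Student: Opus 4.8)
The plan is to reduce the estimate to a one‑variable calculus problem. Set $a:=-\log|z|\ge 0$, which is small when $|z|$ is close to $1$. By \eqref{eq:identity_xk} we have $x_k(z)^2=a+\sqrt{a^2+\theta_k^2/4}$, and since $u_k\in\mathcal S_-$ has negative real part, $x_k(z)=-s_k$ with $s_k:=\big(a+\sqrt{a^2+\theta_k^2/4}\big)^{1/2}\ge 0$. Substituting $w=-s_k$ into $p_z(w)=-2w^3+3(1+2a)w$ gives $p_z(x_k(z))=2s_k^3-3(1+2a)s_k=:g_a(s_k)$. The cubic $g_a(s)=2s^3-3(1+2a)s$ on $[0,\infty)$ has a unique minimum at $s_\ast=\sqrt{(1+2a)/2}$, with
\begin{equation*}
	g_a(s_\ast)=-\sqrt{2}\,(1+2a)^{3/2},
\end{equation*}
and $g_a$ is strictly increasing on $[s_\ast,\infty)$ because $g_a'(s)=6s^2-3(1+2a)$. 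Everything below follows by plugging the constraints on $s_k$ into this picture and taking $a$ small.

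For $k=0$ I would use only the crude bound $p_z(x_0(z))=g_a(s_0)\ge g_a(s_\ast)=-\sqrt2\,(1+2a)^{3/2}$. Since $\sqrt2=1.41421\ldots<1.42$, by continuity one can pick $\delta\in(0,1)$ small enough that $\sqrt2\,(1+2a)^{3/2}\le 1.42$ for every $a\in[0,-\log(1-\delta)]$, which gives $p_z(x_0(z))\ge -1.42$ on the annulus $1-\delta\le|z|\le 1$.

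For $k\ne 0$ I would feed in the lower bound \eqref{eq:GL_ineq_thetak}: it yields $\theta_k^2/4\ge(2|k|-1)^2\pi^2$, hence $s_k^2\ge\sqrt{\theta_k^2/4}\ge(2|k|-1)\pi\ge\pi$. Once $\delta$ is small enough that $(1+2a)/2<\pi$ on the annulus, we get $s_k\ge\sqrt{(2|k|-1)\pi}>s_\ast$, so monotonicity of $g_a$ on $[s_\ast,\infty)$ gives
\begin{equation*}
	p_z(x_k(z))=g_a(s_k)\ \ge\ 2\big((2|k|-1)\pi\big)^{3/2}-3(1+2a)\sqrt{(2|k|-1)\pi}.
\end{equation*}
For $|k|=1$ the right‑hand side is $2\pi^{3/2}-3(1+2a)\sqrt\pi$, and since $2\pi^{3/2}-3\sqrt\pi=5.819\ldots>5.81$, shrinking $\delta$ once more makes this $\ge 5.81$ throughout the annulus. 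For $|k|\ge 2$ I would use $2|k|-1\ge\tfrac{3}{2}|k|$ together with $\sqrt{(2|k|-1)\pi}\le\sqrt{2\pi}\,|k|^{1/2}\le\sqrt{2\pi}\,|k|^{3/2}$ to obtain
\begin{equation*}
	p_z(x_k(z))\ \ge\ \Big(2(\tfrac{3}{2}\pi)^{3/2}-3(1+2a)\sqrt{2\pi}\Big)|k|^{3/2},
\end{equation*}
where for $a$ small the bracket exceeds $12$, so $p_z(x_k(z))\ge 5.81|k|^{3/2}$ with plenty of room. Finally I would take $\delta$ to be the minimum of the finitely many thresholds produced in the $k=0$, $|k|=1$, and $|k|\ge2$ steps; with that choice all the required inequalities hold simultaneously for $1-\delta\le|z|\le1$.

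The main obstacle is purely quantitative: the $k=0$ and $|k|=1$ cases are tight, the exact extremal values being $-\sqrt2\approx-1.41421$ and $2\pi^{3/2}-3\sqrt\pi\approx 5.8193$, sitting only a hair below $-1.42$ and above $5.81$ respectively. So the real work is confirming that switching on $a>0$ is a small, monotone perturbation of the $a=0$ situation — which is exactly what the closed forms $g_a(s_\ast)=-\sqrt2(1+2a)^{3/2}$ and the displayed expressions above make transparent — and then choosing $\delta$ honestly against the tracked constants. The $|k|\ge2$ tail, by contrast, needs no care.
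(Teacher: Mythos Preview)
Your proposal is correct and follows essentially the same approach as the paper: both arguments substitute $w=x_k(z)<0$ into the cubic $p_z$, use the global minimum $-\sqrt{2}(1+2a)^{3/2}$ for $k=0$, and for $k\ne 0$ feed the lower bound $|x_k|\ge\sqrt{(2|k|-1)\pi}$ from \eqref{eq:GL_ineq_thetak} into the monotone branch of the cubic. The only cosmetic differences are that the paper handles all $k\ne 0$ with a single inequality chain (extracting the constant $1.0435\,\pi^{3/2}\approx 5.81$ uniformly) whereas you split $|k|=1$ and $|k|\ge 2$, and the paper phrases the $k=0$ bound via a fixed auxiliary constant $c=\sqrt[3]{2\cdot 0.71^2}$ rather than your continuity-in-$a$ argument; neither changes the substance.
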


\begin{proof}
From the definition of $\mathcal S_-$ we have $x_k(z)<0$. Thus, for all $|z|$ that satisfies the inequality $\log |z|\ge \frac12(1-3\cdot\sqrt[3]{2\cdot 0.71\cdot 0.71})\approx -0.00136$, 
\beqq
	p_z(x_k(z))\ge -2x_k(z)^3 +3\sqrt[3]{2\cdot 0.71\cdot 0.71} x_k(z)\ge -1.42 . 
\eeqq
Now, consider $k\ne 0$. 
Using \eqref{eq:identity_xk}, recalling $x_k(z)<0$,and combining with \eqref{eq:GL_ineq_thetak}, 
\begin{equation*}
	x_k(z)\le 
-\sqrt{-\log|z| +(2|k|-1)\pi}, \qquad k\ne 0.
\end{equation*}
The function $p_z(w)$ decreases in $w\in (-\infty, w_c)$ with $w_c=-\frac{1}{2}\sqrt{1-2\log|z|}$. 
For $k\neq 0$, we see that $-\sqrt{-\log|z| +(2|k|-1)\pi}<w_c$, and hence, 
\beqq
\begin{split}
	p_z(x_k(z))&\ge p_k(-\sqrt{-\log|z| +(2|k|-1)\pi})\\
	&=\left(4\log |z| + (4|k|-2)\pi-3\right)\sqrt{-\log|z|+(2|k|-1)\pi}\\
	&\ge (4|k|-2.9565)\sqrt{2|k|-1}\pi^{3/2} 
	\geq 1.0435\pi^{3/2}|k|^{3/2}
\end{split}
\eeqq
provided $\log|z|>\frac\pi4(3/\pi -0.9565)\approx-0.0012$. This completes the proof. 
\end{proof}

\begin{cor}\label{prop:estimate_real_phi_uk_u0}
There exists a constant $\delta\in (0,1)$ such that for every $\delta_1\in (0,\delta)$, we can find $\tau_0>0$ so that 
\begin{equation*}
\re \Phi_\tau( u_k(z)) \le \begin{dcases}
0.4734\tau, & k=0,\\
-1.9366|k|^{3/2}\tau, & k\ne 0,
\end{dcases} 
\end{equation*}
for all $1-\delta\le |z|\le 1-\delta_1$ and $\tau\ge\tau_0$. 
\end{cor}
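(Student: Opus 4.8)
Take $\delta$ to be the constant furnished by the preceding lemma (shrinking it if convenient). The plan is to read the estimate directly off the decomposition \eqref{eq:relation_phi_p},
\[
\re \Phi_\tau(u_k(z)) = -\frac{\tau}{3}p_z(x_k(z)) + \tau^{1/2}\frac{\pi^{1/4}\rx}{\sqrt 2}\,x_k(z) - \re Q(u_k(z)),
\]
whose leading term is already under control: by the preceding lemma $p_z(x_0(z))\ge -1.42$ and $p_z(x_k(z))\ge 5.81|k|^{3/2}$ for $k\ne 0$, uniformly on $1-\delta\le|z|\le 1$, so $-\tfrac{\tau}{3}p_z(x_0)\le \tfrac{1.42}{3}\tau$ and $-\tfrac{\tau}{3}p_z(x_k)\le -\tfrac{5.81}{3}|k|^{3/2}\tau$. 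Since $\tfrac{1.42}{3}=0.4733\ldots<0.4734$ and $\tfrac{5.81}{3}=1.9366\ldots>1.9366$, it suffices to bound the two remaining terms by something of lower order in $\tau$, uniformly in $k$ and in $z$ on the slightly smaller annulus $1-\delta\le|z|\le 1-\delta_1$. Concretely I will prove the two uniform estimates $|x_k(z)|\le C\sqrt{|k|+1}$ and $|\re Q(u_k(z))|\le C$, with $C$ depending only on $\delta,\delta_1$, and then pick $\tau_0$ large enough to absorb these into the tiny slack above.

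The bound on $x_k$ is elementary: using \eqref{eq:identity_xk}, the inequality $|\theta_k|=|{-\theta_0}+4\pi k|\le 2\pi(1+2|k|)$, and $|\log|z||\le -\log(1-\delta)$ (valid since $1-\delta\le|z|<1$), one gets $x_k(z)^2\le -\log(1-\delta)+\sqrt{(\log(1-\delta))^2+\pi^2(1+2|k|)^2}\le C_\delta(|k|+1)$.

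The bound on $Q$ is the crux. Because $u_k(z)$ lies on the left branch of the hyperbola $\re(w^2)=-2\log|z|>0$ (here $|z|\le 1-\delta_1<1$ is used), its argument lies strictly in $(3\pi/4,5\pi/4)$, so the representation \eqref{eq:alternative_definition_q} applies with $\xi=u_k(z)$. Parametrizing the contour by $u=it$ and using $e^{-u_k(z)^2/2}=z$, this gives $|Q(u_k(z))|\le \tfrac1\pi\int_{\R}\tfrac{|\log(1-z\,e^{-t^2/2})|}{|it-u_k(z)|}\,dt$. In the denominator $|it-u_k(z)|\ge|\re u_k(z)|=|x_k(z)|\ge\sqrt{-2\log(1-\delta_1)}=:c_0>0$ uniformly; in the numerator $|z\,e^{-t^2/2}|\le 1-\delta_1$ keeps $\log(1-z\,e^{-t^2/2})$ uniformly bounded for $|t|$ bounded and $\Boh(e^{-t^2/2})$ for $|t|$ large. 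Hence the integral, and therefore $|\re Q(u_k(z))|$, is bounded by a constant depending only on $\delta_1$ (in fact it tends to $0$ as $|k|\to\infty$, since then $|x_k(z)|\to\infty$, but uniform boundedness is all that is needed). This is the step that genuinely requires $|z|\le 1-\delta_1$: as $|z|\to 1$ with $k=0$ one has $u_0(z)\to 0$, which leaves the sector of analyticity of $Q$ and along which $Q$ diverges logarithmically. I expect this uniform control of $Q$ to be the main obstacle; the rest is bookkeeping.

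Finally, assembling: for $k=0$, $\re\Phi_\tau(u_0)\le \tfrac{1.42}{3}\tau+\tau^{1/2}\tfrac{\pi^{1/4}|\rx|}{\sqrt2}\sqrt{C_\delta}+C$, so choosing $\tau_0$ with $\tau^{1/2}\tfrac{\pi^{1/4}|\rx|}{\sqrt2}\sqrt{C_\delta}+C\le(0.4734-\tfrac{1.42}{3})\tau$ for $\tau\ge\tau_0$ yields the claim. For $k\ne 0$, using $\sqrt{|k|+1}\le\sqrt2\,|k|^{1/2}\le\sqrt2\,|k|^{3/2}$ and $C\le C|k|^{3/2}$ one gets $\re\Phi_\tau(u_k)\le\bigl(-\tfrac{5.81}{3}\tau+C'(\tau^{1/2}+1)\bigr)|k|^{3/2}$, and enlarging $\tau_0$ if necessary so that $-\tfrac{5.81}{3}\tau+C'(\tau^{1/2}+1)\le -1.9366\,\tau$ for $\tau\ge\tau_0$ completes the proof. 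The precise constants in the preceding lemma were chosen exactly to leave the margin $(\tfrac{5.81}{3}-1.9366)=(0.4734-\tfrac{1.42}{3})=\tfrac{1}{15000}$ exploited here.
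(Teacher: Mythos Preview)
Your proof is correct and follows essentially the same approach as the paper: decompose via \eqref{eq:relation_phi_p}, use the preceding lemma for the leading $p_z$ term, bound $|x_k|$ by $C\sqrt{|k|+1}$ from \eqref{eq:identity_xk}, and bound $\re Q(u_k(z))$ uniformly via the integral representation \eqref{eq:alternative_definition_q}. You give considerably more detail than the paper does on the $Q$ estimate (the paper just asserts it is ``uniformly bounded in compacts of $|z|<1$''), and your explanation of why the restriction $|z|\le 1-\delta_1$ is genuinely needed there is a useful addition.
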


\begin{proof}
We use the formula \eqref{eq:relation_phi_p} and the last lemma. 
By \eqref{eq:identity_xk} and \eqref{eq:GL_ineq_thetak}, for every $\epsilon\in (0,1)$, there is a positive constant $C>0$ such that $|x_k(z)|\le C|k|^{1/2}$ for all $k\neq 0$ and $\epsilon\le |z|\le 1$. Using this and the fact that $\rx$ is a fixed constant, we find that  the term $\frac{\pi^{1/4}\rx \tau^{1/2}}{\sqrt{2}}x_k$ of the formula  \eqref{eq:relation_phi_p} is $\Boh(|k|^{1/2} \tau^{1/2})$ uniformly in $\epsilon\le |z|\le 1$. The remaining term of the formula, $\re Q(u_k(z))$, is uniformly bounded in compacts of $|z|<1$ as can be seen from the formula~\eqref{eq:alternative_definition_q}. Hence, we obtain the result for $k\neq 0$. The case $k=0$ is similar.  
\end{proof}

We also need estimates on the points in $\mathcal S_-$. 

\begin{lem} \label{lem:uuplusu}
For every fixed $\delta_1\in (0,1)$, there exists $c>0$ such that 
\begin{equation*}
	|u_k(z)|\geq c\sqrt{|k|+1}  \quad\text{and} \quad |u_j(z)+u_k(z)|\geq c\sqrt{|j|+|k|+2 } 
\end{equation*}
for all $0<|z|<1-\delta_1$ and $k,j\in \Z$.
\end{lem}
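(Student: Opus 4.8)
The plan is to derive both inequalities from a single lower bound on $|\re u_k(z)|$, using crucially that every point of $\mathcal S_-$ has negative real part. Throughout, the hypothesis $0<|z|<1-\delta_1$ enters only through the uniform bound $-\log|z|\ge a_0:=-\log(1-\delta_1)>0$, and this is what makes the estimates uniform in $z$; in particular no upper bound on $|z|$ will be needed.

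First I would bound $x_k(z)=\re u_k(z)$ from below in absolute value. Starting from the identity \eqref{eq:identity_xk},
\[
	x_k(z)^2=-\log|z|+\sqrt{(\log|z|)^2+\theta_k^2/4}\ \ge\ a_0+\tfrac12|\theta_k|,
\]
and invoking \eqref{eq:GL_ineq_thetak}, which gives $|\theta_k|\ge 2\pi|k|$ for $k\ne0$ (and $|\theta_0|\ge0$ trivially), one gets $x_k(z)^2\ge a_0+\pi|k|$ for $k\ne0$ and $x_0(z)^2\ge a_0$. In all cases $x_k(z)^2\ge c_0(|k|+1)$ with $c_0:=\min(a_0,\pi)$, hence
\[
	|u_k(z)|\ \ge\ |x_k(z)|\ \ge\ \sqrt{c_0}\,\sqrt{|k|+1},
\]
which is the first assertion with $c=\sqrt{c_0}$.

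For the second assertion I would use that $x_j(z),x_k(z)<0$ by the definition \eqref{def:bethe_root_set} of $\mathcal S_-$, so that $\re\big(u_j(z)+u_k(z)\big)=x_j(z)+x_k(z)<0$ and therefore
\[
	|u_j(z)+u_k(z)|\ \ge\ |x_j(z)+x_k(z)|\ =\ |x_j(z)|+|x_k(z)|\ \ge\ \sqrt{c_0}\Big(\sqrt{|j|+1}+\sqrt{|k|+1}\Big).
\]
Since $\sqrt{a}+\sqrt{b}\ge\sqrt{a+b}$ for $a,b\ge0$, the right-hand side is at least $\sqrt{c_0}\,\sqrt{|j|+|k|+2}$, so the same constant $c=\sqrt{c_0}$ works for both inequalities.

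There is essentially no serious obstacle here; the proof is short once one observes the two elementary facts (the sign of the real parts turning $|x_j+x_k|$ into $|x_j|+|x_k|$, and super-additivity of the square root). The only mild points to keep track of are the uniformity in $z$, handled by the single bound $-\log|z|\ge a_0$, and the fact that $c=\sqrt{\min(-\log(1-\delta_1),\pi)}$ degenerates as $\delta_1\to0$, which is expected and harmless since $\delta_1$ is fixed.
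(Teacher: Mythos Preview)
Your proof is correct and follows essentially the same approach as the paper: both bound $x_k^2$ from below via \eqref{eq:identity_xk} and \eqref{eq:GL_ineq_thetak}, then exploit $x_j,x_k<0$ to turn $|x_j+x_k|$ into $|x_j|+|x_k|$ and conclude with $\sqrt{a}+\sqrt{b}\ge\sqrt{a+b}$. Your write-up is slightly more explicit about the constant $c=\sqrt{\min(-\log(1-\delta_1),\pi)}$, but the argument is the same.
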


\begin{proof}
From~\eqref{eq:identity_xk}, $x_k^2\ge -\log|z| +|\theta_k|/2> -\log(1-\delta_1) +|\theta_k/2|>0$. 
From \eqref{eq:GL_ineq_thetak}, $|\theta_k|\ge 2(2|k|-1)\pi$ for $k\ne 0$. 
Thus, there exists $c>0$ such that $|u_k|^2\ge x_k^2 \ge c^2(|k|+1)$ for all $k$. 
This implies the first inequality. 
The second inequality follows from the first inequality since $x_j, x_k<0$, and hence 
\begin{equation*}
	|u_j+u_k| \geq  -x_j-x_k\geq c\sqrt{|j|+1}+c\sqrt{|k|+1}\geq c\sqrt{|j|+|k|+2}.
\end{equation*}
\end{proof}

The kernel of the operator $\mb K_z$ is given by \eqref{eq:initial_kernel}. 
With the change $x\mapsto x_\tau$ in \eqref{eq:x_scaling_large_tau}, the kernel becomes 
\begin{equation*}
	\bm K_z(\xi_1,\xi_2)=\sum_{u \in S_-} \frac{e^{\Phi_\tau(\xi_1)+\Phi_\tau(u)+\frac{\gamma}{2}(\xi_1^2-u^2)}}{\xi_1u (\xi_1+u)(u+\xi_2)}
	\quad \text{for $\xi_1, \xi_2\in \mathcal S_-$,}
\end{equation*}
where $\Phi_\tau$ is defined in \eqref{eq:leading_term_Phi}. 
We now estimate this kernel. 

\begin{lem}\label{lem:kernel_sum_1}
There is a constant $\delta\in(0,1)$ such that for every $\delta_1\in (0,\delta)$, 
we can find $\tau_0>0$ and $C>0$ so that %we have
$$
	\sum_{k=1}^N |\bm K_z(\xi, u_{\ell_k})|^2\leq ( C \log N ) e^{2\re \Phi_\tau(\xi)+0.9468\tau}
$$
for all $\xi\in \mathcal S_-$, for all $N\ge 2$ and distinct integers $\ell_1,\hdots , \ell_N$, and for all $\tau\geq \tau_0$ and $1-\delta\le |z|\le 1-\delta_1$. 
\end{lem}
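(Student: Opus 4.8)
The plan is to bound the sum $\sum_{k=1}^N |\bm K_z(\xi,u_{\ell_k})|^2$ by first extracting the $\xi$-dependence and the $u_{\ell_k}$-dependence of each kernel entry, and then summing a convergent-up-to-a-log tail. Writing out \eqref{eq:initial_kernel} with $x\mapsto x_\tau$, we have
\begin{equation*}
	\bm K_z(\xi,u_{\ell_k}) = \frac{e^{\Phi_\tau(\xi)+\frac{\gamma}{2}\xi^2}}{\xi}\sum_{u\in\mathcal S_-}\frac{e^{\Phi_\tau(u)-\frac{\gamma}{2}u^2}}{u(\xi+u)(u+u_{\ell_k})},
\end{equation*}
so that $|\bm K_z(\xi,u_{\ell_k})|\le \dfrac{e^{\re\Phi_\tau(\xi)+\frac{\gamma}{2}\re(\xi^2)}}{|\xi|}\,S_k$ where $S_k:=\sum_{u\in\mathcal S_-}\dfrac{e^{\re\Phi_\tau(u)-\frac{\gamma}{2}\re(u^2)}}{|u|\,|\xi+u|\,|u+u_{\ell_k}|}$. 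The factor $e^{\frac{\gamma}{2}\re(\xi^2)}$ is harmless because $\re(\xi^2)=-2\log|z|$ is bounded on $1-\delta\le|z|\le 1-\delta_1$ (using \eqref{eq:relations_real_imaginary_u}), and likewise $1/|\xi|$ is bounded below away from $0$ by Lemma~\ref{lem:uuplusu}; these contribute only to the constant $C$, not to the exponential rate. So the real content is to show $\sum_{k=1}^N S_k^2\le (C\log N)e^{0.9468\tau}$.

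Next I would estimate $S_k$ itself. By Corollary~\ref{prop:estimate_real_phi_uk_u0}, $\re\Phi_\tau(u)\le 0.4734\tau$ when $u=u_0$ and $\re\Phi_\tau(u_j)\le -1.9366|j|^{3/2}\tau$ for $j\ne 0$, for $\tau\ge\tau_0$; and $\re(-\frac{\gamma}{2}u_j^2)$ is again bounded uniformly (it equals $\gamma\log|z|$ up to sign). Together with Lemma~\ref{lem:uuplusu}, which gives $|u_j+u_{\ell_k}|\ge c\sqrt{|j|+|\ell_k|+2}$ and $|u|\ge c\sqrt{|j|+1}$ and $|\xi+u_j|\ge c\sqrt{|j|+1}$, we obtain
\begin{equation*}
	S_k \le \frac{C'\,e^{0.4734\tau}}{\sqrt{|\ell_k|+2}} + C'\sum_{j\ne 0}\frac{e^{-1.9366|j|^{3/2}\tau}}{(|j|+1)\sqrt{|j|+|\ell_k|+2}}.
\end{equation*}
For $\tau\ge\tau_0$ the second sum is $O(e^{-1.9366\tau})$, which for large $\tau_0$ is negligible compared to $e^{0.4734\tau}/\sqrt{|\ell_k|+2}$; absorbing it, $S_k\le C''e^{0.4734\tau}/\sqrt{|\ell_k|+2}$. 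Hence $S_k^2\le C''^2 e^{0.9468\tau}/(|\ell_k|+2)$. Here $0.9468=2\cdot 0.4734$ is exactly the target rate, which is the reason the precise constants in Corollary~\ref{prop:estimate_real_phi_uk_u0} matter. Finally, since $\ell_1,\dots,\ell_N$ are \emph{distinct} integers, $\sum_{k=1}^N \frac{1}{|\ell_k|+2}\le 2\sum_{m=0}^{N}\frac{1}{m+2}\le C\log N$, giving $\sum_{k=1}^N|\bm K_z(\xi,u_{\ell_k})|^2\le e^{2\re\Phi_\tau(\xi)}\cdot(C\log N)e^{0.9468\tau}$ as claimed.

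\textbf{Main obstacle.} The delicate point is not any single estimate but the bookkeeping of the \emph{exponential rates}: one must verify that the dominant term of $S_k$ is exactly the $u=u_0$ term with rate $0.4734\tau$, that squaring produces precisely $0.9468\tau$ (the number quoted in the statement), and that every other contribution — the $j\ne 0$ tail in $S_k$, the $\gamma$-dependent Gaussian factors, the $1/|\xi|$ and $|\xi+u|$ denominators — is subexponential or bounded and therefore can be swept into $C$ or into the harmless $\log N$. The only genuinely combinatorial input is the distinctness of the $\ell_k$, which converts the a priori $N$-term sum into a harmonic sum $\sum 1/(m+2)$ and thus the $\log N$ factor; one should double-check that the worst case is indeed $\{\ell_k\}=\{0,\pm1,\dots\}$, i.e. the indices packed as tightly around $0$ as possible. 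I would expect the write-up to lean on the uniform-boundedness remarks already made in the proof of Corollary~\ref{prop:estimate_real_phi_uk_u0} (via \eqref{eq:alternative_definition_q}) so as not to re-derive bounds on $\re Q$.
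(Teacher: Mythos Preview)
Your proposal is correct and follows essentially the same route as the paper: factor out $e^{\re\Phi_\tau(\xi)}$, use $|e^{\pm\frac{\gamma}{2}u^2}|=|z|^{\mp\gamma}$ (equivalently, $\re(u^2)=-2\log|z|$ is bounded on the annulus), bound the inner sum over $u=u_j$ via Corollary~\ref{prop:estimate_real_phi_uk_u0} and Lemma~\ref{lem:uuplusu} so that the $j=0$ term dominates with rate $0.4734\tau$ and contributes a factor $1/\sqrt{|\ell_k|+2}$, then square and use distinctness of the $\ell_k$ to get the $\log N$. The only cosmetic difference is that the paper packages the inner sum as a function $f(u_\ell)=\sum_j e^{\re\Phi_\tau(u_j)}/|u_j+u_\ell|$ and bounds the remaining three denominator factors by the single constant $c^3$, whereas you keep slightly sharper powers of $|j|+1$ in the denominator; this changes nothing.
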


\begin{proof}
Let $\delta,\delta_1$ and $\tau_0$ be the constants from Corollary \ref{prop:estimate_real_phi_uk_u0}.
Every $\xi\in \mathcal S_-$ satisfies $e^{- \xi^2/2}= z$, and hence, $|e^{-\gamma \xi^2 /2}|= |z|^\gamma$. 
Thus, using Lemma \ref{lem:uuplusu}, 
\begin{equation} \label{eq:Kinltakak}
	|\bm K_z(\xi,u)| \leq \frac{e^{\re \Phi_\tau (\xi)}}{c^3}f(u) \quad \text{where} \quad
	f(u) := \sum_{j\in \Z}\frac{e^{\re \Phi_\tau(u_j)}}{|u_j+u|}
\end{equation}
for $\xi, u\in \mathcal S_-$, where the constant $c>0$ is from Lemma \ref{lem:uuplusu}. 
For $u=u_\ell$, Corollary \ref{prop:estimate_real_phi_uk_u0} and Lemma~\ref{lem:uuplusu} imply that 
\begin{equation*}
	f(u_\ell)  
	 \leq \frac{e^{0.4734\tau}}{c \sqrt{|\ell|+2}} +\sum_{j\ne 0} \frac{e^{-1.9366|j|^{3/2}\tau}}{c\sqrt{|\ell|+|j|+2}  }.
\end{equation*}
for all $\tau\ge \tau_0$.
Adjusting the value of $\tau_0$ if needed, the sum is smaller than the single term, and hence, 
\begin{equation*}
	f(u_\ell)\le \frac{2e^{0.4734\tau}}{c \sqrt{|\ell|+2}}.
\end{equation*}
Inserting the above inequalities into \eqref{eq:Kinltakak} and noting that $\sum_{k=1}^N \frac1{|\ell|+2}\le \log N$
since $\ell_1, \cdots, \ell_N$ are distinct integers, we complete the proof. 
\end{proof}

We are now ready to prove the main result of this subsection. 

\begin{prop}\label{thm:localization_fred_det}
Change $x\mapsto x_\tau$ as \eqref{eq:x_scaling_large_tau}. 
There is a constant $\delta\in(0,1)$ such that for every $\delta_1\in(0,\delta)$,
$$
	\det(\mb I-\mb K_z)=1-\Tr \mb K_z + \Boh(e^{-0.51\tau}) %\Boh(e^{-4 \tau})
	\quad \text{as } \tau \to \infty,
$$
uniformly for $1-\delta\le |z|\le 1-\delta_1$. 
\end{prop}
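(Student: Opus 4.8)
The plan is to isolate the contribution of the single dominant Bethe root $u_0$, which behaves like a rank-one operator, and to organize the Fredholm expansion so as to exploit this. First I would pass to the symmetric form \eqref{eq:initial_kernel2} of the kernel (with $x$ replaced by $x_\tau$ as in \eqref{eq:x_scaling_large_tau}, so that $\Phi$ becomes $\Phi_\tau$ of \eqref{eq:leading_term_Phi}), which leaves the determinant unchanged. For each $u_k\in\mathcal S_-$ the summand of \eqref{eq:initial_kernel2} with $\eta=u_k$ is the rank-one kernel $\kappa_k\,a_k(\xi_1)a_k(\xi_2)$, where
\[
	a_k(\xi):=\frac{e^{\frac12\Phi_\tau(\xi)+\frac\gamma4\xi^2}}{\sqrt{-\xi}\,(\xi+u_k)},\qquad
	\kappa_k:=\frac{e^{\Phi_\tau(u_k)-\frac\gamma2 u_k^2}}{-u_k},
\]
so that $\mb K_z=\sum_{k\in\Z}\kappa_k\,a_k\otimes a_k$. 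Each $a_k$ lies in $\ell^2(\mathcal S_-)$ because $\re\Phi_\tau(u_\ell)\to-\infty$ like $-|\ell|^{3/2}$, and then $\det(\mb I-\mb K_z)$ equals the Fredholm determinant on $\ell^2(\Z)$ of the matrix with entries $\kappa_j^{1/2}\kappa_k^{1/2}\langle a_j,a_k\rangle$ (bilinear pairing $\langle f,g\rangle=\sum_\xi f(\xi)g(\xi)$; the trace-class property needs $\sum_k|\kappa_k|\,\|a_k\|_2^2<\infty$, which follows from the estimates below). Expanding this determinant in its ordinary Fredholm series gives
\[
	\det(\mb I-\mb K_z)=\sum_{\substack{S\subset\Z\\|S|<\infty}}(-1)^{|S|}\Big(\prod_{j\in S}\kappa_j\Big)\det\big[\langle a_j,a_k\rangle\big]_{j,k\in S}.
\]
The $S=\emptyset$ term is $1$, the $|S|=1$ terms sum to $-\sum_k\kappa_k\langle a_k,a_k\rangle=-\Tr\mb K_z$, so it remains to bound the $|S|\ge2$ terms by $\Boh(e^{-0.51\tau})$ uniformly on the annulus.

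Next I would record the relevant size estimates, uniformly for $1-\delta\le|z|\le1-\delta_1$ where $\delta$ is the minimum of the (small) constants from Corollary~\ref{prop:estimate_real_phi_uk_u0} and Lemma~\ref{lem:uuplusu}, $\delta_1\in(0,\delta)$ is arbitrary, and $\tau\ge\tau_0=\tau_0(\delta_1)$. Since $u_k^2=-2\log z$ for all $k$, the factors $|e^{-\frac\gamma2 u_k^2}|=|z|^{\gamma}$ and $|e^{\frac\gamma2\re\xi^2}|=|z|^{-\gamma}$ are bounded, and $|u_0|$ is bounded below because $z$ stays away from $z=1$. Using Corollary~\ref{prop:estimate_real_phi_uk_u0} ($\re\Phi_\tau(u_0)\le0.4734\,\tau$ and $\re\Phi_\tau(u_k)\le-1.9366\,|k|^{3/2}\tau$ for $k\neq0$) with Lemma~\ref{lem:uuplusu} ($|u_k|\ge c\sqrt{|k|+1}$, $|u_j+u_k|\ge c\sqrt{|j|+|k|+2}$), one obtains, for a constant $C$,
\[
	|\kappa_0|\le Ce^{0.4734\tau},\qquad |\kappa_k|\le \frac{C\,e^{-1.9366|k|^{3/2}\tau}}{\sqrt{|k|+1}}\ \ (k\neq0),\qquad
	\|a_k\|_2^2\le \frac{C\,e^{0.4734\tau}}{|k|+1},
\]
where the bound on $\|a_k\|_2^2=|z|^{-\gamma}\sum_{\ell}e^{\re\Phi_\tau(u_\ell)}/(|u_\ell|\,|u_\ell+u_k|^2)$ comes from its $\ell=0$ term, the $\ell\neq0$ contributions being superexponentially small and absolutely summable exactly as in the proof of Lemma~\ref{lem:kernel_sum_1}.

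Finally, for $|S|\ge2$ I would estimate $\det[\langle a_j,a_k\rangle]_{j,k\in S}$ by Hadamard's inequality, \emph{keeping the scalar $\prod_{j\in S}\kappa_j$ outside the matrix}: since $|\langle a_j,a_k\rangle|\le\|a_j\|_2\|a_k\|_2$ and $\sum_{k\in S}\|a_k\|_2^2\le Ce^{0.4734\tau}$ (dominated by $\|a_0\|_2^2$), this yields $|\det[\langle a_j,a_k\rangle]_{j,k\in S}|\le C^{|S|}|S|^{|S|/2}e^{0.4734|S|\tau}\prod_{j\in S}(|j|+1)^{-1/2}$. The decisive point is that $\kappa_0$, the only exponentially large factor, occurs in $\prod_{j\in S}\kappa_j$ at most once, whereas every $S$ with $|S|\ge2$ contains at least one index $k\neq0$ contributing a factor $\le Ce^{-1.9366|k|^{3/2}\tau}$. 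Hence a term with $|S|=n$ and $0\in S$ is bounded by a constant times $e^{0.4734(n+1)\tau}\prod_{j\in S,\,j\neq0}e^{-1.9366|j|^{3/2}\tau}$ times polynomial and combinatorial factors, of exponential order $e^{(2.41-1.4632\,n)\tau}$, maximal at $n=2$ where it equals $e^{-0.5164\tau}$; terms with $0\notin S$ are smaller still. Summing over all finite $S$ with $|S|\ge2$ — the factors $(|j|+1)^{-1/2}$ together with the leftover superexponential decay making each index-sum converge, and $|S|^{|S|/2}/|S|!$ keeping the sum over $|S|$ finite — gives $\Boh(e^{-0.5164\tau})\subset\Boh(e^{-0.51\tau})$, uniformly in $z$, which is the claim.

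The crux, and the source of the exponent $0.51$, is that $\Tr\mb K_z$ (hence $\|\mb K_z\|_1$) is itself exponentially \emph{large}, of order $e^{0.9468\tau}$, its dominant contribution being $e^{2\Phi_\tau(u_0)}/(4u_0^4)$. Consequently the usual Lipschitz or Plemelj--Smithies bounds for Fredholm determinants, and a naive term-by-term Hadamard estimate of the standard Fredholm series on $\ell^2(\mathcal S_-)$, produce doubly exponential quantities; the enormous individual terms merely cancel. Reorganizing the determinant around the single Bethe root $u_0$, so that the dangerous factor $\kappa_0$ enters each contribution at most linearly, is precisely the device that makes the bookkeeping $e^{0.4734\tau}\cdot e^{-1.9366\tau}\cdot e^{0.9468\tau}=e^{-0.5164\tau}$ close; a secondary, routine point is the uniform control of the $z$-dependent prefactors, all benign because the annulus is bounded away from both $|z|=1$ and $z=1$.
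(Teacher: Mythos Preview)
Your argument is correct and arrives at the same exponent $e^{-0.5164\tau}$ as the paper, but by a different organization. The paper expands the Fredholm series of $\mb K_z$ directly on $\ell^2(\mathcal S_-)$: it applies Hadamard to the $N\times N$ minor $\det(\bm K_z(u_{\ell_j},u_{\ell_k}))$, using Lemma~\ref{lem:kernel_sum_1} to bound each column sum by $(C\log N)e^{2\re\Phi_\tau(u_{\ell_j})+0.9468\tau}$, so that Hadamard produces the factor $\prod_j e^{\re\Phi_\tau(u_{\ell_j})}$; since the $\ell_j$ are \emph{distinct}, this product contains the large term $e^{\re\Phi_\tau(u_0)}$ at most once, and one gets $|D_N|\le C^N(\log N)^{N/2}e^{(2.41-1.4632N)\tau}$. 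Your approach is essentially the dual one: you index the expansion by the summation variable $\eta=u_k$ (the rank-one pieces) rather than by the evaluation points $\xi=u_{\ell_j}$, pass via Sylvester to the Gram-type matrix $(\kappa_j\langle a_j,a_k\rangle)$ on $\ell^2(\Z)$, and pull the scalars $\kappa_j$ out of the minor so that the single large factor $\kappa_0$ visibly enters at most once. Each route uses the same two inputs, Corollary~\ref{prop:estimate_real_phi_uk_u0} and Lemma~\ref{lem:uuplusu}; your route bypasses Lemma~\ref{lem:kernel_sum_1} but at the cost of the Sylvester step. Two minor points: your claim ``$\sum_{k\in S}\|a_k\|_2^2\le Ce^{0.4734\tau}$ (dominated by $\|a_0\|_2^2$)'' is not quite right --- the harmonic sum gives a $\log|S|$ factor --- but this is harmless since you wrote $|S|^{|S|/2}$ anyway; and your closing remark that the ``naive'' Hadamard estimate on the standard Fredholm series fails is a bit overstated, since that is exactly what the paper does, with Lemma~\ref{lem:kernel_sum_1} supplying the needed column-wise refinement.
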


\begin{proof}
From the series expansion of Fredholm determinants, 
\begin{equation*}
 	\det(\mb I-\mb K_z)=1-\Tr \mb K_z +\sum_{N=2}^\infty \frac{(-1)^N}{N!}D_N
	\quad \text{with}
	\quad D_N:= \sideset{}{^*} \sum_{\ell_1,\hdots,\ell_N \in \Z} \det (\bm K_z(u_{\ell_j},u_{\ell_k}))_{j,k=1}^N \end{equation*}
where the superscript $^*$ means that the sum is taken over $\ell_j$'s that are distinct. 
For distinct $\ell_j$'s, the Hadamard's inequality and Lemma~\ref{lem:kernel_sum_1} give us that
\begin{equation*}
	\left| \det (\bm K_z(u_{\ell_j},u_{\ell_k}))_{j,k=1}^N \right| 
	\leq \prod_{j=1}^N \left(\sum_{k=1}^N |\bm K_z(u_{\ell_j},u_{\ell_k})|^2 \right)^{1/2}
	\leq  (C\log N)^{N/2} e^{0.4734\tau N} \prod_{j=1}^N e^{\re\Phi_\tau (u_{\ell_j})} .
\end{equation*}
Hence, 
$$
	|D_N| \leq (C\log N)^{N/2} e^{0.4734\tau N} \sideset{}{^*} \sum_{\ell_1,\hdots,\ell_N \in \Z}  \prod_{j=1}^N e^{\re\Phi_\tau (u_{\ell_j})}.
$$
Considering the case when one of the indices $\ell_j$ is zero separately and using a symmetry, 
\begin{align*}
	\sideset{}{^*} \sum_{\ell_1,\hdots,\ell_N \in \Z}  \prod_{j=1}^N e^{\re\Phi_\tau (u_{\ell_j})}
	& = \sideset{}{^*} \sum_{\ell_1,\hdots,\ell_N \neq 0}  \prod_{j=1}^N e^{\re\Phi_\tau (u_{\ell_j})} 
	+ N e^{\re\Phi_\tau (u_{0})} \sideset{}{^*} \sum_{\ell_2,\hdots,\ell_N\neq 0}  \prod_{j=2}^N e^{\re\Phi_\tau (u_{\ell_j})}\\
	& \le \bigg( \sum_{\ell\neq 0} e^{\re\Phi_\tau (u_{\ell})} \bigg)^N
	+ N e^{\re\Phi_\tau (u_{0})} \bigg(  \sum_{\ell\neq 0} e^{\re\Phi_\tau (u_{\ell})} \bigg)^{N-1}.
\end{align*}
Now, Corollary \ref{prop:estimate_real_phi_uk_u0} implies that 
$$
	\sum_{\ell\in \Z\setminus \{0\}}e^{\re\Phi_\tau(u_{\ell})}\leq 2\sum_{\ell=1}^\infty e^{-1.9366\ell^{3/2}\tau }
	\le  \frac{2 e^{-1.9366\tau}}{1-e^{-1.9366\tau}}  
	\quad \text{and}\quad 
e^{\re\Phi_\tau (u_{0})}\le e^{0.4734\tau}
$$
for all large enough $\tau$. Therefore, 
$$
\sideset{}{^*} \sum_{\ell_1,\hdots,\ell_N \in \Z}  \prod_{j=1}^N e^{\re\Phi_\tau (u_{\ell_j})}
 \leq C_1^N e^{-1.9366\tau N}e^{2.41\tau}
$$
for a new constant $C_1>0$. Thus, adjusting the constant $\tau_0$ if needed, we find that for a yet new constant $C_2>0$
$$
	|D_N|	\le  (\sqrt{C_2\log N})^N e^{-1.4632\tau N + 2.41\tau}
$$
for $\tau\ge \tau_0$ and $N\ge 2$. From this and using $N!\geq (N/e)^N$, we conclude that
\begin{equation*}
	\left| \sum_{N=2}^\infty \frac{(-1)^N}{N!}D_N \right|
	\le \sum_{N=2}^\infty  \left( \frac{e\sqrt{C_2\log N}}{N} \right)^N e^{-1.4632\tau N + 2.41\tau}
	\le C_3 \sum_{N=2}^\infty e^{-1.4632\tau N + 2.41\tau}
\end{equation*}
for a new constant $C_3>0$, and hence we find that the sum is $\Boh(e^{-0.51\tau})$. 
\end{proof}

%%%%%%%%%%%%%%%%%%%%%%%%%%%%%%%%%%%%%%%%%%%%%%%%%%%%%%%%%%%
\subsection{From a Fredholm determinant integral to a polylog integral}

Proposition~\ref{thm:localization_fred_det} implies the following result for the distribution function.

\begin{prop}\label{prop:Flgtu}
We have
\begin{equation*}
	F\left(x_\tau;\tau,\gamma\right)=1-\oint_{|z|=R} e^{x_\tau A_1(z)+\tau A_2(z)+2B(z)} \Tr(\mb K_z) \; \frac{dz}{2\pi i z}+\Boh(e^{-0.002 \tau})
\end{equation*}
as $\tau\to \infty$ for any $R\in (0,1)$.  
\end{prop}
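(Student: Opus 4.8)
The plan is to substitute the localization estimate of Proposition~\ref{thm:localization_fred_det} into the contour integral formula \eqref{def:limiting_distribution} for $F$, after first moving the contour into the annulus $1-\delta\le|z|\le 1-\delta_1$ where that proposition applies. Since the integrand $z\mapsto e^{x_\tau A_1(z)+\tau A_2(z)+2B(z)}\det(\mb I-\mb K_z)/z$ is analytic on the punctured disk $0<|z|<1$, the value of $F(x_\tau;\tau,\gamma)$ does not depend on $R\in(0,1)$; likewise $\oint_{|z|=R}e^{x_\tau A_1(z)+\tau A_2(z)+2B(z)}\Tr(\mb K_z)\,\frac{dz}{2\pi i z}$ is $R$-independent because $\Tr(\mb K_z)$ is analytic in $z$ on $0<|z|<1$. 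So I would fix $R=1-\delta_1$ with $\delta_1\in(0,\delta)$ small, prove the identity for that radius, and invoke $R$-independence of both terms (hence of the error) at the very end.

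The next step is to isolate two elementary properties of the prefactor $\cm_\tau(z):=e^{x_\tau A_1(z)+\tau A_2(z)+2B(z)}$. First, from \eqref{def:functions_A} and $\Li_s(0)=0$ one has $A_1(0)=A_2(0)=B(0)=0$, so $\cm_\tau(z)=1+\Boh(z)$ near the origin; since $\cm_\tau$ is analytic and non-vanishing on $\{|z|<1\}$, the function $\cm_\tau(z)/z$ has a simple pole at $z=0$ with residue $1$ and no other singularity inside $|z|=R$, whence $\oint_{|z|=R}\cm_\tau(z)\,\frac{dz}{2\pi i z}=1$. This is exactly where the leading constant ``$1$'' in the statement comes from. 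Second, I need an upper bound $|\cm_\tau(z)|\le e^{0.508\,\tau}$ on $|z|=R$ for all large $\tau$. Using $x_\tau=-\tau+\Boh(\tau^{1/2})$ from \eqref{eq:x_scaling_large_tau} and the boundedness of $A_1,A_2,B$ on the compact circle $|z|=R$, we get
\[
	\log|\cm_\tau(z)| = \frac{\tau}{\sqrt{2\pi}}\,\re\!\left(\Li_{3/2}(z)-\Li_{5/2}(z)\right)+\Boh(\tau^{1/2}).
\]
The key observation is that the $z^k$-coefficient of $\Li_{3/2}-\Li_{5/2}$ equals $k^{-5/2}(k-1)\ge 0$, so the real part of this function on the circle $|z|=R$ is maximized at $z=R$, giving $\re(\Li_{3/2}(z)-\Li_{5/2}(z))\le \Li_{3/2}(R)-\Li_{5/2}(R)$. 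As $R\uparrow 1$ the right-hand side tends to $\zeta(3/2)-\zeta(5/2)$, and $\frac{1}{\sqrt{2\pi}}(\zeta(3/2)-\zeta(5/2))\approx 0.507<0.508$, so fixing $\delta_1$ small (and then $\tau$ large) delivers the claimed bound.

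Finally I would combine these with Proposition~\ref{thm:localization_fred_det}, which gives $\det(\mb I-\mb K_z)=1-\Tr\mb K_z+\Boh(e^{-0.51\tau})$ uniformly for $|z|=R$. Inserting this into \eqref{def:limiting_distribution}, the ``$1$'' contributes the constant $1$ by the residue computation, the ``$-\Tr\mb K_z$'' produces the stated integral, and the remaining piece is bounded by $(\text{contour length})\times(1/|z|)\times|\cm_\tau|\times\Boh(e^{-0.51\tau}) = \Boh(e^{0.508\tau}e^{-0.51\tau})=\Boh(e^{-0.002\tau})$. Passing back to an arbitrary $R\in(0,1)$ then finishes the proof.

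The main obstacle is the prefactor bound $|\cm_\tau(z)|\le e^{0.508\tau}$: it must beat the $e^{-0.51\tau}$ decay coming from the Fredholm localization, and the margin is genuinely thin — the limiting exponential rate $(\zeta(3/2)-\zeta(5/2))/\sqrt{2\pi}\approx 0.507$ is only barely below $0.51$, which is precisely why the error stated in the proposition is the weak $\Boh(e^{-0.002\tau})$ rather than something larger. The non-negativity-of-Taylor-coefficients argument for locating the maximum of $\re(\Li_{3/2}-\Li_{5/2})$ on the circle is the crucial ingredient that makes the bound sharp enough to work; everything else (analyticity, the residue computation, the contour-length estimate, and the $R$-independence) is routine.
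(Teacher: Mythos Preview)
Your proposal is correct and follows essentially the same route as the paper: deform to a radius inside the annulus where Proposition~\ref{thm:localization_fred_det} applies, use $A_1(0)=A_2(0)=B(0)=0$ to extract the constant $1$ by residues, and control the error via the bound $\re(\Li_{3/2}(z)-\Li_{5/2}(z))\le \zeta(3/2)-\zeta(5/2)$ coming from the nonnegativity of the Taylor coefficients (the paper phrases this as a triangle-inequality bound on $|\Li_{3/2}-\Li_{5/2}|$, which is the same estimate). The only cosmetic difference is that the paper takes $R=1-\delta$ rather than $R=1-\delta_1$, and writes the exponent threshold as $0.5071$ instead of your $0.508$.
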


\begin{proof}
Since the integrand in the result is analytic in $0<|z|<1$, it is enough to prove the statement for $R=1-\delta$ where $\delta>0$ is the constant from Proposition~\ref{thm:localization_fred_det}. 
We insert the result of Proposition~\ref{thm:localization_fred_det} into the formula \eqref{def:limiting_distribution} of $F\left(x_\tau;\tau,\gamma\right)$.
Since $A_1(z), A_2(z)$, and $B(z)$ are analytic in the unit disc, and they are all $0$ at $z=0$, by the residue theorem, we obtain the term $1$. 
We now consider the error term.
Recalling \eqref{eq:x_scaling_large_tau},
$$
	\max_{|z|=R} \re\left(x_\tau A_1(z)+\tau A_2(z)+2B(z)\right)=\frac{\tau}{\sqrt{2\pi}}  \max_{|z|=R}\re( \Li_{3/2}(z)-\Li_{5/2}(z))  +\Boh(\tau^{1/2}).
$$
Using the series representation \eqref{def:series_polylog} of polylog functions,  
$$
	|\Li_{3/2}(z)-\Li_{5/2}(z)|\leq \sum_{k=1}^\infty |z|^k(k^{-3/2}-k^{-5/2})\le \zeta(3/2)-\zeta(5/2)<1.271. 
$$
Here $\zeta$ is the Riemann zeta function.
Hence, 
\begin{equation}\label{eq:estimate_factor}
	\max_{|z|=R} \re\left(x_\tau A_1(z)+\tau A_2(z)+2B(z)\right)<0.5071\tau
\end{equation}
for all large enough $\tau$, and
\begin{equation*}
	\oint_{|z|=R} e^{ x_\tau A_1(z)+\tau A_2(z)+2B(z)} \Boh(e^{-0.51\tau}) dz = \Boh(e^{-0.002\tau})
\end{equation*}
and we obtain the result. 
\end{proof}

Since the kernel of $\mb K_z$ is given by a sum, $\Tr \mb K_z$ is a double sum. 
We show that the main contribution to the trace comes from a single term of the sum. 

\begin{lem}\label{prop:large_tau_estimate_trace}
With the change $x\mapsto x_\tau$ in \eqref{eq:x_scaling_large_tau},
there is a constant $\delta\in(0,1)$ such that for every $\delta_1\in(0,\delta)$,
$$
	\Tr \mb K_z = \frac{e^{2\Phi_\tau(u_0(z))}}{4u_0(z)^4} +\Boh(e^{-1.4632\tau})  \quad \text{as $\tau\to \infty$,}
$$
uniformly for $1-\delta\le |z|\le 1-\delta_1$.
\end{lem}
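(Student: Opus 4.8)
The plan is to compute $\Tr\mb K_z$ directly from the series form of the kernel and isolate the single dominant term. Since $\bm K_z(\xi,\xi)=\sum_{u\in\mathcal S_-}\frac{e^{\Phi_\tau(\xi)+\Phi_\tau(u)+\frac{\gamma}{2}(\xi^2-u^2)}}{\xi u(\xi+u)^2}$, summing over $\xi\in\mathcal S_-$ gives
\[
\Tr\mb K_z=\sum_{\xi,u\in\mathcal S_-}\frac{e^{\Phi_\tau(\xi)+\Phi_\tau(u)+\frac{\gamma}{2}(\xi^2-u^2)}}{\xi u(\xi+u)^2}.
\]
The $(\xi,u)=(u_0(z),u_0(z))$ term equals $\frac{e^{2\Phi_\tau(u_0(z))}}{4u_0(z)^4}$, which is exactly the claimed main term, so it remains to show that the sum of all other terms is $\Boh(e^{-1.4632\tau})$. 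A key simplification is that every $\xi\in\mathcal S_-$ satisfies $e^{-\xi^2/2}=z$, hence $\re(\xi^2)=-2\log|z|$; therefore $\re\!\big(\tfrac{\gamma}{2}(\xi^2-u^2)\big)=0$ for all $\xi,u\in\mathcal S_-$ and real $\gamma$, so the factor $e^{\frac{\gamma}{2}(\xi^2-u^2)}$ has modulus $1$ and plays no role in the size estimates.

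Next I would enumerate $\mathcal S_-=\{u_k\}_{k\in\Z}$ as in the text and bound the remainder. By Lemma~\ref{lem:uuplusu} there is $c>0$, uniform in $1-\delta\le|z|\le 1-\delta_1$, with $|u_k|\ge c\sqrt{|k|+1}$ and $|u_k+u_j|\ge c\sqrt{|k|+|j|+2}$, so each summand is bounded by $c^{-4}(|k|+1)^{-1/2}(|j|+1)^{-1/2}(|k|+|j|+2)^{-1}e^{\re\Phi_\tau(u_k)+\re\Phi_\tau(u_j)}$. Invoking Corollary~\ref{prop:estimate_real_phi_uk_u0} ($\re\Phi_\tau(u_0)\le 0.4734\tau$ and $\re\Phi_\tau(u_k)\le-1.9366|k|^{3/2}\tau$ for $k\ne0$), I would split $\mathcal S_-^2\setminus\{(u_0,u_0)\}$ into: (i) exactly one of $k,j$ zero, where the exponent is $\le 0.4734\tau-1.9366|j|^{3/2}\tau$ and, for $\tau$ large, the geometric-type sum over $j\ne0$ is controlled by its $j=\pm1$ terms, contributing $\Boh(e^{(0.4734-1.9366)\tau})=\Boh(e^{-1.4632\tau})$; and (ii) both $k,j\ne0$ (this includes the diagonal terms $k=j\ne0$), where the exponent is $\le-1.9366(|k|^{3/2}+|j|^{3/2})\tau$ and the double sum is $\Boh(e^{-2\cdot1.9366\tau})$, negligible compared with $e^{-1.4632\tau}$. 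Summing these gives the claim; absolute convergence of the double sum for $\tau$ large (already guaranteed since $\mb K_z$ is trace class) justifies the rearrangement.

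The only genuine issue is bookkeeping of the constants and uniformity in $z$: one needs the same $\delta,\delta_1,\tau_0$ to serve for Lemma~\ref{lem:uuplusu} and Corollary~\ref{prop:estimate_real_phi_uk_u0} simultaneously, which is arranged by taking $\delta$ as in Corollary~\ref{prop:estimate_real_phi_uk_u0} and $\tau_0$ large enough that tails such as $\sum_{k\ge1}e^{-1.9366 k^{3/2}\tau}$ are dominated by a constant multiple of their first term. The equality $0.4734-1.9366=-1.4632$ in case (i) is precisely what forces the stated error exponent, so no slack is lost. This is all routine; the substantive input is entirely contained in the two previously established estimates.
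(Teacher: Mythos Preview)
Your proposal is correct and follows essentially the same route as the paper: expand $\Tr\mb K_z$ as the double sum over $\mathcal S_-\times\mathcal S_-$, isolate the $(u_0,u_0)$ term, kill the $\gamma$-factor via $|e^{\frac{\gamma}{2}(\xi^2-u^2)}|=1$, and bound the remainder using Lemma~\ref{lem:uuplusu} for the denominators together with Corollary~\ref{prop:estimate_real_phi_uk_u0} for the exponential factors. The paper's only cosmetic difference is that it splits the remainder as ``$k=0,\,j\neq0$'' versus ``$k\neq0,\,j$ arbitrary'' rather than your symmetric split, and it drops the polynomial prefactors $(|k|+1)^{-1/2}$ immediately rather than tracking them; the governing arithmetic $0.4734-1.9366=-1.4632$ is identical.
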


A direct calculation shows that $2\re \Phi_\tau(u_0(-1\pm i0))\approx -3.8795 \tau$, so the term singled out above is neglectible when compared with the error term for $z$ close to $-1$. However, the importance of the formula above comes from its uniformity: when $z$ moves away from $-1$, the formula is still valid, and for $z$ sufficiently away from $-1$ the contribution from the term $e^{2\Phi(u_0(z))}$ above becomes dominant over the error term. 

Later we will use the asymptotic formula above to compute the integral in Proposition~\ref{prop:Flgtu}. The major contribution to that integral will come from a critical point $z=z_c$ of the exponent of the integrand and, at this critical point, the contribution that will arise from the term $e^{2\Phi_\tau(u_0(z_c))}$ will indeed be larger than the error term.

Also, the asymptotic formula above has to be interpreted carefully when considering $z<0$, as the term $\Tr \mb K_z$ is analytic in $0<|z|<1$, but $u_0(z)$ is analytic only in $\C\setminus ((-\infty, 0]\cup [1, \infty))$. Nonetheless, it is easy to check from the definition that $u_0(z)$ and $\frac1{u_0(z)}$ are bounded in any compact subset of 
the open set $\{z  : \re z< 0\}$, including on the line segment $(-\infty, 0)$, and admit continuous boundary values when $z$ approaches $(-\infty,0)$ from above or below (although these boundary values do not coincide). Therefore, as will be done later without further mention, %it is safe to use 
these asymptotics can be used to integrate $\Tr \mb T_z$ along any contour that intersects $(-\infty,0)$ only at its endpoints. 

\begin{proof}[Proof of Lemma~\ref{prop:large_tau_estimate_trace}.]
By the definition of the kernel, 
\begin{align*}
	\Tr \mb K_z=\sum_{u\in \mathcal S_-}\bm K_z(u,u) & =\sum_{k\in \Z}\sum_{j\in \Z}\frac{e^{\Phi_\tau(u_k)+\Phi_\tau(u_j)+\frac{\gamma}{2}(u_k^2-u_j^2)}}{u_k u_j (u_j+u_k)^2}	= \frac{e^{2\Phi_\tau(u_0)}}{4u_0^4}  + \text{REST}
\end{align*}
where we have set 
\begin{equation*}
	\text{REST}= \sum_{(k,j)\neq (0,0) }\frac{e^{\Phi_\tau(u_k)+\Phi_\tau(u_j)+\frac{\gamma}{2}(u_k^2-u_j^2)}}{u_k u_j (u_j+u_k)^2}.
\end{equation*}
We estimate \text{REST}. 
Let $\tau_0>0$ be the constant from Corollary \ref{prop:estimate_real_phi_uk_u0}.
Using $|e^{\frac{\gamma}2 u_k^2}|=|z|^\gamma$ and Lemma \ref{lem:uuplusu},  
\begin{equation*} \begin{split}
	|\text{REST}| %&\le  \frac1{c^4}\sum_{(j,k)\neq (0,0)} e^{\re\Phi_\tau(u_k)+\re\Phi_\tau(u_j)} 
	\le   \frac1{c^4}\left[ e^{\re \Phi_\tau(u_0)} \sum_{j\neq 0}e^{\re \Phi_\tau(u_j)}+\sum_{k\neq 0} e^{\re\Phi_\tau(u_k)} \sum_{j\in \Z} e^{\re\Phi_\tau(u_j)}\right] .
\end{split} \end{equation*} 
From Corollary \ref{prop:estimate_real_phi_uk_u0} we have $e^{\re \Phi_\tau(u_0)}\le e^{0.4734\tau}$, 
\begin{equation*}
	\sum_{k\neq 0} e^{\re\Phi_\tau(u_k)} \le 2 \sum_{k=1}^\infty e^{-1.9366|k|^{3/2} \tau}\le \frac{2 e^{-1.9366\tau}}{1-e^{-1.9366\tau}}
\end{equation*}
and
$$
	\sum_{j\in \Z} e^{\re\Phi_\tau(u_j)} \le e^{0.4734\tau}+ \frac{2 e^{-1.9366\tau}}{1-e^{-1.9366\tau}}
$$
for all $\tau\ge \tau_0$. Hence, $|\text{REST}| = \Boh(e^{-1.4632\tau})$ and we obtain the result. 
\end{proof}

We arrived at the main result of this subsection. To state it, we define the functions 
\begin{equation} \label{eq:ff12E} \begin{split}
	&\ff_1(z) := \frac1{\sqrt{2\pi}} \Li_{3/2}(z) - \frac1{\sqrt{2\pi}} \Li_{5/2}(z)  -2 u_0(z) - \frac23 u_0(z)^3, \\
	&\ff_2(z) :=   2u_0(z) - \frac1{\sqrt{2\pi}} \Li_{3/2}(z) ,  \\
	& \EE(z) := e^{2B(z)-2Q(u_0(z))}
\end{split} \end{equation}
for $z\in \C\setminus ((-\infty, 0]\cup [1, \infty))$, where $B$ and $Q$ are given in \eqref{def:functions_A} and \eqref{def:Q_function}. 

\begin{cor}
There is a constant $\epsilon\in (0,1)$ such that 
\begin{equation}\label{eq:relation_F_polylog_integral}
	F(x_\tau;\tau,\gamma)=1- \frac1{8\pi i} \int_{\Gamma} \frac{\EE(z) e^{\tau \ff_1(z)+ \tau^{1/2} \hat x \ff_2(z)}}{z u_0(z)^4} dz+\Boh(e^{-0.002 \tau})  \quad \text{with $\hat x:= \frac{\pi^{1/4}\rx}{\sqrt{2}}$}
\end{equation}
as $\tau \to \infty$, 
where $\Gamma$ is any simple contour in the domain $\C\setminus ((-\infty, 0]\cup [1, \infty))$ that starts at $a-i0$ and ends at $b+i0$, for any 
$a,b\in (-1-\epsilon, -1+\epsilon)$. 
\end{cor}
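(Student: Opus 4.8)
The plan is to reduce \eqref{def:limiting_distribution} to the claimed contour integral in three moves: extract the leading term of the Fredholm determinant, carry out the algebraic bookkeeping that turns the polylog/Bethe‑root data into $\ff_1,\ff_2,\EE$, and finally deform the contour onto the family $\Gamma$, controlling the endpoint ambiguity created by the branch cut of $u_0$ on $(-\infty,0]$. Concretely, fix $\delta_1\in(0,\delta)$ small (to be further constrained below) and take $R=1-\delta_1$ in Proposition~\ref{prop:Flgtu}, so that
$$ F(x_\tau;\tau,\gamma)=1-\frac{1}{2\pi i}\oint_{|z|=R}\frac{e^{x_\tau A_1(z)+\tau A_2(z)+2B(z)}}{z}\Tr(\mb K_z)\,dz+\Boh(e^{-0.002\tau}). $$
Into this I substitute the expansion of Lemma~\ref{prop:large_tau_estimate_trace}. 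Since $u_0$ is defined on the circle $|z|=R$ only after cutting it at $z=-R$, this step is to be read as an integral over the arc $C_R$ traversed counterclockwise from $-R-i0$ to $-R+i0$, with $u_0$ (hence $\ff_1,\ff_2,\EE$) taking the appropriate one‑sided boundary values at the two ends; the extension of the trace asymptotics up to $(-\infty,0)$ is legitimate by the discussion following Lemma~\ref{prop:large_tau_estimate_trace}. The error term $\Boh(e^{-1.4632\tau})$ in $\Tr\mb K_z$, multiplied by $|e^{x_\tau A_1+\tau A_2+2B}/z|\le Ce^{0.5071\tau}$ on $|z|=R$ (the bound \eqref{eq:estimate_factor}) and integrated over a bounded arc, contributes $\Boh(e^{-0.95\tau})$, absorbed into the stated error.

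Next I would verify the identity
$$ x_\tau A_1(z)+\tau A_2(z)+2B(z)+2\Phi_\tau(u_0(z))=\tau\ff_1(z)+\tau^{1/2}\hat x\,\ff_2(z)+\log\EE(z),\qquad \hat x=\frac{\pi^{1/4}\rx}{\sqrt2}, $$
by matching the $\tau$‑, $\tau^{1/2}$‑, and $\tau$‑independent parts against \eqref{def:functions_A}, \eqref{def:phi_function}, \eqref{eq:leading_term_Phi}, \eqref{eq:x_scaling_large_tau} and \eqref{eq:ff12E}. Consequently the leading integrand equals $\tfrac14 g_\tau(z)$ with $g_\tau(z):=\EE(z)e^{\tau\ff_1(z)+\tau^{1/2}\hat x\ff_2(z)}/(z\,u_0(z)^4)$, and
$$ F(x_\tau;\tau,\gamma)=1-\frac{1}{8\pi i}\int_{C_R}g_\tau(z)\,dz+\Boh(e^{-0.002\tau}). $$

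Finally I would deform $C_R$ to $\Gamma$. By Lemmas~\ref{lem:URS0zero}, \ref{lem:ffRS12second} and \ref{lem:EERSzero} the function $g_\tau$ continues analytically to all of $\RS_1=\C\setminus((-\infty,0]\cup[1,\infty))$ (with $\EE,\ff_1,\ff_2$ read as $\EERS,\ffRS_1,\ffRS_2$ there), a simply connected domain; hence $\int_\gamma g_\tau$ depends only on the endpoints of any $\gamma\subset\RS_1$, and since the endpoints $-R-i0,-R+i0$ force a single turn around the origin, $C_R$ and any admissible $\Gamma$ with the same endpoints are homotopic. For general endpoints $a-i0,b+i0$ with $a,b\in(-1-\epsilon,-1+\epsilon)$, the difference $\int_\Gamma g_\tau-\int_{C_R}g_\tau$ equals a sum of two integrals of the boundary values $g_\tau(\cdot\pm i0)$ over the subintervals of $(-\infty,0)$ joining $a$, $b$ to $-R$. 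To bound these I would exhibit $\epsilon,c>0$ with $\re\ff_1(z\pm i0)\le-c$ on $[-1-\epsilon,-1+\epsilon]$: the discussion after Lemma~\ref{prop:large_tau_estimate_trace} gives $2\re\Phi_\tau(u_0(-1\pm i0))\approx-3.8795\tau$, while $\re(x_\tau A_1+\tau A_2+2B)\le0.5071\tau+\Boh(\tau^{1/2})$ near $z=-1$ (as in the proof of Proposition~\ref{prop:Flgtu}), so $\re\ff_1(-1\pm i0)<0$, and the continuity of the boundary values of $\ff_1$ (Lemma~\ref{lem:ffRS12second}) propagates this to a neighborhood. Choosing $\epsilon$ this small and then $\delta_1<\epsilon$, so that $-R=-1+\delta_1$ lies in that neighborhood, and using that $\EE,1/z,1/u_0^4,\ff_2$ are bounded there, each boundary‑value integral is $\Boh(e^{-c\tau/2})$, negligible against $\Boh(e^{-0.002\tau})$. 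This yields \eqref{eq:relation_F_polylog_integral} for every admissible $\Gamma$.

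I expect the genuine difficulty to be this last step: correctly converting the honest closed‑loop integral of the cut‑free function $\Tr\mb K_z$ into the open‑arc integral of $g_\tau$ carrying the boundary values of $u_0$, and then establishing the exponential smallness of $g_\tau$ near $z=-1$ so that the endpoints may be moved freely within $(-1-\epsilon,-1+\epsilon)$. The algebraic identity and the error estimates are routine once Proposition~\ref{prop:Flgtu} and Lemma~\ref{prop:large_tau_estimate_trace} are in hand.
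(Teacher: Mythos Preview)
Your proposal is correct and follows essentially the same route as the paper: insert Lemma~\ref{prop:large_tau_estimate_trace} into Proposition~\ref{prop:Flgtu} on the arc $C_R$, verify the algebraic identity rewriting the exponent as $\tau\ff_1+\tau^{1/2}\hat x\ff_2+\log\EE$, and then move the endpoints inside $(-1-\epsilon,-1+\epsilon)$ using exponential smallness of the integrand near $z=-1$. The only cosmetic difference is that you deduce $\re\ff_1(-1\pm i0)<0$ indirectly by combining the bound \eqref{eq:estimate_factor} with the numerical value $2\re\Phi_\tau(u_0(-1\pm i0))\approx-3.8795\tau$, whereas the paper computes $\re\ff_1(-1\pm i0)=-3.8388\dots$ directly from the formula \eqref{eq:ff12E} and the polylog values; both arguments give the needed negativity.
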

\begin{proof}
Observe that the integrand of \eqref{eq:relation_F_polylog_integral} is analytic in $\C\setminus ((-\infty, 0]\cup [1, \infty))$. 
Let $R$ for the contour in Proposition \ref{prop:Flgtu} be any number in $[1-\delta, 1)$ where $\delta$ is the constant from Lemma \ref{prop:large_tau_estimate_trace}.
We regard the contour as an arc that starts at $-R-i0$ and ends at $-R+i0$. 
We insert the result of Lemma~\ref{prop:large_tau_estimate_trace} into the formula of Proposition \ref{prop:Flgtu}. 
By the definitions, $e^{x_\tau A_1(z)+\tau A_2(z)+2B(z)}$ times $e^{2\Phi_\tau(u_0(z))}$ is $E(z) e^{\tau \ff_1(z)+ \tau^{1/2} \hat x \ff_2(z)}$, 
and hence we obtain the result when the contour is an arc that starts at $-R-i0$ and ends at $-R+i0$. 
Since the integrand of \eqref{eq:relation_F_polylog_integral} is analytic in $\C\setminus ((-\infty, 0]\cup [1, \infty))$, 
we can deform the contour to any other simple contour in the same domain but with the same end points $-R-i0$ and $-R+i0$. 

We now show that there is $\epsilon>0$ such that for any $-1-\epsilon< a_1< a_2< -1+\epsilon$, the integral over the line segment from $a_1-i0$ to $a_2 -i0$ or the line segments from $a_1+i0$ to $a_2+i0$ are exponentially small in $\tau$. This allows us to change the end points and still have the formula \eqref{eq:relation_F_polylog_integral} valid, thus concluding the proof.
Noting $u_0(-1\pm i0)= -\frac{1\mp i}{\sqrt{2}}\sqrt{2\pi}$ and using the numerical values $\Li_{3/2}(-1)=-0.7651\cdots$ and $\Li_{5/2}(-1)=-0.8671\cdots$, we see that
\begin{equation*}
	\re \ff_1(-1\pm i0) =  \frac{1}{\sqrt{2\pi}}\Li_{3/2}(-1) -\frac{1}{\sqrt{2\pi}}\Li_{5/2}(-1) +2\sqrt{\pi}  -\frac{4\pi\sqrt{\pi}}{3} 
	= - 3.8388\cdots. 
\end{equation*}
Thus, by continuity of the integrand up to the boundary $(-\infty, 0)\pm i0$, we find that the integrand is $\Boh (e^{-3\tau})$ in a neighborhood of $z=-1$, as we wanted.
\end{proof}

%%%%%%%%%%%%%%%%%%%%%%%%%%%%%%%%%%%%%%%%%%%%%%%%%%%%%%%%
\subsection{Asymptotic analysis of polylog integral} \label{sec:asymppolyintg}

We now evaluate the integral on the right-hand side of \eqref{eq:relation_F_polylog_integral}  as $\tau\to \infty$ using the method of steepest descent.
It turns out that the main contribution to the integral comes from a point on the Riemann surface on which the integrand admits analytic extension.

Recall the Riemann surface $\RS$ introduced in Section \ref{subsec:riemann_surface}. 
Define the one-form
\begin{equation}\label{eq:oneform_omega}
	\omega_\tau = \frac{\EERS(z) e^{\tau \ffRS_1(z)+ \tau^{1/2} \hat x \ffRS_2(z)}}{8\pi i z \URS_0(z)^4} dz
\end{equation}
on $\RS$, 
where $\EERS, \ffRS_1, \ffRS_2, \URS_0$ are extensions of $\EE, \ff_1,\ff_2, u_0$ defined in Section \ref{section:polylogs}. 
Using this notation, the equation \eqref{eq:relation_F_polylog_integral} becomes 
\begin{equation}\label{eq:Finomega}
	F(x_\tau;\tau,\gamma)=1- \int_{\Gamma} \omega_\tau+\Boh(e^{-0.002 \tau}) 
\end{equation}
where $\Gamma$ is a contour in the first sheet $\RS_1$, which we now take to start at $-1-i0$ and end at $-1+i0$. 
We will see that as $\tau\to \infty$, the main contribution to the integral comes from a neighborhood of order $\tau^{-1/2}$ of the point $z=0^{(2)}$ on the second sheet $\RS_2$. 
We deform $\Gamma$ to a new contour $\Gamma'=  \Gamma_1 \cup \Gamma_2\cup \Gamma_3$ defined as follows (see Figure~\ref{fig:deformation_contour_riemann_surface}).
\begin{itemize}
\item $\Gamma_1=\gamma_1\cup \overline{\gamma_1}$ where $\gamma_1$ is a contour  that starts from the point $-1-i0$ in the lower half plane, moves in the lower half plane of $\RS_1$ until it hits a point on the branch cut on $(1,\infty)$, and then moves to the upper half plane in the second sheet $\RS_2$ until it ends at a point  in the upper imaginary axis. 
Concretely, we use the contour 
$$
	\gamma_1(t)=e^{-\frac{1}{2}(t^2-\pi)+ i \sqrt{\pi} t} \quad 
	\text{for $-\sqrt{\pi}\leq t\leq \sqrt{\pi}/2$, } 
$$
where the part for $-\sqrt{\pi}\le t\le 0$ lies on $\RS_1$ and the part for $0\le t\le \sqrt{\pi}/2$ lies on $\RS_2$. 
Note that $\gamma_1(-\sqrt{\pi})=-1-i0$ and $\gamma_1(0)=e^{\pi/2}>0$. Also, $\gamma_1(\sqrt{\pi}/2)= ie^{\frac{3\pi}{8}}$ on $\RS_2$. 

\item $\Gamma_2=\gamma_2\cup \overline{\gamma_2}$, where $\gamma_2$ is the line segment on the imaginary axis in $\RS_2$ joining the point $\gamma_2(\sqrt{\pi}/2)= ie^{3\pi/8} $ and the point $i\tau^{-5/12}$. 

\item $\Gamma_3$ is a contour in $\RS_2$ joining the point $i\tau^{-5/12}$ and $-i\tau^{-5/12}$. We take it to be the union of straight line segments with a semicircle, namely 
$$\Gamma_3=\{z= \tau^{-1/2} e^{i\theta} |-\pi/2\le \theta\le \pi/2 \} \cup\{ z=iy| \tau^{-1/2}\le |y|\le \tau^{-5/12}\},$$ 
oriented downwards. As we will show, this part gives the main contribution to the integral. 
\end{itemize}

In the above, the notation $\overline{\gamma_j}$ denotes the contour in $\RS$ whose intersection with $\RS_k$ is the complex conjugation of $\gamma_j\cap \RS_k$.

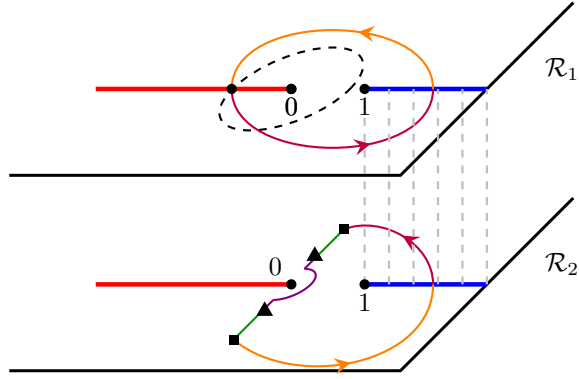
\begin{figure}\centering
\begin{tikzpicture}
[x  = {(1cm,0cm)},
                    y  = {(0.707107cm,0.707107cm )},
                    z  = {(0 cm,1 cm)},
                    scale = 0.65,
                    color = {black}]
                    
%
% drawing the basic coordinate system
%
\begin{scope}[canvas is xy plane at z=0]
  % x-axis
  \draw[white,->] (-5,0) -- (5,0) node (x-axis) [right] {$x$};
  % y-axis
  \draw[white,->] (-5,0) -- (-5,2.8) node (y-axis) [above] {$y$};
\end{scope}
 
\begin{scope}[canvas is yz plane at x=-5]
  % z-axis
  \draw[white,->] (0,-5) -- (0,3) node (z-axis) [above] {$z$};
\end{scope} 
%
%
% info on the first sheet
% 
\begin{scope}[canvas is xy plane at z=0]
\path[draw,line width=0.4mm] (-4,-2.5) to (4,-2.5) to [edge node={node [pos=0.4,right,shift={(4pt,0pt)}] {}}] (4,2.5);
%
% drawing real cuts
%
\path [draw,line width=0.6mm,red] (-4,0) to [edge node={node [pos=0.5,above,shift={(2pt,-2pt)}] {}}] (0,0);
\path [draw,line width=0.6mm,blue] (1.5,0) to [edge node={node [pos=0.5,above,shift={(2pt,-2pt)}] {}}] (4,0);
%
% drawing curves
%
\draw[black,thick,dashed] (0,0) circle[radius=1.2]; 
\draw[thick,purple,postaction={mid6 arrow={purple,scale=1.5}}] (-1.22,0) to [out=-90,in=-90] (1.55+1.35,0); 
\draw[thick,orange,postaction={mid4 arrow={orange,scale=1.5}}] (1.55+1.35,0) to [out=90,in=90] (-1.22,0); 
%\draw (-1.22,1) circle[radius=1]; 

\end{scope}
%

%
% info on the second sheet
% 
\begin{scope}[canvas is xy plane at z=-4]
\path[draw,line width=0.4mm,black] (-4,-2.5) to (4,-2.5) to [edge node={node [pos=0.4,right,shift={(4pt,0pt)}] {}}] (4,2.5);
%
% drawing real cuts
%
\path [draw,line width=0.6mm,red] (-4,0) to [edge node={node [pos=0.25,above,shift={(2pt,-2pt)}] {}}] (0,0);
\path [draw,line width=0.6mm,blue] (1.5,0) to [edge node={node [pos=0.5,above,shift={(2pt,-2pt)}] {}}] (4,0);
%
% drawing curves
%
% Gamma1 in second sheet
\draw[thick,orange,postaction={rmid arrow={orange,scale=1.5}}] (1.55+1.35,0) to [out=-90,in=-40] (1.55-1.6,-1.6); 
\draw[thick,purple,postaction={mid arrow={purple,scale=1.5}}] (1.55+1.35,0) to [out=90,in=20] (1.55-1.6,1.6); 
%
%Gamma2
%
\draw[thick,black!40!green] (1.55-1.6,1.6) to (1.55-1.6,0.7);
\draw[thick,black!40!green] (1.55-1.6,-1.6) to (1.55-1.6,-0.7);
%
%Gamma3
%
\draw[thick,violet] (1.55-1.6,0.7) to (1.55-1.6,0.45);
\draw[thick,violet] (1.55-1.6,-0.7) to (1.55-1.6,-0.45);
\draw[thick,violet] (1.55-1.6,-0.45) arc(-90:90:0.45);
\end{scope}
%
%
% lines to mark connections
%
\foreach \x in {0.5,1,...,3}
\draw[line width=0.3mm,dashed,lightgray] (1+\x,0,-4)--(1+\x,0,0);

%drawing relevant points
\fill [black] (0,0,-4) circle[radius=3pt] node [above left] {$0$}; 
\fill [black] (0,0,0) circle[radius=3pt] node [below] {$0$};
\fill [black] (1.5,0,-4) circle[radius=3pt] node [below] {$1$}; 
\fill [black] (1.5,0,0) circle[radius=3pt] node [below] {$1$};

\node[fill=black,regular polygon, regular polygon sides=4,inner sep=1.3pt] at (1.55-1.6,-1.6,-4) {};
\node[fill=black,regular polygon, regular polygon sides=4,inner sep=1.3pt] at (1.55-1.6,1.6,-4) {};
\node[fill=black,regular polygon, regular polygon sides=3,inner sep=1.3pt] at (1.55-1.6,-0.7,-4-0.05) {};
\node[fill=black,regular polygon, regular polygon sides=3,inner sep=1.3pt] at (1.55-1.6,0.7+0.05,-4+0.05) {};

\fill (-1.22,0) circle[radius=3pt];
\fill (0,0,0) circle[radius=3pt] node [below] {$0$};

% draw labels
\node [above right] at (5,0,0) {$\RS_1$};
\node [above right] at (5,0,-4) {$\RS_2$};

\end{tikzpicture}
\caption{
	The dashed curve in the first sheet is the original contour $\Gamma$. The solid curve is the contour $\Gamma'=\Gamma_1\cup\Gamma_2\cup\Gamma_3$. 
	The contour $\Gamma_3$ is the arc on the second sheet that connects the two marked triangle points. $\Gamma_2$ consists of two straight line segments each connecting one marked triangle point and one marked square point.
	The remaining part of the solid curve is $\Gamma_1$. }
\label{fig:deformation_contour_riemann_surface}
\end{figure}

We now evaluate the integral of $\omega_\tau$ on each part of $\Gamma'=\Gamma_1\cup\Gamma_2\cup\Gamma_3$. 

\begin{lem}\label{lem:integral_gamma5}
We have 
$$
	\lim_{\tau\to \infty} \int_{\Gamma_3}\omega_\tau =  \frac1{\sqrt{2\pi}} \int_\rx^\infty e^{-\frac12 u^2} du . 
$$
\end{lem}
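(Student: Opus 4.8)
The plan is to localize the integral near the point $z=0^{(2)}$ on the second sheet $\RS_2$, perform the rescaling $z=\tfrac{2\pi^{1/4}}{\sqrt\tau}\,s$ that turns $\Gamma_3$ into a contour with a semicircle of fixed radius, replace the integrand by an explicit Gaussian-type integrand up to a uniformly small error, and finally evaluate the limiting contour integral in closed form.

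First I would record the local expansions of the data near $z=0^{(2)}$. Since $\Gamma_3$ lies on $\RS_2$, Lemma~\ref{lem:ffRS12second} gives $\ffRS_1(z)=\tfrac1{\sqrt{2\pi}}\big(\Li_{3/2}(z)-\Li_{5/2}(z)\big)$ and $\ffRS_2(z)=-\tfrac1{\sqrt{2\pi}}\Li_{3/2}(z)$ there, and plugging in $\Li_s(z)=z+2^{-s}z^2+\Boh(z^3)$ yields
\[
	\ffRS_1(z)=\frac{z^2}{8\sqrt\pi}+\Boh(z^3),\qquad
	\ffRS_2(z)=-\frac{z}{\sqrt{2\pi}}+\Boh(z^2)\qquad(z\to0^{(2)}),
\]
with remainders uniform on a fixed disk around $0^{(2)}$ since these functions are analytic there. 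Using $\URS_0(z)^4=u_0(z)^4$ on $\RS_2$ together with Lemma~\ref{prop:local_expansion_prefactor_gaussian}, the prefactor reduces to $\dfrac{\EERS(z)}{8\pi i\,z\,\URS_0(z)^4}=\dfrac{-1}{2\pi i\,z}\big(1+\Boh(z^2)\big)$, so that on $\Gamma_3$
\[
	\om_\tau=\frac{-1}{2\pi i}\,\big(1+\Boh(z^2)\big)\exp\!\Big(\frac{\tau z^2}{8\sqrt\pi}-\frac{\tau^{1/2}\hat x\,z}{\sqrt{2\pi}}+\Boh(\tau z^3)+\Boh(\tau^{1/2}z^2)\Big)\frac{dz}{z}.
\]

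Next I would substitute $z=\tfrac{2\pi^{1/4}}{\sqrt\tau}s$. Recalling $\hat x=\tfrac{\pi^{1/4}\rx}{\sqrt2}$, a direct check gives $\tfrac{\tau z^2}{8\sqrt\pi}=\tfrac{s^2}{2}$, $\tfrac{\tau^{1/2}\hat x\,z}{\sqrt{2\pi}}=\rx s$, and $\tfrac{dz}{z}=\tfrac{ds}{s}$, while $\Gamma_3$ becomes a contour $\widetilde\Gamma_3$ descending along the imaginary axis from $ic_0\tau^{1/12}$ to $ic_0$, around a right semicircle of radius $c_0:=\tfrac1{2\pi^{1/4}}$, and down to $-ic_0\tau^{1/12}$. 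Since $|z|\le\tau^{-5/12}$ on $\Gamma_3$, on $\widetilde\Gamma_3$ one has $|s|\le c_0\tau^{1/12}$ and the corrections obey $\tau|z|^3\le\tau^{-1/4}$, $\tau^{1/2}|z|^2\le\tau^{-1/3}$, $|z|^2\le\tau^{-5/6}$, all $\to0$ uniformly on $\Gamma_3$; hence there is $\eta_\tau$ with $\sup_{\widetilde\Gamma_3}|\eta_\tau|\to0$ such that
\[
	\int_{\Gamma_3}\om_\tau=\frac{-1}{2\pi i}\int_{\widetilde\Gamma_3}\frac{e^{s^2/2-\rx s}}{s}\,\big(1+\eta_\tau(s)\big)\,ds.
\]
On the imaginary axis $\re(s^2)=-(\im s)^2$ and on the semicircle $\re(s^2)\le c_0^2$, so $\bigl|\tfrac{e^{s^2/2-\rx s}}{s}\bigr|$ is bounded on $\widetilde\Gamma_3$ by an integrable function of $\im s$ uniformly in $\tau$, and the $\eta_\tau$–term is $o(1)$. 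The Gaussian decay $e^{-(\im s)^2/2}$ also lets one extend the vertical pieces to $\pm i\infty$ at the cost of $\Boh(e^{-c'\tau^{1/6}})$, and then, since the integrand is analytic and decays in $\{\re s>0\}$ along the relevant directions, deform onto the line $\re s=\epsilon$ for any fixed $\epsilon>0$. Reversing orientation gives $\lim_{\tau\to\infty}\int_{\Gamma_3}\om_\tau=\tfrac1{2\pi i}\int_{\epsilon-i\infty}^{\epsilon+i\infty}\tfrac{e^{s^2/2-\rx s}}{s}\,ds$.

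Finally I would evaluate this integral by writing $\tfrac1s=\int_0^\infty e^{-st}\,dt$ (valid for $\re s=\epsilon>0$), interchanging integrals, and using $\tfrac1{2\pi i}\int_{\epsilon-i\infty}^{\epsilon+i\infty}e^{s^2/2-as}\,ds=\tfrac1{\sqrt{2\pi}}e^{-a^2/2}$ (a Gaussian integral on $s=\epsilon+iy$) with $a=\rx+t$; this produces $\tfrac1{\sqrt{2\pi}}\int_0^\infty e^{-(\rx+t)^2/2}\,dt=\tfrac1{\sqrt{2\pi}}\int_\rx^\infty e^{-u^2/2}\,du$, as claimed. The step I expect to be the main obstacle is making the error estimates uniform along the long vertical arms of $\Gamma_3$: one must check that the specific endpoint $\tau^{-5/12}$ is tuned so that the cubic and quadratic corrections to the exponent (of sizes $\tau|z|^3$ and $\tau^{1/2}|z|^2$) still vanish while $\im s$ ranges out to $\sim\tau^{1/12}$, which is exactly far enough for the Gaussian tails to be exponentially negligible — balancing these two requirements is the crux.
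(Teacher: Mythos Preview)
Your proof is correct and follows essentially the same route as the paper: expand the integrand on $\RS_2$ near $0^{(2)}$ using Lemmas~\ref{lem:ffRS12second} and~\ref{prop:local_expansion_prefactor_gaussian}, rescale by $\tau^{-1/2}$, use $|z|\le\tau^{-5/12}$ to kill the correction terms, extend the vertical arms to $\pm i\infty$, and evaluate the limiting contour integral. The only cosmetic differences are that you fold the paper's two substitutions ($z=\tau^{-1/2}s$ followed by $s=2i\pi^{1/4}u$) into a single one, and you compute the final integral via the Laplace representation $1/s=\int_0^\infty e^{-st}\,dt$ rather than by differentiating in $\rx$ and checking the limit $\rx\to+\infty$; both are routine.
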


\begin{proof}
The contour $\Gamma_3$ is on the second sheet $\RS_2$. 
From the definition \eqref{eq:oneform_omega} of $\omega_\tau$, Lemma \ref{lem:ffRS12second}, and Lemma~\ref{prop:local_expansion_prefactor_gaussian}, 
\begin{equation*}
	\int_{\Gamma_3}\omega_\tau
	=- \frac1{2\pi i} \int_{\Gamma_3} e^{H_\tau(z)} (1+\Boh(z^2)) \frac{dz}{z}
\end{equation*}
with 
\begin{equation*}
	H_\tau(z):= \frac{\tau}{\sqrt{2\pi}} \left( \Li_{3/2}(z)-\Li_{5/2}(z) \right) - \frac{\tau^{1/2}\hat x}{\sqrt{2\pi}} \Li_{3/2}(z)
	= \frac{\tau}{8 \sqrt{\pi}} z^2 + \tau\;\Boh(z^3) - \frac{\tau^{1/2}\hat x}{\sqrt{2\pi}} z+\tau^{1/2} \Boh(z^2)
\end{equation*}
where the error terms are independent of $\tau$. 
Using $|z|\le \tau^{-5/12}$ for the error term and changing the variables $z= \tau^{-1/2} s$, we find that 
\begin{equation*}
	\int_{\Gamma_3}\omega_\tau 
	=- \frac1{2\pi i} \int_{\tau^{1/2} \Gamma_3} e^{\frac{1}{8 \sqrt{\pi}} s^2  - \frac{\hat x}{\sqrt{2\pi}} s+ \Boh(\tau^{-1/4})}  \frac{ds}{s} (1+\Boh(\tau^{-5/6})).
\end{equation*}
The new contour $\tau^{1/2}\Gamma_3$ consists of the line segment from $-i\tau^{1/12}$ and $-i$, the line segment $i$ and $i\tau^{1/12}$, and the half-circle,  $\{z= e^{i\theta} |-\pi/2\le \theta\le \pi/2 \}$. 
It is oriented downwards. 
Note that the pole $s=0$ is located to the left of the contour. 
Reversing the orientation, recalling $\hat x= \frac{\pi^{1/4}\rx}{\sqrt{2}}$, and changing the variables $s=i 2\pi^{1/4} u$,  we find that 
\begin{equation*}
	\lim_{\tau\to \infty} \int_{\Gamma_3}\omega_\tau 
	= \frac1{2\pi i} \int_{-i\infty}^{i\infty}  e^{\frac{1}{8 \sqrt{\pi}} s^2  - \frac{\hat x}{\sqrt{2\pi}} s}  \frac{ds}{s} 
	= \frac1{2\pi i} \int_{-\infty}^\infty e^{-\frac12 u^2-i\rx u} \frac{du}{u},
\end{equation*}
where the contour of the middle integral goes around the pole $s=0$ through the right half plane, and the contour of the last integral goes around the pole $u=0$ through the lower half plane. Changing variables $u\mapsto u-i\rx$, it is easy to show that the last integral is a function of $\rx$ whose derivative is $-\frac1{\sqrt{2\pi}} e^{-\rx^2/2}$ and tends to zero as $\rx\to +\infty$. Hence, we find that
\begin{equation*}
	\frac1{2\pi i} \int_{-\infty}^\infty e^{-\frac12 u^2-i\rx u} \frac{du}{u} = \frac1{\sqrt{2\pi}} \int_\rx^\infty e^{-\frac12 u^2} du
\end{equation*}
and we obtain the result. 
\end{proof}

\begin{lem}\label{lem:integral_gamma3}
We have 
$$
\lim_{\tau\to\infty} \int_{\Gamma_2}\omega_\tau = 0 .
$$
\end{lem}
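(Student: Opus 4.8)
The plan is to restrict everything to the second sheet $\RS_2$, where $\Gamma_2=\gamma_2\cup\overline{\gamma_2}$ lives, and use the explicit formulas from Lemma~\ref{lem:ffRS12second}: there $\ffRS_1(z)=\frac1{\sqrt{2\pi}}\bigl(\Li_{3/2}(z)-\Li_{5/2}(z)\bigr)$ and $\ffRS_2(z)=-\frac1{\sqrt{2\pi}}\Li_{3/2}(z)$. Parametrizing $\gamma_2$ as $z=iy$, $\tau^{-5/12}\le y\le e^{3\pi/8}$, the goal is to show $e^{\tau\ffRS_1(iy)+\tau^{1/2}\hat x\ffRS_2(iy)}$ decays like a Gaussian in $\sqrt\tau\,y$, while the prefactor of $\omega_\tau$ is controlled, so that the cutoff $y\ge\tau^{-5/12}$ tames the $z^{-1}$ singularity. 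First I would compute the real parts. Using the duplication identity $\Li_s(iy)+\Li_s(-iy)=2^{1-s}\Li_s(-y^2)$ together with $\Li_s(-iy)=\overline{\Li_s(iy)}$ (real-analyticity of $\Li_s$ on $\C\setminus[1,\infty)$), one gets $\re\Li_s(iy)=2^{-s}\Li_s(-y^2)$, hence
\[
\re\ffRS_1(iy)=\frac{\psi(y^2)}{2^{5/2}\sqrt{2\pi}},\qquad \psi(v):=2\Li_{3/2}(-v)-\Li_{5/2}(-v),\qquad \re\ffRS_2(iy)=-\frac{\Li_{3/2}(-y^2)}{2^{3/2}\sqrt{2\pi}},
\]
so the whole question reduces to the behaviour of $\psi(v)$ and of $\Li_{3/2}(-v)$ for $v=y^2\in(0,e^{3\pi/4}]$.

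The key estimate, which I expect to be the main obstacle, is that $\psi(v)<0$ for all $v\in(0,e^{3\pi/4}]$. For $v<1$ this follows from a termwise alternating-series bound: $\psi(v)=\sum_{k\ge1}\frac{(2k-1)(-v)^k}{k^{5/2}}$ with magnitudes $\tfrac{(2k-1)v^k}{k^{5/2}}$ strictly decreasing in $k$ (the ratio of consecutive magnitudes is $\tfrac{2k+1}{2k-1}(\tfrac{k}{k+1})^{5/2}v<v<1$), so $\psi(v)\le -v+\tfrac{3v^2}{2^{5/2}}<0$. For $1\le v\le e^{3\pi/4}$ the series diverges, so I would use the integral representation \eqref{eq:integral_repr_polylog}: since $\Gamma(5/2)=\tfrac32\Gamma(3/2)$,
\[
\psi(v)=\frac{-2v}{3\,\Gamma(3/2)}\int_0^\infty\frac{t^{1/2}(3-t)}{e^t+v}\,dt ,
\]
and as $v>0$ makes the prefactor negative, it remains to show $\int_0^\infty\frac{t^{1/2}(3-t)}{e^t+v}\,dt>0$. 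The integrand changes sign at $t=3$, so one compares the two pieces quantitatively: on $(0,3)$, bounding $e^t+v\le e^3+v$ gives $\int_0^3\ge\frac{1}{e^3+v}\int_0^3 t^{1/2}(3-t)\,dt=\frac{12\sqrt3}{5(e^3+v)}$, while on $(3,\infty)$, $e^t+v\ge e^t$ gives $\int_3^\infty\le\int_3^\infty t^{1/2}(t-3)e^{-t}\,dt$, a fixed constant (crudely bounded via $(3+s)^{1/2}\le\sqrt3+s/(2\sqrt3)$); since $v\le e^{3\pi/4}$ is bounded, a short numerical comparison shows the first term wins. Granting $\psi<0$, the function $v\mapsto\psi(v)/v$ is continuous on $(0,e^{3\pi/4}]$ and extends to $v=0$ with value $-1$ by the series, so it has a negative maximum on $[0,e^{3\pi/4}]$; this yields a constant $c>0$ with $\re\ffRS_1(iy)\le -c\,y^2$ for $0<y\le e^{3\pi/8}$. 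Similarly, \eqref{eq:integral_repr_polylog} gives $|\Li_{3/2}(-v)|<v$, hence $|\re\ffRS_2(iy)|\le C_2 y^2$.

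With these bounds the exponent satisfies $\re\bigl(\tau\ffRS_1(iy)+\tau^{1/2}\hat x\ffRS_2(iy)\bigr)\le y^2\bigl(-c\tau+|\hat x|C_2\tau^{1/2}\bigr)\le-\tfrac c2\tau y^2$ for all large $\tau$, uniformly in $y$. For the prefactor, on $\RS_2$ one has $\URS_0(z)^4=u_0(z)^4$, and Lemma~\ref{prop:local_expansion_prefactor_gaussian} gives $\EERS(z)/\URS_0(z)^4=-4\bigl(1+\Boh(z^2)\bigr)$ near $0^{(2)}$; combined with analyticity of $\EERS$ on $\RS$ and non-vanishing of $\URS_0$ away from $z=1$ (Lemma~\ref{lem:URS0zero}), the quantity $\EERS(z)/\URS_0(z)^4$ is bounded by some $M<\infty$ on the whole segment $\{iy:0<y\le e^{3\pi/8}\}$. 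Therefore, with $|z|=y$ and $|dz|=dy$,
\[
\Bigl|\int_{\gamma_2}\omega_\tau\Bigr|\le\frac{M}{8\pi}\int_{\tau^{-5/12}}^{e^{3\pi/8}}\frac{e^{-\frac c2\tau y^2}}{y}\,dy=\frac{M}{8\pi}\int_{\sqrt{c/2}\,\tau^{1/12}}^{\;\sqrt{c/2}\,e^{3\pi/8}\sqrt\tau}\frac{e^{-u^2}}{u}\,du\le\frac{M}{8\pi}\int_{\sqrt{c/2}\,\tau^{1/12}}^{\infty}\frac{e^{-u^2}}{u}\,du\xrightarrow[\tau\to\infty]{}0,
\]
after the substitution $u=y\sqrt{c\tau/2}$; the crucial point is that the cutoff makes the lower limit grow like $\tau^{1/12}$, which kills the logarithmically singular $u^{-1}$. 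The integral over $\overline{\gamma_2}$ is handled identically — it equals the complex conjugate of the one over $\gamma_2$, since $\EERS,\ffRS_1,\ffRS_2,\URS_0$ all have real Taylor coefficients — and adding the two halves gives $\int_{\Gamma_2}\omega_\tau\to0$.
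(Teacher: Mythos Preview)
Your proof is correct and follows essentially the same route as the paper's: parametrize $\gamma_2$ by $z=iy$, establish $\re\ffRS_1(iy)\le -cy^2$ via the integral representation of the polylogs split at the sign change of the integrand, bound $|\re\ffRS_2(iy)|$ by $Cy^2$, control the prefactor $\EERS/\URS_0^4$ by Lemma~\ref{prop:local_expansion_prefactor_gaussian} and continuity, and finish with the Gaussian tail whose lower limit is $\sim\tau^{1/12}$. Your use of the duplication identity $\re\Li_s(iy)=2^{-s}\Li_s(-y^2)$ is a tidy repackaging that leads to the same integral inequality the paper verifies directly.
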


\begin{proof}
Since $\Gamma_2=\gamma_2\cup\overline{\gamma_2}$, by symmetry, it is enough to consider $\gamma_2$ which is the line segment in $\RS_2$ from the point $i e^{3\pi/8}$ to $i\tau^{-5/12}$. 
From the definition of $\omega_\tau$ and Lemmas \ref{lem:URS0zero} and \ref{lem:ffRS12second},
\begin{equation*}
	\int_{\gamma_3} \omega_\tau =
	\frac{i}{8\pi}\int_{\tau^{-5/12}}^{e^{3\pi/8}} \frac{\EERS(iy)}{y u_0(iy)^4}e^{-\frac{\tau}{\sqrt{2\pi}} \left(\Li_{5/2}(iy)-\Li_{3/2}(iy)\right)- \frac{\tau^{1/2} \hat x}{\sqrt{2\pi}} \Li_{3/2}(iy)} dy .
\end{equation*}
By the integral representation \eqref{eq:integral_repr_polylog},
\begin{equation*}
	\re\left(\Li_{5/2}(iy)-\Li_{3/2}(iy)\right)
	= \frac{y^2}{\sqrt{\pi}}\int_0^\infty \frac{1}{e^{2t}+y^2}\left(2t^{1/2}- \frac{4}{3}t^{3/2}\right)dt .
\end{equation*}
The integrand changes its sign at $t=3/2$. Splitting the integral into two, numerical evaluations show that, for $0\le y\le e^{3\pi/8}$, 
\begin{equation*} \begin{split}
	\re\left(\Li_{5/2}(iy)-\Li_{3/2}(iy)\right)
	&\ge \frac{y^2}{\sqrt{\pi}} \int_0^{\frac{3}{2}}\frac{1}{e^{2t}+e^{3\pi/4}} \left(2t^{1/2}-\frac{4}{3}t^{3/2}\right)dt 
	- \frac{y^2}{\sqrt{\pi}}\int_{\frac{3}{2}}^\infty \frac{1}{e^{2t}}\left(\frac{4}{3}t^{3/2}-2t^{1/2}\right)dt \\
	&= \frac{y^2}{\sqrt{\pi}} (( 0.06729\cdots) -  (0.02600 \cdots)) > (0.02) y^2 .
\end{split} \end{equation*}
On the other hand, the function $y\mapsto \Li_{3/2}(iy)$ is continuous in the interval $[0,e^{3\pi/8}]$ and has the behavior $\re \Li_{3/2}(iy)= \re (iy  -\frac{y^2}{2\sqrt{2}}+ O(y^3))= -\frac{y^2}{2\sqrt{2}}+ O(y^3)$ as $y\to 0$. 
Hence, there is a constant $c_1>0$ such that
$$
	|\re \Li_{3/2}(iy)|\leq c_1 y^2 \quad \text{for every } y\in [0, e^{3\pi/8}].
$$
In addition, by Lemma \ref{prop:local_expansion_prefactor_gaussian} and continuity, there is $c_2>0$ such that $|\EERS(iy)|\le c_2 |u_0(iy)|^4$ on the contour. 
Combining together, we find that  there is a constant $c_3>0$ such that 
\begin{equation*} \begin{split}
	\left| \int_{\Gamma_2} \omega_\tau \right| 
	\le c_2 \int_{\tau^{-5/12}}^{e^{3\pi/8}} \frac{e^{-c_3 \tau y^2}}{y} dy
	\le c_2 \int_{\tau^{1/12}}^\infty \frac{e^{-c_3 u^2}}{u} du
\end{split} \end{equation*}
and we obtain the result.
\end{proof}

\begin{lem}\label{lem:GL_int_gamma1}
We have 
$$
\lim_{\tau\to\infty} \int_{\Gamma_1} \omega_\tau = 0.
$$
\end{lem}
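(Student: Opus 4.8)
The goal is to show the integral over $\Gamma_1$ vanishes in the limit $\tau\to\infty$. The contour $\Gamma_1=\gamma_1\cup\overline{\gamma_1}$ runs from $-1-i0$ along the explicit curve $\gamma_1(t)=e^{-\frac12(t^2-\pi)+i\sqrt\pi t}$ for $-\sqrt\pi\le t\le\sqrt\pi/2$ (crossing from $\RS_1$ to $\RS_2$ through the cut on $(1,\infty)$ at $t=0$) and its conjugate. By the symmetry $\omega_\tau(\bar z)=\overline{\omega_\tau(z)}$ on $\RS$ (which holds because $\EERS,\ffRS_1,\ffRS_2,\URS_0$ all have real Taylor coefficients / Schwarz-reflection boundary values per Lemmas~\ref{lem:ffRS12second} and~\ref{lem:EERSzero}), it suffices to bound $\int_{\gamma_1}\omega_\tau$. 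The strategy is the standard Laplace/steepest-descent estimate: show that $\re\ffRS_1(z)$ is strictly negative, bounded away from $0$, along all of $\gamma_1$ (uniformly, in fact maximized at the endpoint $z=-1$), so that $|e^{\tau\ffRS_1(z)}|\le e^{-c\tau}$ for some $c>0$, while the prefactor $\EERS(z)/(z\,\URS_0(z)^4)$ and the subleading exponential $e^{\tau^{1/2}\hat x\ffRS_2(z)}$ contribute only polynomially-controlled factors.

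\textbf{Key steps.} First, one checks that $\ffRS_1$, $\ffRS_2$, $\EERS$ and $1/\URS_0$ are all continuous on the (compact) curve $\gamma_1$: away from the branch point $z=1$ this is immediate from analyticity on $\RS$, and near $z=1$ one uses Lemma~\ref{lem:EERSzero} (the zero of order $3$ of $\EERS$ exactly cancels the pole of $1/\URS_0^4$ coming from Lemma~\ref{lem:URS0zero} up to a finite factor — more precisely $\EERS(z)/\URS_0(z)^4$ extends continuously across $z=1$) and the analyticity of $\ffRS_1,\ffRS_2$ there. Hence $\bigl|\EERS(z)/(z\,\URS_0(z)^4)\bigr|$ and $|\ffRS_2(z)|$ are bounded by absolute constants $M_1,M_2$ on $\gamma_1$. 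Second — the crux — one shows
$$
	\max_{z\in\gamma_1}\re\ffRS_1(z)=\re\ffRS_1(-1)= -3.8388\cdots <0.
$$
On the $\RS_1$-portion ($-\sqrt\pi\le t\le 0$) one uses the representation $\ffRS_1=\ff_1$ and the series bound $|\Li_{3/2}(z)-\Li_{5/2}(z)|\le\zeta(3/2)-\zeta(5/2)<1.271$ together with a lower bound on $-2u_0(z)-\frac23u_0(z)^3$ along the curve; on the $\RS_2$-portion ($0\le t\le\sqrt\pi/2$) one uses $\ffRS_1(z)=\frac1{\sqrt{2\pi}}(\Li_{3/2}(z)-\Li_{5/2}(z))$ from Lemma~\ref{lem:ffRS12second}, which is bounded in modulus by $1.271/\sqrt{2\pi}<1$, so $\re\ffRS_1$ is easily negative there once $\tau$ is large (indeed the exponential factor there is at worst $e^{\tau/\sqrt{2\pi}}$, which is dominated by the gain from $\Gamma_3$ — but here we want it genuinely small, which forces checking the $\RS_2$ part of $\gamma_1$ stays away from $z=0$; note $|\gamma_1(t)|=e^{-\frac12(t^2-\pi)}\ge e^{3\pi/8}>1$ on $0\le t\le\sqrt\pi/2$, so $\re(\Li_{3/2}(z)-\Li_{5/2}(z))$ can in fact be positive but is at most $1.271/\sqrt{2\pi}$, so actually one does \emph{not} get smallness from $\ffRS_1$ alone on this subarc). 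This last point means the genuine argument must combine $\ffRS_1$ with the analysis: on the $\RS_2$-subarc of $\gamma_1$, since $|z|\ge e^{3\pi/8}$, one should instead check directly that $\re\ffRS_1(z)<0$ there using the explicit behavior of $\Li_{5/2}-\Li_{3/2}$ (as in the proof of Lemma~\ref{lem:integral_gamma3}, $\re(\Li_{5/2}(iy)-\Li_{3/2}(iy))>0$ and by continuity/openness the same sign persists in a neighborhood of the imaginary axis that contains the $\RS_2$-subarc). Third, combine: $\bigl|\int_{\gamma_1}\omega_\tau\bigr|\le \mathrm{length}(\gamma_1)\cdot M_1\, e^{-c\tau}\, e^{M_2\sqrt\tau}\to0$.

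\textbf{Main obstacle.} The delicate part is establishing $\re\ffRS_1(z)<0$ strictly on the second-sheet subarc of $\gamma_1$, because there $|z|$ is bounded below away from the neighborhoods where the polylog expansions are simplest, and a crude modulus bound $|\ffRS_1|\le 1.271/\sqrt{2\pi}$ is not enough to conclude negativity of the real part. Handling this requires either deforming $\gamma_1$'s $\RS_2$-portion to lie in a region where $\re(\Li_{5/2}-\Li_{3/2})\ge \delta>0$ (staying in $\RS_2\cap\{z:\re\ffRS_1<0\}$, a connected open set containing a neighborhood of the positive imaginary axis), or a direct but slightly intricate estimate of $\re(\Li_{5/2}(z)-\Li_{3/2}(z))$ along the explicit curve via the integral representation~\eqref{eq:integral_repr_polylog}. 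Everything else — the cancellation of the pole of $1/\URS_0^4$ against the zero of $\EERS$ at $z=1$, the boundedness of the subleading factors, and the Schwarz-symmetry reduction to $\gamma_1$ — is routine.
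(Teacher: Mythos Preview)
Your approach is essentially the paper's: reduce by Schwarz symmetry to $\gamma_1$, bound the prefactors, and show $\re\ffRS_1(\gamma_1(t))$ is bounded above by a negative constant on $-\sqrt\pi\le t\le\sqrt\pi/2$. Two corrections worth noting. First, your claimed maximum $\re\ffRS_1(-1)=-3.8388\cdots$ is the value at the \emph{wrong} endpoint: the maximum of $\re\ffRS_1(\gamma_1(t))$ actually occurs at $t=\sqrt\pi/2$ (on the $\RS_2$-subarc) and equals $-0.104065\cdots$, much closer to zero. This does not break the argument --- what matters is only that the maximum is strictly negative --- but it confirms your instinct that the $\RS_2$-portion is the dangerous one. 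Second, the cancellation of $\EERS$'s zero against $1/\URS_0^4$'s pole at $z=1$ is irrelevant here: $\gamma_1$ crosses the cut at $z=e^{\pi/2}\approx 4.81$, not at $z=1$, so the curve stays away from the branch point and boundedness of the prefactor is immediate from continuity on a compact arc.

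On the crux you identified --- showing $\re\ffRS_1<0$ on the $\RS_2$-subarc --- the paper takes a more direct route than either of your suggestions. It writes out $\ffRS_1(\gamma_1(t))$ explicitly on both sheets (using $u_0(\gamma_1(t))=\pm\eta(t)$ with $\eta(t)=t-i\sqrt\pi$) and simply plots $\re\ffRS_1(\gamma_1(t))$ over $t\in[-\sqrt\pi,\sqrt\pi/2]$, observing the maximum is $\approx -0.104$. For a rigorous bound it notes that expanding $\Li_{3/2}(\gamma_1(t))-\Li_{5/2}(\gamma_1(t))$ in series reduces the claim to a polynomial inequality in $\eta(t)$ plus a controllable tail, which is straightforward but tedious. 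No contour deformation and no appeal to the integral representation \eqref{eq:integral_repr_polylog} is needed; the explicit parametrization does the work.
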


\begin{proof}
By symmetry, it is enough to consider only the part $\gamma_1$ of $\Gamma_1$ that starts on the lower half plane on $\RS_1$ and ends on the upper half plane on $\RS_2$. We specified $\gamma_1$ explicitly by its parametrization
$$
	\gamma_1(t)=e^{-\frac{\eta(t)^2}{2}}=e^{-\frac{1}{2}(t^2-\pi)+ i \sqrt{\pi} t} \quad 
	\text{where $\eta(t)=t- i \sqrt{\pi}$ for $-\sqrt{\pi}\leq t\leq \sqrt{\pi}/2$, } 
$$
and we take $\gamma_1(t)$ to be on the first sheet $\RS_1$ for $-\sqrt{\pi}\le t< 0$, and on the second sheet $\RS_2$ for $0< t\le \sqrt{\pi}/2$. 
We note that $\gamma_1(-\sqrt{\pi})=-1-i0$, $\gamma_1(0)=e^{\pi/2}$, and $\gamma_1(\sqrt{\pi}/2)=ie^{3\pi/8}$.
It is straightforward to check that 
$u_0(\gamma_1(t))= \eta(t)$ for $-\sqrt{\pi}\le t<0$ and $u_0(\gamma_1(t))= -\eta(t)$ for $0<t\le \sqrt{\pi}/2$.
From the definition \eqref{eq:oneform_omega} of $\omega_\tau$, 
\begin{equation}\label{eq:integral_gamma2_parametrization}
	\int_{\gamma_1}\omega_\tau 
	=\frac{1}{8\pi i} \int_{-\sqrt{\pi}}^{\sqrt{\pi}/2}  \frac{\EERS(\gamma_1(t))}{\eta(t)^4}
	e^{\tau \ffRS_1(\gamma_1(t)) + \tau^{1/2} \hat x \ffRS_2(\gamma_1(t)) } \gamma_1'(t) d t.
\end{equation}
We have $|\eta(t)^4|=|t- i \sqrt{\pi}|^4\ge \pi^2$. Each of the functions $|\gamma_1'(t)|$, $|\EERS(\gamma_1(t))|$ and $\re \ffRS_2(\gamma_1(t))$ is bounded along the contour: for $\gamma_1'$ this is clear from its definition whereas for $\re \ffRS_2$ and $\EERS$ this follows from Lemmas~\ref{lem:ffRS12second} and \ref{lem:EERSzero}. 
From the definition and Lemma \ref{lem:ffRS12second}, 
$$
	\ffRS_1(\gamma_1(t)) =
\begin{cases}
	\frac1{\sqrt{2\pi}} \Li_{3/2}(\gamma_1(t))-\frac1{\sqrt{2\pi}} \Li_{5/2}(\gamma_1(t))-2\eta(t) -\frac{2}{3}\eta(t)^3,& t<0, \\
	\frac1{\sqrt{2\pi}} \Li_{3/2}(\gamma_1(t))-\frac1{\sqrt{2\pi}} \Li_{5/2}(\gamma_1(t)), & t\geq 0.
\end{cases}
$$
Using this explicit expression one can check numerically that the function $\re \ffRS_1(\gamma_1(t))$ for $-\sqrt{\pi}\le t\le \sqrt{\pi}/2$ has its maximum at $t=\sqrt{\pi}/2$, and the maximum value is negative, see Figure \ref{fig:polylog_gamma2}. One could also rigorously prove that $\re \ffRS_1(\gamma_1(t))\le -0.03$ in this interval by expanding the difference $\Li_{3/2}(\gamma_1(t))-\Li_{5/2}(\gamma_1(t))$ in series and viewing the expression for $\ffRS_1$ above as a polynomial in $\eta(t)$ of degree $6$ plus a small and controllable error. Since this proof is straightforward but cumbersome, and the claim is clear from Figure~\ref{fig:polylog_gamma2}, we do not provide the explicit calculations. 
Hence, the integrand of \eqref{eq:integral_gamma2_parametrization} is $O(e^{-c\tau})$ for some $c>0$, and we obtain the result. 

\begin{figure}
\centering
\includegraphics[scale=.45]{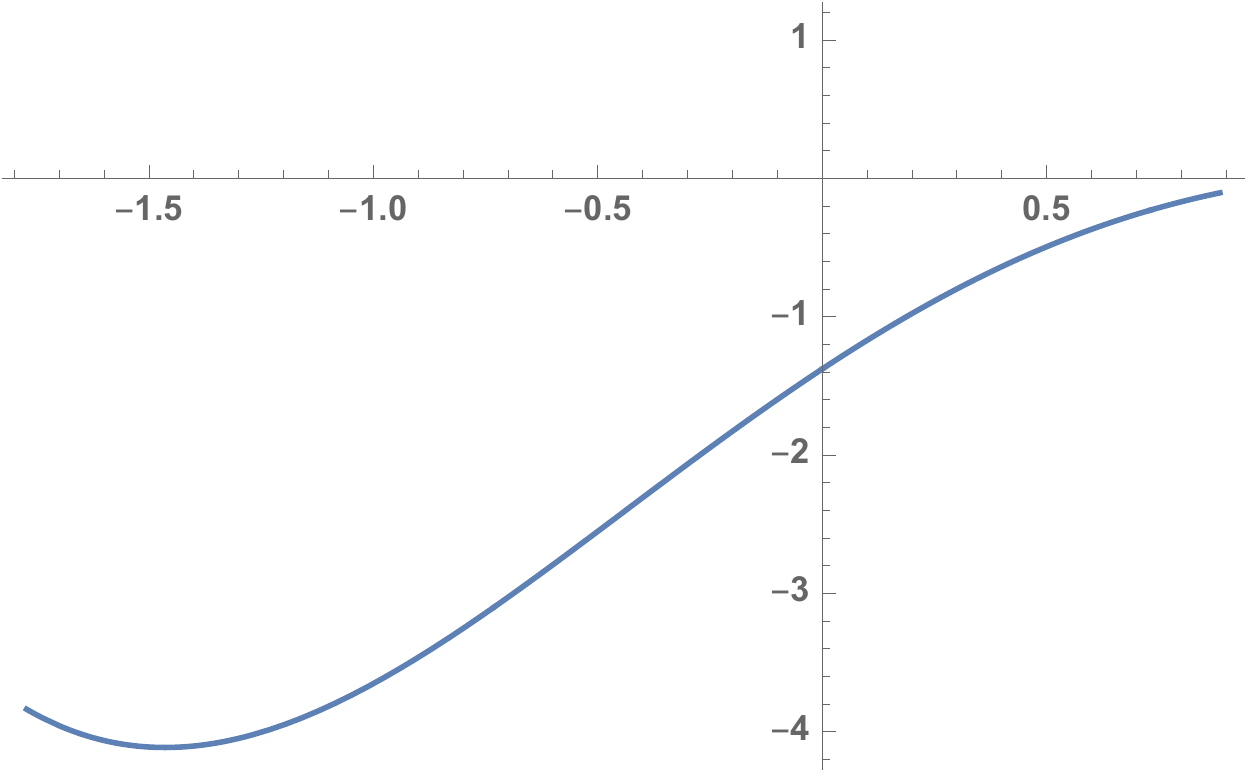}
\caption{The plot of $\re \ffRS_1(\gamma_1(t))$ for $t\in [-\sqrt{\pi},\sqrt{\pi}/2]$: the maximum is 
$-0.104065\cdots$ at $t=\sqrt{\pi}/2$. }\label{fig:polylog_gamma2}
\end{figure}
\end{proof}

%%%%%%%%%%%%%%%%%%%%%%%%%%%%%%%%%%%%%%%%%%%%%%
\subsection{Completion of the proof of Theorem~\ref{thm:gaussian_degeneration}}

The only thing left to check is that it is possible to deform the contour $\Gamma$ in \eqref{eq:Finomega} to $\Gamma'$. 
We thus need to check that $\omega_\tau$ is analytic on $\RS$. 
Since $\ffRS_1, \ffRS_2, \EERS, \URS_0$ are all analytic on $\RS$, the only possible singularities of $\omega_\tau$ come from the zeros of $\URS_0$. The only zero of $\URS_0$ is the branch point $z=1$, which is a simple zero (see Lemma \ref{lem:URS0zero}). 
Since $\EERS$ has a zero of order $3$ by Lemma \ref{lem:EERSzero}, and $dz = 2w dw$ has a simple zero using the local coordinate $z-1=w^2$, we find that $\omega_\tau$ has a removable singularity at $z=1$. 
(Putting aside the terminology of Riemann surfaces, this simply means that the function $\frac{\EE(z) e^{\tau \ff_1(z)+ \tau^{1/2} \hat x \ff_2(z)}}{8\pi iz u_0(z)^4}$ has an integrable singularity at the branch point $z=1$). 
Therefore, $\omega_\tau$ is analytic on $\RS$, and we can deform $\Gamma$ to $\Gamma'$ in the integral in \eqref{eq:Finomega}. The proof is then completed from Lemmas~\ref{lem:integral_gamma5}, \ref{lem:integral_gamma3} and \ref{lem:GL_int_gamma1}.

%%%%%%%%%%%%%%%%%%%%%%%%%%%%%%%%%%%%%%%%%%%%%%%%%%%%%%%%%%%%%%%%
%%%%%%%%%%%%%%%%%%%%%%%%%%%%%%%%%%%%%%%%%%%%%%%%%%%%%%%%%%%%%%%%
\section{Small time limit}\label{sec:tracy_widom}

We prove Theorem~\ref{thm:tracy_widom_degeneration}. 

\subsection{Small $\tau$ limit}

We set $\gamma=0$ as in Theorem~\ref{thm:tracy_widom_degeneration}.  
Recall $\Phi(\xi)= -\frac13 \tau \xi^3+ x\xi - Q(\xi)$ from \eqref{def:phi_function}. 
Scaling $s, t$ by $\tau^{1/3}$ in Lemma~\ref{lem:kernelK_kernelT}, the distribution function \eqref{def:limiting_distribution} can be written as 
\begin{equation*}
	F(\tau^{1/3}x;\tau, 0)=\oint e^{\tau^{1/3}xA_1(z)+\tau A_2(z)+2B(z)}\det(\mb I-\mb A_{x,\tau}^2)\frac{dz}{2\pi i z},
\end{equation*}
where $\mb A_{x,\tau}:L^2(0,\infty)\to L^2(0,\infty)$ is the operator with the kernel 
\begin{equation}\label{def:kernel_operator_A}
	\bm A_{x,\tau}(s,t) %=\bm A_{x,\tau}(s,t;z)
	= \tau^{1/3} \bm T_0(\tau^{1/3}s, \tau^{1/3}t)|_{x\mapsto \tau^{1/3}x}
		=\tau^{1/3} \sum_{\xi \in \mathcal S_-}\frac{1}{-\xi}e^{-\frac{1}{3}\tau \xi^3+\tau^{1/3}\xi (s+t+x)-Q(\xi)} 
\end{equation}
for $s, t>0$. 
Observe that $\mb A_{x,\tau}$ depends on $z$ since the set $\mathcal S_-$ depends on $z$.

We compare $F(\tau^{1/3}x;\tau, 0)$ with the GUE Tracy-Widom distribution, %. We the formula in terms of the Airy operator, namely 
$F_{\mathrm{GUE}}(x)=\det(\mb I-\mb A_x^2)$, where $\mb A_x:L^2(0,\infty)\to L^2(0,\infty)$ has kernel
\begin{equation*}
	\bm A_x(s,t)=\ai(s+x+t), \qquad s,t>0.
\end{equation*}
Recall the integral representation of the Airy function
\begin{equation}\label{integral_representation_airy}
	\ai(z)=\frac{1}{2\pi i}\int_{\Clambda}e^{-\frac{u^3}{3}+z u}du,\qquad z\in \C,
\end{equation}
where $\Clambda$ is an unbounded contour from $\infty e^{-\theta i}$ to $\infty e^{\theta i}$ for any $\theta\in (\pi/2,5\pi/6)$.
 
%If the term $Q(\xi)$ were not present, the kernel $\bm A_{x,\tau}$ would become discretization of the Airy kernel $\bm A_x$. 
We will show that if we scale $z$ appropriately with $\tau$ then in the limit $\tau\to 0$ the term $Q(\xi)$ vanishes and the kernel $\bm A_{x,\tau}(s,t)$ converges to the Airy kernel along with its Fredholm determinant. 

\begin{lem}\label{lem:estimate:scaled_Q}
For every $\epsilon\in (0,1)$, there is a constant $c>0$ such that 
\begin{equation}\label{eq:estimate_Q_tw_scaling}
	Q(\tau^{-1/3}\zeta)=\Boh(e^{-c\tau^{-2/3}}) 
\end{equation}
as $\tau \to 0$ uniformly for $\zeta$ satisfying $\re (\zeta^2) \ge 1-\epsilon$ and $\re \zeta<0$. 
\end{lem}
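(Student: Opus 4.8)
The plan is to bound $Q(\tau^{-1/3}\zeta)$ by choosing a convenient path in the integral representation \eqref{def:Q_function} and estimating the integrand pointwise. Write $w:=\tau^{-1/3}\zeta$. First I would translate the hypotheses on $\zeta$ into information about $w$: since $\re(\zeta^2)\ge 1-\epsilon>0$ we have $\arg\zeta\in(-\pi/4,\pi/4)\cup(3\pi/4,5\pi/4)$, and $\re\zeta<0$ selects the second component, so $\arg w=\arg\zeta\in(3\pi/4,5\pi/4)$ and $w$ lies in the sector where $Q$ is defined. Moreover $(\re\zeta)^2=\re(\zeta^2)+(\im\zeta)^2\ge 1-\epsilon$, hence $|\re w|\ge\sqrt{1-\epsilon}\,\tau^{-1/3}$, and $\re(w^2)=\tau^{-2/3}\re(\zeta^2)\ge(1-\epsilon)\tau^{-2/3}$.

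Next I would take the path of integration in \eqref{def:Q_function} to be the horizontal ray $s=w-t$, $t\in[0,\infty)$, traversed from $s=-\infty$ (at $t=+\infty$) to $s=w$ (at $t=0$). Because $\re w<0$ one has $|\im s|=|\im w|<|\re w|\le|\re w|+t=|\re s|$ for all $t\ge 0$, so this ray stays in the open sector $\arg s\in(3\pi/4,5\pi/4)$ and $\Li_{1/2}(e^{-s^2/2})$ is well defined along it. The key point is that along this ray $\re(s^2)=(\re w-t)^2-(\im w)^2\ge(\re w)^2-(\im w)^2=\re(w^2)\ge(1-\epsilon)\tau^{-2/3}$, so $|e^{-s^2/2}|=e^{-\re(s^2)/2}\le e^{-(1-\epsilon)\tau^{-2/3}/2}$. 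For $\tau$ small enough (depending only on $\epsilon$) this is $\le 1/2$, so from the defining series \eqref{def:series_polylog} we get $|\Li_{1/2}(e^{-s^2/2})|\le\sum_{k\ge 1}|e^{-s^2/2}|^k\le 2e^{-\re(s^2)/2}$ on the whole ray.

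It then remains to integrate this bound. Using $\re((w-t)^2)=(|\re w|+t)^2-(\im w)^2$ and the elementary Gaussian tail estimate $\int_a^\infty e^{-r^2/2}\,dr\le a^{-1}e^{-a^2/2}$,
$$|Q(w)|\le 2\sqrt{\tfrac{2}{\pi}}\int_0^\infty e^{-\re((w-t)^2)/2}\,dt=2\sqrt{\tfrac{2}{\pi}}\,e^{(\im w)^2/2}\!\int_{|\re w|}^\infty\! e^{-r^2/2}\,dr\le\frac{2\sqrt{2/\pi}}{|\re w|}\,e^{-\re(w^2)/2}.$$
Substituting $|\re w|\ge\sqrt{1-\epsilon}\,\tau^{-1/3}$ and $\re(w^2)\ge(1-\epsilon)\tau^{-2/3}$ gives $|Q(\tau^{-1/3}\zeta)|\le\frac{2\sqrt{2/\pi}}{\sqrt{1-\epsilon}}\,\tau^{1/3}e^{-(1-\epsilon)\tau^{-2/3}/2}$, which for $\tau<1$ is at most $Ce^{-c\tau^{-2/3}}$ with $c=(1-\epsilon)/2$ and $C=C(\epsilon)$; all estimates are uniform in $\zeta$, so this is the claimed $\Boh(e^{-c\tau^{-2/3}})$ as $\tau\to 0$.

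The argument is short and I do not anticipate a serious obstacle; the two points needing care are (i) verifying that the horizontal ray $s=w-t$ never leaves the sector $\arg s\in(3\pi/4,5\pi/4)$ in which the continuation of $\Li_{1/2}$ used in \eqref{def:Q_function} is valid, and (ii) obtaining a bound uniform over the \emph{unbounded} region $\{\re(\zeta^2)\ge 1-\epsilon,\ \re\zeta<0\}$. The horizontal-ray choice is exactly what resolves (ii): along it the crucial lower bound $\re(s^2)\ge\re(w^2)$ does not see $\im w$, and the leftover factor $1/|\re w|$ is tamed by $|\re w|\ge\sqrt{1-\epsilon}\,\tau^{-1/3}$.
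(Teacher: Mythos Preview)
Your argument is correct and complete. It differs from the paper's proof in that you work directly with the defining integral \eqref{def:Q_function}, choosing the horizontal ray $s=w-t$ and bounding $\Li_{1/2}(e^{-s^2/2})$ by a geometric series, whereas the paper instead invokes the alternative representation \eqref{eq:alternative_definition_q} over the imaginary axis, bounds $\log(1-e^{-(\zeta^2-u^2)/(2\tau^{2/3})})$ by $2e^{-\re(\zeta^2-u^2)/(2\tau^{2/3})}$, and uses $|u-\zeta|\ge\sqrt{1-\epsilon}$ in the denominator. Both arguments hinge on the same fact, $\re(w^2)\ge(1-\epsilon)\tau^{-2/3}$, and both reduce to a one-dimensional Gaussian-type integral; your version has the mild advantage of not needing the auxiliary identity \eqref{eq:alternative_definition_q}, at the cost of having to verify that the horizontal ray stays in the sector $\arg s\in(3\pi/4,5\pi/4)$, which you do correctly via $|\im w|<|\re w|$. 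The resulting constant $c=(1-\epsilon)/2$ is the same in both proofs.
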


\begin{proof}
From \eqref{eq:alternative_definition_q},  after a change of variables, 
\begin{equation*}
	Q(\tau^{-1/3}\zeta)
	=-\int_{-i\infty}^{i\infty} \frac{\log(1-e^{-\zeta^2/(2\tau^{2/3})}e^{u^2/2})}{u-\tau^{-1/3}\zeta}\frac{du}{\pi i}
	= -\frac1{\pi i} \int_{-i\infty}^{i\infty} \log\left(1-e^{- \frac{\zeta^2-u^2}{2\tau^{2/3}} }\right)\frac{du}{u-\zeta}. 
\end{equation*}
Uniformly for $u\in i\R$ and $\re(\zeta^2)\ge 1-\epsilon$,  
$ \big| e^{- \frac{\zeta^2-u^2}{2\tau^{2/3}}} \big| \le \big|e^{- \frac{\zeta^2}{2\tau^{2/3}} } \big| 
= e^{- \frac{\re(\zeta^2)}{2\tau^{2/3}} } \le e^{- \frac{1-\epsilon}{2\tau^{2/3}} } \to 0$ as $\tau\to 0$, and hence, 
\beqq
	\left|\log \left(1-e^{- \frac{\zeta^2-u^2}{2\tau^{2/3}}}\right)\right|
	\leq 2 \big|e^{- \frac{\zeta^2-u^2}{2\tau^{2/3}}} \big|
= 2e^{-\frac{\re (\zeta^2- u^2)}{2\tau^{2/3}}}\leq 2e^{-\frac{1-\epsilon+|u|^2}{2\tau^{2/3}}} .
\eeqq 
Also note that $|u-\zeta|\ge |\re u -\re\zeta| =\sqrt{(\re \zeta)^2}\ge \sqrt{\re \zeta^2}\ge \sqrt{1-\epsilon}$ uniformly for $u\in i\R$ and $\zeta$ as assumed in the lemma. 
These two estimates complete the proof. 
\end{proof}

\begin{lem}\label{prop:convergence_kernels_tw_scaling}
For every $x_0\in \R$, there exist constants $C, c, \tau_0>0$ such that 
$$
	|\bm A_{x,\tau}(s,t)-\bm A_{x}(s,t)|\leq C e^{-c\tau^{-2/3}}  e^{-c(s+t)-c(x-x_0)} 
$$
for every $x\geq x_0$, $s, t>0$, $\tau\in (0, \tau_0)$, and every $z$ satisfying 
\begin{equation}\label{eq:scaling_z_tw}
	|z|=e^{-\frac{1}{2\tau^{2/3}}}.
\end{equation}
\end{lem}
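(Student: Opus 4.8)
The plan is to rewrite the sum defining $\bm A_{x,\tau}(s,t)$ as a residue integral, peel off the Airy kernel from it, and bound the remainder uniformly. Set $g(w):=e^{-\frac13\tau w^3+\tau^{1/3}(s+t+x)w-Q(w)}$, which is analytic in the sector $3\pi/4<\arg w<5\pi/4$; by \eqref{def:kernel_operator_A} one has $\bm A_{x,\tau}(s,t)=\tau^{1/3}\sum_{\xi\in\mathcal S_-}g(\xi)/(-\xi)$. Since $\frac{d}{dw}\big(e^{-w^2/2}-z\big)=-w e^{-w^2/2}$ equals $-\xi z$ at a root $\xi$ of $e^{-w^2/2}=z$, the map $w\mapsto \frac{z\,g(w)}{e^{-w^2/2}-z}$ has a simple pole at each $\xi\in\mathcal S_-$ with residue $g(\xi)/(-\xi)$. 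Because $|e^{-w^2/2}|=|z|$ precisely on $\re(w^2)=-2\log|z|=\tau^{-2/3}$ while $\mathcal S_-$ consists exactly of the solutions with $\re w<0$, the set $\mathcal S_-$ lies strictly inside the curved strip
\begin{equation*}
	\mathcal D:=\big\{\,w\in\C:\ \tfrac12\tau^{-2/3}\le \re(w^2)\le 2\tau^{-2/3},\ \re w<0\,\big\}
\end{equation*}
and is its only set of poles there. Using that $g$ decays superexponentially along the wall $C_B:=\{\re(w^2)=2\tau^{-2/3},\ \re w<0\}$ and that the whole integrand decays along the wall $C_A:=\{\re(w^2)=\tfrac12\tau^{-2/3},\ \re w<0\}$ (so the arcs at infinity are negligible, a routine point for these Bethe-root sums), the residue theorem gives the exact identity
\begin{equation*}
	\bm A_{x,\tau}(s,t)=\frac{\tau^{1/3}}{2\pi i}\int_{C_A}\frac{z\,g(w)}{e^{-w^2/2}-z}\,dw-\frac{\tau^{1/3}}{2\pi i}\int_{C_B}\frac{z\,g(w)}{e^{-w^2/2}-z}\,dw,
\end{equation*}
with $C_A,C_B$ oriented from $\infty e^{-3i\pi/4}$ to $\infty e^{3i\pi/4}$.

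Next I would expand geometric series on each wall. Because $|z|=e^{-1/(2\tau^{2/3})}$ one checks $|z\,e^{w^2/2}|=e^{-1/(4\tau^{2/3})}<1$ on $C_A$ and $|z^{-1}e^{-w^2/2}|=e^{-1/(2\tau^{2/3})}<1$ on $C_B$, so on $C_A$ we write $\frac{z}{e^{-w^2/2}-z}=\sum_{n\ge1}z^n e^{nw^2/2}$ and on $C_B$ we write $\frac{z}{e^{-w^2/2}-z}=-1-\sum_{n\ge1}z^{-n}e^{-nw^2/2}$. Substituting and isolating the ``$-1$'' contribution gives $\bm A_{x,\tau}(s,t)=\frac{\tau^{1/3}}{2\pi i}\int_{C_B}g(w)\,dw+\mathrm{II}+\mathrm{III}$, where $\mathrm{II}=\frac{\tau^{1/3}}{2\pi i}\sum_{n\ge1}z^n\int_{C_A}g(w)e^{nw^2/2}\,dw$ and $\mathrm{III}=\frac{\tau^{1/3}}{2\pi i}\sum_{n\ge1}z^{-n}\int_{C_B}g(w)e^{-nw^2/2}\,dw$ (the absolute bounds below justify interchanging sum and integral). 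For the main term, the image of $C_B$ under $u=\tau^{1/3}w$ is the fixed hyperbola $\re(u^2)=2$, which is an admissible contour in \eqref{integral_representation_airy}, so $\frac{\tau^{1/3}}{2\pi i}\int_{C_B}e^{-\frac13\tau w^3+\tau^{1/3}(s+t+x)w}\,dw=\ai(s+x+t)=\bm A_x(s,t)$; hence $\frac{\tau^{1/3}}{2\pi i}\int_{C_B}g(w)\,dw=\bm A_x(s,t)+\mathrm{I}$ with $\mathrm{I}=\frac{\tau^{1/3}}{2\pi i}\int_{C_B}e^{-\frac13\tau w^3+\tau^{1/3}(s+t+x)w}\big(e^{-Q(w)}-1\big)\,dw$. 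Thus $\bm A_{x,\tau}(s,t)-\bm A_x(s,t)=\mathrm{I}+\mathrm{II}+\mathrm{III}$, and it remains to estimate these three terms.

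All three estimates follow the same pattern. The change of variables $u=\tau^{1/3}w$ turns $\int_{C_\bullet}\big|e^{-\frac13\tau w^3+\tau^{1/3}(s+t+x)w}\big|\,|dw|$ into $\tau^{-1/3}\int_{\re(u^2)=\kappa}\big|e^{-u^3/3+(s+x+t)u}\big|\,|du|$ with $\kappa\in\{\tfrac12,2\}$ fixed; on such a hyperbola $\re u\le-\sqrt\kappa$, the cubic term keeps the integral convergent, and it is $\le C\,e^{-\sqrt\kappa\,(s+x+t)}$ when $s+x+t\ge0$ and $\le C(x_0)$ otherwise (recall $s+x+t\ge x_0$); writing $e^{-\sqrt\kappa(s+x+t)}=e^{-\sqrt\kappa x_0}e^{-\sqrt\kappa[(s+t)+(x-x_0)]}$ and noting that in the remaining regime $s+t$ and $x-x_0$ both lie in $[0,|x_0|]$, one obtains in either case a bound $\le C(x_0)e^{-c'[(s+t)+(x-x_0)]}$, while the prefactor $\tau^{1/3}$ cancels the $\tau^{-1/3}$. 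For $\mathrm{I}$, Lemma~\ref{lem:estimate:scaled_Q} applied on $C_B$ (where $w=\tau^{-1/3}\zeta$ with $\re(\zeta^2)=\tau^{2/3}\re(w^2)=2$, $\re\zeta<0$) gives $|e^{-Q(w)}-1|\le 2|Q(w)|=\Boh(e^{-c\tau^{-2/3}})$ uniformly on $C_B$, whence $|\mathrm{I}|\le C(x_0)e^{-c\tau^{-2/3}}e^{-c'[(s+t)+(x-x_0)]}$. For $\mathrm{II}$, on $C_A$ one has $|z^n e^{nw^2/2}|=e^{-n/(4\tau^{2/3})}$ and $|e^{-Q(w)}|\le 2$ (again by Lemma~\ref{lem:estimate:scaled_Q}, now with $\re(\zeta^2)=1/2$), so the geometric sum contributes a factor $\Boh(e^{-c\tau^{-2/3}})$ and the same bound follows; likewise $\mathrm{III}$ uses $|z^{-n}e^{-nw^2/2}|=e^{-n/(2\tau^{2/3})}$ on $C_B$. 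Adding the three bounds gives the claim. The main obstacle—and the reason for the particular choice of hyperbolas—is to obtain the decay in $s,t,x$ simultaneously with the gain $e^{-c\tau^{-2/3}}$: this forces $C_A$ and $C_B$ to be genuine $\tau^{1/3}$-rescalings of fixed hyperbolas $\re(u^2)=\mathrm{const}$ (so the surviving integrals stay bounded uniformly in $\tau$ and decay in $s+x+t$, with rate independent of $\tau$ since $\re u$ is bounded away from $0$), yet to lie strictly on the two sides of $\re(w^2)=\tau^{-2/3}$ (so the geometric series converge and are exponentially small); a secondary technical nuisance is the standard justification that the arcs at infinity may be discarded when the sum over $\mathcal S_-$ is converted into the two contour integrals.
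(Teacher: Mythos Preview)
Your proof is correct and follows essentially the same route as the paper: write the sum over $\mathcal S_-$ as a residue integral between two hyperbolas $\re(w^2)=\mathrm{const}\cdot\tau^{-2/3}$ straddling the Bethe-root curve, rescale $u=\tau^{1/3}w$ so the hyperbolas become $\tau$-independent, and use Lemma~\ref{lem:estimate:scaled_Q} together with the smallness of $|ze^{w^2/2}|$ (resp.\ $|z^{-1}e^{-w^2/2}|$) on the outer (resp.\ inner) wall to get the factor $e^{-c\tau^{-2/3}}$, while the fixed hyperbola gives the $e^{-c(s+t+x-x_0)}$ decay. The only cosmetic difference is that you expand the rational factor $\frac{z}{e^{-w^2/2}-z}$ into a geometric series (your $\mathrm{II}$ and $\mathrm{III}$), whereas the paper bounds it directly as $1+\Boh(e^{-\varepsilon/(2\tau^{2/3})})$ on the inner contour and $\Boh(e^{-\varepsilon/(2\tau^{2/3})})$ on the outer one; this leads to the same estimates.
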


\begin{proof}
Define the function 
$$
	f(w) =\frac{z}{z-e^{-w^2/2}}e^{-\frac{1}{3}\tau w^3+\tau^{1/3}w\left(x+s+t\right)-Q(w)}
$$
for $\re w<0$ in the sector $\frac{3\pi}4< \arg w <\frac{5\pi}4$. 
Recall that this sector contains the set $\mathcal S_-$ (see \eqref{def:bethe_root_set}) and that the function $Q$ is analytic in this sector. 
The fraction in the formula for $f$ has simple poles precisely at the set $\mathcal S_-$ and
$$
	\res_{\xi} f=\frac{1}{\xi}e^{-\frac{1}{3}\tau \xi^3+\tau^{1/3}\xi\left(x+s+t\right)-Q(\xi)} 
	\quad \text{for } \xi\in \mathcal S_-.
$$
Therefore, (see \eqref{def:kernel_operator_A}) we can write 
\begin{equation}\label{eq:kernel_residue}
	\bm A_{x,\tau}(s,t)
	=-\tau^{1/3}\sum_{\xi\in \mathcal S_-}\res_{\xi}f 
	= \frac{\tau^{1/3}}{2\pi i}\int_{\Clambda_\inn}f(w)dw-\frac{\tau^{1/3}}{2\pi i}\int_{\Clambda_\out}f(w)dw,
\end{equation}
where $\Clambda_\inn$ and $\Clambda_\out$ are two unbounded curves in $\C$, both from $\infty e^{-3\pi i/4}$ to $\infty e^{3\pi i /4}$, lying in the sector $\frac{3\pi}4<\arg w < \frac{5\pi}4$ such that $\mathcal S_-$ is contained in the strip between $\Clambda_\inn$ and $\Clambda_\out$. 
We also choose $\Clambda_\inn$ to be to the left of $\Clambda_\out$; see Figure~\ref{fig:TW_contours}.

\begin{figure}\centering
\begin{tikzpicture}[scale = 0.76]
\draw [line width=0.4mm,lightgray] (-5,0)--(0.5,0) node [pos=1,right,black] {$\R$};
\draw [line width=0.4mm,lightgray] (0,-3)--(0,3) node [pos=1,above,black] {$i\R$};
\draw[thick,domain=-3:3,smooth,variable=\y,black,dashed]  plot ({-(\y*\y+1)^(1/2)},{\y});
\draw[thick,postaction={mid4 arrow={red,scale=1.5}},domain=-3:3,smooth,variable=\y,red,mark position=0.7(g1)]  plot ({-(\y*\y+1)^(1/2)-1},{\y})  node {};
\node at (g1) [left,red] {$\Clambda_{\inn}$};
\draw[thick,postaction={mid4 arrow={red,scale=1.5}},domain=-3:3,smooth,variable=\y,red,  mark position=0.7(g2)]  plot ({-(\y*\y+1)^(1/2)+0.7},{\y}) node {};
\node at (g2) [right,red] {$\Clambda_{\out}$};
\fill (-1.06573,-0.368479) circle[radius=2.5pt] node [above,shift={(0pt,0pt)}] {};
\fill (-2.0173,-1.75199) circle[radius=2.5pt] node [above,shift={(0pt,0pt)}] {};
\fill (-2.68227,-2.48889) circle[radius=2.5pt] node [above,shift={(0pt,0pt)}] {};
\fill (-2.53213,2.3263) circle[radius=2.5pt] node [above,shift={(0pt,0pt)}] {};
\fill (-1.81494,1.51459) circle[radius=2.5pt] node [above,shift={(0pt,0pt)}] {};
\end{tikzpicture}
\caption{The dots are the points in $\mathcal S_-$ and sit in the dashed parabola determined by $|e^{-w^2/2}|=|z|$. The contours $\Clambda_\inn$ and $\Clambda_\out$ are represented in solid lines. }\label{fig:TW_contours}
\end{figure}
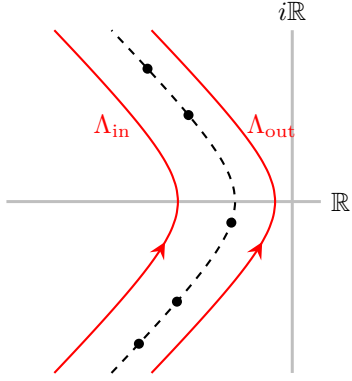

Let $z$ satisfy \eqref{eq:scaling_z_tw}. Changing the variables $w=\tau^{-1/3}\zeta$ we get the identity
\begin{equation}\label{eq:integral_f_reg}
	\tau^{1/3}\int_{\Clambda_{\inn,\out}} f(w)dw = \int_{\widehat \Clambda_{\inn,\out}}\frac{z}{z-e^{-\zeta^2/(2\tau^{2/3})}} e^{-\frac{1}{3}\zeta^3+\zeta\left(x+s+t\right)-Q(\tau^{-1/3}\zeta)} d\zeta,
\end{equation}
where the new contours $\widehat \Clambda_{\inn}$ and $\widehat \Clambda_{\out}$ are as follows. 
Since $|z|=e^{-\frac1{2\tau^{2/3}}}$, 
$$ 
	\tau^{1/3} \mathcal S_-\subset \{ u\in \C \; \mid \; \re u^2=1, \; \re u<0 \}.
$$
Thus, for every $\varepsilon\in (0,1)$, the scaled set $\tau^{1/3} \mathcal S_-$ lies between the contours 
\begin{equation*}
	\widehat \Clambda_\inn := \{ \zeta\in \C \; \mid \; \re \zeta^2=1+\varepsilon, \; \re \zeta<0 \}
	\quad \mbox{and}\quad 
	\widehat \Clambda_\out := \{ \zeta\in \C \; \mid \; \re \zeta^2=1-\varepsilon, \; \re \zeta<0 \}.
\end{equation*}
These contours lie in the sector $\frac{3\pi}4< \arg \zeta< \frac{5\pi}4$. 
We choose the contours with any fixed $\varepsilon\in (0,1)$. 

Note that since $|z|= e^{-\frac1{2\tau^{2/3}}}$, we have $|ze^{ \frac{\zeta^2}{2\tau^{2/3}}}|=e^{ \frac{ \re\zeta^2-1}{2\tau^{2/3}}}=e^{\frac{\varepsilon}{2\tau^{2/3}}}$ for $\zeta\in\widehat\Clambda_\inn$ 
and $|ze^{ \frac{\zeta^2}{2\tau^{2/3}}}|= e^{-\frac{\varepsilon}{2\tau^{2/3}}}$ for $\zeta\in\widehat\Clambda_\out$.
Thus, 
\begin{equation}\label{eq:estimate_quotient_gamma1_tw}
	\frac{z}{z-e^{-\zeta^2/(2\tau^{2/3})}}
	=\left(1-\frac{1}{ze^{\zeta^2/(2\tau^{2/3})}}\right)^{-1}
	=1+\Boh(e^{-\frac{\varepsilon}{2\tau^{2/3}}}),
	\qquad \zeta\in \widehat \Clambda_\inn,
\end{equation}
and
\begin{equation}\label{eq:estimate_quotient_gamma2_tw}
	\frac{z}{z-e^{-\zeta^2/(2\tau^{2/3})}}
	=-ze^{\zeta^2/(2\tau^{2/3})} \left(1-ze^{\zeta^2/(2\tau^{2/3})}\right)^{-1}
	=\Boh(e^{-\frac{\varepsilon}{2\tau^{2/3}}}),
	\qquad \zeta\in \widehat\Clambda_{\out}
\end{equation}
as $\tau\to 0$, uniformly for $\zeta$ on the respective contour and $z$ satisfying \eqref{eq:scaling_z_tw}.

Using $\Clambda=\widehat \Lambda_\inn$ in the integral formula of the Airy function in \eqref{integral_representation_airy} and \eqref{eq:integral_f_reg}, 
we obtain 
\begin{equation*} 
\begin{split}
	\left| \frac{\tau^{1/3}}{2\pi i}\int_{\Clambda_\inn}f(w)dw-\ai(s+x+t) \right| 
	&= \left| \frac{1}{2\pi i}\int_{\widehat \Clambda_\inn}\left(\frac{z}{z-e^{-\zeta^2/(2\tau^{2/3})}}e^{- Q(\tau^{-1/3}\zeta)}-1\right) e^{-\frac{1}{3}\zeta^3+\zeta\left(x+s+t\right) }d\zeta \right| . 
\end{split}
\end{equation*} 
Since $s, t>0$ and $x>x_0$, and $-\re\zeta =\sqrt{(\re \zeta)^2}\ge \sqrt{\re \zeta^2}= \sqrt{1+\epsilon}$, we have 
$$
\re \zeta\left(x+s+t\right) \le -\sqrt{1+\epsilon} (x-x_0+ s+t) + x_0\re \zeta,\quad \zeta\in \widehat \Clambda_{\inn}\cup \widehat \Clambda_{\out}.
$$
The integral $\int_{\widehat \Clambda_{\inn}} e^{\frac{1}{3} \re(\zeta^3)-  x_0 \re \zeta} |d \zeta| $ is convergent, 
and thus we find using \eqref{eq:estimate_quotient_gamma1_tw} and \eqref{eq:estimate_Q_tw_scaling} that there is a constant $c>0$ which is independent of the parameters, and another constant $C>0$ which depends only on $x_0$, such that  
\begin{equation*} \begin{split}
	\left| \frac{\tau^{1/3}}{2\pi i}\int_{\Clambda_\inn}f(w)dw-\ai(s+x+t) \right| 
	& \le C e^{-c\tau^{-2/3}} e^{-\sqrt{1+\epsilon}(x-x_0+ s+t)} 
\end{split} \end{equation*} 
as $\tau \to 0$. Similarly, we obtain using \eqref{eq:estimate_quotient_gamma2_tw} 
\begin{equation*} \begin{split}
	\left| \frac{\tau^{1/3}}{2\pi i}\int_{\Clambda_\out}f(w)dw  \right| 
	& \le C e^{-c\tau^{-2/3}} e^{-\sqrt{1-\epsilon}(x-x_0+ s+t)} .
\end{split} \end{equation*} 
The proof is now complete once we combine these estimates with \eqref{eq:kernel_residue}--\eqref{eq:integral_f_reg}.
\end{proof}

\begin{cor}\label{prop:convergence_fred_det_tw}
For every $x_0\in \R$, there exist constants $C,c, \tau_0>0$ such that
$$
	|\det\left(\mb I-\mb A^2_{x,\tau}\right)-\det\left(\mb I-\mb A^2_{x}\right)|\leq C e^{-c \tau^{-2/3}}e^{-c(x-x_0)}
$$
for all $x\geq x_0$, $\tau\in (0, \tau_0)$, and $z$ satisfying $|z|= e^{-\frac1{2\tau^{2/3}}}$. 
\end{cor}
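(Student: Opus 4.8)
The plan is to derive Corollary~\ref{prop:convergence_fred_det_tw} from the pointwise kernel estimate of Lemma~\ref{prop:convergence_kernels_tw_scaling} via the standard Lipschitz bound for Fredholm determinants with respect to the trace norm. Throughout, let $\lVert\cdot\rVert_1$ and $\lVert\cdot\rVert_2$ denote the trace and Hilbert--Schmidt norms. I will use the classical estimate $|\det(\mb I-\mb K_1)-\det(\mb I-\mb K_2)|\le \lVert \mb K_1-\mb K_2\rVert_1 \exp(\lVert \mb K_1\rVert_1+\lVert \mb K_2\rVert_1+1)$ for trace class operators $\mb K_1,\mb K_2$, together with the product bound $\lVert \mb A\mb B\rVert_1\le \lVert \mb A\rVert_2\lVert \mb B\rVert_2$ for Hilbert--Schmidt operators. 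These will be applied with $\mb K_1=\mb A_{x,\tau}^2$ and $\mb K_2=\mb A_x^2$; both $\mb A_{x,\tau}$ and $\mb A_x$ are Hilbert--Schmidt because their kernels decay (super)exponentially, so their squares are trace class and the determinants in question make sense.

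First I would bound the Hilbert--Schmidt norm of the difference. Squaring the bound of Lemma~\ref{prop:convergence_kernels_tw_scaling} and integrating over $s,t>0$ gives, for $x\ge x_0$, $\tau\in(0,\tau_0)$ and $z$ with $|z|=e^{-1/(2\tau^{2/3})}$,
\[
	\lVert \mb A_{x,\tau}-\mb A_x\rVert_2^2
	=\int_0^\infty\!\!\int_0^\infty |\bm A_{x,\tau}(s,t)-\bm A_x(s,t)|^2\,ds\,dt
	\le C^2e^{-2c\tau^{-2/3}}e^{-2c(x-x_0)}\Big(\int_0^\infty e^{-2cs}\,ds\Big)^2,
\]
hence $\lVert \mb A_{x,\tau}-\mb A_x\rVert_2\le C_1 e^{-c\tau^{-2/3}}e^{-c(x-x_0)}$ after relabeling constants. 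Next, the super-exponential decay of the Airy function gives $\lVert \mb A_x\rVert_2\le C_2$ uniformly for $x\ge x_0$, and then the triangle inequality yields $\lVert \mb A_{x,\tau}\rVert_2\le C_1+C_2=:C_3$ uniformly in the allowed range of $x,\tau,z$ (possibly after shrinking $\tau_0$); in particular $\lVert \mb A_{x,\tau}^2\rVert_1\le C_3^2$ and $\lVert \mb A_x^2\rVert_1\le C_2^2$.

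It then remains to estimate $\lVert \mb A_{x,\tau}^2-\mb A_x^2\rVert_1$. Using the telescoping identity $\mb A_{x,\tau}^2-\mb A_x^2=\mb A_{x,\tau}(\mb A_{x,\tau}-\mb A_x)+(\mb A_{x,\tau}-\mb A_x)\mb A_x$ and the product inequality for Schatten norms,
\[
	\lVert \mb A_{x,\tau}^2-\mb A_x^2\rVert_1\le\big(\lVert \mb A_{x,\tau}\rVert_2+\lVert \mb A_x\rVert_2\big)\lVert \mb A_{x,\tau}-\mb A_x\rVert_2\le (C_3+C_2)C_1\,e^{-c\tau^{-2/3}}e^{-c(x-x_0)}.
\]
Substituting this together with the uniform bounds on $\lVert \mb A_{x,\tau}^2\rVert_1$ and $\lVert \mb A_x^2\rVert_1$ into the determinant Lipschitz estimate yields the stated inequality, with $C,c$ depending only on $x_0$. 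The argument is essentially bookkeeping: all of the analytic content already sits in Lemma~\ref{prop:convergence_kernels_tw_scaling}, and the only mild point is the uniform Hilbert--Schmidt bound on $\mb A_{x,\tau}$, which however follows for free from that lemma and the Airy-kernel bound via the triangle inequality. Hence I do not anticipate any genuine obstacle here.
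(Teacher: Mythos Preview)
Your proposal is correct and follows essentially the same route as the paper: the Lipschitz bound for Fredholm determinants, the factorization $\mb A_{x,\tau}^2-\mb A_x^2=\mb A_{x,\tau}(\mb A_{x,\tau}-\mb A_x)+(\mb A_{x,\tau}-\mb A_x)\mb A_x$ with $\lVert\cdot\rVert_1\le\lVert\cdot\rVert_2\lVert\cdot\rVert_2$, and the Hilbert--Schmidt control coming from Lemma~\ref{prop:convergence_kernels_tw_scaling} together with the uniform bound on $\lVert\mb A_x\rVert_2$.
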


\begin{proof}
Denote by $\norm{\cdot}_1$ the trace norm and by $\norm{\cdot}_2$ the Hilbert-Schmidt norm. Using
$$
	|\det (\mb I-\mb K_1)-\det (\mb I-\mb K_2)|\leq \norm{\mb K_1-\mb K_2}_1\exp\left({\norm{\mb K_1}_1+\norm{\mb K_2 }_1+1}\right)
$$
and $\|\mb \K_1 \mb K_2 \|_{1}\leq \|\mb K_1\|_2 \|\mb K_2\|_2 $, we have 
\begin{equation*} 
	|\det\left(\mb I-\mb A^2_{x,\tau}\right)-\det\left(\mb I-\mb A^2_{x}\right)| 
	\leq \norm{\mb A_{x,\tau}-\mb A_{x}}_2\left(\norm{\mb A_{x,\tau}}_2+\norm{\mb A_x}_2\right)
	\exp\left({\norm{\mb A_{x,\tau}}^2_2+\norm{\mb A_{x}}^2_2+1}\right)
\end{equation*}
Lemma~\ref{prop:convergence_kernels_tw_scaling} implies that
\beqq
	\norm{\mb A_{x,\tau}-\mb A_x}_2^2
	\le C^2 e^{-2c\tau^{-2/3}} e^{-2c(x-x_0)} \int_0^\infty \int_0^\infty e^{-2c(s+t)} dsdt 
	= \Boh(e^{-2c\tau^{-2/3}}) .
\eeqq
On the other hand, $\norm{\mb A_x}_2$ is uniformed bounded for $x\geq x_0$.
Therefore, we obtain the result. 
\end{proof}

%%%%%%%%%%%%%%%%%%%%%%%%%%%%%%%%%%%%%%%%
\subsection{Completion of the proof of Theorem~\ref{thm:tracy_widom_degeneration}} \label{sec:comppfsmall}

The functions $A_1,A_2$ and $B$ in \eqref{def:functions_A} are analytic on the unit disk and $A_1(0)=A_2(0)=B(0)=0$. 
Thus, by Cauchy's Theorem,
$$
	F(\tau^{1/3}x;\tau,\gamma=0)-F_{\mathrm{GUE}}(x)
	=\oint e^{\tau^{1/3}xA_1(z)+\tau A_2(z)+2B(z)} \left(\det(\mb I-\mb A_{x,\tau}^2)-\det(\mb I-\mb A_x^2) \right)\frac{dz}{2\pi i z}.
$$
Taking the contour as the circle \eqref{eq:scaling_z_tw}, the proof now follows from Corollary~\ref{prop:convergence_fred_det_tw}

%%%%%%%%%%%%%%%%%%%%%%%%%%%%%%%%%%%%%%%%
%%%%%%%%%%%%%%%%%%%%%%%%%%%%%%%%%%%%%%%%
\section{Right tail estimate}\label{sec:large_tail}

In this section we prove Theorem~\ref{thm:large_tail}. 
It is convenient to reformulate the right tail of $F_{\mathrm{GUE}}(x)$ and $(F_{\mathrm{GUE}}(x))^2$ in terms of an integral involving the Airy function.
The Airy function $\mathrm{Ai}(x)$ has the integral representation~\eqref{integral_representation_airy} and
\begin{equation}\label{eq:Airy_asymptotics}
	\mathrm{Ai}(x) = \frac{1}{2\sqrt{\pi}x^{1/4}}e^{-\frac{2}{3}x^{3/2}}\left(1+\Boh(x^{-3/2})\right),\qquad x\to\infty.
\end{equation}
We define
\begin{equation}
\label{eq:def_CB_general}
\CB(x;\alpha):= \int_x^\infty (y-x)e^{\alpha y}\mathrm{Ai}(y)^2dy,\qquad \alpha\in\R,
\end{equation}
and denote in particular
\begin{equation*}
\CB(x)=\CB(x;0).
\end{equation*}

\begin{lem} \label{prop:right_tail_FGUE}
We have
		\begin{equation}
		\label{eq:asympt_CB}	\CB(x;\alpha)=\frac{1}{16\pi x^{3/2}}e^{\alpha x-\frac{4}{3}x^{3/2}} \left(1+ \frac{\alpha}{x^{1/2}}+\frac{3\alpha^2}{4x}+ \Boh(x^{-3/2})\right)
		\end{equation}
		as $x\to\infty$. Moreover, for any positive constants $a$ and $\beta$,
		\begin{equation}
		\label{eq:asympt_CB2}
		\int_{x}^{x+ax^\beta}(y-x)e^{\alpha y} \mathrm{Ai}(y)^2dy = \CB(x;\alpha) (1+\Boh(e^{-ax^{\beta+\frac12}})),\quad x\to\infty.
		\end{equation} 
\end{lem}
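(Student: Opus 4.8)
The plan is to prove both statements by the Laplace method, starting from the classical asymptotics \eqref{eq:Airy_asymptotics}, which squares to $\mathrm{Ai}(y)^2=\frac{1}{4\pi\sqrt y}e^{-\frac43 y^{3/2}}\left(1+\Boh(y^{-3/2})\right)$ as $y\to\infty$; in particular $\mathrm{Ai}(y)^2\le C y^{-1/2}e^{-\frac43 y^{3/2}}$ for $y\ge y_0$. For \eqref{eq:asympt_CB}, substitute $y=x+s$ and factor $e^{\alpha x}$ to get $\CB(x;\alpha)=e^{\alpha x}\int_0^\infty s\,e^{\alpha s}\mathrm{Ai}(x+s)^2\,ds$. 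Inserting the Airy asymptotics (uniformly in $s\ge 0$ once $x$ is large) and pulling out $e^{-\frac43 x^{3/2}}$ reduces the problem to $\int_0^\infty \frac{s\,e^{\alpha s}}{\sqrt{x+s}}\,e^{-\frac43[(x+s)^{3/2}-x^{3/2}]}\left(1+\Boh((x+s)^{-3/2})\right)\,ds$. Convexity of $t\mapsto t^{3/2}$ gives $(x+s)^{3/2}-x^{3/2}\ge\frac32 x^{1/2}s$, hence $e^{-\frac43[(x+s)^{3/2}-x^{3/2}]}\le e^{-2x^{1/2}s}$, so the integral concentrates on $0\le s\lesssim x^{-1/2}\log x$; on that range I will use $(x+s)^{3/2}-x^{3/2}=\frac32 x^{1/2}s+\Boh(x^{-3/2}\log^2 x)$ and $(x+s)^{-1/2}=x^{-1/2}(1+\Boh(x^{-3/2}\log x))$, check that the quadratic-and-higher phase corrections, the algebraic prefactor correction, and the $\Boh(y^{-3/2})$ Airy remainder each contribute only at relative order $x^{-3/2}$, and bound the complementary range $s\gtrsim x^{-1/2}\log x$ by the same factor $e^{-2x^{1/2}s}$ (super-polynomially small). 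What remains is $\frac1{\sqrt x}\int_0^\infty s\,e^{(\alpha-2\sqrt x)s}\,ds=\frac{1}{\sqrt x\,(2\sqrt x-\alpha)^2}=\frac{1}{4x^{3/2}}\left(1-\frac{\alpha}{2\sqrt x}\right)^{-2}=\frac{1}{4x^{3/2}}\left(1+\frac{\alpha}{\sqrt x}+\frac{3\alpha^2}{4x}+\Boh(x^{-3/2})\right)$, and multiplying by $\frac1{4\pi}e^{\alpha x-\frac43 x^{3/2}}$ gives \eqref{eq:asympt_CB}.

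For \eqref{eq:asympt_CB2}, the difference between the full and the truncated integral is the tail $T(x):=\int_{x+ax^\beta}^\infty (y-x)e^{\alpha y}\mathrm{Ai}(y)^2\,dy$. Using $\mathrm{Ai}(y)^2\le C y^{-1/2}e^{-\frac43 y^{3/2}}$ and the fact that the phase $\alpha y-\frac43 y^{3/2}$ has derivative $\alpha-2\sqrt y\le\alpha-2\sqrt x\le-\sqrt x$ on $[x+ax^\beta,\infty)$ for $x$ large, I bound $\alpha y-\frac43 y^{3/2}\le\big[\alpha(x+ax^\beta)-\frac43(x+ax^\beta)^{3/2}\big]-\sqrt x\,(y-x-ax^\beta)$, and integrating in $y$ yields $T(x)\le P(x)\exp\big(\alpha(x+ax^\beta)-\frac43(x+ax^\beta)^{3/2}\big)$ for some power $P(x)$ of $x$. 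Convexity again gives $\frac43(x+ax^\beta)^{3/2}-\frac43 x^{3/2}\ge 2a x^{\beta+1/2}$, while $\alpha a x^\beta=o(x^{\beta+1/2})$; hence the exponent at the truncation point is at most $\alpha x-\frac43 x^{3/2}-2a x^{\beta+1/2}+o(x^{\beta+1/2})$. Dividing by $\CB(x;\alpha)\asymp x^{-3/2}e^{\alpha x-\frac43 x^{3/2}}$ from \eqref{eq:asympt_CB}, and absorbing $P(x)$ and the $o(x^{\beta+1/2})$ into the exponential (valid since $\beta+\frac12>0$), gives $T(x)/\CB(x;\alpha)\le e^{-a x^{\beta+1/2}}$ for all large $x$, which is exactly \eqref{eq:asympt_CB2}.

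The routine but delicate point is the bookkeeping in the first paragraph: one must verify that every term discarded in the Laplace expansion — the $\frac38 x^{-1/2}s^2$ and higher terms in the phase, the factor $(1+s/x)^{-1/2}$, and the $\Boh(y^{-3/2})$ remainder of \eqref{eq:Airy_asymptotics} — genuinely enters at relative order $x^{-3/2}$ rather than $x^{-1}$, and that all the error bounds are uniform down to $s=0$. By contrast, the truncation estimate \eqref{eq:asympt_CB2} is a soft large-deviation bound and should go through with little trouble once \eqref{eq:asympt_CB} is in hand.
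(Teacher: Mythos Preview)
Your proposal is correct and follows essentially the same route as the paper: substitute $y=x+s$, insert the Airy asymptotics \eqref{eq:Airy_asymptotics}, and reduce to a Laplace integral in $s$ whose leading contribution is $\frac{1}{\sqrt x}\int_0^\infty s\,e^{(\alpha-2\sqrt x)s}\,ds$. The paper truncates to $s\in(0,1)$ and then invokes Watson's lemma; you truncate more aggressively to $s\lesssim x^{-1/2}\log x$ and track the Taylor remainders by hand, but the mechanism and the verification that the subleading corrections enter at relative order $x^{-3/2}$ are the same. Your tail estimate for \eqref{eq:asympt_CB2} via the convexity bound $(x+s)^{3/2}-x^{3/2}\ge \tfrac32 x^{1/2}s$ is likewise equivalent to the paper's ``similar calculation''.
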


\begin{proof}
Inserting~\eqref{eq:Airy_asymptotics} in~\eqref{eq:def_CB_general}, and writing $y=x+s$, we have
	\begin{equation*}
	\begin{split}
	\CB(x;\alpha) 
	&= \frac{e^{\alpha x}}{4\pi} \int_0^\infty \frac{s}{(s+x)^{1/2}}e^{\alpha s} e^{-\frac{4}{3} (x+s)^{3/2}}ds \left(1+\Boh(x^{-3/2})\right)\\
	&= \frac{e^{\alpha x-\frac{4}{3}x^{3/2}}}{4\pi \sqrt{x} }  \int_{0}^1 s e^{\alpha s } 
	\frac{e^{-\frac{s^2}{2\sqrt{x}}}}{(1+s/x)^{1/2}} e^{-2\sqrt{x}s} ds \left(1+\Boh(x^{-3/2})\right)
	\end{split}
	\end{equation*}
where for the second equality we changed the integral domain from $(0, \infty)$ to $(0, 1)$ since the integral over $(1, \infty)$ only gives exponentially smaller error, and expanded $(x+s)^{3/2}$. 
Expanding the fraction in the integrand in power series and applying Watson's lemma we obtain~\eqref{eq:asympt_CB} after routine calculations. 
Using a similar calculation, 
	\begin{equation*}
	\begin{split}
	\int_{x+ax^\beta}^\infty (y-x)e^{\alpha y}\mathrm{Ai}(y)^2dy
	&= \frac{e^{\alpha x-\frac{4}{3}x^{3/2}}}{4\pi \sqrt{x} }  \Boh(e^{-cx^{\beta+\frac12}})
	\end{split}
	\end{equation*}
for any positive constant $c$ with $c<2a$. Combining with~\eqref{eq:asympt_CB}, we obtain~\eqref{eq:asympt_CB2}.
\end{proof}

Noting that $\Tr (\mb A_x^2) = \CB(x)$, the right tail of the GUE Tracy-Widom distribution satisfies
\begin{equation}\label{eq:right_tail_FGUE}
	1-F_{\mathrm{GUE}}(x) = \CB(x) \left(1+\Boh(e^{-\frac{4}{3}(1-\epsilon)x^{3/2}})\right)
\end{equation}
as $x\to\infty$, for any $\epsilon>0$ fixed. This implies that 
\begin{equation}\label{eq:right_tail_FGUE2}
	1-(F_{\mathrm{GUE}}(x))^2 =2\CB(x) \left(1+\Boh(e^{-\frac{4}{3}(1-\epsilon)x^{3/2}})\right) .
\end{equation}
		
\bigskip

Now we turn to the function $F(x;\tau,\gamma)$.
Recall Lemma~\ref{lem:kernelK_kernelT}, 
\begin{equation}\label{eq:limiting_distr_AB_decomposition}
	F(x;\tau,\gamma)=\oint e^{xA_1(z)+\tau A_2(z)+2B(z)}\det(\mb I-\mb \Tngamma\mb \Tgamma) \frac{dz}{2\pi iz}
\end{equation}
where the contour is any circle $|z|=R$ with $0<R<1$. 
We take $x\to +\infty$ with the choice of contour
\begin{equation*} 
	|z|=e^{-\frac{x}{2\tau}}.
\end{equation*}
We prove the following result. 

\begin{prop} \label{prop:LT_HSnorm_est}
For every $\gamma\in [-1/2, 1/2]$, there exists a  constant $0<\epsilon<1$  such that the Hilbert-Schmidt norm of $\mb \Tgamma$ satisfies
\begin{equation*}
	\|\mb \Tgamma\|_2\le e^{-\frac{2}{3}(1-\epsilon)x^{3/2}\tau^{-1/2} } \quad \text{as $x\to \infty$,} 
\end{equation*}
uniformly for $|z|=e^{-\frac{x}{2\tau}}$.
\end{prop}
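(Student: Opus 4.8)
The plan is to estimate $\|\mb T_\gamma\|_2$ directly from the kernel $\bm T_\gamma(s,t)=\mathcal{T}_\gamma(s+x+t)$ of Lemma~\ref{lem:kernelK_kernelT}. Substituting $r=s+t$,
\[
	\|\mb T_\gamma\|_2^2=\int_0^\infty\!\!\!\int_0^\infty|\mathcal{T}_\gamma(s+x+t)|^2\,ds\,dt=\int_0^\infty r\,|\mathcal{T}_\gamma(x+r)|^2\,dr ,
\]
so it suffices to produce a pointwise bound on $|\mathcal{T}_\gamma(x+r)|$, $r>0$, that decays in $r$. First I would record the geometry of $\mathcal S_-$ on the circle $|z|=e^{-x/(2\tau)}$: enumerating $\mathcal S_-=\{u_k\}_{k\in\Z}$ with $u_k=x_k+iy_k$, $x_k<0$, the identities \eqref{eq:relations_real_imaginary_u}, \eqref{eq:identity_xk} and \eqref{eq:GL_ineq_thetak} give $\re(u_k^2)=-2\log|z|=x/\tau$, hence $x_k\le-\sqrt{x/\tau}$, together with $x_k^2\ge\frac{x}{2\tau}+(2|k|-1)\pi$ for $k\neq0$; in particular $|u_k|\ge|x_k|\ge\sqrt{x/\tau}$, so $1/|u_k|\le1$ once $x$ is large.

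Next I would isolate the real part of the exponent in $\mathcal{T}_\gamma$. Using $\re(u_k^2)=x/\tau$ one gets $\re\big(-\frac{\tau}{3}u_k^3+\frac{\gamma}{2}u_k^2+(x+r)u_k\big)=-\frac{2\tau}{3}|x_k|^3-r|x_k|+\frac{\gamma x}{2\tau}$, while $Q$ is negligible: by \eqref{eq:alternative_definition_q}, for $u_k\in\mathcal S_-$ and $u\in i\R$ one has $|e^{-u_k^2/2}e^{u^2/2}|\le e^{-x/(2\tau)}$ and $|u-u_k|\ge|x_k|\ge\sqrt{x/\tau}$, so $|Q(u_k)|=\Boh\big(\sqrt{\tau/x}\,e^{-x/(2\tau)}\big)$ uniformly in $k$ and in $z$ with $|z|=e^{-x/(2\tau)}$. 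Since $|x_k|^3=(x_k^2)^{3/2}\ge(x/\tau)^{3/2}$ for every $k$ and $\gamma\le\frac12$, bounding the defining series of $\mathcal{T}_\gamma(x+r)$ termwise shows that each term is at most $e^{-\frac{2}{3}x^{3/2}\tau^{-1/2}}\,e^{-r\sqrt{x/\tau}}\,e^{x/(4\tau)}\,e^{o(1)}$.

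To control the sum over $k$ I would split at $k_0:=\lceil x/(2\pi\tau)\rceil+1$. There are $\Boh(x)$ indices with $|k|<k_0$, and for each the bound of the previous paragraph applies as is. For $|k|\ge k_0$ one has $x_k^2\ge\frac{x}{2\tau}+(2|k|-1)\pi\ge(2|k|-1)\pi\ge x/\tau$, hence $|x_k|^3\ge\big((2|k|-1)\pi\big)^{3/2}$ and the corresponding term is at most $e^{-\frac{2\tau}{3}((2|k|-1)\pi)^{3/2}}e^{-r\sqrt{x/\tau}}e^{x/(4\tau)}e^{o(1)}$; the sum of these over $|k|\ge k_0$ is dominated by its first term, hence $\le C\,e^{-\frac{2}{3}x^{3/2}\tau^{-1/2}}e^{-r\sqrt{x/\tau}}e^{x/(4\tau)}$ for $x$ large. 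Altogether $|\mathcal{T}_\gamma(x+r)|\le C\,\frac{x}{\tau}\,e^{x/(4\tau)}\,e^{-\frac{2}{3}x^{3/2}\tau^{-1/2}}\,e^{-r\sqrt{x/\tau}}$; inserting this into the $r$-integral and using $\int_0^\infty r\,e^{-2r\sqrt{x/\tau}}\,dr=\frac{\tau}{4x}$ yields $\|\mb T_\gamma\|_2\le C'\sqrt{x/\tau}\,e^{x/(4\tau)}\,e^{-\frac{2}{3}x^{3/2}\tau^{-1/2}}$. As $\tau$ is fixed, the prefactor is $e^{o(x^{3/2}\tau^{-1/2})}$, which may be absorbed into an arbitrarily small loss in the exponent, giving $\|\mb T_\gamma\|_2\le e^{-\frac{2}{3}(1-\epsilon)x^{3/2}\tau^{-1/2}}$ for every $\epsilon\in(0,1)$ and all large $x$, uniformly in $z$ with $|z|=e^{-x/(2\tau)}$.

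I expect the main obstacle to be precisely this sum over $k$. Bounding term-by-term with only the inequality $x_k^2\ge x/\tau$ gives the correct constant $\frac23$ in the exponent but leaves $\Boh(x)$ ``essentially equal'' terms (the roots $u_k$ with $1\le|k|\lesssim x/\tau$ are comparable to $u_0$), whereas using only the separation estimate \eqref{eq:GL_ineq_thetak} would degrade the constant to $\frac13$. The dichotomy above resolves this: the crude bound, which carries the correct constant, is used for the $\Boh(x)$ small-index roots at the cost of a polynomial factor, and \eqref{eq:GL_ineq_thetak} is invoked only in the range $|k|\gtrsim x/\tau$, where it is strong enough to simultaneously recover the constant $\frac23$ and render the tail summable. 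A secondary but routine point is the uniformity of all estimates as $|z|\to0$, which is immediate from the explicit formulas \eqref{eq:alternative_definition_q} and \eqref{eq:identity_xk}.
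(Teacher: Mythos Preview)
Your proof is correct and takes a genuinely different, more elementary route than the paper.

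The paper first develops a substantial toolkit (Corollary~\ref{lem:CAseriesf}, Lemma~\ref{lem:Agk}, Propositions~\ref{prop:asymptotics_CA} and~\ref{prop:bound_CA}): it rewrites $\mathcal T_\gamma(y)$ as a truncated Laurent series in $z$ whose coefficients are the Airy-like functions $\RA(y;\gamma-k)$, then controls these coefficients via the concave function $f(u;\lambda)$ of~\eqref{eq:fffns}. Proposition~\ref{prop:LT_HSnorm_est} is then deduced by splitting the $y$-integral at $y=x+c\sqrt x$ and invoking Propositions~\ref{prop:asymptotics_CA} and~\ref{prop:bound_CA} on the two pieces. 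You bypass all of this by staying with the original Bethe-root sum and exploiting the algebraic identity that on the circle $|z|=e^{-x/(2\tau)}$ (so $\re(u_k^2)=x/\tau$) the real part of the exponent collapses to $-\tfrac{2\tau}{3}|x_k|^3-r|x_k|+\tfrac{\gamma x}{2\tau}$; combined with the crude lower bound $|x_k|\ge\sqrt{x/\tau}$ and the separation estimate \eqref{eq:GL_ineq_thetak} in the tail $|k|\gtrsim x/\tau$, this already yields the right exponent with only a polynomial-times-$e^{O(x)}$ prefactor, which is swallowed by any $\epsilon>0$.

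What each buys: your argument is shorter and self-contained for this proposition, but it gives only an upper bound on $|\mathcal T_\gamma|$ with the right exponential rate and an uncontrolled sub-leading factor. The paper's Laurent-series machinery is heavier, but it is indispensable for Proposition~\ref{prop:LT_trace_asympt}, where one must identify the \emph{leading term} of $\Tr(\mb T_{-\gamma}\mb T_\gamma)$ (including, for $\gamma=\pm\tfrac12$, the interplay of the $z^0$ and $z^{\pm1}$ contributions). Since that machinery has to be built anyway, the paper reuses it here; your approach would not extend to Proposition~\ref{prop:LT_trace_asympt}.
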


We also need to evaluate the leading term asymptotics for $\Tr  ( \mb \Tngamma \mb \Tgamma )$. This will be done with a careful control of the Laurent expansion of the kernel for the product $\mb \Tngamma \mb \Tgamma$ at $z=0$. As a result we will see that for $|\gamma|<1$ the Laurent coefficient for $z^0$ is the leading one, whereas for $|\gamma|=1/2$ the coefficients for $z^{\pm 1}$ and $z^0$ all contribute to the same order as $x\to\infty$.
For the precise statement, set
\beqq
	\cb(\tau, \gamma, x):= \CB\left(\frac{x}{\tau^{1/3}}+\frac{\gamma^2}{4\tau^{4/3}}\right), \quad
	\cb_{\pm}(\tau, \gamma, x):= \CB\left(\frac{x}{\tau^{1/3}}+\frac{\gamma^2}{4\tau^{4/3}};\pm \frac{1}{2\tau^{2/3}}\right). 
\eeqq

\begin{prop}\label{prop:LT_trace_asympt}
For each $\gamma\in [-1/2, 1/2]$, there is $c>0$ such that the following result holds as $x\to \infty$ uniformly for $|z|=e^{-\frac{x}{2\tau}}$:  
if $|\gamma|<1/2$, 
\begin{equation*}
	\Tr  ( \mb \Tngamma \mb \Tgamma ) =  
	\cb(\tau, \gamma, x) \left(1+\Boh(e^{-cx^{1/2}})\right) ,
\end{equation*}
and if $\gamma=\pm 1/2$, 
\beqq
\begin{split}
	&\Tr ( \mb \Thalf \mb \Tnhalf )
	= \left(2 b(\tau, 1/2, x) 
	+ ze^{-\frac{1}{96\tau^2}} \cb_+(\tau, 1/2, x) 
	+ z^{-1}e^{\frac{1}{96\tau^2}} \cb_-(\tau, 1/2, x) \right) \left(1+\Boh(e^{-cx^{1/2}})\right) .
\end{split}
\eeqq
\end{prop}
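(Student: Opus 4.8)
The plan is to turn $\Tr(\mb T_{-\gamma}\mb T_\gamma)$ into a one–dimensional integral whose integrand carries an explicit Laurent expansion in $z$, and then to isolate the finitely many Laurent coefficients that survive at the order $e^{-\frac43(x/\tau^{1/3})^{3/2}}$ relevant for the right tail. Starting from Lemma~\ref{lem:kernelK_kernelT}, since $\bm T_\gamma(s,t)=\mathcal T_\gamma(s+x+t)$ depends on $s,t$ only through $s+t$, the substitution $w=s+t+x$ gives
$$\Tr(\mb T_{-\gamma}\mb T_\gamma)=\int_x^\infty(w-x)\,\mathcal T_{-\gamma}(w)\,\mathcal T_\gamma(w)\,dw=\sum_{\xi,\eta\in\mathcal S_-}\frac{e^{\Phi(\xi)+\Phi(\eta)+\frac{\gamma}{2}(\eta^2-\xi^2)}}{\xi\,\eta\,(\xi+\eta)^2},$$
where the second form uses $\int_x^\infty(w-x)e^{w(\xi+\eta)}\,dw=e^{x(\xi+\eta)}/(\xi+\eta)^2$, valid because $\re(\xi+\eta)<0$. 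This is the object to analyze.

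Next I would prove the expansion $\mathcal T_\gamma(w)=\sum_{n\in\Z}z^{\,n}\,\mathcal A_\gamma^{(n)}(w)$, in which each coefficient $\mathcal A_\gamma^{(n)}$ is independent of $z$ and equals the shifted Airy function $\mathcal A_{\gamma+n}$ of \eqref{eq:KPZ_shiftedairykernel} up to a $Q$–correction (on the Airy contour $\Lambda_-$ one replaces $e^{-\frac\tau3\xi^3+\frac{\gamma+n}2\xi^2+w\xi}$ by $e^{-\frac\tau3\xi^3+\frac{\gamma+n}2\xi^2+w\xi-Q(\xi)}$). The cleanest derivation writes $\mathcal T_\gamma(w)$ as a sum of residues of $\zeta\mapsto -z\,e^{-\frac\tau3\zeta^3+\frac\gamma2\zeta^2+w\zeta-Q(\zeta)}/(z-e^{-\zeta^2/2})$ over $\mathcal S_-$, opens it into two contours running on either side of the hyperbola $\Lambda_-$, and expands $z/(z-e^{-\zeta^2/2})$ geometrically in $e^{-\zeta^2/2}/z$ (left side, powers $z^0,z^{-1},\dots$) and in $z\,e^{\zeta^2/2}$ (right side, powers $z^1,z^2,\dots$); the $z^n$–term carries exactly the extra factor $e^{\frac n2\zeta^2}$, which merges into the quadratic part of the exponent and, after deforming to $\Lambda_-$ (legitimate since $e^{-\frac\tau3\zeta^3+\cdots-Q(\zeta)}$ is analytic and decaying in the sector $\arg\zeta\in(3\pi/4,5\pi/4)$), produces $\mathcal A_{\gamma+n}$. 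Equivalently one may Poisson–sum over the index of $\mathcal S_-=\{u_k(z)\}$ and change variables $\xi=u_k$ in each Fourier mode. Either way, consistency with the known identities $\mathcal T_{\gamma\pm1}=z^{\mp1}\mathcal T_\gamma$ is immediate.

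I would then multiply the two expansions, so that $\Tr(\mb T_{-\gamma}\mb T_\gamma)=\sum_{m\in\Z}z^{\,m}\sum_{n_1+n_2=m}\int_x^\infty(w-x)\mathcal A_{-\gamma+n_1}(w)\mathcal A_{\gamma+n_2}(w)\,dw$, and analyze each piece via \eqref{eq:KPZ_shiftedairykernel} and \eqref{eq:Airy_asymptotics}. When $(-\gamma+n_1)^2=(\gamma+n_2)^2$ the two Airy functions have the same argument, their exponential prefactors combine into a single linear exponential, and the change of variables $y=w/\tau^{1/3}+(-\gamma+n_1)^2/(4\tau^{4/3})$ turns the integral into $\CB(\cdot\,;\tfrac{m}{2\tau^{2/3}})$ times an explicit $\tau$–dependent constant — a multiple of $\cb(\tau,\gamma,x)$ if $m=0$, of $\cb_+$ if $m=1$, of $\cb_-$ if $m=-1$. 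The size of a generic $(n_1,n_2)$–term is controlled as in Lemma~\ref{prop:right_tail_FGUE}: the weight $|z^{\,m}|=e^{-mx/(2\tau)}$ exactly cancels the linear–in–$w$ drift $e^{mw/(2\tau)}$ of the product at the lower endpoint $w=x$, leaving size $e^{-\frac43(x/\tau^{1/3})^{3/2}}$ multiplied by $e^{-[(-\gamma+n_1)^2+(\gamma+n_2)^2]\,x^{1/2}/(4\tau^{3/2})}\,(1+o(1))$. Hence only the $(n_1,n_2)$ minimizing $(-\gamma+n_1)^2+(\gamma+n_2)^2$ contribute at leading order: for $|\gamma|<\frac12$ the minimizer is unique, $(0,0)$, of weight $z^0$, giving exactly $\cb(\tau,\gamma,x)$; for $\gamma=\pm\frac12$ there are four minimizers — two of weight $z^0$ each giving $\cb(\tau,\tfrac12,x)$, one of weight $z$ giving the stated multiple of $\cb_+$, one of weight $z^{-1}$ giving the stated multiple of $\cb_-$. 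Every other term — non-minimal resonant, or off-resonant with Airy arguments differing by $\gtrsim\tau^{-4/3}$ — is smaller by $\Boh(e^{-cx^{1/2}})$.

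Finally I would carry out the uniform error analysis. The three sources of error are: (i) the $Q$–corrections $\mathcal A_\gamma^{(n)}-\mathcal A_{\gamma+n}$, handled by noting that at each relevant saddle $|e^{-\zeta^2/2}|\le|z|\,e^{\Boh(\sqrt x)}\to0$, so $Q$ is exponentially small there and perturbs the coefficients only by $1+\Boh(e^{-cx})$ (the analogue of Lemma~\ref{lem:estimate:scaled_Q} in the present scaling $|z|=e^{-x/(2\tau)}$); (ii) the replacement of $\int_x^\infty$ by $\int_x^{x+x^{1/2}}$ in each Airy integral, absorbed by \eqref{eq:asympt_CB2}; and (iii) the tail of the Laurent series together with the summation over the infinitely many subleading $(n_1,n_2)$, which must be shown to stay below $\Boh(e^{-cx^{1/2}})\,\cb(\tau,\gamma,x)$ uniformly for $z$ on the circle $|z|=e^{-x/(2\tau)}$. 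I expect (iii) to be the main obstacle: because $\CB$ decays super-exponentially while the individual Laurent coefficients do not decay in $z$ on this circle, the bookkeeping of how finitely many resonant terms beat all the others has to be done carefully and uniformly in $z$, and one must also make precise the dichotomy between $|\gamma|<\frac12$ (where the phase $e^{2\pi i\gamma(j-k)}$ in the double sum is non-degenerate, forcing every $z^{\pm1}$ contribution to vanish) and $\gamma=\pm\frac12$ (where this phase degenerates to $(-1)^{j-k}$ and exactly the two extra terms appear).
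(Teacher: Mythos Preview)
Your approach is essentially the paper's. Both write $\Tr(\mb T_{-\gamma}\mb T_\gamma)=\int_x^\infty(w-x)\,\mathcal T_{-\gamma}(w)\,\mathcal T_\gamma(w)\,dw$, expand $\mathcal T_\gamma$ in powers of $z$ with shifted-Airy-like coefficients (your $\mathcal A_\gamma^{(n)}$ is the paper's $\RA(\cdot\,;\gamma+n)$ of \eqref{eq:def_RA}; your contour-opening argument is Corollary~\ref{lem:CAseriesf}, stated there as a \emph{truncated} series with remainder since the inner-contour geometric expansion does not converge), and isolate the dominant term(s). For your obstacle (iii) the paper first proves pointwise results on $\mathcal T_\gamma(y)$ --- Proposition~\ref{prop:asymptotics_CA} for $y$ close to $x$, obtained from the consecutive-term comparison Lemma~\ref{lem:Agk}, and the crude uniform bound of Proposition~\ref{prop:bound_CA} valid for all $y\ge x$ --- and only then splits the $w$-integral at $x+a\sqrt x$ and inserts these.

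One correction: the $|\gamma|<\tfrac12$ versus $\gamma=\pm\tfrac12$ dichotomy is \emph{not} driven by the phase $e^{2\pi i\gamma(j-k)}$ you invoke at the end; no $z^{\pm1}$ term ``vanishes'' by cancellation. It is purely the size comparison you already identified earlier in your proposal: after $|z|^m$ cancels the linear drift $e^{mw/(2\tau)}$ at $w=x$, the $(n_1,n_2)$-term is of order $\cb(\tau,\gamma,x)\exp\bigl(-[(n_1-\gamma)^2+(n_2+\gamma)^2-2\gamma^2]\,x^{1/2}/(4\tau^{3/2})\bigr)$, so only the integer minimizers of $(n_1-\gamma)^2+(n_2+\gamma)^2$ survive --- a unique one for $|\gamma|<\tfrac12$, and four (two with $m=0$, one each with $m=\pm1$) for $\gamma=\pm\tfrac12$. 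The paper makes this rigorous through the concavity of the function $f$ in \eqref{eq:fffns} and Lemma~\ref{lem:Agk}, not through any phase degeneracy.
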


Lemma~\ref{prop:right_tail_FGUE} implies that $\cb_{\pm}(\tau, 1/2, x)  = e^{\pm ( \frac{x}{2\tau} + \frac{1}{32\tau^2} ) } \cb(\tau, 1/2, x)(1+\Boh(x^{-1/2}))$.
Hence, for $|z|=e^{-\frac{x}{2\tau}}$, 
\beq \label{eq:threeequalorder}
	z\cb_+(\tau, 1/2, x) = \Boh(b(\tau, 1/2, x)),
	\qquad 
	z^{-1} \cb_-(\tau, 1/2, x) = \Boh(b(\tau, 1/2, x)),
\eeq
so the three terms in $\Tr ( \mb \Thalf \mb \Tnhalf )$ have the same order.

We prove the two propositions in the following subsections. Assuming them, we now obtain Theorem~\ref{thm:large_tail}.

\begin{proof}[Proof of Theorem~\ref{thm:large_tail}]
The functions $A_1,A_2$ and $B$ are analytic (see \eqref{def:functions_A}), and using \eqref{def:series_polylog} we see that  
\begin{equation*}
	E(z):= e^{xA_1(z)+\tau A_2(z)+2B(z)}=1 - \frac{(x+\tau)z}{\sqrt{2\pi}}  + \Boh(z^2) \qquad \text{as } z\to 0. 
\end{equation*}
Hence,  
\beq \label{eq:integral_prefactor}
	\oint E(z) \frac{dz}{2\pi iz}=1, 
	\qquad \oint zE(z) \frac{dz}{2\pi iz}=0, 
	\qquad \oint z^{-1} E(z) \frac{dz}{2\pi iz}=- \frac{x+\tau}{\sqrt{2\pi}}. 
\eeq
We also have $E(z)= \Boh(1)$ for $|z|=e^{-\frac{x}{2\tau}}$ as $x\to\infty$.
On the other hand, using  $\|\mb  \Tngamma\mb \Tgamma \|_1\le \|\mb \Tngamma\|_2 \|\mb \Tgamma\|_2$ and Proposition \ref{prop:LT_HSnorm_est}, 
\beqq 
	\det(\mb I - \mb\Tngamma \mb \Tgamma) - 1 + \Tr (\mb\Tngamma\mb \Tgamma) = \Boh(\|\mb\Tngamma\mb \Tgamma \|_1^2) = \Boh(  e^{-\frac{8(1-\epsilon)x^{3/2}}{3\tau^{1/2}}} )
\eeqq 
as $x\to \infty$ uniformly for $|z|=e^{-\frac{x}{2\tau}}$. 
Hence, 
\begin{equation}
\label{eq:F_trace}
	1- F(x;\tau,\gamma)= \oint_{|z|=e^{-\frac{x}{2\tau}}} E(z) \Tr (\mb \Tngamma\mb \Tgamma) \frac{dz}{2\pi iz} + O(e^{-\frac{8(1-\epsilon)x^{3/2}}{3\tau^{1/2}}}).
\end{equation}
Thus, for $|\gamma|<1/2$, Proposition~\ref{prop:LT_trace_asympt} and \eqref{eq:right_tail_FGUE} imply the theorem. 
For $\gamma=\pm 1/2$, Proposition~\ref{prop:LT_trace_asympt}, the equation \eqref{eq:integral_prefactor}, and \eqref{eq:threeequalorder} imply that  the integral in \eqref{eq:F_trace} is 
\begin{equation*}
\begin{split}
	\left( 2\cb(\tau, x, 1/2)  -\frac{(x+\tau)e^{\frac{1}{96\tau^2}}}{\sqrt{2\pi}}   \cb_-(\tau, x, 1/2)\right)
	\left(1+\Boh(e^{-cx^{1/2}})\right).
\end{split}
\end{equation*}
Since $\cb_{-}(\tau, 1/2, x)  = \Boh( e^{- \frac{x}{2\tau}} \cb(\tau, 1/2, x))$, we obtain the result from \eqref{eq:right_tail_FGUE2}. 
\end{proof}

In Section~\ref{sec:analysis_Tgamma}, we first analyze the kernel of $\mb T_\gamma$. We then prove Propositions~\ref{prop:LT_HSnorm_est} and~\ref{prop:LT_trace_asympt} in Sections~\ref{sec:proof_LT_HSnorm_est} and~\ref{sec:proof_LT_trace_asympt}, respectively.

%%%%%%%%%%%%%%%%%%%%%%%%%%%%%%%%%%%%%%%%
%%%%%%%%%%%%%%%%%%%%%%%%%%%%%%%%%%%%%%%%
\subsection{Analysis of the kernel $\bm T_\gamma$}\label{sec:analysis_Tgamma}

The kernel of $\mb T_\gamma$ is given by Lemma~\ref{lem:kernelK_kernelT} and we recall it here,
\begin{equation} \label{eq:T_to_CA}
	\bm T_\gamma(s,t)=\CT_{\gamma}(x+s+t), 
	\qquad 
	\CT_\gamma(y)= \CT_\gamma(y;z,\tau)=\sum_{\xi\in\mathcal S_-}\frac{e^{-\frac{\tau}{3} \xi^3 +\frac{\gamma}{2}\xi^2 +y\xi -Q(\xi)}}{-\xi} 
\end{equation}
for $s, t>0$. The function $Q$ is defined in~\eqref{def:Q_function}. In this subsection we find the asymptotic behavior of $\CT_\gamma(y)$ as $y\to\infty$. 

Fix $\tau>0$ and define the Airy-like function
\begin{equation} \label{eq:def_RA}
 \RA(y;\mu)=  \int_{\Lambda} e^{ -\frac{\tau}3 \xi^3 + \frac{\mu}{2}\xi^2+y\xi -Q(\xi) }\frac{d\xi}{2\pi i}
 \end{equation}
where $\Lambda$ is any simple unbounded contour from $\infty e^{5\pi i /4}$ to $\infty e^{3\pi i /4}$ contained in the sector $3\pi/4<\arg\xi<5\pi/4$, the latter condition being imposed so that $Q$ remains analytic along $\Lambda$.
Without the term $Q$, the above function is a scaled Airy function, 
\beq
\label{eq:aux_02}
    \int_{\Lambda} e^{ -\frac{\tau}3 \xi^3 +\frac{\mu}{2}\xi^2+y\xi  }\frac{d\xi}{2\pi i}
    =\frac{e^{ \frac{\mu^3}{12\tau^2} + \frac{\mu y}{2\tau} }}{\tau^{1/3}}  \ai\left(\frac{y}{\tau^{1/3}}+\frac{\mu^2}{4\tau^{4/3}}\right).
\eeq
The function $\CT_\gamma(y)$ in \eqref{eq:T_to_CA} looks like a discretization of $\RA(y;\gamma)$. The factor $\frac1{-\xi}$ is close to the spacing between the points in the discrete set $\mathcal S_-$, which is not uniformly spaced.
A precise comparison between $\CT_\gamma(y)$ and $\RA(y;\gamma)$ is in Proposition \ref{prop:asymptotics_CA} below. 

\subsubsection{Asymptotics of $\RA(y; \mu)$} 

\begin{lem}\label{lem:asymptotics_g_large_tail}
There are constants $c>0$ and $y_0>0$ such that
\begin{equation*}
    	\RA(y;\mu) 
	= \frac{e^{\frac{\mu^3}{12\tau^2} + \frac{\mu y}{2\tau} }}{\tau^{1/3}}  \ai\left(\frac{y}{\tau^{1/3}}+\frac{\mu^2}{4\tau^{4/3}}\right)
	\left( 1+ \Boh(e^{-cy} ) \right)
\end{equation*}
for all $y\ge y_0$ and $\mu\le 2\sqrt{\tau y}$.
\end{lem}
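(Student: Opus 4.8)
The plan is to regard $\RA(y;\mu)$ as a small perturbation of the scaled Airy integral \eqref{eq:aux_02}. Writing $\phi(\xi):=-\frac{\tau}{3}\xi^3+\frac{\mu}{2}\xi^2+y\xi$, one has
\[
	\RA(y;\mu)=\int_{\Lambda}e^{\phi(\xi)}\frac{d\xi}{2\pi i}+\int_{\Lambda}e^{\phi(\xi)}\bigl(e^{-Q(\xi)}-1\bigr)\frac{d\xi}{2\pi i}=:\RA_0(y;\mu)+R(y;\mu),
\]
and by \eqref{eq:aux_02} the term $\RA_0(y;\mu)$ equals $\tau^{-1/3}e^{\mu^3/(12\tau^2)+\mu y/(2\tau)}\,\ai\bigl(y\tau^{-1/3}+\mu^2\tau^{-4/3}/4\bigr)$, which is exactly the claimed main term. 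It therefore suffices to prove $|R(y;\mu)|\le C e^{-cy}\,|\RA_0(y;\mu)|$ uniformly for $y\ge y_0$ and $\mu\le 2\sqrt{\tau y}$. Since the value of $\RA(y;\mu)$ does not depend on the choice of the contour $\Lambda$ inside the sector $3\pi/4<\arg\xi<5\pi/4$, I am free to choose $\Lambda=\Lambda_y$ adapted to the relevant saddle point.

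The phase $\phi$ has a unique critical point in the open left half-plane, the negative real number $\xi_c=\xi_c(y,\mu)=\frac{\mu-\sqrt{\mu^2+4\tau y}}{2\tau}$, with $\phi''(\xi_c)=\sqrt{\mu^2+4\tau y}\ge 2\sqrt{\tau y}$ and, using $\mu\le 2\sqrt{\tau y}$, with $\xi_c^2\ge y/(8\tau)$; both bounds are uniform over the admissible range of $\mu$ (and only improve as $\mu\to-\infty$). I take $\Lambda_y$ to be the vertical segment $\{\xi_c+iv:|v|\le\delta|\xi_c|\}$ near the saddle, continued to infinity along two rays interior to the sector, say $\arg\xi=\tfrac{3\pi}{4}+\epsilon$ and $\arg\xi=\tfrac{5\pi}{4}-\epsilon$, on which $\re(\xi^3)>0$. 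The main technical obstacle is that the genuine steepest-descent rays of the cubic, $\arg\xi=\tfrac{2\pi}{3}$ and $\tfrac{4\pi}{3}$, lie outside the sector of analyticity of $Q$, so one must settle for these sub-optimal rays and check that $\re\phi$ still decays adequately: since $\phi'''\equiv-2\tau$ and $\phi''(\xi_c)>0$ is real, a Taylor expansion at $\xi_c$ gives $\re\phi(\xi)\le\re\phi(\xi_c)-c\,\phi''(\xi_c)\,\xi_c^2\le\re\phi(\xi_c)-c'y^{3/2}$ for $\xi\in\Lambda_y$ off the short vertical segment (on the rays the cubic term dominates the quadratic one, which is of lower order in $|\xi|$). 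Standard Laplace estimates along $\Lambda_y$ then give $\int_{\Lambda_y}|e^{\phi}|\,|d\xi|\le C\,\phi''(\xi_c)^{-1/2}e^{\re\phi(\xi_c)}$ and, since $\phi(\xi_c)$ is real, $|\RA_0(y;\mu)|\ge c\,\phi''(\xi_c)^{-1/2}e^{\phi(\xi_c)}$ for $y$ large, whence $\int_{\Lambda_y}|e^{\phi}|\,|d\xi|\le C\,|\RA_0(y;\mu)|$.

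Finally one bounds $Q$ on $\Lambda_y$: taking the integration path in \eqref{def:Q_function} along the negative real axis up to $\xi_c$ and then vertically, and using $\Li_{1/2}(w)=\Boh(|w|)$ as $w\to 0$ together with $|e^{-s^2/2}|=e^{-\re(s^2)/2}$, one obtains $|Q(\xi)|\le C e^{-\frac14\xi_c^2}\le Ce^{-cy}$ on the vertical segment (where $\re(\xi^2)\ge(1-\delta^2)\xi_c^2$ for $\delta$ small), and $|Q(\xi)|\le M$ on all of $\Lambda_y$ (the rays stay in the interior of the sector, so $\re(\xi^2)>0$ and the defining integral converges). Splitting $R(y;\mu)$ along $\Lambda_y$: over the vertical segment $|e^{-Q}-1|\le 2|Q|\le Ce^{-cy}$, contributing at most $Ce^{-cy}\int_{\Lambda_y}|e^{\phi}|\,|d\xi|\le Ce^{-cy}|\RA_0|$; over the remainder $|e^{-Q}-1|\le e^{M}+1$ while $\re\phi\le\re\phi(\xi_c)-c'y^{3/2}$, contributing at most $Ce^{-c'y^{3/2}}\phi''(\xi_c)^{-1/2}e^{\re\phi(\xi_c)}\le Ce^{-c'y^{3/2}}|\RA_0|$. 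Adding the two estimates yields $|R(y;\mu)|\le Ce^{-cy}|\RA_0(y;\mu)|$, hence $\RA(y;\mu)=\RA_0(y;\mu)\bigl(1+\Boh(e^{-cy})\bigr)$ with constants uniform in $\mu$ over the stated range. The deformation from the original $\Lambda$ to $\Lambda_y$ is legitimate by Cauchy's theorem, the integrand $e^{\phi-Q}$ being analytic in the open sector and decaying super-exponentially in the ray directions where $\re(\xi^3)>0$.
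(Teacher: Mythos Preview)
Your approach is the same as the paper's: steepest descent through the real saddle $\xi_c$, bounding $e^{-Q}-1$ along a suitable contour. The paper first normalizes via $\xi=sw+\mu/(2\tau)$ with $s=\tau^{-1/2}(y+\mu^2/(4\tau))^{1/2}$, reducing the phase to $\tau s^3(-w^3/3+w)$ with saddle at $w=-1$; it then takes a short vertical segment joined to an explicit hyperbola in the $w$-plane and verifies directly that (i) $\re(-w^3/3+w)$ decreases along the hyperbola and (ii) $\re(\xi(w)^2)\ge cs^2$ uniformly on the whole contour. Point (ii) gives $|Q|=\Boh(e^{-cy})$ everywhere, so no segment/ray split is needed at the end.

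Your argument has a gap in the phase estimate on the rays. The justification ``the cubic term dominates the quadratic one, which is of lower order in $|\xi|$'' is not valid uniformly in $\mu$: in the expansion $\phi(\xi)=\phi(\xi_c)+\tfrac12\phi''(\xi_c)(\xi-\xi_c)^2-\tfrac{\tau}{3}(\xi-\xi_c)^3$ the quadratic coefficient $\phi''(\xi_c)=\sqrt{\mu^2+4\tau y}$ is itself of order $\sqrt{y}$, so the cubic only overtakes the quadratic at $|\xi-\xi_c|\sim\sqrt{y}$, which is \emph{comparable} to where your rays begin. Along a ray at angle $3\pi/4+\epsilon$ the real part of the quadratic eventually turns positive (since $\re((\xi-\xi_c)^2)\sim|\xi-\xi_c|^2\sin 2\epsilon$), and for generic choices of $\delta,\epsilon$ the combined real part can climb back above $\re\phi(\xi_c)$: for instance with $\delta=1/2$, $\epsilon=0.2$ at the boundary case $\mu=2\sqrt{\tau y}$ one finds $\re\phi>\re\phi(\xi_c)$ at a point on the ray. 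The fix is either to impose the quantitative constraint $\delta(1-1/\sqrt{2})>\tan(2\epsilon)$, under which one can check that $\re\phi$ is genuinely monotone along each ray, or --- simpler, and closer to the paper --- to note that both factors of $\re(\xi^2)=(|\re\xi|-|\im\xi|)(|\re\xi|+|\im\xi|)$ are increasing along each ray, so your segment bound $\re(\xi^2)\ge(1-\delta^2)\xi_c^2\ge cy$ in fact extends to all of $\Lambda_y$; then $|Q|=\Boh(e^{-cy})$ uniformly and the ray/segment split, together with the delicate phase control on the rays, becomes unnecessary.
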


\begin{proof}
The proof follows from the standard steepest descent analysis, accounting for the factor $Q$ in the exponent. 
Introduce the change of variables $\xi\mapsto w$ for~\eqref{eq:def_RA} and~\eqref{eq:aux_02},
 $$
 	\xi= s w +\frac{\mu }{2\tau}, 	\qquad \text{with}\quad s:= \frac{1}{\sqrt{\tau}}\left(y+\frac{\mu^2}{4\tau}\right)^{1/2}>0.
 $$
In terms of the new variable, 
\begin{equation*}
 	\tau^{1/3} e^{ -\frac{\mu^3}{12\tau^2} - \frac{\mu y}{2\tau} } \RA(y;\mu) 
	- \ai ( \tau^{2/3}s^2  ) 
	=\frac{\tau^{1/3} s}{2\pi i}  \int_{\widehat \Lambda} e^{\tau s^3 \left(-\frac{1}{3}w^3+w\right)}(e^{-Q(\xi(w))}-1)dw.
\end{equation*}
To evaluate the integral as $s\to \infty$, we need to take a contour of steepest descent. 
The relevant critical point of the function $-w^3/3+w$ is $w=-1$ and its path of steepest descent is a contour from $\infty e^{4\pi i/3}$ to $\infty e^{2\pi i/3}$, it is on the hyperbola $3x^2-y^2=3$.
However, $Q(\xi(w))$ is analytic in the sector $3\pi/4<\arg \xi(w)<5\pi/3$, which is approximately $3\pi/4<\arg w<5\pi/3$ when $w$ becomes large, 
and consequently we cannot deform the contour to the path of steepest descent. 
Nonetheless, since the functions involved are explicit, we find that there is a contour in the sector that is ``steep enough" and passes the critical point $z=-1$. 
We use the following concrete choice: 
$$
 	\widehat \Lambda=\widehat\Lambda_1\cup \widehat\Lambda_2:=\{-1+bi\mid -0.2\le b\le 0.2\}\cup \{a+bi\mid a^2-b^2=-1.45a-0.49, \, a\le -1\}.
$$ 
This contour is locally the same as the path of steepest descent near the critical point $z=-1$. 
To verify that the method of steepest descent applies, it is enough to show that (1) $\re(-\frac{1}{3}w^3+w)$ decreases as $a=\re w$ decreases along $\widehat \Lambda_2$ and (2) the factor
$e^{-Q(\xi(w))}-1$ is exponentially small uniformly on the contour. 

For (1), it is direct to check that for $w=a+bi\in\widehat\Lambda_2$,
\beqq
 	\re \big(-\frac{w^3}{3}+w \big)=\frac{2}{3}a^3+1.45a^2 +1.49a
\eeqq
and its derivative is $2a^2+2.9a+1.49>0$ for all $a$. 
 	
For (2), we use the integral representation~\eqref{eq:alternative_definition_q} of $Q$. 
For $w=a+bi\in\widehat \Lambda_2$,
 	\begin{equation*}
 	\begin{split}
 	\re(\xi(w)^2)&=\frac{1}{\tau}\left(y+\frac{\mu^2}{4\tau}\right) \left(\lambda^2 +a(2\lambda -1.45) - 0.49) \right), 
 	\quad \lambda:=\frac{\mu}{2\tau s}=\frac{\mu}{\sqrt{\mu^2+4\tau y}}.
 	\end{split}
 	\end{equation*}
 	Note that the above function depends only on $a=\re(w)$, not on $b=\im(w)$. 
 	The assumption $\mu\le  2 \sqrt{\tau y}$ implies that $\lambda\le 1/\sqrt{2}$ 	which in turn gives $2\lambda-1.45<0$. 
 	Hence, the above function takes its minimum when $a=-1$, and for this value of $a$ we have $b=\pm 0.2$ since $a+ib$ is on $\widehat\Lambda_2$. 
 	
 	On the other hand, for $w=a+ ib \in\widehat\Lambda_1$, $\re(\xi(-1+bi)^2)$ is equal to $-s^2b^2$ plus a term which is independent of $b$. Hence, the minimum of $\re(\xi(w)^2)$ over $w\in \widehat\Lambda_1$ is achieved at the end points $w=-1\pm 0.2i$. 
 	
 	Thus, we found that $\re(\xi(w)^2) \ge \re(\xi(-1 \pm 0.2i)^2)$ for any $w\in \widehat\Lambda$. Now, noting that the function $f(v):= v^2-2v+0.96$ is a decreasing function for $v\le 1$, we have 
 	$f(\lambda)\ge f(\frac1{\sqrt{2}})= 1.46-\sqrt{2}>0$, and therefore, 
  	\begin{equation*}
 	\begin{split}
 	\re(\xi(w)^2) \ge \re(\xi(-1 \pm 0.2i)^2) 
	= s^2  f(\lambda) 
	\ge \left(1.46-\sqrt{2}\right)s^2 \quad \text{for $w\in \widehat\Lambda.$} 
 	\end{split}
 	\end{equation*}	
 	Combined with $\re(\xi(w))<0$ from the definition of $\xi(w)$, we find that $\xi(w)$ is in the sector $3\pi/4<\arg(\xi(w))<5\pi/4$ 
 	for all $w\in \widehat \Lambda$, and also, 
 	\begin{equation*}
 	|e^{-\xi(w)^2/2}|=e^{-\re(\xi(w)^2)/2}\le e^{-c(y+\frac{\mu^2}{4\tau})}
 	\end{equation*}
 with $c=(1.46-\sqrt{2})/\tau$. With arguments similar to the ones in Lemma~\ref{lem:estimate:scaled_Q} we obtain
\begin{equation} \label{eq:Qerrorbd}
 	|Q(\xi(w))|= \Boh ( e^{-c(y+\frac{\mu^2}{4\tau})})
\end{equation}
 	as $y\to \infty$, uniformly for $w\in\widehat\Lambda$. Hence $e^{-Q(\xi(w))}-1=\Boh ( e^{-c(y+\frac{\mu^2}{4\tau})})$ uniformly for $w\in\widehat\Lambda$.
	
Thus, the method of steepest-descent applies. Since the integral without the term $(e^{-Q(\xi(w))}-1)$ is the same as $\ai ( \tau^{2/3}s^2  )$, the asymptotic formula is the same as that of the airy function multiplied by an error from $(e^{-Q(\xi(w))}-1)$.
From  \eqref{eq:Qerrorbd} we find that 
\beqq
	\frac{\tau^{1/3} s}{2\pi i}  \int_{\widehat \Lambda} e^{\tau s^3 \left(-\frac{1}{3}w^3+w\right)}(e^{-Q(\xi(w))}-1)dw
	= \Boh (\ai(\tau^{1/3} s^2) e^{-c(y+\frac{\mu^2}{4\tau})}).
\eeqq
This completes the proof. 
\end{proof}

By \eqref{eq:Airy_asymptotics}, the above lemma implies that
\begin{equation} \label{eq:RA_asympotitcs}
	\RA(y;\mu)= \frac{1}{2\sqrt{\pi}\tau^{1/4}}\left(y+\frac{\mu^2}{4\tau}\right)^{-1/4}
	e^{\frac{\mu^3}{12\tau^2}+\frac{\mu y}{2\tau} -\frac{2}{3 \sqrt{\tau}}\left(y+\frac{\mu^2}{4\tau}\right)^{3/2}}
	\left( 1+ \Boh(y^{-3/2}) \right)
\end{equation}
for all $y\ge y_0$ and $\mu\le 2\sqrt{\tau y}$.

\subsubsection{Truncated series expansion of $\CT_\gamma(y)$}
 
Note that $\bm \CT_\gamma(y)$ depends on $\mathcal S_-$ which, in turn, depends on the complex variable $z$ that appears in the integral in \eqref{eq:limiting_distr_AB_decomposition}. To compare $\bm \CT_\gamma(y)$ with $\RA(y; \gamma)$, we start with the following formula. 

\begin{lem} 
There are positive constants $x_0$ and $C$ such that 
\beqq 
	\left| \bm \CT_\gamma(y) - \sum_{k=-K}^\infty z^{-k}\RA\left(y;\gamma-k\right) \right| 
	\le C \left( \frac{|z|}{0.9} \right)^K 
\eeqq
for all $y\geq x_0$, $0<|z|\le 0.8$, 
and $K\geq 1$. 
\end{lem}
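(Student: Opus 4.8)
The plan is to realize the lattice sum $\CT_\gamma(y)$ over $\mathcal S_-$ as a contour integral via the residue theorem, and then to expand the resulting integrand as a geometric series whose finite partial sums produce exactly the terms $z^{-k}\RA(y;\gamma-k)$. The only genuinely delicate point will be a uniform estimate for $Q$ along a cleverly chosen fixed contour.

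Write $\Psi(\xi):=-\tfrac{\tau}{3}\xi^{3}+\tfrac{\gamma}{2}\xi^{2}+y\xi-Q(\xi)$, so that by \eqref{eq:T_to_CA} one has $\CT_\gamma(y)=\sum_{\xi_0\in\mathcal S_-}e^{\Psi(\xi_0)}/(-\xi_0)$. The function $g(\xi):=z\,e^{\Psi(\xi)}/(e^{-\xi^{2}/2}-z)$ is meromorphic on the sector $3\pi/4<\arg\xi<5\pi/4$ (where $Q$ is analytic), its poles there are simple and located exactly at $\mathcal S_-$, and $\operatorname{Res}_{\xi_0}g=e^{\Psi(\xi_0)}/(-\xi_0)$; hence $\CT_\gamma(y)=\sum_{\xi_0\in\mathcal S_-}\operatorname{Res}_{\xi_0}g$, the series converging absolutely thanks to the super-exponential decay coming from the cubic term in $\Psi$. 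Since $\mathcal S_-$ lies on the hyperbola $\{\re(\xi^{2})=-2\log|z|,\ \re\xi<0\}$, I would take two hyperbolic contours $\Gamma_{\mathrm{in}}=\{\re(\xi^{2})=c_1,\ \re\xi<0\}$ with $c_1>-2\log|z|$ and the \emph{fixed} contour $\Gamma_{\mathrm{out}}=\{\re(\xi^{2})=-2\log(0.9),\ \re\xi<0\}$. Because $|z|\le 0.8<0.9$, the set $\mathcal S_-$ lies strictly inside the strip bounded by these curves; on that closed strip $|e^{-\xi^{2}/2}|\le 0.9<1$, so $Q$ remains analytic there; and each curve runs from $\infty e^{5\pi i/4}$ to $\infty e^{3\pi i/4}$ and is therefore an admissible contour in the definition \eqref{eq:def_RA} of $\RA$. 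Using the super-exponential decay of $e^{\Psi}$ along the two hyperbolas to discard the arcs at infinity, the residue theorem gives
$$\CT_\gamma(y)=\frac{1}{2\pi i}\int_{\Gamma_{\mathrm{out}}}g(\xi)\,d\xi-\frac{1}{2\pi i}\int_{\Gamma_{\mathrm{in}}}g(\xi)\,d\xi,$$
both contours oriented from $\infty e^{5\pi i/4}$ to $\infty e^{3\pi i/4}$.

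On $\Gamma_{\mathrm{in}}$ one has $|e^{-\xi^{2}/2}/z|<1$, so $g(\xi)=-\sum_{k\ge0}z^{-k}e^{-k\xi^{2}/2}e^{\Psi(\xi)}$ with the series uniformly convergent; interchanging sum and integral (justified by dominated convergence, the $k$-th term being bounded by a fixed geometric factor times an integrable function) yields $-\tfrac{1}{2\pi i}\int_{\Gamma_{\mathrm{in}}}g=\sum_{k\ge0}z^{-k}\RA(y;\gamma-k)$. On $\Gamma_{\mathrm{out}}$ the quantity $\beta:=z\,e^{\xi^{2}/2}$ satisfies $|\beta|=|z|/0.9\le 8/9$ and is \emph{constant} along the contour; writing $g(\xi)=e^{\Psi(\xi)}\sum_{k\ge1}\beta^{k}=e^{\Psi(\xi)}\big(\sum_{k=1}^{K}\beta^{k}+\tfrac{\beta^{K+1}}{1-\beta}\big)$ produces $\tfrac{1}{2\pi i}\int_{\Gamma_{\mathrm{out}}}g=\sum_{k=1}^{K}z^{k}\RA(y;\gamma+k)+R_K$, where $R_K:=\tfrac{1}{2\pi i}\int_{\Gamma_{\mathrm{out}}}\tfrac{\beta^{K+1}}{1-\beta}\,e^{\Psi(\xi)}\,d\xi$. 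Re-indexing, $\sum_{k\ge0}z^{-k}\RA(y;\gamma-k)+\sum_{k=1}^{K}z^{k}\RA(y;\gamma+k)=\sum_{m=-K}^{\infty}z^{-m}\RA(y;\gamma-m)$, so the lemma is reduced to the bound $|R_K|\le C(|z|/0.9)^{K}$ (and, incidentally, the full identity $\CT_\gamma(y)=\sum_{m\in\Z}z^{-m}\RA(y;\gamma-m)$ falls out as $K\to\infty$).

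For the remainder, on $\Gamma_{\mathrm{out}}$ we have $|\beta|=|z|/0.9$ and $|1-\beta|\ge 1-8/9=1/9$, so $|\beta^{K+1}/(1-\beta)|\le 9(|z|/0.9)^{K+1}\le 8(|z|/0.9)^{K}$, and it remains to bound $\int_{\Gamma_{\mathrm{out}}}|e^{\Psi(\xi)}|\,|d\xi|$ by a constant $C_0$ uniform in $y\ge x_0$ (for any fixed $x_0>0$) and $\gamma\in[-1/2,1/2]$. I expect this to be the main obstacle. Along $\Gamma_{\mathrm{out}}$ one has $\re(\tfrac{\gamma}{2}\xi^{2})=-\gamma\log(0.9)$, a bounded constant, $\re(y\xi)=y\,\re\xi\le x_0\,\re\xi\le0$, and $\re(-\tfrac{\tau}{3}\xi^{3})\to-\infty$ cubically in $|\xi|$; the only subtle term is $Q(\xi)$. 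The difficulty is that the defining integral of $Q$ cannot be taken along $\Gamma_{\mathrm{out}}$ itself — it would diverge at $-\infty$, since $|e^{-s^{2}/2}|=0.9\not\to0$ there — so instead one evaluates $Q(\xi)$ along a path that comes in from $-\infty$ along the negative real axis, where $e^{-s^{2}/2}\to0$ and the integral converges, and then joins $\xi$ along $\Gamma_{\mathrm{out}}$. Since $|\Li_{1/2}(e^{-s^{2}/2})|$ is bounded on the circle $|w|=0.9$, this gives $|Q(\xi)|\le C+C'|\xi|$ on $\Gamma_{\mathrm{out}}$, a rate that is swamped by the cubic term. Consequently $\int_{\Gamma_{\mathrm{out}}}|e^{\Psi}|\,|d\xi|\le C_0$ uniformly in the stated ranges, and $|R_K|\le\tfrac{9C_0}{2\pi}(|z|/0.9)^{K}$, which is the desired estimate. (The remaining care is purely bookkeeping of orientations in the residue theorem and of the two interchanges of sum and integral.)
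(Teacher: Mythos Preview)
Your argument is correct and follows essentially the same route as the paper: both express $\CT_\gamma$ via the residue theorem as the difference of integrals over two hyperbolic contours sandwiching $\mathcal S_-$, expand geometrically on each, and bound the remainder on the fixed contour $\{|e^{-\xi^{2}/2}|=0.9\}$ using $\re(y\xi)\le0$ and cubic decay. Your labels $\Gamma_{\rm in}/\Gamma_{\rm out}$ are swapped relative to the paper's $\Lambda_{\rm out}/\Lambda_{\rm inn}$, and you supply a bit more detail on the $Q$ bound than the paper does, but otherwise the proofs coincide.
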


The values $0.8$ and $0.9$ are chosen solely for convenience.
The proof will show that the condition $|z|\le 0.8$ can be changed to $|z|\le r$ for any fixed $r\in(0,1)$ with the bound changed to $\left(\frac{|z|}{R}\right)^K$ for any $R\in (r, 1)$. 

\begin{proof} 
Using the fact that $\mathcal S_-$ is a set of zeros of the equation $e^{-\xi^2/2}-z=0$, we can write  
(cf. the proof of Lemma \ref{prop:convergence_kernels_tw_scaling}) 
	\begin{equation*} 
		\CT_\gamma(y) = \left(\int_{\Lambda_\out} -\int_{\Lambda_\inn}\right)  \frac{z}{z-e^{-\xi^2/2}} e^{-\frac{\tau}{3}\xi^3 +\frac{\gamma}{2}\xi^2 +y\xi -Q(\xi)} \frac{d\xi}{2\pi i}
	\end{equation*}
	where we take $\Lambda_\out:=\{\xi: |e^{-\xi^2/2}|=0.9|z|, \, \re(\xi)<0\}$ and $\Lambda_\inn:=\{\xi: |e^{-\xi^2/2}|=0.9, \,  \re\xi <0\}$. Both contours lie in the sector $3\pi/4<\arg \xi <5\pi/4$ and they extend from $\infty e^{5\pi i/4}$ to $\infty e^{3\pi i/4}$. 
	The points of $\mathcal S_-$ lie between these two contours since $|e^{-\xi^2/2}|=|z|$ for all $\xi\in\mathcal S_-$ and $0.9|z|<|z|<0.9$.

Since $|z^{-1} e^{-\xi^2/2}|=0.9<1$ for $\xi\in \Lambda_\out$, the geometric series implies that
\begin{align*}
	\frac{1}{2\pi i}\int_{\Lambda_\out}\frac{z}{z-e^{-\xi^2/2}}e^{-\frac{\tau}{3}\xi^3 +\frac{\gamma}{2}\xi^2 +y\xi-Q(\xi)}d\xi 
&=\sum_{k=0}^\infty z^{-k}\int_{\Lambda_\out}e^{-\frac{\tau}{3}\xi^3 +\frac{\gamma-k}{2}\xi^2 +y\xi -Q(\xi)}\frac{d\xi}{2\pi i}.
\end{align*}
The integral is $\RA\left(y;\gamma-k\right)$. 
On the other hand, for $\xi \in \Lambda_\inn$, we write  
\begin{equation*}
	\frac{z}{z-e^{-\xi^2/2}} 
	=-\sum_{k=1}^{K}z^{k}e^{k\xi^2 /2}+ \left(\frac{z}{e^{-\xi^2/2}}\right)^{K} \frac{z}{z-e^{-\xi^2/2}} .
\end{equation*}
Since $|e^{-\xi^2/2}|=0.9$ for $\xi \in \Lambda_\inn$ and $|z|\le 0.8$, 
\begin{equation*}
\begin{split}
	&\left| \frac{1}{2\pi i} \int_{\Lambda_\inn} \left(\frac{z}{e^{-\xi^2/2}}\right)^{K} \frac{z}{z-e^{-\xi^2/2}} e^{-\frac{\tau}{3}\xi^3 +\frac{\gamma}{2}\xi^2 +y\xi -Q(\xi)}d\xi \right| 
	 \le \frac{4}{\pi} \left( \frac{|z|}{0.9} \right)^{K} (0.9)^{-\gamma}  \int_{\Lambda_\inn} e^{-\re (\frac{\tau}{3}\xi^3 +Q(\xi))} |d \xi| .
\end{split}
\end{equation*}
where we use the fact that $\re (\xi)<0$ 
on $\Lambda_\inn$. 
The last integral does not depend on $z$, $y$, or $K$. Hence, we obtain the result. 
\end{proof}

We use the above result when $|z|=e^{-\frac{x}{2\tau}}$ with $x\to \infty$. 
We will also choose $K=K(y)$ depending on $y$ in a specific way; see \eqref{eq:Kydef}.

\begin{cor} \label{lem:CAseriesf}
There are positive constants $x_0$ and $C$ such that 
\beq \label{eq:kernel_A_series_expansion}
	\left| \bm \CT_\gamma(y) - \sum_{k=-K}^\infty z^{-k}\RA\left(y;\gamma-k\right) \right| 
	\le C e^{-\frac{9K(y)x}{19\tau}}
\eeq
for all $0< |z|\le e^{-\frac{x}{2\tau}}$, $y\geq x\ge x_0$, and any choice of $K=K(y)\geq 1$. 
\end{cor}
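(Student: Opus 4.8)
The plan is to obtain Corollary~\ref{lem:CAseriesf} as an immediate specialization of the preceding lemma: once $|z|$ is restricted to the regime $|z|\le e^{-x/(2\tau)}$, the bound $C(|z|/0.9)^K$ of that lemma converts into the desired exponential bound $Ce^{-9Kx/(19\tau)}$ by absorbing the geometric factor $0.9^{-K}$ into the exponential smallness of $|z|^K$.

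Concretely, I would first take $x_0$ large enough (depending on the fixed $\tau$) that it is at least the constant $x_0$ of the preceding lemma and in addition satisfies (i) $e^{-x_0/(2\tau)}\le 0.8$ and (ii) $\log\tfrac{10}{9}\le \tfrac{x_0}{38\tau}$; each of these only forces $x_0$ to be a $\tau$-dependent constant, which is allowed. Then, given $y\ge x\ge x_0$ and $0<|z|\le e^{-x/(2\tau)}$, condition (i) ensures $|z|\le e^{-x_0/(2\tau)}\le 0.8$, so the preceding lemma applies verbatim and gives, with the same constant $C$,
\[
	\left| \bm \CT_\gamma(y) - \sum_{k=-K}^\infty z^{-k}\RA\left(y;\gamma-k\right) \right| \le C \left( \frac{|z|}{0.9} \right)^{K}
\]
for every choice of $K=K(y)\ge 1$.

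What remains is the elementary estimate $(|z|/0.9)^K\le e^{-9Kx/(19\tau)}$. Since $0<|z|\le e^{-x/(2\tau)}$ and $t\mapsto t^K$ is nondecreasing on $[0,\infty)$ for $K\ge1$, we have $(|z|/0.9)^K\le \bigl(e^{-x/(2\tau)}/0.9\bigr)^K=\exp\!\bigl(K(-\tfrac{x}{2\tau}+\log\tfrac{10}{9})\bigr)$; condition (ii) and $x\ge x_0$ then give $-\tfrac{x}{2\tau}+\log\tfrac{10}{9}\le -\tfrac{x}{2\tau}+\tfrac{x}{38\tau}=-\tfrac{9x}{19\tau}$, so the right side is at most $e^{-9Kx/(19\tau)}$, and combining with the displayed inequality finishes the proof. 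I do not expect any genuine obstacle: all of the analytic content — writing $\CT_\gamma$ as a difference of two contour integrals over level curves of $|e^{-\xi^2/2}|$ and expanding the resulting Cauchy kernels in geometric series — is already contained in the preceding lemma, so the only things needing care are the $\tau$-dependent choice of $x_0$ making (i) and (ii) hold at once, and the (harmless) remark that the exponent constant $9/19=\tfrac12-\tfrac1{38}$ is merely a convenient rational value; any fixed number strictly below $\tfrac12$ would do, at the cost of enlarging $x_0$.
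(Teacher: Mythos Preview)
Your proposal is correct and is exactly the intended argument: the paper treats the corollary as an immediate specialization of the preceding lemma (it does not even write a separate proof, simply remarking ``We use the above result when $|z|=e^{-\frac{x}{2\tau}}$''), and your explicit choice of $x_0$ and the inequality $-\tfrac{x}{2\tau}+\log\tfrac{10}{9}\le -\tfrac{9x}{19\tau}$ for $x\ge x_0$ are precisely the details one would fill in.
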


\subsubsection{$\CT_\gamma(y)$ when $y$ is close to $x$}

We show that the main contribution to the sum in Corollary~\ref{lem:CAseriesf} comes from only one or two terms. 
From~\eqref{eq:RA_asympotitcs}, for $|z|=e^{-\frac{x}{2\tau}}$, 
\begin{equation}
\label{eq:asymptotics_RA}
\begin{split}
	&\left|z^{-k} \RA(y;\mu-k)\right|
	 = \frac{e^{\frac{\mu x}{2\tau} +\frac{2y^{3/2}}{3 \sqrt{\tau}}\fe\left(\frac{\mu-k}{2\sqrt{\tau y}};1-\frac{x}{y}\right)}}{\sqrt{2\pi}(4\tau y+(\mu-k)^2))^{1/4}} 
	\left(1+\Boh(y^{-3/2}) \right)
\end{split}
\end{equation}
as $y\to \infty$, uniformly for $\mu$ and $k$ satisfying $\mu-k \le  2\sqrt{\tau y}$, 
where 
\begin{equation} \label{eq:fffns}
	\fe(u;\lambda):=u^3 +\frac{3}{2}\lambda u -(1+u^2)^{3/2}.    
\end{equation}
This function $f(u; \lambda)$ is  strictly concave for every $\lambda$. It is strictly increasing when $\lambda\ge 1$, and when $\lambda<1$ it has the unique maximum
\begin{equation}
\label{eq:min_fe}
	\max_{u\in\R}\fe(u;\lambda)=\fe(u_c;\lambda)=-\frac{3}{2}(1-\lambda)^{1/2}+\frac{1}{2}(1-\lambda)^{3/2}, 
	\qquad u_c=\frac{\lambda}{2\sqrt{1-\lambda}} .
\end{equation}

We are interested in the case when $x\le y\le x+ \Boh(x^{1/2})$ and $x\to \infty$. 
In this case,  $\lambda= 1- \frac{x}{y} \to 0$ as $x\to \infty$, and hence the maximizer $u_c$ of $f(u;1-\frac{x}{y})$ is close  to $0$. 
We may expect that the main contribution to the sum in Corollary~\ref{lem:CAseriesf} comes from the term corresponding to $k$ at which the function $\fe (\frac{\mu-k}{2\sqrt{\tau y}};1-\frac{x}{y} )$ becomes the largest. 
Since $f$ is concave and $k$ is a discrete index, the maximum may occur either at one value of $k$ or two values of $k$. 
We will show that the main contribution to the sum comes from the term $k=0$ when $|\gamma|<1/2$, from the terms $k=1, 0$ when $\gamma=1/2$, and from the terms $k=0, -1$ when $\gamma=-1/2$. 
For this purpose, we first analyze the function $f$. 
We use the next result when $s=\Boh(x)\to \infty$. 

\begin{lem}\label{Claim:asymptotics_RA}
The function $f$ defined in \eqref{eq:fffns} satisfies the following inequalities. 
	\begin{enumerate}[(a)]
		\item For all $\alpha<1/2$, $s>0$, and  $\lambda\ge 0$, 
		\beqq
			\fe \left(\frac{\alpha}{s}; \lambda \right)-\fe \left(\frac{\alpha-1}{s}; \lambda \right)\ge \frac{3(1-2\alpha)}{2s^2} .
		\eeqq
		\item For all $\alpha>-1/2$, $s\ge \frac{3(1+2\alpha)^2 +1}{5(1+2\alpha)}$, and $0\le \lambda\le \frac{1+2\alpha}{12s}$, 
		\beqq
			\fe \left(\frac{\alpha}{s}; \lambda \right)-\fe \left(\frac{\alpha+1}{s}; \lambda \right) \ge \frac{1+2\alpha}{8s^2} .
		\eeqq
	\end{enumerate}	 
\end{lem}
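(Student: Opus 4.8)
The plan is to reduce both inequalities to elementary one-variable estimates on $g(u):=u^{3}-(1+u^{2})^{3/2}$, using that $\fe(u;\lambda)=g(u)+\tfrac32\lambda u$. Two facts will do all the work. First, \emph{$h(u):=g(u)+\tfrac32 u^{2}=u^{3}+\tfrac32 u^{2}-(1+u^{2})^{3/2}$ is nondecreasing on $\R$}: here $h'(u)=3u\bigl(u+1-\sqrt{1+u^{2}}\bigr)$, and the factor $u+1-\sqrt{1+u^{2}}$ has the same sign as $u$ (for $u\ge0$ because $(u+1)^{2}\ge1+u^{2}$; for $u<0$ because either $u+1\le0$ or, when $-1<u<0$, $(u+1)^{2}\le1+u^{2}$), so $h'\ge0$. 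Second, \emph{for $0\le x_{1}<x_{2}$ one has $(1+x_{2})^{3/2}-(1+x_{1})^{3/2}\ge\tfrac32(x_{2}-x_{1})$}, which is immediate from the mean value theorem since $\tfrac{d}{dx}(1+x)^{3/2}=\tfrac32(1+x)^{1/2}\ge\tfrac32$.

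For part (a) I would set $p:=\alpha/s$ and $q:=(\alpha-1)/s$, so $p>q$, $p-q=1/s$, and $p^{2}-q^{2}=(p-q)(p+q)=\tfrac{2\alpha-1}{s^{2}}$, whence $\tfrac{3(1-2\alpha)}{2s^{2}}=-\tfrac32(p^{2}-q^{2})$. Since $g(p)+\tfrac32 p^{2}=h(p)$,
\[
	\fe(p;\lambda)-\fe(q;\lambda)-\frac{3(1-2\alpha)}{2s^{2}}=g(p)-g(q)+\frac{3}{2}(p^{2}-q^{2})+\frac{3\lambda}{2s}=\bigl(h(p)-h(q)\bigr)+\frac{3\lambda}{2s}.
\]
As $p>q$, monotonicity of $h$ gives $h(p)-h(q)\ge0$, and $\lambda\ge0$ gives $\tfrac{3\lambda}{2s}\ge0$; this proves (a). (The hypothesis $\alpha<1/2$ is in fact not needed here.)

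For part (b) I would set $p:=\alpha/s$, $q:=(\alpha+1)/s$, and write $\beta:=1+2\alpha>0$, $\sigma:=p+q=\beta/s>0$; then $q>p$, $q-p=1/s$, and $q^{2}-p^{2}=(q-p)(q+p)=\sigma/s>0$. Using $\fe(p;\lambda)-\fe(q;\lambda)=g(p)-g(q)-\tfrac{3\lambda}{2s}$, the exact identity $q^{3}-p^{3}=(q-p)(q^{2}+qp+p^{2})=\tfrac1s\bigl(\tfrac34\sigma^{2}+\tfrac1{4s^{2}}\bigr)$ (via $q^{2}+qp+p^{2}=\tfrac34(p+q)^{2}+\tfrac14(q-p)^{2}$), and the second fact above applied with $x_{1}=p^{2}<x_{2}=q^{2}$, I get
\[
	g(p)-g(q)=(p^{3}-q^{3})+\bigl((1+q^{2})^{3/2}-(1+p^{2})^{3/2}\bigr)\ \ge\ -\frac1s\Bigl(\frac34\sigma^{2}+\frac1{4s^{2}}\Bigr)+\frac32(q^{2}-p^{2})=\frac1s\Bigl(\frac{3\sigma}{2}-\frac34\sigma^{2}-\frac1{4s^{2}}\Bigr).
\]
Since $\lambda\le\tfrac{1+2\alpha}{12s}=\tfrac{\sigma}{12}$, one has $\tfrac{3\lambda}{2s}\le\tfrac{\sigma}{8s}$, so
\[
	\fe(p;\lambda)-\fe(q;\lambda)\ \ge\ \frac1s\Bigl(\frac{3\sigma}{2}-\frac{\sigma}{8}-\frac34\sigma^{2}-\frac1{4s^{2}}\Bigr)=\frac1s\Bigl(\frac{11\sigma}{8}-\frac34\sigma^{2}-\frac1{4s^{2}}\Bigr).
\]
Finally, the hypothesis $s\ge\tfrac{3\beta^{2}+1}{5\beta}$ is equivalent to $5\beta s\ge3\beta^{2}+1$, hence (dividing by $s^{2}$ and using $\sigma=\beta/s$) to $5\sigma\ge3\sigma^{2}+\tfrac1{s^{2}}$, i.e. $\tfrac{11\sigma}{8}-\tfrac34\sigma^{2}-\tfrac1{4s^{2}}\ge\tfrac{\sigma}{8}$; therefore $\fe(p;\lambda)-\fe(q;\lambda)\ge\tfrac{\sigma}{8s}=\tfrac{\beta}{8s^{2}}=\tfrac{1+2\alpha}{8s^{2}}$, which is (b).

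The bulk of the argument is the two elementary facts, which are routine sign checks. The only mildly delicate point is the bookkeeping in (b): recognizing that the right way to lower-bound $g(p)-g(q)$ is to keep the cubic part $p^{3}-q^{3}$ \emph{exactly} while only crudely estimating the convex term $(1+u^{2})^{3/2}$ from below, and that the numerical constants in the hypotheses on $s$ and on $\lambda$ are calibrated so that the resulting polynomial inequality in $\sigma$ coincides exactly with the stated bound on $s$. I expect that to be the main obstacle; everything else is mechanical.
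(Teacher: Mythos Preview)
Your proof is correct. The overall strategy---keep the cubic part exactly and control the term $(1+u^{2})^{3/2}$ via the tangent-line bound $(1+a)^{3/2}\ge 1+\tfrac{3a}{2}$---is the same as the paper's, and for part~(b) the two arguments are essentially identical up to bookkeeping: the paper first uses the hypothesis on $s$ to bound $\tfrac{\alpha^{3}-(\alpha+1)^{3}}{s^{3}}\ge -\tfrac{5(1+2\alpha)}{4s^{2}}$ and then combines, whereas you carry everything through in the variable $\sigma=\beta/s$ and recognize that the resulting inequality $5\sigma\ge 3\sigma^{2}+s^{-2}$ is literally the $s$-hypothesis. Both routes land on the same final estimate.

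Where your argument does differ is in part~(a). The paper simply discards the increasing cubic and linear parts of $\fe$ and then applies the tangent bound to the remaining $(1+u^{2})^{3/2}$ difference; this step needs $(\alpha-1)^{2}>\alpha^{2}$, i.e.\ $\alpha<1/2$, for the right sign. Your observation that $h(u)=u^{3}+\tfrac{3}{2}u^{2}-(1+u^{2})^{3/2}$ is nondecreasing is a sharper repackaging: it absorbs the cubic term rather than discarding it, and as you note it removes the need for the hypothesis $\alpha<1/2$ (the stated inequality then holds for all $\alpha$, though its content is only nontrivial when the right-hand side is positive). This is a modest but genuine improvement in part~(a); the rest is equivalent to the paper's proof.
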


\begin{proof}
(a) Since $u^3$ and $\frac32\lambda u$ are increasing functions of $u$, we have 
\beqq \begin{split}
	\fe \left(\frac{\alpha}{s}; \lambda \right)-\fe \left(\frac{\alpha-1}{s}; \lambda \right)
	&\ge \left(1+ \frac{(\alpha-1)^2}{s^2} \right)^{3/2} - \left(1+ \frac{\alpha^2}{s^2} \right)^{3/2}. 
\end{split} \eeqq
Using the inequality $(1+a)^{3/2}\ge 1+\frac{3a}{2}$, which holds for all $a\ge 0$, the above is equal to 
\beqq \begin{split}
	\left(1+ \frac{\alpha^2}{s^2}\right)^{3/2} \left(  \left(1+ \frac{1-2\alpha}{\alpha^2+s^2}\right)^{3/2} -1 \right)
	\ge \left(1+ \frac{\alpha^2}{s^2}\right) \frac{3(1-2\alpha)}{2(\alpha^2+s^2)}
	= \frac{3(1-2\alpha)}{2s^2}. 
\end{split} \eeqq

(b) From the formula of $f$, 
\beqq \begin{split}
	\fe \left(\frac{\alpha}{s}; \lambda \right)-\fe \left(\frac{\alpha+1}{s}; \lambda \right)
	= \frac{\alpha^3-(\alpha+1)^3}{s^3} - \frac{3\lambda}{2s} 
	+\big(1+ \frac{\alpha^2}{s^2}\big)^{3/2} \left(  \left(1+ \frac{1+2\alpha}{\alpha^2+s^2}\right)^{3/2} -1 \right) . 
\end{split} \eeqq
From the assumption on $s$, 
$$
	\frac{\alpha^3-(\alpha+1)^3}{s^3}= - \frac{3(1+2\alpha)^2 + 1}{4s^3} \ge -\frac{5(1+2\alpha)}{4s^2}. 
$$
Using the inequality $(1+a)^{3/2}\ge 1+\frac{3a}{2}$ for $a\ge 0$ again, 
\beqq
\begin{split}
	\fe \left(\frac{\alpha}{s}; \lambda \right)-\fe \left(\frac{\alpha+1}{s}; \lambda \right)
	\ge -\frac{5(1+2\alpha)}{4s^2} - \frac{3\lambda}{2s} + \frac{3(1+2\alpha)}{2s^2} 
\end{split}
\eeqq
From the condition $\lambda\le \frac{1+2\alpha}{12s}$ we conclude the proof.
\end{proof}

\begin{lem} \label{lem:Agk}
Fix $\tau>0$. 
\begin{enumerate}[(a)]
	\item For each fixed $\mu<1/2$, there exists $x_0>0$ such that 
	\beqq
		|z^{-k-1}\RA(y;\mu-k-1)| \le 2 e^{- \frac{1-2\mu+2k}{4\tau^{3/2}} y^{1/2}} | z^{-k} \RA(y;\mu-k)|
	\eeqq
	 for all $|z|=e^{-\frac{x}{2\tau}}$, $y\ge x\ge x_0$, and $k\ge 0$. 
	\item For each fixed $\mu> -1/2$, there exists $x_0>0$ such that 
	\beqq
		|z^{k+1}\RA(y;\mu+k+1)| \le 2 e^{- \frac{1+2\mu+2k}{48\tau^{3/2}} y^{1/2}} | z^{k} \RA(y;\mu+k)|
	\eeqq
	 for all $|z|=e^{-\frac{x}{2\tau}}$ and $x\ge x_0$, and for all $y$ and $k$ satisfying $x\le y\le x+ \frac{1+2\mu+2k}{24\sqrt{\tau}} \sqrt{x}$ and $0\le k\le K(y) :=\lfloor \frac{19}{12} \sqrt{\tau y} \rfloor$. 
\end{enumerate}
\end{lem}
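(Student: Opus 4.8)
\emph{Proof plan.} Both inequalities will follow by substituting the precise asymptotics \eqref{eq:asymptotics_RA} for $z^{\mp k}\RA(y;\mu\mp k)$ and reducing the claimed estimates to one-step differences of the function $\fe$, which are in turn supplied by Lemma~\ref{Claim:asymptotics_RA}. Throughout I set $\lambda:=1-\tfrac{x}{y}$, so that $0\le\lambda<1$ because $y\ge x>0$ in both parts.

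For part (a) I would form the quotient
\[
	\frac{|z^{-k-1}\RA(y;\mu-k-1)|}{|z^{-k}\RA(y;\mu-k)|}
	=\Bigl(\tfrac{4\tau y+(\mu-k)^2}{4\tau y+(\mu-k-1)^2}\Bigr)^{1/4}
	\exp\!\Bigl(\tfrac{2y^{3/2}}{3\sqrt\tau}\bigl[\fe(\tfrac{\mu-k-1}{2\sqrt{\tau y}};\lambda)-\fe(\tfrac{\mu-k}{2\sqrt{\tau y}};\lambda)\bigr]\Bigr)\bigl(1+\Boh(y^{-3/2})\bigr),
\]
which is legitimate because $\mu<1/2$ and $k\ge0$ force $\mu-k\le\mu<2\sqrt{\tau y}$, so the error term in \eqref{eq:asymptotics_RA} is uniform over all such $k$. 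Since $(\mu-k-1)^2-(\mu-k)^2=1-2\mu+2k>0$ the algebraic prefactor is $\le 1$, and Lemma~\ref{Claim:asymptotics_RA}(a) applied with $\alpha=\mu-k<1/2$, $s=2\sqrt{\tau y}$ and $\lambda\ge0$ gives $\fe(\tfrac{\mu-k}{2\sqrt{\tau y}};\lambda)-\fe(\tfrac{\mu-k-1}{2\sqrt{\tau y}};\lambda)\ge\tfrac{3(1-2\mu+2k)}{8\tau y}$; hence the exponential factor is at most $\exp\bigl(-\tfrac{1-2\mu+2k}{4\tau^{3/2}}y^{1/2}\bigr)$. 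Taking $x_0$ large enough that $1+\Boh(y^{-3/2})\le 2$ for $y\ge x_0$ then closes part (a).

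For part (b) the analogous computation (replacing $k$ by $-k$ in \eqref{eq:asymptotics_RA}) produces
\[
	\frac{|z^{k+1}\RA(y;\mu+k+1)|}{|z^{k}\RA(y;\mu+k)|}
	=\Bigl(\tfrac{4\tau y+(\mu+k)^2}{4\tau y+(\mu+k+1)^2}\Bigr)^{1/4}
	\exp\!\Bigl(\tfrac{2y^{3/2}}{3\sqrt\tau}\bigl[\fe(\tfrac{\mu+k+1}{2\sqrt{\tau y}};\lambda)-\fe(\tfrac{\mu+k}{2\sqrt{\tau y}};\lambda)\bigr]\Bigr)\bigl(1+\Boh(y^{-3/2})\bigr),
\]
where the constraint $0\le k\le K(y)=\lfloor\tfrac{19}{12}\sqrt{\tau y}\rfloor$ guarantees $\mu+k+1\le 2\sqrt{\tau y}$ once $x_0$ is large (so \eqref{eq:asymptotics_RA} applies uniformly), and $(\mu+k+1)^2-(\mu+k)^2=1+2\mu+2k>0$ again makes the prefactor $\le 1$. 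The substantive step is to check the two hypotheses of Lemma~\ref{Claim:asymptotics_RA}(b) with $\alpha=\mu+k$ and $s=2\sqrt{\tau y}$: the condition $0\le\lambda\le\tfrac{1+2\alpha}{12s}=\tfrac{1+2\mu+2k}{24\sqrt{\tau y}}$ is precisely the window $x\le y\le x+\tfrac{1+2\mu+2k}{24\sqrt\tau}\sqrt x$ rewritten using $\sqrt x\le\sqrt y$, while $s\ge\tfrac{3(1+2\alpha)^2+1}{5(1+2\alpha)}$ holds for $x_0$ large, since $1+2\mu+2k\le 1+2\mu+\tfrac{19}{6}\sqrt{\tau y}$ forces the right-hand side below $\tfrac{19}{10}\sqrt{\tau y}+\Boh(1)<2\sqrt{\tau y}=s$; here the slack $\tfrac{19}{10}<2$ is exactly what the constant $\tfrac{19}{12}$ in $K(y)$ is chosen to provide. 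Lemma~\ref{Claim:asymptotics_RA}(b) then yields $\fe(\tfrac{\mu+k}{2\sqrt{\tau y}};\lambda)-\fe(\tfrac{\mu+k+1}{2\sqrt{\tau y}};\lambda)\ge\tfrac{1+2\mu+2k}{32\tau y}$, so the exponential factor is at most $\exp\bigl(-\tfrac{1+2\mu+2k}{48\tau^{3/2}}y^{1/2}\bigr)$, and absorbing $1+\Boh(y^{-3/2})$ into the constant $2$ for $x_0$ large completes the proof.

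The step I expect to require the most care is the uniformity bookkeeping in part (b): one must be certain that a \emph{single} threshold $x_0$ works simultaneously for all admissible pairs $(y,k)$, which comes down to matching the constants $\tfrac{19}{12}$ (in $K(y)$) and $24$ (in the $y$-window) against the constants $5$ and $12$ appearing in Lemma~\ref{Claim:asymptotics_RA}(b). These are elementary inequalities, but they must be tracked explicitly to confirm that every estimate closes with a strict margin; everything else is a routine substitution into \eqref{eq:asymptotics_RA}.
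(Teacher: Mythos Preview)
Your proposal is correct and follows essentially the same approach as the paper: form the ratio via \eqref{eq:asymptotics_RA}, bound the algebraic prefactor by $1$, reduce to a one-step difference of $\fe$, and invoke Lemma~\ref{Claim:asymptotics_RA} with $\alpha=\mu\mp k$, $s=2\sqrt{\tau y}$, $\lambda=1-x/y$. Your verification of the hypotheses of Lemma~\ref{Claim:asymptotics_RA}(b) --- including the observation that the constant $\tfrac{19}{12}$ in $K(y)$ is chosen so that $\tfrac{3}{5}\cdot\tfrac{19}{6}=\tfrac{19}{10}<2$ closes the $s$-inequality --- is exactly the bookkeeping the paper carries out.
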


\begin{proof}
(a) The formula \eqref{eq:asymptotics_RA} applies since $\mu-k  \le \mu\le 2\sqrt{\tau x}\le  2\sqrt{\tau y}$ holds if $x$ is large enough. 
Note that $(\mu-k)^2\le (\mu-k-1)^2$ for all $k\ge 0$ since $\mu<1/2$. 
Also noting that the error term in \eqref{eq:asymptotics_RA} is uniform in $y$ and $k$,  we find that for all large enough $y\ge x$, 
\beqq
	\frac{|z^{-k-1}\RA(y;\mu-k-1)|}{| z^{-k} \RA(y;\mu-k)|}
	\le 2 \exp \left\{ -\frac{2y^{3/2}}{3\tau^{1/2}} \left( \fe\left(\frac{\mu-k}{2\sqrt{\tau y}};1-\frac{x}{y}\right) - \fe\left(\frac{\mu-k-1}{2\sqrt{\tau y}};1-\frac{x}{y}\right) \right) \right\} .
\eeqq
Applying Lemma \ref{Claim:asymptotics_RA} (a) with $\alpha= \mu-k$, $s=2\sqrt{\tau y}$, and $\lambda= 1- \frac{x}{y}$, we obtain the result.

(b) Since $\mu+k +1 \le \mu+1 +K(y)\le  2\sqrt{\tau y}$ if $y\ge x$ and $x$ is large enough, the formula \eqref{eq:asymptotics_RA} applies. By the same argument as the part (a), the result follows from Lemma \ref{Claim:asymptotics_RA} (b) with $\alpha= \mu+k$, $s=2\sqrt{\tau y}$, and $\lambda= 1- \frac{x}{y}$ if these values satisfy the conditions of the lemma.  
Clearly, $\alpha>-1/2$ since $\mu>-1/2$ and $k\ge 0$. 
The condition  $0\le \lambda\le \frac{1+2\alpha}{12s}$ is satisfied since 
\beqq
	0\le 1-\frac{x}{y} = \frac{y-x}{y} \le \frac{(1+2\mu+2k)x^{1/2}}{24\sqrt{\tau}y}
	\le \frac{1+2\mu +2k}{24\sqrt{\tau y}} = \frac{1+2\alpha}{12s} 
\eeqq
Finally, to verify the condition on $s$ we observe that for $x$ large enough so that $\sqrt{\tau x}\ge 6(1+2\mu)+ \frac2{1+2\mu}$, 
\beqq
	\frac{3}{5}(1+2\alpha)+ \frac{1}{5(1+2\alpha)} 
	\le \frac{3}{5}(1+2\mu+2K)+ \frac1{5(1+2\mu)}
	= \frac{1}{10}\left(6(1+2\mu)+\frac{2}{1+2\mu}\right)+\frac{6}{5}K\leq s
\eeqq
since $K\le \frac{19}{12}\sqrt{\tau y}$ and $x\le y$. 
Hence, Lemma \ref{Claim:asymptotics_RA} (b) applies and we obtain the result. 
\end{proof}

The above estimates imply the following result. 

\begin{prop}[$\CT_\gamma(y)$ when $y$ is close to $x$] \label{prop:asymptotics_CA}
Fix $\tau>0$. 
For any fixed $\gamma\in (-1/2, 1/2)$,  
there exists $c>0$ such that 
	\begin{equation} \label{eq:asymptotics_CA}
	\CT_\gamma(y)= \RA(y;\gamma)  \left(1+\Boh(e^{-cy^{1/2}})\right) 
	\end{equation}
	as $x\to\infty$ uniformly for $y\in \left[x,x+\frac{(1+2\gamma)\sqrt{x}}{24\sqrt{\tau}}  \right]$ and  $|z|=e^{-\frac{x}{2\tau}}$. 
For $\gamma=\pm 1/2$, there is $c>0$ such that 
	\begin{equation*}
	\begin{split}
	\CT_{\pm1/2}(y)=& \left(\RA(y;\pm1/2) +z^{\mp 1}\RA(y;\mp 1/2)\right) \left(1+\Boh(e^{-cy^{1/2}})\right) + \Boh(e^{-\frac{9K(y)x}{19\tau}}) %\qquad \text{for $\gamma=\pm 1/2$.} 
	\end{split}	
	\end{equation*}
	as $x\to\infty$ uniformly for $y\in \left[x,x+\frac{\sqrt{x}}{12 \sqrt{\tau} } \right]$ and  $|z|=e^{-\frac{x}{2\tau}}$. 
\end{prop}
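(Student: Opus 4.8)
\textbf{Proof plan for Proposition~\ref{prop:asymptotics_CA}.}
The plan is to start from the truncated series expansion in Corollary~\ref{lem:CAseriesf}, choose the cut-off $K=K(y)=\lfloor \tfrac{19}{12}\sqrt{\tau y}\rfloor$ as in Lemma~\ref{lem:Agk}(b), and show that the sum $\sum_{k=-K}^\infty z^{-k}\RA(y;\gamma-k)$ is dominated by just one term (when $|\gamma|<1/2$) or two terms (when $\gamma=\pm1/2$), with everything else exponentially smaller. First, I would split the sum into the part with $k\ge 0$ and the part with $-K\le k\le -1$; by the symmetry $\RA(y;\gamma-k)$ with the sign structure, the second piece is handled exactly like the first after replacing $\gamma$ by $-\gamma$ and $k$ by $-k$, so I only need to treat $k\ge 0$. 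For the tail of the series, namely the error term $Ce^{-9K(y)x/(19\tau)}$ in \eqref{eq:kernel_A_series_expansion}, note that with $K(y)\asymp\sqrt{\tau y}$ and $y\ge x$ this is $\Boh(e^{-cx^{3/2}})$, which is far smaller than any $e^{-cy^{1/2}}$; for $\gamma=\pm1/2$ we keep this term explicitly since it is only claimed to be absorbed into the stated remainder $\Boh(e^{-9K(y)x/(19\tau)})$.

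The core of the argument is the telescoping/ratio estimate. For $|\gamma|<1/2$, I would apply Lemma~\ref{lem:Agk}(b) with $\mu=\gamma$: for each $k\ge 0$ and $k\le K(y)$, on the range $x\le y\le x+\tfrac{1+2\gamma+2k}{24\sqrt\tau}\sqrt{x}$ we get $|z^{k+1}\RA(y;\gamma+k+1)|\le 2e^{-\frac{1+2\gamma+2k}{48\tau^{3/2}}y^{1/2}}|z^{k}\RA(y;\gamma+k)|$. Since in the stated $y$-range, $y\in[x,x+\tfrac{(1+2\gamma)\sqrt x}{24\sqrt\tau}]$, the hypothesis holds for \emph{all} $k\ge0$ (the admissible window only widens as $k$ grows), iterating the ratio bound gives $|z^{k}\RA(y;\gamma+k)|\le 2^k e^{-c_k y^{1/2}}|\RA(y;\gamma)|$ with $c_k\ge \frac{(1+2\gamma)k}{48\tau^{3/2}}$ roughly, so the geometric-type series $\sum_{k\ge1}$ is bounded by $\Boh(e^{-cy^{1/2}})|\RA(y;\gamma)|$ for a suitable $c>0$, once $x$ is large enough that the ratio $2e^{-\cdots}$ is below $1/2$. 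This leaves $\RA(y;\gamma)$ as the leading term, and combining with the series-tail bound from Corollary~\ref{lem:CAseriesf} (which is negligible against $\RA(y;\gamma)$ by \eqref{eq:RA_asympotitcs}, since $\RA(y;\gamma)$ decays only like $e^{-\frac{2}{3\sqrt\tau}y^{3/2}}$ and the tail like $e^{-cx^{3/2}}$ with a comparable but we must check the constants — this is where one uses that $\fe(u;\lambda)\le 0$ so $|\RA(y;\gamma)|\ge e^{-\frac{2}{3\sqrt\tau}y^{3/2}+\Boh(y)}$, while the series tail is $e^{-\frac{9K(y)x}{19\tau}}\sim e^{-\frac{9}{19\cdot 12}\cdot\frac{19}{1}\cdot\ldots}$, i.e. $e^{-\frac{3}{4\sqrt\tau}x^{3/2}(1+o(1))}$ which beats $\RA$ when $y$ is close to $x$), yields \eqref{eq:asymptotics_CA}.

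For $\gamma=\pm1/2$, by symmetry take $\gamma=1/2$. Now $\mu=1/2$ is the boundary case where $1+2\gamma=2$, and the term $k=0$ (i.e. $\RA(y;1/2)$) and the term $k=1$ in the \emph{other} direction, namely $z^{-1}\RA(y;-1/2)$ coming from the $k=-1$ index of the original sum, are of the same exponential order because $\fe$ is maximized near $u=0$ and $\tfrac{1/2}{2\sqrt{\tau y}}$ and $\tfrac{-1/2}{2\sqrt{\tau y}}$ are symmetric around it. Concretely: for the $k\ge0$ tail I apply Lemma~\ref{lem:Agk}(b) with $\mu=1/2$ starting from $k=0$; the window condition $y\le x+\tfrac{1+2\cdot(1/2)+2k}{24\sqrt\tau}\sqrt x = x+\tfrac{2+2k}{24\sqrt\tau}\sqrt x$ holds for all $k\ge0$ on the stated range $y\in[x,x+\tfrac{\sqrt x}{12\sqrt\tau}]$ (the $k=0$ case being exactly $y\le x+\tfrac{\sqrt x}{12\sqrt\tau}$), so $\sum_{k\ge1}|z^k\RA(y;1/2+k)|=\Boh(e^{-cy^{1/2}})|\RA(y;1/2)|$. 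For the $k\le-1$ part, I re-index $j=-k\ge1$ and apply Lemma~\ref{lem:Agk}(a) with $\mu=-1/2$: $|z^{-j-1}\RA(y;-1/2-j-1)|\le 2e^{-\frac{1-2(-1/2)+2(j-1)}{4\tau^{3/2}}y^{1/2}}|z^{-j}\RA(y;-1/2-j)|$ wait—here I want the chain \emph{starting} at $j=1$ downward in magnitude, so I iterate from $j=1$: $|z^{-2}\RA(y;-5/2)|\le 2e^{-\frac{2}{4\tau^{3/2}}y^{1/2}}|z^{-1}\RA(y;-3/2)|$, etc., and separately compare $|z^{-1}\RA(y;-3/2)|$ to $|z^{-1}\RA(y;-1/2)|$ — but $\RA(y;-3/2)$ vs $\RA(y;-1/2)$ is again a one-step ratio controlled by $\fe(\tfrac{-1/2}{2\sqrt{\tau y}};\lambda)-\fe(\tfrac{-3/2}{2\sqrt{\tau y}};\lambda)\ge \tfrac{3\cdot 2}{2s^2}>0$ from Lemma~\ref{Claim:asymptotics_RA}(a) with $\alpha=-1/2$. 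So all terms with $|k|\ge 2$ plus the $k=1$ term are $\Boh(e^{-cy^{1/2}})$ times $\max(|\RA(y;1/2)|,|z^{-1}\RA(y;-1/2)|)$, leaving $\RA(y;1/2)+z^{-1}\RA(y;-1/2)$ as the two-term leading contribution, plus the $\Boh(e^{-9K(y)x/(19\tau)})$ from the series truncation. The main obstacle I anticipate is bookkeeping the constants so that the ratio bounds are genuinely summable \emph{and} the series-truncation error $Ce^{-9K(y)x/(19\tau)}$ is correctly placed: in the $|\gamma|<1/2$ case it must be shown negligible against $|\RA(y;\gamma)|$ on the whole $y$-range (this is tightest at $y=x$ and uses that $\tfrac{9K(y)x}{19\tau}\approx\tfrac{9\cdot 19\sqrt{\tau x}\, x}{19\cdot 12\,\tau}=\tfrac{3x^{3/2}}{4\sqrt\tau}$ beats $\tfrac{2}{3\sqrt\tau}x^{3/2}$), whereas for $\gamma=\pm1/2$ it is simply left in the statement, so no comparison is needed there.
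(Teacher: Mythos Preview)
Your overall strategy---truncate via Corollary~\ref{lem:CAseriesf} with $K(y)=\lfloor\tfrac{19}{12}\sqrt{\tau y}\rfloor$ and then iterate the ratio bounds of Lemma~\ref{lem:Agk} to extract the dominant term(s)---is exactly the paper's approach, and your handling of the truncation error (comparing $\tfrac{9K(y)x}{19\tau}\approx\tfrac{3}{4\sqrt\tau}x^{3/2}$ against $\tfrac{2}{3\sqrt\tau}x^{3/2}$ to absorb it into the multiplicative error for $|\gamma|<1/2$) is correct.

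There is, however, a genuine gap in the $|\gamma|<1/2$ case: the claimed symmetry ``replace $\gamma$ by $-\gamma$ and $k$ by $-k$'' does not hold. Parts (a) and (b) of Lemma~\ref{lem:Agk} are \emph{not} symmetric counterparts of each other: (a) holds for all $y\ge x$, whereas (b) carries the window restriction $y\le x+\tfrac{1+2\mu+2k}{24\sqrt\tau}\sqrt x$. The paper uses (a) for the positive-$k$ tail (terms $z^{-k}\RA(y;\gamma-k)$, $k\ge1$, valid on all of $y\ge x$) and (b) for the negative-$k$ tail (terms $z^{j}\RA(y;\gamma+j)$, $j\ge1$), and it is precisely the latter that produces the stated range $[x,x+\tfrac{(1+2\gamma)\sqrt x}{24\sqrt\tau}]$. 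If you tried to use (b) in both directions via your symmetry, the second application (with $\mu=-\gamma$) would impose the tighter window $y\le x+\tfrac{(1-2\gamma)\sqrt x}{24\sqrt\tau}$, which for $\gamma>0$ is strictly smaller than the proposition allows. This is also why the stated $y$-range is \emph{not} symmetric in $\gamma$. The fix is simply to drop the symmetry shortcut and apply Lemma~\ref{lem:Agk}(a) explicitly for $k\ge1$, which you in fact already do in the $\gamma=\pm1/2$ case.

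There is also index confusion throughout: the terms $z^{k}\RA(y;\gamma+k)$ that you bound explicitly via Lemma~\ref{lem:Agk}(b) are the \emph{negative}-$k$ terms of the original sum $\sum_{k=-K}^\infty z^{-k}\RA(y;\gamma-k)$, not the positive ones as you label them; and in the $\gamma=1/2$ discussion your ``$k\le-1$ part'' with terms $z^{-j}\RA(y;-1/2-j)$ is off by one power of $z$ from the actual terms $z^{-k}\RA(y;1/2-k)=z^{-1}\cdot z^{-(k-1)}\RA(y;-1/2-(k-1))$ for $k\ge2$. These are bookkeeping slips rather than conceptual errors, but they would need to be cleaned up.
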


\begin{proof}
We use the truncated series formula for $\CT_\gamma(y)$ given in Corollary \ref{lem:CAseriesf} with 
\beq \label{eq:Kydef}
	K(y)= \left \lfloor \frac{19}{12} \sqrt{\tau y} \right \rfloor.
\eeq
Consider first the case when $-1/2<\gamma<1/2$. 
For $k\ge 1$, using Lemma \ref{lem:Agk} (a) $k$ times, 
\beqq
	|z^{-k} \RA(y;\gamma-k)| \le (2 e^{-c_1 y^{1/2}})^{k} |  \RA(y;\gamma)|, 
	\qquad c_1:= \frac{1-2\gamma}{4\tau^{3/2}} 
\eeqq
for all $y\ge x$. 
For $-K\le k\le -1$, using Lemma \ref{lem:Agk} (b) $|k|$ times, 
\beqq
	|z^{-k} \RA(y;\gamma-k)| \le (2 e^{-c_2 y^{1/2}})^{|k|} | \RA(y;\gamma)|, 
	\qquad c_2:= \frac{1+2\gamma}{48\tau^{3/2}}
\eeqq
for $x\le y\le x+ \frac{(1+2\gamma)\sqrt{x}}{24\sqrt{\tau}}$. With \eqref{eq:kernel_A_series_expansion} in mind,
this implies that $k=0$ term gives the main contribution to the sum and with $c=\min\{c_1, c_2\}$, 
$$
	\CT_\gamma(y)= \RA(y;\gamma)  \left(1+\Boh(e^{-cy^{1/2}})\right) + \Boh(e^{-\frac{9K(y)x}{19\tau} }) 
$$
as $x\to\infty$ uniformly for $y\in \left[x,x+\frac{(1+2\gamma)\sqrt{x}}{24\sqrt{\tau}}  \right]$ and  $|z|=e^{-\frac{x}{2\tau}}$. 
We compare the two error terms. 
From  \eqref{eq:Kydef} and $y\ge x$, we see that $ \Boh(e^{-\frac{9K(y)x}{19\tau} })$ is 
(super-)exponentially smaller than $\RA(y;\gamma)$ (see \eqref{eq:RA_asympotitcs}). Hence, the additive error can be replaced by a multiplicative error and we obtain \eqref{eq:asymptotics_CA}. 

Consider now the case $\gamma=1/2$. 
For $k\ge 2$, we use Lemma \ref{lem:Agk} (a) $k-1$ times to obtain
\beqq
	|z^{-k} \RA(y;1/2-k)| \le (2 e^{-c_1 y^{1/2}})^{k-1} | z^{-1} \RA(y;-1/2)|, 
	\qquad c_1:= \frac{1-2\gamma+2}{4\tau^{3/2}} = \frac{1}{2\tau^{3/2}},
\eeqq
for all $y\ge x$. 
For $-K\le k\le -1$, we use Lemma \ref{lem:Agk} (b) $|k|$ times to obtain
\beqq
	|z^{-k} \RA(y;1/2-k)| \le (2 e^{-c_2 y^{1/2}})^{|k|} | \RA(y;1/2)|, 
	\qquad c_2:= \frac{1+2\gamma}{48\tau^{3/2}} = \frac{1}{24\tau^{3/2}}, 
\eeqq
for $x\le y\le x+ \frac{\sqrt{x}}{12\sqrt{\tau}} $. Proceeding as before and replacing the additive error by the multiplicative error we obtain the result in this case.

Finally, consider the case when $\gamma=-1/2$. For $k\ge 1$, we use \ref{lem:Agk} (a) $k$ times and estimate 
of $|z^{-k} \RA(y;1/2+k)|$ in terms of $|\RA(y;1/2)|$ for all $y\ge x$. For $-K\le k\le -2$, we use \ref{lem:Agk} (b) $|k|-1$ times and find an estimate of
$|z^{-k} \RA(y;1/2+k)|$ in terms of $|z \RA(y;-1/2)|$, which are valid for $x\le y\le x+ \frac{1+2\gamma+2}{24\sqrt{\tau}} \sqrt{x}
= x+\frac{\sqrt{x}}{12\sqrt{\tau}} $. We now obtain the result in the same way as before.
\end{proof}

\subsubsection{Uniform estimate of $\CT_\gamma(y)$ for all $y\ge x$}

Although valid for all $y\geq x$, the next result will only be used for $y\ge x+\Boh(x^{1/2})$. 

\begin{prop}[Uniform estimate of $\CT_\gamma(y)$ for $y\ge x$]	\label{prop:bound_CA}
Fix $-1/2\le \gamma \le 1/2$ and $\tau>0$. 
There exist positive constants $C$ and $x_0$ such that
\begin{equation*}
	\left|\CT_\gamma(y)\right| \le  C\max\left\{ y^{1/4} e^{ \frac{\gamma x}{2\tau} -\frac{\sqrt{x}(3y-x)}{3 \sqrt{\tau}}}, e^{-\frac{9K(y) x}{19\tau}}  \right\}, \qquad K(y) :=\left \lfloor \frac{19}{12} \sqrt{\tau y} \right\rfloor, 
\end{equation*}
for all $y\ge x\ge x_0$ and $|z|=e^{-\frac{x}{2\tau}}$. 
\end{prop}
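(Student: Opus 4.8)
The plan is to start from the truncated series representation of Corollary~\ref{lem:CAseriesf} with the specific truncation $K=K(y)=\lfloor\frac{19}{12}\sqrt{\tau y}\rfloor$, namely
\[
	\CT_\gamma(y) = \sum_{k=-K(y)}^\infty z^{-k}\RA(y;\gamma-k) + \Boh\!\left(e^{-\frac{9K(y)x}{19\tau}}\right),
\]
and to bound the series by splitting it into the block $-K(y)\le k\le -1$ and the tail $k\ge 0$. For each individual term the key input is the asymptotic formula \eqref{eq:asymptotics_RA} for $|z^{-k}\RA(y;\gamma-k)|$ on $|z|=e^{-x/(2\tau)}$, together with the explicit maximum \eqref{eq:min_fe} of the function $\fe(\,\cdot\,;1-x/y)$ from \eqref{eq:fffns}: since $1-x/y\in[0,1)$ for $y\ge x$, a short computation gives the unconstrained bound
\[
	\frac{2y^{3/2}}{3\sqrt\tau}\,\fe\!\left(\frac{\gamma-k}{2\sqrt{\tau y}};1-\frac{x}{y}\right)\ \le\ \frac{2y^{3/2}}{3\sqrt\tau}\max_{u\in\R}\fe\!\left(u;1-\frac{x}{y}\right)\ =\ -\frac{\sqrt{x}\,(3y-x)}{3\sqrt\tau},
\]
valid for every $k$, whence $|z^{-k}\RA(y;\gamma-k)|\le C\,(4\tau y)^{-1/4}\,e^{\frac{\gamma x}{2\tau}}e^{-\frac{\sqrt x(3y-x)}{3\sqrt\tau}}$ for every index in the sum. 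Before applying \eqref{eq:asymptotics_RA} I would check that it is legitimate for all the relevant indices, i.e. that $\gamma-k\le 2\sqrt{\tau y}$ whenever $k\ge-K(y)$; this reduces, using $\frac{19}{12}<2$, to $\sqrt{\tau y}\ge 6/5$, hence holds for all $y\ge x\ge x_0$ once $x_0$ is chosen large, and the error $\Boh(y^{-3/2})$ in \eqref{eq:asymptotics_RA} is uniform in $k$.

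For the tail $k\ge 0$ I would invoke Lemma~\ref{lem:Agk}(a)---applied with $\mu=\gamma$ when $\gamma<1/2$, and, when $\gamma=1/2$, after shifting the summation index by one and applying it with $\mu=-1/2$. Iterating the one-step inequality there yields, for $y$ large, a super-geometric decay of $|z^{-k}\RA(y;\gamma-k)|$ in $k$ with \emph{no} upper restriction on $y$; summing, $\sum_{k\ge0}|z^{-k}\RA(y;\gamma-k)|\le C|\RA(y;\gamma)|$ (or $\le C|z^{-1}\RA(y;-1/2)|$ when $\gamma=1/2$), which by the displayed bound is $\le C'(4\tau y)^{-1/4}e^{\frac{\gamma x}{2\tau}}e^{-\frac{\sqrt x(3y-x)}{3\sqrt\tau}}$. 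For the block $-K(y)\le k\le-1$, by contrast, the sharper estimate Lemma~\ref{lem:Agk}(b) is \emph{not} available, as it requires $y$ in the narrow window $[x,x+\Boh(\sqrt x)]$; instead I would bound each of these at most $K(y)$ terms by the crude estimate above and simply count terms:
\[
	\sum_{k=-K(y)}^{-1}|z^{-k}\RA(y;\gamma-k)|\ \le\ C\,K(y)\,(4\tau y)^{-1/4}\,e^{\frac{\gamma x}{2\tau}}e^{-\frac{\sqrt x(3y-x)}{3\sqrt\tau}}\ \le\ C'\,y^{1/4}\,e^{\frac{\gamma x}{2\tau}}e^{-\frac{\sqrt x(3y-x)}{3\sqrt\tau}},
\]
using $K(y)\asymp\sqrt{\tau y}$; this is exactly what produces the factor $y^{1/4}$ in the statement.

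Finally I would combine the three contributions---the $\Boh(e^{-9K(y)x/(19\tau)})$ truncation error, the $\Boh(y^{1/4}e^{\gamma x/(2\tau)-\sqrt x(3y-x)/(3\sqrt\tau)})$ from the negative-$k$ block, and the (smaller, $y^{-1/4}$-weighted) bound from the $k\ge0$ tail. Since $y^{-1/4}\le y^{1/4}$ for $y\ge x\ge x_0$, the last two merge; absorbing all constants into a single $C$ and a factor $2$ into the maximum then gives
\[
	|\CT_\gamma(y)|\ \le\ C\max\!\left\{\,y^{1/4}e^{\frac{\gamma x}{2\tau}-\frac{\sqrt x(3y-x)}{3\sqrt\tau}},\ e^{-\frac{9K(y)x}{19\tau}}\,\right\}
\]
for all $y\ge x\ge x_0$ and $|z|=e^{-x/(2\tau)}$, as claimed. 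The only genuinely delicate point is the negative-$k$ block: away from the window $y\in[x,x+\Boh(\sqrt x)]$ there is no geometric decay to exploit in the $k\to-\infty$ direction, so one is forced to tolerate the full factor $K(y)\asymp\sqrt{y}$; this loss is precisely accounted for by the $y^{1/4}$ in the bound and is the main (though mild) obstacle.
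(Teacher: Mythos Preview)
Your proposal is correct and follows essentially the same approach as the paper's proof: start from the truncated series of Corollary~\ref{lem:CAseriesf} with $K=K(y)$, use Lemma~\ref{lem:Agk}(a) to control the positive-$k$ tail geometrically, bound the remaining $\Boh(K(y))$ terms each by the unconstrained maximum of $\fe$ from \eqref{eq:min_fe}, and absorb the factor $K(y)\cdot(4\tau y)^{-1/4}\asymp y^{1/4}$. The only cosmetic difference is that the paper places the split at $k=2$ rather than $k=0$ (so that Lemma~\ref{lem:Agk}(a) applies directly for all $\gamma\in[-1/2,1/2]$ without the separate shift you introduce for $\gamma=1/2$), and then writes the whole sum as $2(K(y)+3)\max_{-K(y)\le k\le 2}|z^{-k}\RA(y;\gamma-k)|$; your version and the paper's are otherwise interchangeable.
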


\begin{proof}
We use \eqref{eq:kernel_A_series_expansion}. 
Since Lemma \ref{lem:Agk} (a) holds for all $y\ge x$, using the same argument of the proof of Proposition \ref{prop:asymptotics_CA} we find that for some $c_1>0$, 
\beqq
	\sum_{k=2}^\infty |z^{-k}\RA(y;\gamma-k)| \le |z^{-2}\RA(y;\gamma-2)|\left(1+\Boh(e^{-c_1y^{1/2}})\right)\le 2|z^{-2} \RA(y;\gamma-2)|.
\eeqq
for all $y\ge x$ as $x\to\infty$. 
Thus,
\beqq
	\sum_{k=-K(y)}^\infty |z^{-k}\RA(y;\gamma-k)| 
	\le 2 (K(y)+3) \max_{-K(y)\le k\le 2} |z^{-k} \RA(y;\gamma-k)|
\eeqq
for all $y\ge x$ as $x\to\infty$.
Hence, from \eqref{eq:asymptotics_RA} we obtain that there are $C>0$ and $x_0>0$ such that 
\beqq
	\sum_{k=-K(y)}^\infty |z^{-k}\RA(y;\gamma-k)| 
	\le Cy^{1/4} e^{\frac{\gamma x}{2\tau}}  \exp\left\{ \frac{2y^{3/2}}{3\tau^{1/2}} \max_u \fe\left(u;1-\frac{x}{y}\right)\right\}
\eeqq
for all $y\ge x\ge x_0$. 
Inserting the formula \eqref{eq:min_fe} of the maximum, we obtain 
\beqq
	\sum_{k=-K(y)}^\infty |z^{-k}\RA(y;\gamma+k)| 
	\le Cy^{1/4} e^{ \frac{\gamma x}{2\tau} -\frac{\sqrt{x}(3y-x)}{3\sqrt{\tau}}}.
\eeqq
Remembering the error term in \eqref{eq:kernel_A_series_expansion} we obtain the result. 
\end{proof}

\subsection{Proof of Proposition~\ref{prop:LT_HSnorm_est}}
\label{sec:proof_LT_HSnorm_est}

From the formula \eqref{eq:T_to_CA}, 
\begin{equation*}
\begin{split}
	\|\mb \Tgamma\|_2^2 = \int_0^\infty\int_0^\infty |\CT_\gamma(x+s+t)|^2 dsdt
&=\int_x^\infty (y-x) |\CT_\gamma(y)|^2 dy . %\\
\end{split}
\end{equation*}
We split the integral into two parts: one for $x\le y\le x+ c\sqrt{x}$ and the other for $y\ge x+c\sqrt{x}$, where 
$c=\frac{1-2\gamma}{24\tau^{1/2}}$ for $|\gamma|<\frac12$ and $c=\frac{1}{12\tau^{1/2}}$ for $\gamma=\pm\frac12$. 
Equation~\eqref{eq:RA_asympotitcs} implies that there is $0< \epsilon<1$ such that 
\begin{equation*}
	|\RA(y;\gamma)|^2, \, |z^{\mp1}\RA(y;\gamma\mp1)|^2\le  e^{-\frac{4}{3\sqrt{\tau}}(1-\frac\epsilon2) y^{3/2}}
\end{equation*}
as $x\to \infty$, uniformly for $x\le y\le x+c\sqrt{x}$. 
Thus, for the first part of the integral, Proposition~\ref{prop:asymptotics_CA} and the equation \eqref{eq:RA_asympotitcs} imply that for every $\epsilon\in (0,1)$, 
\begin{equation*}
	\int_{x}^{x+c\sqrt{x}} (y-x) |\CT_\gamma(y)|^2 dy 
	\le  \int_{x}^{x+cx^{1/2}} (y-x) e^{-\frac{4}{3 \sqrt{\tau}}(1-\frac\epsilon2) y^{3/2}} dy
	\le e^{-\frac{4}{3\sqrt{ \tau}}(1-\epsilon) x^{3/2}}
\end{equation*}
for all large enough $x$. 
For the second part of the integral, applying Proposition~\ref{prop:bound_CA} we have
\begin{equation*}
\begin{split}
	&\int_{x+c\sqrt{x} }^\infty (y-x) |\CT_\gamma(y)|^2 dy 
	\le C^2 \int_{x+c\sqrt{x} }^\infty (y-x) \left( y^{1/2}e^{\frac{\gamma x}{\tau}-\frac{2(3y-x) \sqrt{x} }{3\sqrt{\tau} } }+ e^{-\frac{18 K(y) x}{19\tau}}\right) dy
	\le e^{-\frac{4}{3\sqrt{ \tau}}(1-\epsilon) x^{3/2}}
\end{split}
\end{equation*}
for all large enough $x$. 
This completes the proof.

\subsection{Proof of Proposition~\ref{prop:LT_trace_asympt}}\label{sec:proof_LT_trace_asympt}

From the formula \eqref{eq:T_to_CA}, 
\begin{equation*}
	\Tr (\mb \Tngamma\mb \Tgamma ) = \int_0^\infty\int_0^\infty \CT_{-\gamma}(x+s+t)\CT_{\gamma}(x+s+t)dsdt
	=\int_x^\infty (y-x) \CT_{-\gamma}(y)\CT_{\gamma}(y)dy.
\end{equation*}
We split the integral into two parts; the part $I_1$ for $x\le y\le x+ a\sqrt{x}$ and the second part $I_2$ for $y\ge x+ a\sqrt{x}$ where $a= \frac{1-2|\gamma|}{24\sqrt{\tau}}$ when $|\gamma|<1/2$, and $a= \frac1{12\sqrt{\tau}}$ when $\gamma=1/2$. 

Consider the first part $I_1$. 
Proposition~\ref{prop:asymptotics_CA} implies that for $\gamma\in (-1/2, 1/2)$, 
\beqq
	I_1= \int_x^{x+ax^{1/2}}(y-x) \RA(y;\gamma)\RA(y;-\gamma) dy \left( 1+ \Boh(e^{-cx^{1/2}})\right), 
\eeqq
and for $\gamma=\pm 1/2$, 
\beqq
	I_1= \int_x^{x+ax^{1/2}}(y-x) \left( 2\RA(y;1/2)\RA(y;-1/2) +z\RA(y;1/2)^2 +z^{-1}\RA(y;-1/2)^2\right)dy 
	\left( 1+ \Boh(e^{-cx^{1/2}})\right). 
\eeqq
From Lemma \ref{lem:asymptotics_g_large_tail}  and Equation \eqref{eq:asympt_CB2}, 
\begin{equation*}
	\int_x^{x+a x^{1/2}} (y-x)\RA(y;\gamma)\RA(y;-\gamma) dy 
	= \CB \left(\frac{x}{\tau^{1/3}}+\frac{\gamma^2}{4\tau^{4/3}} \right)\left(1+\Boh(e^{-cx^{1/2}})\right),
\end{equation*}
\begin{equation*}
	\int_x^{x+ax^{1/2}}(y-x)\RA(y;1/2)^2 dy = e^{-\frac{1}{96\tau^2}}
	\CB \left(\frac{x}{\tau^{1/3}}+\frac{1}{16\tau^{4/3}};\frac{1}{2\tau^{2/3}} \right)\left(1+\Boh(e^{-cx^{1/2}})\right),
\end{equation*}
and 
\begin{equation*}
	\int_x^{x+ax^{1/2}}(y-x)\RA(y;-1/2)^2 dy = e^{ \frac{1}{96\tau^2}}
	\CB \left(\frac{x}{\tau^{1/3}}+\frac{1}{16\tau^{4/3}};-\frac{1}{2\tau^{2/3}} \right)\left(1+\Boh(e^{-cx^{1/2}})\right),
\end{equation*}

Consider the second part $I_2$.  Using Proposition~\ref{prop:bound_CA} (which holds for $\gamma\in [-1/2, 1/2]$) for each of $\CT_\gamma(y)$ and $\CT_{-\gamma}(y)$, we need to consider four integrals. Noting the cancellation of the terms $-\frac{\gamma x}{2\tau}$ and $\frac{\gamma x}{2\tau}$, the first integral is bounded by a constant times 
\begin{equation*}
\begin{split}
	\int_{x+a\sqrt{x} }^\infty (y-x) y^{1/2}e^{-\frac{2(3y-x) \sqrt{x} }{3\sqrt{\tau} } } dy
	= \Boh\left(  x e^{-\frac{4}{3\sqrt{ \tau}}x^{3/2} - cx  } \right) .
\end{split}
\end{equation*}
for some $c>0$. As we will see in a moment, this extra decay $e^{-cx}$ is relevant.
Two of the remaining three integrals are bounded by a constant times 
\begin{equation*}
\begin{split}
	\int_{x+a\sqrt{x} }^\infty (y-x) y^{1/4}e^{ \pm\frac{\gamma x}{2\tau}-\frac{(3y-x) \sqrt{x} }{3\sqrt{\tau} } -\frac{9 K(y) x}{19\tau}} dy
	= \Boh\left(   e^{-\frac{4(1+\epsilon')}{3\sqrt{ \tau}}x^{3/2} } \right) 
\end{split}
\end{equation*}
for some $\epsilon'>0$ due to the term involving $K(y)$. 
The final integral is bounded by a constant times 
\begin{equation*}
\begin{split}
	\int_{x+a\sqrt{x} }^\infty (y-x) e^{ -\frac{18 K(y) x}{19\tau}} dy
	= \Boh\left(   e^{-\frac{4(1+\epsilon')}{3\sqrt{ \tau}}x^{3/2} } \right) .
\end{split}
\end{equation*}
Thus, $I_2=\Boh\big(  x e^{-\frac{4}{3\sqrt{ \tau}}x^{3/2} - cx } \big)$. 
We compare this with $I_1$. 
Using Lemma~\ref{prop:right_tail_FGUE} the leading terms in $I_1$ are of order $\CB\left(\frac{x}{\tau^{1/3}}+\frac{1}{16\tau^{4/3}}\right)=\Boh\left(e^{-\frac{4}{3\sqrt{\tau}}x^{3/2}}\right)$.
Hence, $I_2$ is exponentially smaller than $I_1$, completing the proof.

%%%%%%%%%%%%%%%%%%%%%%%%%%%%%%%%%%%%%%%%%%%%%%%%%%%%%%%%
\section{Integrable differential equations}\label{sec:integrable_structure}

As mentioned before, from Lemmas~\ref{lem:integrability_operator_H} and \ref{lem:integrability_operator_F} we learn that the operators $\mb H$ and $\mb F$ are IIKS-integrable \cite{IIKS}, and it is now the time to explore this structure.

It is known that the resolvent of an integrable operator can be evaluated in terms of a canonically associated Riemann-Hilbert problem (RHP) \cite{IIKS, Deift99b}.
The main difference of $\mb F$ and $\mb H$ is that $\mb F$ acts on a space $L^2$ with absolutely continuous reference measure, whereas $\mb H$ acts on a space $\ell^2$ with discrete reference measure. Consequently the RHP for the former is continuous (involves only jump conditions) whereas the RHP for the latter is purely discrete (involves only residue conditions). 
The survey paper \cite{Deift99b} reviews many aspects of the theory of continuous IIKS operators. 
Properties of discrete IIKS integrable operators were analyzed by Deift, in a work of Borodin \cite[Section 4]{Borodin_Deift_2000}. In this Section we explore this integrable structure to obtain Theorem~\ref{thm:integrable_systems}.

Riemann-Hilbert problems, whether discrete, continuous or mixed, often arise in the inverse scattering transform in integrable differential equations. The common cubic polynomial factor $V$ in \eqref{def:functionV} is the key to recognize the associated Riemann-Hilbert problems for $\mb H$ and $\mb F$ both as the ones that arise from coupled mKdV equations and coupled nonlinear heat equations, yielding Theorem~\ref{thm:integrable_systems}. This approach of using integrable operators and Riemann-Hilbert problems for finding differential equations was also used in many other problems; see, for example, \cite{Deift-Its-Zhou97, Deift99b, Borodin_Deift_2000, Borodin03, Bertola-Cafasso-2012b}. 

%%%%%%%%%%%%%%%%%%%%%%%%%%%%%%%%%%%%%%%%%%%%%%%%%%%%%%%%
\subsection{Riemann-Hilbert problem}

As a first step, we describe the Riemann-Hilbert Problems (RHPs) associated to $\mb F$ and $\mb H$. For that, recall that $\Lambda$ and $\mathcal S$ were defined in \eqref{def:Lambda_bethe} and $V$ was defined in \eqref{def:functionV}.

The RHP asks for finding $2\times 2$ matrices $X$ (which is associated to $\mb H$) and $Y$ (which is associated to $\mb F$) with the following properties.
\begin{enumerate}[\bf RHP(a)]
\item The entries of $X$ are analytic on $\C\setminus \mathcal S$ and the entries of $Y$ are analytic on $\C\setminus \Lambda$.
\item $X$ has a simple pole at every $\xi\in \mathcal S$, and its residue satisfies
\begin{equation*}
\res_{s=\xi}X(s)=\lim_{s\to\xi}X(s)R_X(s),\quad \text{with}\quad R_X(s):=\vec f(s)\vec g(s)^T.
\end{equation*}
The matrix $Y$ has continuous boundary values $Y_\pm(s)$ as $s$ approaches $\Lambda$ from its $\pm$-side, and they are related by
\begin{equation*}
Y_+(s)=Y_-(s)J_Y(s),\quad \text{with} \quad J_Y(s):=I-\vec a(s)\vec b(s)^T,
\end{equation*}
where $I$ is the $2\times 2$ identity matrix.

\item As $s\to\infty$ the matrices $X$ and $Y$ admit asymptotic expansions of the form
\begin{equation}\label{eq:asymptotics_X}
X(s)\sim I+\sum_{k=1}^\infty \frac{X_k}{s^k}\quad \text{and}\quad Y(s)\sim I+\sum_{k=1}^\infty \frac{Y_k}{s^k}
\end{equation}
for matrices $X_k$ and $Y_k$ that are constant in $s$.
\end{enumerate}

The condition {\bf RHP(c)} for $X$ should be understood as $s\to \infty$ uniformly away from $\mathcal S$. 

For $\gamma=0$ and $\tau=1$, the RHP for $Y$ above coincides with a particular case of the ones studied in \cite{Bertola-Cafasso-2012b,Bertola-Cafasso-2012a}.

Since $\vec g(s)^T \vec f(s)=0$, we find that $R_X(s)^2 = 0$. This implies that the limit $\lim_{\xi\to s}X(\xi)R_X(s)$ in {\bf RHP(b)} converges. 
The residue condition can also be stated in a different way. The following basic result is essentially in \cite[Lemma~4.4]{Borodin_Deift_2000}.

\begin{lem}\label{lem:residue_inverse}
Let $X(\xi)$ be an $r\times r$ matrix function with a simple pole at a point $s\in \C$. 
Let $R$ be a constant $r\times r$ matrix such that $R^2=0$. The following statements are equivalent.
\begin{enumerate}[(i)]
    \item $\displaystyle{\res_s X=\lim_{\xi\to s}X(\xi)R}$
    \item The product $X(\xi) \big( - \frac{R}{\xi-s} + I \big)$ is analytic at $\xi=s$.
\end{enumerate}
\end{lem}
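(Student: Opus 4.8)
The plan is to argue directly from the Laurent expansion of $X$ at the pole $s$. Write $X(\xi) = \dfrac{A}{\xi-s} + B + \Boh(\xi-s)$ as $\xi\to s$, where $A = \res_s X$ and $B = \lim_{\xi\to s}\bigl(X(\xi) - \tfrac{A}{\xi-s}\bigr)$. The two conditions in the statement will both be translated into algebraic relations among $A$, $B$, and $R$, and then seen to coincide.

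First I would establish (i)$\Rightarrow$(ii). Assuming $A = \lim_{\xi\to s}X(\xi)R$, I expand this limit using the Laurent series: $\lim_{\xi\to s}X(\xi)R = \lim_{\xi\to s}\bigl(\tfrac{AR}{\xi-s} + BR + \Boh(\xi-s)\bigr)$. For this limit to exist (and equal the finite matrix $A$) we must have $AR = 0$, and then the limit is $BR$, so $A = BR$. Now compute
\begin{equation*}
X(\xi)\left(-\frac{R}{\xi-s} + I\right) = \left(\frac{A}{\xi-s} + B + \Boh(\xi-s)\right)\left(-\frac{R}{\xi-s} + I\right).
\end{equation*}
The $(\xi-s)^{-2}$ coefficient is $-AR = 0$; the $(\xi-s)^{-1}$ coefficient is $-BR + A = -A + A = 0$. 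Hence all negative-order terms vanish and the product is analytic at $\xi=s$, which is (ii).

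Next I would establish (ii)$\Rightarrow$(i). If $X(\xi)(-\tfrac{R}{\xi-s}+I)$ is analytic at $s$, then repeating the expansion above, vanishing of the $(\xi-s)^{-2}$ coefficient gives $AR=0$, and vanishing of the $(\xi-s)^{-1}$ coefficient then gives $A = BR$. From $A=BR$ and $AR=0$ I recover $\lim_{\xi\to s}X(\xi)R = \lim_{\xi\to s}\bigl(\tfrac{AR}{\xi-s}+BR+\Boh(\xi-s)\bigr) = BR = A = \res_s X$, which is (i). The hypothesis $R^2=0$ is what guarantees that the factor $-\tfrac{R}{\xi-s}+I$ is invertible (with inverse $\tfrac{R}{\xi-s}+I$), so that the equivalence is not vacuous and the manipulation in {\bf RHP(b)} is legitimate; it is not otherwise needed for the bare algebra above, but I would note it since it is the reason the limit in (i) converges in the first place.

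I do not anticipate a genuine obstacle here — the result is purely a bookkeeping statement about the first two Laurent coefficients. The only point requiring a little care is being explicit that the existence of the limit in (i) already forces $AR=0$ (so that one cannot have $A\neq 0$ with a spurious $(\xi-s)^{-1}$ term surviving), rather than treating $AR=0$ as an extra assumption; handling the two implications symmetrically through the single identity for $X(\xi)(-\tfrac{R}{\xi-s}+I)$ makes this transparent.
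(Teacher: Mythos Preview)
Your proposal is correct and takes essentially the same approach as the paper: Laurent-expand $X$ at $s$, multiply out, and read off the conditions on the $(\xi-s)^{-2}$ and $(\xi-s)^{-1}$ coefficients. The only cosmetic difference is that the paper packages (i) as the single algebraic identity $X_{-1}=X_0R$ and then invokes $R^2=0$ to deduce $X_{-1}R=0$, whereas you extract $AR=0$ from the existence of the limit in (i) and thereby avoid appealing to $R^2=0$ in the equivalence itself.
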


\begin{proof}
Without loss of generality, assume $s=0$ and write $X(\xi)=\frac{X_{-1}}{\xi}+X_0 + \Boh(\xi)$ as $\xi\to 0$. 
Then
$$
	X(\xi) \left(-\frac{R}{\xi} + I \right)=-\frac{X_{-1}R}{\xi^2}+\frac{X_{-1}-X_0 R}{\xi}+\Boh(1),\quad \xi\to 0.
$$
This means that (ii) holds true if and only if $X_{-1} R =0$ and $X_{-1}-X_0 R=0$. 
Since $R^2=0$, these two equations are equivalent to the single equation $X_{-1}=X_0R$, which is equivalent to the statement (i). 
\end{proof}

We now state the solvability of the RHPs. 

\begin{lem} 
For any real values of $x,\gamma$ and $\tau$, the RHP for $Y$ has a unique solution.
The  RHP for $X$ has a solution for all $(\tau, \gamma, x)$ and $0<|z|<1$ satisfying $\det(\I-\mb K_z)\neq 0$ which is an open set of full Lebesgue measure in the space of parameters $(\tau,\gamma,x,z)$. 
Also, if $X$ exists, then it is unique and $\det X\equiv 1$. 
\end{lem}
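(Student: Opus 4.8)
The plan is to treat the two RHPs separately, since the continuous problem for $Y$ and the discrete problem for $X$ require different machinery, but to route both through the standard IIKS correspondence between the RHP and the resolvent of the associated integrable operator.

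For $Y$: the jump matrix is $J_Y(s) = I - \vec a(s)\vec b(s)^T$ with $\vec a, \vec b$ as in Lemma~\ref{lem:integrability_operator_F}. First I would note $\det J_Y(s) = 1 - \vec b(s)^T\vec a(s)$, and since $\vec b_0(s)^T\vec a_0(s) = \chi_{\Lambda_+}(s)\chi_{\Lambda_-}(s) - \chi_{\Lambda_-}(s)\chi_{\Lambda_+}(s) = 0$ (the supports of $\chi_{\Lambda_\pm}$ are disjoint except at finitely many points), we get $\det J_Y \equiv 1$. The standard IIKS theory (see \cite{Deift99b, IIKS}) then says that the RHP for $Y$ is equivalent to the invertibility of $\mb I - \mb F$ on $L^2(\Lambda)$, in the sense that $Y$ exists if and only if $\mb I - \mb F$ is invertible, and then $Y$ is built explicitly from the resolvent. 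But $\det(\mb I - \mb F)_{L^2(\Lambda)} = \det(\mb I - \Kk)_{L^2(0,\infty)} = \Fk(x;\tau,\gamma)$ by Lemma~\ref{lem:integrability_operator_F}, and this is a distribution function strictly between $0$ and $1$ for every real $(\tau,\gamma,x)$ — in particular nonzero. Hence $\mb I - \mb F$ is invertible for all real $x,\gamma,\tau$, so $Y$ exists. Uniqueness is the usual Liouville argument: if $Y,\widetilde Y$ both solve the RHP, then $Y\widetilde Y^{-1}$ has no jump across $\Lambda$ (the jump matrices cancel because $\det J_Y = 1$ makes $\widetilde Y^{-1}$ well-behaved), is entire, and tends to $I$ at infinity by {\bf RHP(c)}, so it is identically $I$ by Liouville. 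The same $\det J_Y \equiv 1$ gives $\det Y$ entire and $\to 1$ at infinity, hence $\det Y \equiv 1$.

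For $X$: here I would invoke the discrete analogue of IIKS theory developed by Deift in \cite[Section~4]{Borodin_Deift_2000}. Using Lemma~\ref{lem:residue_inverse}, the residue condition in {\bf RHP(b)} is equivalent to saying that $X(\xi)\prod_{s\in\mathcal S}\bigl(I - \frac{R_X(s)}{\xi - s}\bigr)$ (interpreted locally, one factor at a time) is analytic. Borodin--Deift's result then identifies solvability of the discrete RHP with invertibility of $\mb I - \mb H$ on $\ell^2(\mathcal S)$. Since $\det(\mb I - \mb H)_{\ell^2(\mathcal S)} = \det(\mb I - \mb K_z)_{\ell^2(\mathcal S_-)}$ by Lemma~\ref{lem:integrability_operator_H}, the RHP for $X$ is solvable precisely on the set where $\det(\mb I - \mb K_z) \neq 0$. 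That this set has full measure and is open follows from analyticity: the excerpt already states $\det(\mb I - \mb K_z)$ is analytic in $0<|z|<1$, and one checks analyticity in $(\tau,\gamma,x)$ as well (the kernel \eqref{eq:initial_kernel2} is entire in these parameters and the trace-class norm is locally bounded by the growth estimates on $\Phi$), so the zero set of a not-identically-zero analytic function is closed with empty interior — equivalently, its complement is open and of full Lebesgue measure. Uniqueness and $\det X \equiv 1$ follow as for $Y$: if $X, \widetilde X$ both solve the RHP, then $X\widetilde X^{-1}$ has removable singularities at every $\xi\in\mathcal S$ (this is where $R_X^2 = 0$, i.e. $\vec g^T\vec f = 0$, is used, exactly as in Lemma~\ref{lem:residue_inverse}), is entire, $\to I$ at infinity, hence $\equiv I$; and $\det X$ is entire with no poles (the residue structure combined with $R_X^2=0$ forces $\det X$ to have no pole at $\xi$) and $\to 1$, so $\det X \equiv 1$.

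The main obstacle I expect is the careful verification of the discrete IIKS correspondence in our precise setting — in particular that $\mb H$ (equivalently $\mb K_z$) genuinely falls within the scope of \cite[Section~4]{Borodin_Deift_2000}, including the trace-class property (already established via the Hilbert--Schmidt decomposition $\mb G_\gamma$ in Section~\ref{subsection:prooflemma}) and the behaviour of the relevant products at infinity, since the kernel does not decay fast along $\mathcal S_-$ in one variable. The continuous case for $Y$ is comparatively routine, being a direct citation of \cite{Deift99b}. One subtlety worth flagging: the equivalence ``RHP solvable $\iff$ operator invertible'' in the discrete case should be stated so that the implication we need — nonvanishing Fredholm determinant implies existence of $X$ — is the one actually proved in \cite{Borodin_Deift_2000}, and the converse (existence of $X$ implies the determinant is nonzero) is what justifies the "if $X$ exists" phrasing in the statement.
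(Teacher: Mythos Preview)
Your proposal is correct and follows essentially the same approach as the paper: invoke the IIKS correspondence (\cite{Deift99b} for $Y$, \cite{Borodin_Deift_2000} for $X$) to reduce solvability to nonvanishing of the respective Fredholm determinants, use analyticity for the full-measure claim, and use $R_X^2=0$ together with Lemma~\ref{lem:residue_inverse} and Liouville for $\det X\equiv 1$ and uniqueness. The paper's proof is somewhat terser but the ingredients and logic are identical.
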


\begin{proof}
From Lemma~\ref{lem:integrability_operator_F} we know that $\det(\mb I-\mb F)=\det(\mb I-\mb A_{-\gamma}\mb A_{\gamma})$, and the latter is never zero because it coincides with the classical Airy determinant (see \eqref{eq:kpAgamma}). Consequently $\mb I-\mb F$ is invertible and by \cite{Deift99b} the RHP for $Y$ has a unique solution.

By \cite{Borodin_Deift_2000}, the RHP for $X$ has a unique solution if $\mb I-\mb H$ is invertible. 
Hence, the first statement for $X$ follows since $\det(\I-\mb K_z)$ is analytic in all four variables. 
On the other hand, from $R_X(s)^2 = 0$ we obtain that $\det(I-(\xi-s)^{-1}R)\equiv 1$ and Lemma \ref{lem:residue_inverse} implies that $\det X(\xi)$ is an entire function. 
Because $\det X(\xi) \to 1$ as $\xi\to \infty$, Liouville's Theorem implies that $\det X\equiv 1$. 
\end{proof}

Since $\det X\equiv 1$ and $\det Y\equiv 1$, from {\bf RHP(c)} we find that $\Tr X_1=0$ and $\Tr Y_1=0$. We denote the entries of $X_1$ and $Y_1$ as 
\begin{equation}\label{def:notation_X1}
	X_1:=
\begin{pmatrix}
\Qp & \Pp \\
\Rp & -\Qp
\end{pmatrix}, \qquad \text{and} \qquad 
Y_1:=
\begin{pmatrix}
\Qk & \Pk \\
\Rk & -\Qk
\end{pmatrix}
\end{equation}
where $\Qp,\Pp, \Rp$ are functions of $x, \gamma, \tau$, and also $z$, whereas $\Qk,\Pk,\Rk$ are functions of $x,\gamma,\tau$ but not of $z$.

If the integrable operator has some additional structure, then the logarithmic derivative of the Fredholm determinant can be expressed directly in terms of the solution to the associated RHP.
In our case, we show that the logarithmic derivative of the Fredholm determinants in the $x$ variable can be expressed in terms of \eqref{def:notation_X1} as in the next lemma.

\begin{lem}[Deformation formula] \label{prop:RHPforH}
We have 
\begin{equation}\label{eq:identity_fred_det_residue}
	\partial_x \log \det(\mb I-\mb H)= \Qp 
	\quad \text{and} \quad \partial_x \log \det(\mb I-\mb F)=\Qk.
\end{equation}
\end{lem}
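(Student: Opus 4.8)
The plan is to treat $\mb H$ and $\mb F$ in parallel, writing $\mb M$ for either operator, $\mathcal X$ for the underlying space ($\ell^2(\mathcal S)$ or $L^2(\Lambda)$), $\mu$ for the reference measure (counting measure or arc-length), and $\vec f,\vec g$ for the corresponding vector functions (the pair $\vec f,\vec g$ in the $\mb H$ case, and $\vec a,\vec b$ times the $\tfrac1{2\pi i}$ factor in the $\mb F$ case, so that the kernel is $\bm M(u,v)=\vec f(u)^T\vec g(v)/(u-v)$). In all cases the $x$-dependence enters only through the exponential factor $e^{\mp\frac12 V(u)\sigma_3}$ with $V(u)=-\frac{\tau}3u^3+\frac\gamma2 u^2+xu$, so $\partial_x\vec f(u)=-\frac{u}2\,\vec f(u)$ componentwise up to the $\sigma_3$ twist; more precisely $\partial_x\big(e^{-\frac12V(u)\sigma_3}\vec f_0(u)\big)=-\frac u2\sigma_3\,\vec f(u)$ and $\partial_x\vec g(u)=\frac u2\sigma_3\,\vec g(u)$. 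First I would record the standard Jacobi-type variational formula $\partial_x\log\det(\mb I-\mb M)=-\Tr\!\big((\mb I-\mb M)^{-1}\partial_x\mb M\big)$, valid on the open set where $\mb I-\mb M$ is invertible, which for $\mb F$ is everything and for $\mb H$ is the full-measure set identified above.

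Next I would compute $\partial_x\bm M(u,v)$. Using the product rule and $\partial_x\vec f(u)=-\frac u2\sigma_3\vec f(u)$, $\partial_x\vec g(v)=\frac v2\sigma_3\vec g(v)$, one gets
\begin{equation*}
\partial_x\bm M(u,v)=\frac{-\frac u2\,(\sigma_3\vec f(u))^T\vec g(v)+\frac v2\,\vec f(u)^T(\sigma_3\vec g(v))}{u-v}
=-\frac12\,\frac{u\,\vec f(u)^T\sigma_3\vec g(v)-v\,\vec f(u)^T\sigma_3\vec g(v)}{u-v}
\end{equation*}
wait — since $\sigma_3$ is symmetric, $(\sigma_3\vec f(u))^T\vec g(v)=\vec f(u)^T\sigma_3\vec g(v)$, so the numerator is $-\frac12(u-v)\vec f(u)^T\sigma_3\vec g(v)$, and hence $\partial_x\bm M(u,v)=-\tfrac12\vec f(u)^T\sigma_3\vec g(v)$, a rank-one (in fact rank $\le 2$) kernel with \emph{no} singularity on the diagonal. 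This is the key simplification: $\partial_x\mb M$ factors as $-\tfrac12\,\mb F_{\!f}\,\sigma_3\,\mb F_{\!g}^T$ where $\mb F_{\!f}$ has ``columns'' $\vec f$ and $\mb F_{\!g}$ has ``columns'' $\vec g$ (integration against $\mu$). Therefore, cycling the trace,
\begin{equation*}
\partial_x\log\det(\mb I-\mb M)=\tfrac12\Tr\!\Big(\sigma_3\,\mb F_{\!g}^T(\mb I-\mb M)^{-1}\mb F_{\!f}\Big)
=\tfrac12\int_{\mathcal X}\vec g(u)^T\,(\mb I-\mb M)^{-1}\big[\vec f\big](u)\;\sigma_3\;\mu(du)
\end{equation*}
— more precisely $\tfrac12\Tr_{2\times2}\!\big(\sigma_3\,\Xi\big)$ where $\Xi:=\int\vec g(u)\otimes\big((\mb I-\mb M)^{-1}\vec f\big)(u)\,\mu(du)$ is a $2\times2$ matrix. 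So what remains is to identify this $2\times2$ matrix with $X_1$ (resp. $Y_1$).

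The bridge is the IIKS representation of the resolvent: by \cite{IIKS,Deift99b,Borodin_Deift_2000}, the ``dressed'' vectors $\vec F:=(\mb I-\mb M)^{-1}\vec f=X_{\pm}^{-T}\vec f$ (boundary value on $\Lambda$, or value on $\mathcal S$) and $\vec G:=(\mb I-\mb M^T)^{-1}\vec g=X_{\pm}^{T}\vec g$ are exactly the ones built from the RHP solution, and the resolvent kernel is $\bm R(u,v)=\vec F(u)^T\vec G(v)/(u-v)$. The standard large-argument expansion of $X$ (resp. $Y$) then yields $X_1=\int \vec g(u)\otimes\vec F(u)\,\mu(du)$ in the discrete case, and $Y_1=\frac1{2\pi i}\int_\Lambda \vec b(u)\otimes\vec F(u)\,\mu(du)$ in the continuous case — this is precisely how one extracts residues/jumps from RHP(c): writing $X(s)=I+\sum X_k s^{-k}$ and matching against the representation $X(s)=I+\sum_\xi \frac{\vec f(\xi)\otimes\vec G(\xi)}{s-\xi}$-type formula gives $X_1=\sum_\xi \vec f(\xi)\otimes\vec G(\xi)$, equivalently (by the symmetry $\vec F\leftrightarrow\vec G$, $\vec f\leftrightarrow\vec g$ under transpose) $\Xi$ above. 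Hence $\Xi=X_1$ and similarly $Y_1$. Feeding the notation \eqref{def:notation_X1} in, $\tfrac12\Tr(\sigma_3 X_1)=\tfrac12\big(\Qp-(-\Qp)\big)=\Qp$, and likewise $\tfrac12\Tr(\sigma_3 Y_1)=\Qk$, which is \eqref{eq:identity_fred_det_residue}.

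The main obstacle is bookkeeping rather than conceptual: getting the precise dictionary between ``the $2\times2$ matrix $\Xi$ assembled from $\vec g$ and the dressed $\vec f$'' and ``the subleading RHP coefficient $X_1$'' exactly right, including all the $\sigma_3$-twists hidden in $\vec f=e^{-\frac12V\sigma_3}\vec f_0$, the sign/orientation conventions on $\Lambda$, the $\frac1{2\pi i}$ normalization in $\bm F$, and the difference between residue conditions (discrete $\mb H$) and jump conditions (continuous $\mb F$) — the discrete case needs the Borodin--Deift framework \cite[Section 4]{Borodin_Deift_2000} with Lemma~\ref{lem:residue_inverse} to make ``$X_1=\sum_\xi(\text{rank-one at }\xi)$'' rigorous, while the continuous case is the classical computation from \cite{Deift99b}. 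I would present the $\mb H$ case in full and remark that the $\mb F$ case is identical after replacing sums by contour integrals and inserting $\tfrac1{2\pi i}$; the one genuinely new check is that the $x$-derivative of the kernel really does lose its diagonal singularity, which I verified above and which is what makes $\partial_x\mb M$ trace-class with a clean factorization.
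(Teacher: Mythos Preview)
Your proposal is correct and follows essentially the same route as the paper's proof in Appendix~\ref{sec:proofofdeformation}: Jacobi's formula, the cancellation $\partial_x\bm M(u,v)=-\tfrac12\vec f(u)^T\sigma_3\vec g(v)$, cycling the trace, and identifying the resulting $2\times2$ matrix with the subleading RHP coefficient via the integrable-operator representation $X(\xi)=I-\sum_{s\in\mathcal S}\frac{\vec F(s)\vec g(s)^T}{s-\xi}$. One small bookkeeping point (which you yourself flag): the paper's representation gives $X_1=\sum_s\vec F(s)\vec g(s)^T$, not $\sum_s\vec f(s)\vec G(s)^T$, so your $\Xi=\int\vec g\,\vec F^T$ is actually $X_1^T$ rather than $X_1$; this is harmless here because $\tfrac12\Tr(\sigma_3 X_1^T)=\tfrac12\Tr(\sigma_3 X_1)=\Qp$, but it is worth getting straight before you use the same identification elsewhere.
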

\begin{proof}
The proof is standard, and various versions of this formula for continuous RHPs have appeared, for example, in \cite{Deift-Its-Zhou97, Borodin_Deift_2000, Bertola-Cafasso-2012a}. For convenience of the reader in Appendix \ref{sec:proofofdeformation} we provide a proof in the discrete case.
\end{proof}

%%%%%%%%%%%%%%%%%%%%%%%%%%%%%%%%%%%%%%%%%%%%%%%%%%%%%%%%
\subsection{Coupled integrable differential equations}

From the explicit form of the vectors $\vec a,\vec b,\vec f$ and $\vec g$ given in Lemmas~\ref{lem:integrability_operator_H} and \ref{lem:integrability_operator_F}, the matrices $R_X$ and $J_Y$ can be brought to the form
$$
R_X(s)=e^{-\frac{1}{2}\mathcal V(s)\sigma_3}R_0(s)e^{\frac{1}{2}\mathcal V(s)\sigma_3} \quad \text{and}\quad J_Y(s)=e^{-\frac{1}{2}\mathcal V(s)\sigma_3}J_0(s)e^{\frac{1}{2}\mathcal V(s)\sigma_3}
$$
for matrices $R_0(s)$ and $J_0(s)$ which do not depend on $\tau,\gamma,\tau$, and the choice $\mathcal V(s)=V(s;\tau,\gamma,x)$ with $V$ as \eqref{def:functionV}.

For general $\mathcal V$, the dressing method for RHPs consists of, starting from a residue or jump matrix as above, finding an integrable differential equation related to the solution $W=X,Y$ as follows.
Setting $W_0=We^{-\mathcal V \sigma_3/2}$, the new matrix $W_0$ has the residue matrix $R_0$ (or jump matrix $J_0$) which does not depend on the parameters of $\mathcal V$. 
So the derivative of $W_0$ with respect to these parameters satisfies the same RHP as $W$ itself, apart from different asymptotics as $s\to\infty$. 
This implies that $(\partial W_0)W_0^{-1}$ has only removable singularities, and using Liouville's theorem, we find linear differential equations for $W_0$ on the parameters of $\mathcal V$.
When these differential equations for two different parameters are combined they form a Lax pair for $W$. The specific form of this Lax pair depends solely on $\mathcal V$ but not on whether $W$ solves a discrete or continuous RHP.
After some work, the compatibility condition for this Lax pair reduces to nonlinear equations/systems on the entries of $W_1=X_1,Y_1$.

Carrying out the general ideas just outlined, by picking different rational exponents $\mathcal V$ one can produce different integrable systems, including (modified) Korteweg-de Vries equation, nonlinear Schr\"odinger equation, or even more generally the AKNS system \cite{AKNS-74} that contains the former and many other integrable differential equations. 

In our specific context, we can suppress the $\gamma$ term from $V$ into $R_0$ and $J_0$ and write
$$
	R_X(s) = e^{-\frac12( -\frac{\tau}3 s^3+ xs) \sigma_3 } R_1(s) e^{\frac12(  -\frac{\tau}3 s^3+ xs) \sigma_3 }\quad \text{and} \quad J_Y(s) = e^{-\frac12( -\frac{\tau}3 s^3+ xs) \sigma_3 } J_1(s) e^{\frac12(  -\frac{\tau}3 s^3+ xs) \sigma_3 }
$$
for matrices $R_1(s)$ and $J_1(s)$ which do not depend on $\tau$ and $x$. The formalism just mentioned then produces that the entries %$p$ and $r$ 
of $X_1$ and $Y_1$ solve the coupled mKdV system claimed in Theorem~\ref{thm:integrable_systems}. If, on the other hand, we suppress the $\tau$ term and write
$$
	R_X(s) = e^{-\frac12(  \frac{\gamma}2 s^2+ xs) \sigma_3 } R_2(s) e^{\frac12( \frac{\gamma}2 s^2+ xs) \sigma_3 }\quad \text{and}\quad J_Y(s) = e^{-\frac12(  \frac{\gamma}2 s^2+ xs) \sigma_3 } J_2(s) e^{\frac12( \frac{\gamma}2 s^2+ xs) \sigma_3 }
$$
for matrices $R_2(s)$ and $J_2(s)$ which do not depend on $\gamma$ and $x$, then we obtain the nonlinear heat equations in Theorem~\ref{thm:integrable_systems}.
We stress that these arguments do not rely on the choice of $R_1,R_2, J_1,J_2$ but solely on the choice of the conjugating exponential factors in the above.

In either of the two situations above, for $X$ we are dealing with a purely discrete RHP with infinitely many poles, which indicates that $\Pp$ and $\Rp$ are infinite soliton solutions. Also, we considered the parameter $z$ that defines the set of poles $\mathcal S$ to be fixed. 
In terms of the integrable systems for $X$, the dependence of $\Pp$ and $\Rp$ on $z$ appears in the initial/boundary conditions, and since this set of poles is not, for general $z$, symmetric under the real axis, these solutions $\Pp$ and $\Rp$ are, also for general $z$, complex-valued.

The equations in Theorem~\ref{thm:integrable_systems} should be supplemented by appropriate initial/boundary conditions, which should be read off from the RHP. Finding these initial/boundary conditions is a challenge of its own, and we do not pursue it in this paper.

The calculations that lead to the exact form of the systems \eqref{eq:coupledMKdV}--\eqref{eq:coupledNLS} is a folklore in integrable systems theory, for convenience of the reader we include them in the Appendix~\ref{sec:iiks_general}.

In concrete terms, the outcome of this discussion is summarized as the next result.

\begin{prop}\label{prop:qpreqs}
The entries $q=\Qk$, $r=\Rk$ and $p=\Pk$ of $Y_1$  in \eqref{def:notation_X1} satisfy
\beq \label{eq:2qxpr2}
	q_x= pr .
\eeq
The pair $(p,r)$ satisfies a coupled system of nonlinear heat equations, 
\begin{equation} \label{eq:prxxxtauPDE} 
\begin{cases}
	&2p_\gamma = - p_{xx} - 2p^2r \\
	&2r_\gamma =  r_{xx} + 2pr^2   
\end{cases} 
\end{equation}
and also a coupled system of mKdV equations 
\begin{equation} \label{eq:prxxgammaPDE} 
\begin{cases}
	&3p_{\tau}+ p_{xxx} +  6 prp_x=0  \\
	&3 r_{\tau} + r_{xxx} + 6 pr r_x =0 . 
\end{cases} 
\end{equation}
The same is true for the entries $q=\Qp$, $r=\Rp$ and $p=\Pp$ of $X_1$  in \eqref{def:notation_X1}, provided the solution $X$ of the RHP exists.
\end{prop}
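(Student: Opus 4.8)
The plan is to prove all of \eqref{eq:2qxpr2}, \eqref{eq:prxxxtauPDE} and \eqref{eq:prxxgammaPDE} simultaneously for $X$ and $Y$ by the dressing method applied to the RHPs of this section, using that $R_X(s)=e^{-\frac12 V(s)\sigma_3}R_0(s)e^{\frac12 V(s)\sigma_3}$ and $J_Y(s)=e^{-\frac12 V(s)\sigma_3}J_0(s)e^{\frac12 V(s)\sigma_3}$ with $R_0,J_0$ independent of $\tau,\gamma,x$. Write $W$ for either $X$ or $Y$ and set $\widehat W(s):=W(s)e^{-\frac12 V(s)\sigma_3}$. Since $e^{-\frac12 V\sigma_3}$ is entire and unimodular, $\widehat W$ solves a RHP on $\mathcal S$ (resp.\ $\Lambda$) whose residue matrix $R_0$ (resp.\ jump $J_0$) no longer depends on $(\tau,\gamma,x)$, with normalization $\widehat W(s)\sim\bigl(I+\tfrac{W_1}{s}+\tfrac{W_2}{s^2}+\cdots\bigr)e^{-\frac12 V(s)\sigma_3}$ as $s\to\infty$, where $W_1=X_1$ or $Y_1$ is as in \eqref{def:notation_X1}.

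For $\mu\in\{x,\gamma,\tau\}$ set $A_\mu(s):=\bigl(\partial_\mu\widehat W(s)\bigr)\widehat W(s)^{-1}$. The first step is to show that $A_\mu$ extends to an entire function of $s$. In the continuous case this is immediate: differentiating $\widehat Y_+=\widehat Y_-J_0$ with $J_0$ independent of $\mu$ gives $\bigl(\partial_\mu\widehat Y_+\bigr)\widehat Y_+^{-1}=\bigl(\partial_\mu\widehat Y_-\bigr)\widehat Y_-^{-1}$, so $A_\mu$ has no jump across $\Lambda$. In the discrete case, Lemma~\ref{lem:residue_inverse} yields near each pole $\xi\in\mathcal S$ a factorization $\widehat W(s)=H_\xi(s)\bigl(I+\tfrac{R_\xi(s)}{s-\xi}\bigr)$ with $H_\xi$ analytic and invertible at $\xi$, $R_\xi(s)=e^{\frac12(V(s)-V(\xi))\sigma_3}R_0(\xi)e^{-\frac12(V(s)-V(\xi))\sigma_3}$, and $R_\xi(s)^2=0$; since $R_\xi(\xi)=R_0(\xi)$ does not depend on $\mu$ one has $\partial_\mu R_\xi(s)=\Boh(s-\xi)$, and a direct computation using $\bigl(I+\tfrac{R_\xi}{s-\xi}\bigr)^{-1}=I-\tfrac{R_\xi}{s-\xi}$ shows $A_\mu$ is analytic at $\xi$. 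This is the discrete counterpart of the computation in \cite{Borodin_Deift_2000}.

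The second step is to identify $A_\mu$ from the behaviour at $s=\infty$. Since $\partial_xV=s$, $\partial_\gamma V=\tfrac12 s^2$, $\partial_\tau V=-\tfrac13 s^3$, substituting the asymptotic expansion of $\widehat W$ into $A_\mu=(\partial_\mu\widehat W)\widehat W^{-1}$ and invoking Liouville's theorem shows that $A_x,A_\gamma,A_\tau$ are polynomials in $s$ of degrees $1,2,3$ with leading coefficients $-\tfrac12\sigma_3$, $-\tfrac14\sigma_3$, $\tfrac16\sigma_3$, and the remaining coefficients are explicit polynomial expressions in $W_1,W_2,W_3$ and $\sigma_3$. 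In particular $A_x(s)=-\tfrac{s}{2}\sigma_3+\tfrac12[\sigma_3,W_1]$; comparing the $\Boh(s^{-1})$ terms in $\partial_x\widehat W=A_x\widehat W$ gives $W_{1,x}=\tfrac12[\sigma_3,W_1]\,W_1+(\text{off-diagonal})$, whose $(1,1)$-entry is exactly $q_x=pr$, i.e.\ \eqref{eq:2qxpr2}. The Lax system $\partial_\mu\widehat W=A_\mu\widehat W$, $\mu\in\{x,\gamma,\tau\}$, is compatible, so the zero-curvature identities $\partial_\gamma A_x-\partial_x A_\gamma+[A_x,A_\gamma]=0$ and $\partial_\tau A_x-\partial_x A_\tau+[A_x,A_\tau]=0$ hold; expanding these in powers of $s$ and inserting the explicit coefficients yields the coupled nonlinear heat system \eqref{eq:prxxxtauPDE} and the coupled mKdV system \eqref{eq:prxxgammaPDE}. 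Nothing in this step distinguishes the discrete from the continuous RHP, so the same conclusions hold for $X=W$ on the open set where $X$ exists.

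The main obstacle is the bookkeeping in the second step: reading off $A_\gamma$ and $A_\tau$ requires carrying the expansion of $\widehat W$ through the coefficients $W_2$ and $W_3$ and solving the resulting triangular system (forced by the requirement that $A_\mu$ is a polynomial) for the lower-order coefficients in terms of $W_1$, and then expanding the two zero-curvature relations and collecting the $s^0$ (and, for mKdV, $s^1$ and $s^2$) components. This is routine AKNS algebra — cf.\ \cite{AKNS-74, Deift-Its-Zhou97} — and the detailed computation will be carried out in Appendix~\ref{sec:iiks_general}. The only genuinely analytic inputs are that $W_2,W_3$ exist and depend smoothly enough on $(\tau,\gamma,x)$ to be differentiated, and that in the discrete case the $s\to\infty$ expansion of $\widehat X$ is uniform away from $\mathcal S$ as required by {\bf RHP(c)}; both are furnished by the solvability theory for IIKS Riemann--Hilbert problems recalled above (\cite{Deift99b} in the continuous case, \cite{Borodin_Deift_2000} in the discrete case).
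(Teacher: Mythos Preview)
Your proposal is correct and follows essentially the same dressing-method route as the paper: conjugate $W=X,Y$ by $e^{-V\sigma_3/2}$, show $(\partial_\mu\widehat W)\widehat W^{-1}$ is entire, and identify it as a polynomial via Liouville. The one organizational difference is that you propose extracting \eqref{eq:prxxxtauPDE} and \eqref{eq:prxxgammaPDE} from the zero-curvature identities $\partial_\mu A_x-\partial_x A_\mu+[A_x,A_\mu]=0$, whereas the paper instead expands each Lax equation $\partial_\mu W=A_\mu W$ in powers of $s^{-1}$, obtains recursions relating $X_k$ to $X_{k+1},X_{k+2},\dots$, and then algebraically eliminates $X_2,X_3$ using the $\partial_x$-recursion until only $X_1$ remains. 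Both routes are standard AKNS bookkeeping and equivalent in content; your zero-curvature packaging is arguably cleaner conceptually, while the paper's direct elimination makes the dependence on (and removal of) the higher $X_k$ more explicit, which is convenient when one later needs the intermediate identity $\partial_xX_2=(\partial_xX_1)X_1+2\partial_\gamma X_1$ (as in \eqref{eq:xgammaX2X1}).
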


%%%%%%%%%%%%%%%%%%%%%%%%%%%%%%%%%%%%%%%%%%%%%%%%%%%%%%%%
\subsection{From the coupled systems to the KP equation: proof of Theorem \ref{thm:integrable_systems} and Corollary~\ref{thm:integrable_structure}} \label{sec:kp_equation}

For $u=\Up,\Uk$ as in Corollary~\ref{thm:integrable_structure}, we take the derivative of the equation \eqref{eq:identity_fred_det_residue} and use Lemma \ref{lem:integrability_operator_H} (for $u=\Up$) or Lemma~\ref{lem:integrability_operator_F} (for $u=\Uk$), combined with \eqref{eq:2qxpr2}, and write
\beqq
	u= q_x= pr,
\eeqq
with $q=\Qk,\Qp$ etc. Thus, Proposition \ref{prop:qpreqs} implies Theorem \ref{thm:integrable_systems} except \eqref{eq:prgammaminus} which  we prove in the next subsection. 

We now show that the pair of coupled systems \eqref{eq:prxxxtauPDE} and \eqref{eq:prxxgammaPDE}, when combined together, yield the KP equation, hence proving Theorem~\ref{thm:integrable_structure}.
Using \eqref{eq:prxxxtauPDE}, we have $2u_\gamma=p r_{xx}-p_{xx}r= \partial_x (pr_x- p_xr)$. 
Using \eqref{eq:prxxxtauPDE} again and noting $pr (pr_x+p_x r)=  u u_x$, we find that 
\beqq %\label{eq:ugammagammaPDE}
	4u_{\gamma\gamma}=\partial_x\left( pr_{xxx} + p_{xxx}r + 4u u_x -(p_xr_x)_x  \right).
\eeqq
On the other hand, multiplying the first equation in  \eqref{eq:prxxgammaPDE} by $r$ and the second by $p$, and adding the results, we get, using $pr (pr_x+p_x r)=  u u_x$ again, 
\beqq 
	3u_{\tau}+pr_{xxx}+ p_{xxx}r + 6 uu_x=0.
\eeqq 
Finally, the third derivative evaluation of $u=pr$ is given by
\beqq 
	u_{xxx}=pr_{xxx}+ p_{xxx}r+3(p_xr_x)_x .
\eeqq 
We combine the above three equations to remove $pr_{xxx}+ p_{xxx}r$ and $(p_xr_x)_x$ and obtain the KP equation 
$$
12u_{\gamma\gamma}+\left(u_{xxx}+12uu_x+12u_{\tau}\right)_x=0. 
$$
This proves Theorem~\ref{thm:integrable_structure}.

In summary, the calculations above show that any RHP with residue or jump matrix of the form 
$$
	M(s) = e^{-\frac12( -\frac{\tau}3 s^3+ \frac{\gamma}{2} s+ xs) \sigma_3 } M_0(s) e^{\frac12(  -\frac{\tau}3 s^3+\frac{\gamma}{2} s+  xs) \sigma_3 }
$$
with a matrix $M_0(s)$ which do not depend on $\tau, \gamma, x$ is associated to the KP equation. 
Rigorous analysis of the scattering transform for the KP equation has been carried out for classes of real solutions that decay sufficiently fast as $x,\gamma\to \infty$, except possibly along a line (see \cite{Ablowitz-Villarroel-04} and the references therein). 
In these situations, the inverse scattering transform always involves a so-called d-bar problem component, possibly also with poles, and a continuous RHP, all for scalar functions instead of matrix-valued functions. 
Our RHP for $Y$ is nothing but a dressed-up version of the RHP for the Hastings-McLeod solution to PII, which is well understood. In contrast, our RHP for $X$ is a purely discrete matrix RHP. Hence, the KP solution encoded in our discrete matrix RHP falls outside the class of the just mentioned works. 
Determining the exact growth/decay properties of our KP solution is a separate task of interest.

%%%%%%%%%%%%%%%%%%%%%%%%%%%%%%%%%%%%%%%%%%%%%%%%%%%%%%%%
\subsection{Symmetry}

We start with the discrete RHP for $X$. Define
$$
	\widehat X(\xi;\gamma) :=\sigma_1 X(-\xi;-\gamma)\sigma_1, \quad \text{with}\quad \sigma_1 = \big( \begin{smallmatrix} 0 & 1 \\ 1 & 0 \end{smallmatrix} \big).
$$
This matrix tends to the identity matrix $I$ as $\xi \to\infty$. 

A direct calculation shows that the residue matrix for $X$ satisfies the symmetry property
\beqq
	R_{X}(-s; -\gamma)= - \sigma_1 R_X(s;\gamma)\sigma_1 .
\eeqq
Hence, $\widehat X(\xi;\gamma)$ satisfies, for every $s\in \mathcal S$, 
\beqq
	\widehat X(\xi;\gamma) \left( -  \frac{R_{X}(s; \gamma)}{\xi-s} +I \right) 
	= \sigma_1  X(-\xi; -\gamma) \left( - \frac{R_{X}(-s; -\gamma)}{(-\xi)-(-s)} +I \right) \sigma_1 .
\eeqq
By the RHP for $X$ and Lemma~\ref{lem:residue_inverse}, the above formula is analytic at $\xi=s$. 
Hence,  Lemma~\ref{lem:residue_inverse} again shows that $\widehat X(\xi;\gamma)$ satisfies the same residue condition as $X(\xi; \gamma)$. 
By the uniqueness of the solution to the RHP, we find that $\widehat X(\xi)=X(\xi)$, thus obtaining the symmetry property 
$$
	X(\xi;\gamma)=\sigma_1 X(-\xi;-\gamma)\sigma_1.
$$
Inserting this into the asymptotic expansion \eqref{eq:asymptotics_X} we find that 
$X_k= (-1)^k \sigma_1 X_k(-\gamma)\sigma_1$ for all $k\geq 1$.
When $k=1$, this means that the coefficients in \eqref{def:notation_X1} satisfy $p(\gamma)=- r(-\gamma)$, 
proving the symmetry \eqref{eq:prgammaminus}. We also have $q(\gamma)=q(-\gamma)$. 

The proof for $Y$ is similar: the jump symmetry is now
$$
\sigma_1 J_Y(-s;-\gamma)\sigma_1 = J_Y(s;\gamma)^{-1},
$$
and this implies that $\sigma_1Y(-s;-\gamma)\sigma_1=Y(s;\gamma)$, which then from \eqref{eq:asymptotics_X} yields the symmetry \eqref{eq:prgammaminus}.

We discussed in Section \ref{sec:intropde} that using the symmetry, the function $p$ itself satisfies the non-local differential equations \eqref{eq:nonlocal_1} and \eqref{eq:nonlocal_2}. 
The equation \eqref{eq:nonlocal_2} is a non-local nonlinear Schr\"odinger (NLS)  equation with complex time variable. The appearance of the non-local NLS from the AKNS system was obtained relatively recently in \cite{Ablowitz-Musslimani-13}, and the inverse scattering transform of this non-local NLS equation has also recently been analyzed (see \cite{Ablowitz-Luo-Musslimani-18} and the references therein) for a class of initial conditions with finite amplitude as $x\to \infty$. 
Among other distinguishing features, in these works the poles of the discrete RHP were symmetric under reflection onto the imaginary axis, which is not the case for our discrete RHP. 
 
%%%%%%%%%%%%%%%%%%%%%%%%%%%%%%%%%%%%%%%%%%%%%%%%%%%%
\section{Flat initial condition}\label{sec:flat_IC}

All of the results for the step initial condition extend to the flat initial condition, and we now discuss them.

The limit of the one point distribution with the flat initial condition does not depend on the location parameter $\gamma$. 
It is given by the formula \cite{Baik-Liu16} 
\begin{equation}\label{def:limiting_distribution_flat}
	\FF(x;\tau)=\oint e^{xA_1(z)+\tau A_2(z)+A_3(z)+ B(z)}\det(\mb I-\mb K_z^{(1)})\frac{dz}{2\pi i z},
\end{equation}
where the integral is a small circle around the origin in the counterclockwise direction.
The functions $A_1, A_2, B$ are the same as in the step initial condition case, and $A_3(z)=-\frac{1}{4}\log(1-z)$. 
The operator 
$\mb K_z^{(1)}:\ell^2(\mathcal S_-)\to \ell^2(\mathcal S_-)$ is defined via the kernel
\begin{equation}\label{eq:initial_kernel_K1}
	\bm K_z^{(1)}(\xi_1,\xi_2) =\bm K_z^{(1)}(\xi_1,\xi_2; x,\tau)= \frac{e^{\Psi(\xi_1)+\Psi(\xi_2)}}{\xi_1(\xi_1+\xi_2)}, 
	\qquad \xi_1, \xi_2\in \mathcal S_- ,
\end{equation}
where the function $\Psi$ is given by
\beq \label{def:function_Psi}
	\Psi(\xi)=\Psi(\xi;x,\tau):=-\frac{1}{3}\tau\xi^3 +x\xi -\frac{1}{2}Q(\xi)
	= \frac12 \Phi(\xi; 2x, 2\tau)
\eeq
compared with the function $\Phi$ in \eqref{def:phi_function}. 
We have the following results.

\begin{thm}[Large $\tau$ limit; flat case] \label{thm:limits_FF}
For each fixed $x\in\R$, 
	\begin{equation*} 
	\lim_{\tau\to\infty} \FF\left( -\tau +\frac{\pi^{1/4}}{\sqrt{2}}x\tau^{1/2}; \tau\right)
	=\frac{1}{\sqrt{2\pi}} \int_{-\infty}^x e^{-\frac{y^2}{2}}dy.
	\end{equation*}
\end{thm}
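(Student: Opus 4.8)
The plan is to mirror the strategy used to prove Theorem~\ref{thm:gaussian_degeneration} for the step case, exploiting the close relationship between the flat and step kernels. First I would reduce the flat Fredholm determinant to its leading term. The kernel $\bm K_z^{(1)}$ in~\eqref{eq:initial_kernel_K1} is a rank-type Airy-like operator; writing $\Psi(\xi) = \frac12\Phi(\xi; 2x, 2\tau)$ as in~\eqref{def:function_Psi} lets us reuse all of Subsection~\ref{subsec:riemann_surface}'s estimates on $\re \Phi_\tau$ after the substitution $(\tau,x)\mapsto(2\tau,2x)$ and noting the extra factor of $\tfrac12$. With $x\mapsto -\tau + \tfrac{\pi^{1/4}}{\sqrt2}x\tau^{1/2}$ one checks the relevant exponent is $\Psi_\tau(\xi) = -\tau(\tfrac13\xi^3 + \xi) + \tau^{1/2}\tfrac{\pi^{1/4}x}{\sqrt2}\xi - \tfrac12 Q(\xi)$, \emph{up to an overall factor absorbed into $2\tau$}; I would track the constants carefully, but the structure is identical. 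The analogue of Proposition~\ref{thm:localization_fred_det} then gives $\det(\mb I - \mb K_z^{(1)}) = 1 - \Tr \mb K_z^{(1)} + \Boh(e^{-c\tau})$ uniformly on $1-\delta \le |z| \le 1-\delta_1$, and the analogue of Lemma~\ref{prop:large_tau_estimate_trace} isolates the $k=0$ Bethe root: $\Tr \mb K_z^{(1)} = \tfrac{e^{2\Psi_\tau(u_0(z))}}{2 u_0(z)^2} + \Boh(e^{-c\tau})$ (the power of $u_0$ and the constant differ from the step case because the kernel~\eqref{eq:initial_kernel_K1} has $\xi_1(\xi_1+\xi_2)$ rather than $\xi_1 u (\xi_1+u)(u+\xi_2)$ in the denominator, so the diagonal term is $\bm K_z^{(1)}(u_0,u_0) = \tfrac{e^{2\Psi_\tau(u_0)}}{2u_0^2}$).

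Next I would convert the resulting $z$-integral into a polylog integral on the Riemann surface $\RS$, exactly as in the Corollary following Lemma~\ref{prop:large_tau_estimate_trace}. The new prefactor is $e^{xA_1(z) + \tau A_2(z) + A_3(z) + B(z)}$ times $e^{2\Psi_\tau(u_0(z))}$; combining the exponents and using $A_3(z) = -\tfrac14\log(1-z)$ together with the definitions of $A_1, A_2, B$ and $Q$, one rewrites this as $\tilde E(z) e^{\tau \ff_1(z) + \tau^{1/2}\hat x \ff_2(z)}$ where $\ff_1, \ff_2$ are the \emph{same} functions as in~\eqref{eq:ff12E} (because the cubic and linear parts of the exponent are governed by the same $\Li_{3/2} - \Li_{5/2}$ combination after the $2\tau$ rescaling cancels), and $\tilde E(z)$ is a modification of $\EE(z)$ by the extra half-power of $Q$ and the factor $(1-z)^{-1/4}$. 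I would then show $\tilde E$ extends to $\RS$ with the appropriate behavior at $z=1$ and $z=0^{(2)}$: the factor $(1-z)^{-1/4}$ contributes a mild algebraic singularity at the branch point, but the order-$3$ zero of $\EERS$ (Lemma~\ref{lem:EERSzero}) dominates, so $\omega_\tau^{(1)} = \tilde E_{\RS}(z) e^{\tau\ffRS_1 + \tau^{1/2}\hat x \ffRS_2}/(c\, z\, \URS_0(z)^2)\, dz$ is still analytic on $\RS$ (note $\URS_0^2$ not $\URS_0^4$). The behavior near $z=0^{(2)}$ requires recomputing the analogue of Lemma~\ref{prop:local_expansion_prefactor_gaussian} with the half-$Q$ and the $(1-z)^{-1/4}$ factor: I expect $\tilde E_{\RS}(z) = c\, u_0(z)^2(1 + \Boh(z^2))$ as $z\to 0^{(2)}$, with the power matching $\URS_0^2$ so that the integrand over $\Gamma_3$ again produces a Gaussian.

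Then I would deform the contour to $\Gamma' = \Gamma_1 \cup \Gamma_2 \cup \Gamma_3$ exactly as in Subsection~\ref{sec:asymppolyintg}, and repeat the three lemmas: $\int_{\Gamma_1}\omega_\tau^{(1)} \to 0$ and $\int_{\Gamma_2}\omega_\tau^{(1)}\to 0$ follow from the same sign analysis of $\re\ffRS_1$ and $\re(\Li_{5/2}(iy) - \Li_{3/2}(iy))$ (the functions are literally the same, only the prefactor changed, and the prefactor is bounded on $\Gamma_1\cup\Gamma_2$), while $\int_{\Gamma_3}\omega_\tau^{(1)} \to \tfrac1{\sqrt{2\pi}}\int_x^\infty e^{-u^2/2}\,du$ follows from the local expansion of $\tilde E_{\RS}$ at $0^{(2)}$ and the residue computation in Lemma~\ref{lem:integral_gamma5}, \emph{provided the constant $c$ in $\omega_\tau^{(1)}$ and the constant in the local expansion of $\tilde E_{\RS}$ are matched so the Gaussian normalization comes out exactly right}.

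The main obstacle I anticipate is bookkeeping: getting all the constants and powers of $2$ to line up so that the Gaussian in Lemma~\ref{lem:integral_gamma5}'s analogue has exactly the normalization $\tfrac1{\sqrt{2\pi}}$, rather than some spurious multiplicative factor. The substitution $(\tau,x)\mapsto(2\tau,2x)$ interacts with the $\tfrac12 Q$ in $\Psi$, the $u_0^2$ versus $u_0^4$ in the denominators, and the extra $A_3$ and the halved $B$; each of these shifts the local expansion at $0^{(2)}$, and I would need to verify that the net effect is a single consistent Gaussian. A secondary, more technical point is handling the $(1-z)^{-1/4}$ branch cut on $\RS$: since $z=1$ is already a branch point with local coordinate $z-1 = w^2$, the factor becomes $w^{-1/2}$, which is \emph{not} single-valued in $w$, so one must check that it combines with $\EERS \sim w^3$ and $dz = 2w\,dw$ to give something with at worst an integrable (in fact removable, after the $\URS_0^{-2}$) singularity, and that the contour $\Gamma'$ can be chosen to avoid any monodromy issue — this is where I would be most careful, though I expect it works out because the flat distribution is known to be well-defined.
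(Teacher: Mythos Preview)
Your proposal is correct and follows essentially the same route as the paper; the paper merely makes the cosmetic choice of computing $\FF(x_\tau/2;\tau/2)$ so that the kernel exponent becomes exactly $\tfrac12\Phi_\tau$, then changes variables at the end, whereas your direct computation at $(x_\tau,\tau)$ produces the same $\ff_1,\ff_2$ with coefficient $\tau$ (not via any ``$2\tau$ rescaling'' but simply because $2\Psi_\tau(u_0)=\Phi_\tau(u_0)+\tfrac12 Q(u_0)$ and the extra $\tfrac12 Q$ is absorbed into $\tilde E$). On your branch-cut worry at $z=1$: neither $\EERS^{1/2}$ nor $(1-z)^{-1/4}$ is single-valued on $\RS$ (they have orders $3/2$ and $-1/2$ in the local coordinate $w$ with $z-1=w^2$), but their product $E^{(1)}=\EERS^{1/2}(1-z)^{-1/4}$ has integer order $1$, hence is single-valued with a simple zero there; combined with $\URS_0^{-2}$ and $dz=2w\,dw$ this makes $\omega_\tau^{(1)}$ analytic on $\RS$, exactly as you hoped.
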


\begin{thm}[Small $\tau$ limit; flat case] \label{thm:smallflat}
For every $x_0\in \R$, 
there exist constants $C,c, \epsilon>0$  and $\tau_0>0$ such that
$$
\left| \FF\left(\tau^{1/3}x;\tau\right)-\FGOE(2^{2/3}x)\right|\leq C e^{-\epsilon \tau^{-2/3}}e^{-c(x-x_0)},\qquad 0<\tau <\tau_0, \quad x\geq x_0,
$$
where $\FGOE$ is the GOE Tracy-Widom distribution. In particular,
	\begin{equation*}
		\lim_{\tau\to0}\FF(\tau^{1/3}x;\tau)=\FGOE(2^{2/3}x).
	\end{equation*}	
\end{thm}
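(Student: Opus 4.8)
The final statement to prove is Theorem~\ref{thm:smallflat}, the small $\tau$ limit for the flat initial condition.

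\medskip

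The plan is to mimic the proof of Theorem~\ref{thm:tracy_widom_degeneration} (the step case with $\gamma=0$), carried out in Section~\ref{sec:tracy_widom}, adapting it to the kernel $\mb K_z^{(1)}$ and the extra prefactor $A_3(z)=-\tfrac14\log(1-z)$. First I would rewrite $\det(\mb I-\mb K_z^{(1)})$ as a Fredholm determinant of an Airy-type operator on $L^2(0,\infty)$, exactly as in Lemma~\ref{lem:kernelK_kernelT}: since the kernel \eqref{eq:initial_kernel_K1} factors through $\frac{1}{\xi_1(\xi_1+\xi_2)}=-\int_0^\infty e^{s(\xi_1+\xi_2)}\,ds$ one writes $\mb K_z^{(1)}=\mb G\mb G^{T}$ (or a one-sided variant), so that $\det(\mb I-\mb K_z^{(1)})=\det(\mb I-\mb B_{x,\tau})$ where $\mb B_{x,\tau}$ is a single Airy-like operator (not a square), reflecting the fact that the GOE law is a Fredholm determinant of a single shifted Airy operator rather than a square. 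Concretely, after scaling $s,t\mapsto \tau^{1/3}s,\tau^{1/3}t$ and using $\Psi(\xi)=\tfrac12\Phi(\xi;2x,2\tau)$ from \eqref{def:function_Psi}, the kernel of $\mb B_{x,\tau}$ is a sum over $\mathcal S_-$ of $\frac{\tau^{1/3}}{-\xi}e^{-\tfrac13\tau\xi^3+\tau^{1/3}\xi(\cdots)-\tfrac12 Q(\xi)}$-type terms, a discretization of a shifted Airy function with an extra $\tfrac12$ in front of $Q$.

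\medskip

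Next I would choose the scaling of the contour to be $|z|=e^{-1/(2\tau^{2/3})}$ as in \eqref{eq:scaling_z_tw}. The two analytic inputs mirror Section~\ref{sec:tracy_widom}: (i) along this circle $Q(\tau^{-1/3}\zeta)=\Boh(e^{-c\tau^{-2/3}})$ by Lemma~\ref{lem:estimate:scaled_Q}, so the factor $e^{-\tfrac12 Q(\xi)}$ tends to $1$ super-polynomially fast; (ii) the residue/contour-deformation argument of Lemma~\ref{prop:convergence_kernels_tw_scaling}, writing the sum over $\mathcal S_-$ as a difference of two contour integrals $\frac{1}{2\pi i}(\int_{\Clambda_\inn}-\int_{\Clambda_\out})f(w)\,dw$, shows $\mb B_{x,\tau}$ converges in Hilbert--Schmidt (indeed with an $e^{-c\tau^{-2/3}}e^{-c(s+t)-c(x-x_0)}$ bound on the kernel difference, uniformly for $x\ge x_0$) to the shifted Airy operator whose Fredholm determinant on $L^2(0,\infty)$ is precisely $\FGOE(2^{2/3}x)$. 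The factor $2^{2/3}$ should appear from keeping track of the $2\tau$ and $2x$ substitutions in $\Psi=\tfrac12\Phi(\cdot;2x,2\tau)$ together with the standard identity expressing the GOE Tracy--Widom distribution as $\det(\mb I-\mathrm{Ai}\ \text{operator on a half-line})$ after a rescaling of the argument; I would cite the classical GOE Fredholm-determinant representation (e.g. \cite{Tracy-Widom94}, or the form in \cite{Ferrari-Nejjar15, Baik-Liu16}) to pin the constant. Then, as in Corollary~\ref{prop:convergence_fred_det_tw}, the inequality $|\det(\mb I-\mb K_1)-\det(\mb I-\mb K_2)|\le \|\mb K_1-\mb K_2\|_1 e^{\|\mb K_1\|_1+\|\mb K_2\|_1+1}$ upgrades the kernel estimate to $|\det(\mb I-\mb B_{x,\tau})-\FGOE(2^{2/3}x)|\le Ce^{-c\tau^{-2/3}}e^{-c(x-x_0)}$.

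\medskip

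Finally I would handle the prefactor $e^{xA_1(z)+\tau A_2(z)+A_3(z)+B(z)}$. On the circle $|z|=e^{-1/(2\tau^{2/3})}$ one has $z\to 0$ as $\tau\to 0$, and since $A_1,A_2,B$ all vanish at $z=0$ and $A_3(z)=-\tfrac14\log(1-z)$ also vanishes at $z=0$ and is analytic near $0$, the prefactor is $1+o(1)$, bounded uniformly on the contour; more precisely it is $\Boh(1)$ with the same type of control used in Section~\ref{sec:comppfsmall}. Then by Cauchy's theorem
\[
	\FF(\tau^{1/3}x;\tau)-\FGOE(2^{2/3}x)=\oint e^{\tau^{1/3}xA_1(z)+\tau A_2(z)+A_3(z)+B(z)}\bigl(\det(\mb I-\mb B_{x,\tau})-\FGOE(2^{2/3}x)\bigr)\frac{dz}{2\pi i z},
\]
and bounding the integrand by the estimate above gives the claimed $Ce^{-\epsilon\tau^{-2/3}}e^{-c(x-x_0)}$ bound, hence the uniform limit. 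The main obstacle I anticipate is bookkeeping: correctly tracking the factors of $2$ coming from $\Psi=\tfrac12\Phi(\cdot;2x,2\tau)$ and the $\tfrac12 Q$ to land exactly on $\FGOE(2^{2/3}x)$ with the right shifted-Airy kernel, and verifying that the one-sided (non-square) operator structure indeed reproduces the GOE — rather than GUE-type — Fredholm determinant; the contour-integral convergence argument itself is a routine repetition of Lemmas~\ref{lem:estimate:scaled_Q}–\ref{prop:convergence_kernels_tw_scaling}.
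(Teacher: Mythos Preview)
Your proposal is correct and follows essentially the same route as the paper, but the paper executes it more economically via a substitution you do not quite spot. The paper observes that, after conjugation, $\mb K_z^{(1)}|_{\frac{x}{2},\frac{\tau}{2}}$ equals $-\mJ|_{\gamma=0}=\mb G_0\mb G_0^T$ \emph{exactly} (using $\Psi(\xi;x/2,\tau/2)=\tfrac12\Phi(\xi;x,\tau)$), and hence $\det(\mb I-\mb K_z^{(1)}|_{x/2,\tau/2})=\det(\mb I-\mb T_0)=\det(\mb I-\mb A_{x,\tau})$ with the \emph{very same} operator $\mb A_{x,\tau}$ already analyzed in Section~\ref{sec:tracy_widom}. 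This means Lemma~\ref{prop:convergence_kernels_tw_scaling} and Corollary~\ref{prop:convergence_fred_det_tw} apply verbatim with no re-derivation; one then invokes $\FGOE(x)=\det(\mb I-\mb A_x)$ and at the very end substitutes $\tau\mapsto 2\tau$, $x\mapsto 2^{2/3}x$ to recover the stated form.

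Your direct factorization works too, but note a small slip in your bookkeeping: when you write $\mb K_z^{(1)}=\mb G\mb G^T$ and pass to $\mb G^T\mb G$ on $L^2(0,\infty)$, the two copies of $e^{\Psi}$ combine, so the $\tfrac12 Q$'s add to the \emph{full} $Q$, and the resulting kernel is $\CT_0(2x+s+t;2\tau,z)$ --- i.e.\ literally $\mb T_0$ at parameters $(2x,2\tau)$, not a new operator with a stray $\tfrac12$. So you would not need to redo Lemma~\ref{lem:estimate:scaled_Q} or Lemma~\ref{prop:convergence_kernels_tw_scaling} at all; you land on the paper's shortcut anyway. Either way the $A_3(z)=-\tfrac14\log(1-z)$ factor is harmless on $|z|=e^{-1/(2\tau^{2/3})}$ exactly as you say, and the Cauchy-integral conclusion in Section~\ref{sec:comppfsmall} goes through unchanged.
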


\begin{thm}[Right tail estimate; flat case]\label{thm:right_tail_FF}
For every fixed $\tau>0$, 
there is a constant $c>0$ such that
\beqq
	1- \FF(x;\tau) = (1- \FGOE(2^{2/3}\tau^{-1/3}x))  \left(1+\Boh(e^{-cx^{1/2}})  \right),\qquad x\to\infty.
\eeqq 
\end{thm}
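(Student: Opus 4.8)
The plan is to mirror the proof of Theorem~\ref{thm:large_tail} (the right tail estimate for the step initial condition) with the adjustments dictated by the flat-case formula \eqref{def:limiting_distribution_flat}. First I would reduce the Fredholm determinant $\det(\mb I-\mb K_z^{(1)})$ to its leading correction: writing $\mb K_z^{(1)} = \mb L \mb L^T$ with $\mb L : L^2(0,\infty)\to \ell^2(\mathcal S_-)$ having kernel $\bm L(\xi,s) = e^{\Psi(\xi)+s\xi}/\sqrt{-\xi}$ (so that $\mb K_z^{(1)}$ is conjugate to a self-product of a Hilbert--Schmidt operator as in Subsection~\ref{subsection:prooflemma}), the Fredholm determinant equals $\det(\mb I - \mb S_z)_{L^2(0,\infty)}$ where $\mb S_z = \mb L^T \mb L$ has kernel $\bm S_z(u,v) = \mathcal U(x+u+v)$ with $\mathcal U(y) = \sum_{\xi\in\mathcal S_-} \frac{e^{-\frac{\tau}{3}\xi^3 + y\xi - \frac12 Q(\xi)}}{-\xi}$. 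Comparing with \eqref{eq:T_to_CA} and \eqref{def:function_Psi}, this is exactly the step-case kernel $\CT_0$ but with the substitution $(\tau,x,Q)\mapsto(2\tau,2x,\tfrac12 Q)$ built into $\Psi$; equivalently, $\mathcal U(y)$ is governed by the same Airy-like function $\RA$ of Lemma~\ref{lem:asymptotics_g_large_tail}, with the halved $Q$ making no difference to any of the estimates there (the proof only used that $Q$ is exponentially small on the relevant contour). Then $\det(\mb I - \mb S_z) = 1 - \Tr \mb S_z + \Boh(\|\mb S_z\|_1^2)$ with $\|\mb S_z\|_2$ super-exponentially small for the contour $|z| = e^{-x/(4\tau)}$ (note the factor $\tau\mapsto 2\tau$ in $\Psi$ shifts the natural radius), by the obvious analogue of Proposition~\ref{prop:LT_HSnorm_est}.

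Next I would evaluate $\Tr \mb S_z = \int_x^\infty (y-x)\,\mathcal U(y)^2\,dy$ asymptotically. Splitting the integral at $y = x + a\sqrt{x}$ exactly as in Section~\ref{sec:proof_LT_trace_asympt}, the near-diagonal part is dominated by the $k=0$ term of the truncated series expansion for $\mathcal U$ (the flat-case analogue of Corollary~\ref{lem:CAseriesf} and Proposition~\ref{prop:asymptotics_CA}), giving, via Lemma~\ref{lem:asymptotics_g_large_tail} and \eqref{eq:asympt_CB2},
$$
	\Tr \mb S_z = \int_x^{x+a\sqrt x}(y-x)\RA(y;0;2\tau)^2\,dy\,(1+\Boh(e^{-cx^{1/2}})) = \CB\!\left(\frac{2^{2/3}x}{\tau^{1/3}}\right)\!\big(1+\Boh(e^{-cx^{1/2}})\big),
$$
where I have used that the scaled Airy function $\RA(\cdot;0)$ with time parameter $2\tau$ produces the argument $(2\tau)^{-1/3}\cdot$ times $2$ coming from the doubled $x$, i.e.\ $2^{2/3}\tau^{-1/3}x$ — precisely the argument appearing in $\FGOE(2^{2/3}\tau^{-1/3}x)$ after the known identity $\FGOE(\cdot)$ versus $\CB$. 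The tail part $y \ge x+a\sqrt x$ is bounded using the flat-case analogue of Proposition~\ref{prop:bound_CA} and is exponentially smaller. A subtlety here: for the flat case there is no $\gamma$, so there is no distinguished value $\gamma=\pm1/2$ and no two-term competition in the series — only the $k=0$ term contributes to leading order, and the result has no "factor $2$" phenomenon, consistent with the statement. The relation between $\CB(\frac{2^{2/3}x}{\tau^{1/3}})$ and $1 - \FGOE(2^{2/3}\tau^{-1/3}x)$ should be supplied by the standard right-tail expansion of $\FGOE$ in terms of an Airy integral (the analogue of \eqref{eq:right_tail_FGUE}, using the Fredholm-determinant/Pfaffian representation of $\FGOE$), which I would cite or include as a short lemma.

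Finally I would handle the prefactor integral. The flat-case prefactor is $E^{(1)}(z) := e^{xA_1(z)+\tau A_2(z)+A_3(z)+B(z)}$ with $A_3(z) = -\frac14\log(1-z)$; from the power series \eqref{def:series_polylog} and $A_3(z) = \frac{z}{4} + \Boh(z^2)$ one has $E^{(1)}(z) = 1 + \Boh(z)$ and $E^{(1)}(z) = \Boh(1)$ on $|z| = e^{-x/(4\tau)}$, so $\oint E^{(1)}(z)\,\frac{dz}{2\pi i z} = 1$. Combining,
$$
	1 - \FF(x;\tau) = \oint_{|z|=e^{-x/(4\tau)}} E^{(1)}(z)\,\Tr \mb S_z\,\frac{dz}{2\pi i z} + \Boh\!\big(e^{-\frac{16(1-\epsilon)x^{3/2}}{3(2\tau)^{1/2}}}\big) = \CB\!\left(\tfrac{2^{2/3}x}{\tau^{1/3}}\right)\!\big(1 + \Boh(e^{-cx^{1/2}})\big),
$$
using that the $z$-dependence of $\Tr \mb S_z$ enters only through error terms on this contour (the leading term being $z$-independent to the order needed). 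Then \eqref{eq:right_tail_FGUE}-type expansion of $\FGOE$ finishes it. The main obstacle, as in the step case, is establishing the uniform (in $z$ on the contour) control of the near-diagonal asymptotics of $\mathcal U(y)$ for $x \le y \le x + \Boh(\sqrt x)$ together with the uniform tail bound for $y$ large — i.e.\ reproving Propositions~\ref{prop:asymptotics_CA} and~\ref{prop:bound_CA} in the flat normalization; this is routine given the step-case machinery but requires care with the $\tau\mapsto 2\tau$, $x\mapsto 2x$, $Q\mapsto \frac12 Q$ bookkeeping and with checking that the branch point $z=1$ contribution from $A_3(z)$ does not spoil analyticity of the integrand where it is needed (it does not, since $A_3$ is analytic in $|z|<1$).
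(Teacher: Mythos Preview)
Your overall strategy---reduce to the step-case machinery via the factorisation of $\mb K_z^{(1)}$---is correct and matches what the paper does. But there is a genuine error that breaks the argument: you have computed the wrong trace. With $\bm S_z(u,v)=\mathcal U(\text{shift}+u+v)$, the quantity you need is
\[
\Tr\mb S_z=\int_0^\infty \mathcal U(\text{shift}+2s)\,ds=\tfrac12\int_{\text{shift}}^\infty \mathcal U(y)\,dy,
\]
a \emph{single} integral of $\mathcal U$. What you wrote, $\int_x^\infty (y-x)\,\mathcal U(y)^2\,dy$, is $\Tr(\mb S_z^2)=\|\mb S_z\|_2^2$, i.e.\ the step-case object for the product $\mb T_{-\gamma}\mb T_\gamma$. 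In the flat formula there is no product: the Fredholm determinant is $\det(\mb I-\mb S_z)$, linear in the operator, so the leading correction is the plain trace.

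This error is not cosmetic. It propagates to the wrong exponential order: your integral yields $\CB(2^{2/3}\tau^{-1/3}x)\sim e^{-\frac{4}{3}(2^{2/3}\tau^{-1/3}x)^{3/2}}$, whereas $1-\FGOE(2^{2/3}\tau^{-1/3}x)\sim e^{-\frac{2}{3}(2^{2/3}\tau^{-1/3}x)^{3/2}}$, off by a factor of $2$ in the exponent. No ``standard GOE tail formula'' will bridge $\CB$ to $1-\FGOE$; they are different objects ($\CB(x)=\Tr\mb A_x^2$ is the GUE tail, while $1-\FGOE(x)\approx\Tr\mb A_x=\tfrac12\int_x^\infty\ai(y)\,dy$).

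The fix is immediate once you use the right trace. The paper does it by first observing (via \eqref{eq:KflJ}--\eqref{eq:KflatT0}) that $\det(\mb I-\mb K_z^{(1)}|_{x/2,\tau/2})=\det(\mb I-\mb T_0)$ with the \emph{unchanged} step-case parameters $(x,\tau,\gamma=0)$, so Propositions~\ref{prop:LT_HSnorm_est}, \ref{prop:asymptotics_CA} and \ref{prop:bound_CA} apply verbatim. Then $\Tr\mb T_0=\tfrac12\int_x^\infty\CT_0(y)\,dy\approx\tfrac12\int_x^\infty\tau^{-1/3}\ai(\tau^{-1/3}y)\,dy=\Tr\mb A_{\tau^{-1/3}x}\approx 1-\FGOE(\tau^{-1/3}x)$, and scaling $x\mapsto 2x$, $\tau\mapsto 2\tau$ gives the theorem. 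Your $\mathcal U$ bookkeeping (the exponent in $\bm S_z$ is $2\Psi(\xi)+(u+v)\xi=-\tfrac{2\tau}{3}\xi^3+(2x+u+v)\xi-Q(\xi)$, so $Q$ is \emph{not} halved and the shift is $2x$, not $x$) is off as well, but that is a minor issue compared with the trace.
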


The above result shows that the leading term of the right tail of $\FF(\tau^{1/3}x;\tau)$ does not depend on $\tau$, and it is the same as the right tail of $\FGOE(2^{2/3}x)$, which can be found, for example, in \cite{Baik-Buckingham-DiFranco08}. 

\begin{thm}[Integrable differential equations; flat case] \label{thm:intflat}
The kernel $\mb K^{(1)}$ with $x, \tau$ replaced by $\frac{x}2, \frac{\tau}2$ satisfies 
\beqq \begin{split}
	\partial_x \ln \det(\mb I - \mb K_z^{(1)}|_{\frac{x}2, \frac{\tau}2})
	= \frac12 R(\tau, x)+  \frac12 Q(\tau, x)
\end{split} \eeqq
where $R(\tau, x)= \Rp(\tau, 0, x)$ and $Q(\tau, x)=\Qp(\tau, 0, x)$ in terms of the functions in Theorem~\ref{thm:integrable_systems} and Proposition \ref{prop:qpreqs}. 
The function $R$ satisfies the (defocusing) mKdV equation 
\beqq
	3 R_{\tau} + R_{xxx} - 6 R^2 R_x =0 \qquad \text{and} \quad  Q_x= - R^2. 
\eeqq
\end{thm}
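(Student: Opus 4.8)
The plan is to reduce the flat Fredholm determinant to a step--case object at $\gamma=0$ and then differentiate in $x$. By \eqref{def:function_Psi} one has $\Psi(\xi;x/2,\tau/2)=\tfrac12\Phi(\xi;x,\tau)$ with $\Phi$ as in \eqref{def:phi_function}, so the kernel \eqref{eq:initial_kernel_K1} with $(x,\tau)$ replaced by $(x/2,\tau/2)$ equals $e^{\frac12(\Phi(\xi_1)+\Phi(\xi_2))}/(\xi_1(\xi_1+\xi_2))$. Conjugating this kernel by $\xi\mapsto\sqrt{-\xi}$ (that is, replacing $K(\xi_1,\xi_2)$ by $\tfrac{\sqrt{-\xi_1}}{\sqrt{-\xi_2}}K(\xi_1,\xi_2)$, which leaves the Fredholm determinant unchanged) and using $\sqrt{-\xi}/\xi=-1/\sqrt{-\xi}$ for $\xi\in\mathcal S_-$, it becomes $-\bJ(\xi_1,\xi_2)$ with $\bJ$ the kernel \eqref{eq:kernel_K1} at $\gamma=0$; hence $\det(\mb I-\mb K_z^{(1)}|_{\frac{x}{2},\frac{\tau}{2}})=\det(\mb I+\mJ)$, where $\mJ=\mJ(\,\cdot\,;\gamma=0)$ is the operator from the proof of Lemma~\ref{lem:kernelK_kernelT}. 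Since $\mJ^{T}=\mJ$ at $\gamma=0$ by \eqref{eq:Jsymr}, we also have $\mb K_z|_{\gamma=0}=\mJ\mJ^{T}|_{\gamma=0}=\mJ^{2}$, and therefore $\det(\mb I-\mb K_z)|_{\gamma=0}=\det(\mb I-\mJ)\det(\mb I+\mJ)$.

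Next I would differentiate in $x$. Because $\partial_x\Phi(\xi)=\xi$, one gets $\partial_x\bJ(\xi,\eta)|_{\gamma=0}=\tfrac12(\xi+\eta)\bJ(\xi,\eta)=\tfrac12\,\mathfrak u(\xi)\mathfrak u(\eta)$ with $\mathfrak u(\xi):=e^{\Phi(\xi)/2}/\sqrt{-\xi}$, so $\partial_x\mJ$ is rank one. It follows that $\partial_x\log\det(\mb I\pm\mJ)=\pm\tfrac12\langle\mathfrak u,(\mb I\pm\mJ)^{-1}\mathfrak u\rangle$ for the bilinear pairing $\langle f,g\rangle=\sum_{\mathcal S_-}fg$. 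Adding the two relations (by multiplicativity of the determinant) and subtracting them (using $(\mb I+\mJ)^{-1}+(\mb I-\mJ)^{-1}=2(\mb I-\mJ^{2})^{-1}$) yields
\[
\partial_x\log\det(\mb I+\mJ)+\partial_x\log\det(\mb I-\mJ)=\partial_x\log\det(\mb I-\mb K_z)|_{\gamma=0}=\Qp(\tau,0,x)
\]
(the last step by Lemma~\ref{prop:RHPforH} together with Lemma~\ref{lem:integrability_operator_H}) and $\partial_x\log\det(\mb I+\mJ)-\partial_x\log\det(\mb I-\mJ)=\langle\mathfrak u,(\mb I-\mb K_z|_{\gamma=0})^{-1}\mathfrak u\rangle$; hence $2\,\partial_x\log\det(\mb I-\mb K_z^{(1)}|_{\frac{x}{2},\frac{\tau}{2}})=\Qp(\tau,0,x)+\langle\mathfrak u,(\mb I-\mb K_z|_{\gamma=0})^{-1}\mathfrak u\rangle$.

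The remaining input is to identify that bilinear form with an entry of $X_1$. The point is that $\mathfrak u$ is precisely the lower component of the vector $\vec f$ of Lemma~\ref{lem:integrability_operator_H} (restricted to $\mathcal S_-$) at $\gamma=0$, while $-\mathfrak u$ is the upper component of $\vec g$ there. Using the block factorization $\mb K_z=\mJ_1\mJ_2$ from the proof of Lemma~\ref{lem:integrability_operator_H}, the resolvent $(\mb I-\mb K_z)^{-1}$ is the $\ell^2(\mathcal S_-)$ block of $(\mb I-\mb H)^{-1}$; combining the IIKS dressing relation $\vec F=X\vec f=(\mb I-\mb H)^{-1}\vec f$ with the moment formula $X_1=\sum_{\xi\in\mathcal S}\res_{s=\xi}X(s)=\sum_{\xi\in\mathcal S}\vec F(\xi)\vec g(\xi)^{T}$ (read off from {\bf RHP(c)}), one obtains $\langle\mathfrak u,(\mb I-\mb K_z|_{\gamma=0})^{-1}\mathfrak u\rangle=\Rp(\tau,0,x)$, the $(2,1)$ entry of $X_1$ in \eqref{def:notation_X1}. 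Substituting gives $\partial_x\log\det(\mb I-\mb K_z^{(1)}|_{\frac{x}{2},\frac{\tau}{2}})=\tfrac12\Qp(\tau,0,x)+\tfrac12\Rp(\tau,0,x)=\tfrac12 Q(\tau,x)+\tfrac12 R(\tau,x)$, which is the first assertion. I expect this IIKS identification — carrying the discrete dressing machinery of Appendix~\ref{sec:iiks_general} and \cite{Borodin_Deift_2000}, tracking the signs and the correct components of $\vec f,\vec g$, and checking that $\mathfrak u\in\ell^2(\mathcal S_-)$, that $\partial_x\mJ$ is trace class, and that $X$ exists in the relevant parameter range — to be the main obstacle. (An alternative would be to set up a Riemann--Hilbert problem for the flat determinant directly, but the naive block reduction to $\ell^2(\mathcal S)$ involves the reflection operator, which is not of IIKS type, so the route through the step RHP seems cleaner.)

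Finally, the differential equations for $R$ and $Q$ are immediate from the coupled systems already established. By Theorem~\ref{thm:integrable_systems} and Proposition~\ref{prop:qpreqs}, with $p=\Pp(\tau,0,x)$, $r=\Rp(\tau,0,x)=R$ and $q=\Qp(\tau,0,x)=Q$, the symmetry \eqref{eq:prgammaminus} gives $p=-r$; hence $Q_x=q_x=pr=-r^{2}=-R^{2}$ by \eqref{eq:2qxpr2}, and the $r$-equation of the coupled mKdV system \eqref{eq:prxxgammaPDE} reads $3R_\tau+R_{xxx}-6R^{2}R_x=0$, which is also \eqref{eq:nonlocal_1} specialized to $\gamma=0$, as noted in Section~\ref{sec:intropde}.
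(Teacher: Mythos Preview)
Your proposal follows essentially the same route as the paper. Both arguments conjugate the flat kernel to $-\mJ|_{\gamma=0}$, use $\mb K_z|_{\gamma=0}=\mJ^2$, exploit the rank-one structure of $\partial_x\mJ$, and then identify the resulting bilinear form with the $(2,1)$ entry of $X_1$ via the IIKS relations \eqref{eq:vecFGandresolv}--\eqref{eq:X1Fg}; the mKdV and $Q_x=-R^2$ statements are obtained in both cases by specializing Proposition~\ref{prop:qpreqs} and the symmetry \eqref{eq:prgammaminus} to $\gamma=0$.

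The only organizational differences are cosmetic: where the paper writes $(\mb I+\mJ)^{-1}=(\mb I-\mJ^2)^{-1}(\mb I-\mJ)$ and splits the trace directly, you instead form the sum and difference of $\partial_x\log\det(\mb I\pm\mJ)$ and use $(\mb I+\mJ)^{-1}+(\mb I-\mJ)^{-1}=2(\mb I-\mJ^2)^{-1}$; and where the paper invokes an auxiliary trace lemma to pass from $\ell^2(\mathcal S_-)$ to $\ell^2(\mathcal S)$, you observe directly that $(\mb I-\mb K_z)^{-1}$ is the $\mathcal S_-$ block of $(\mb I-\mb H)^{-1}$. These are equivalent. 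One caution: in the identification $\langle\mathfrak u,(\mb I-\mb K_z)^{-1}\mathfrak u\rangle=\Rp$ you should track the sign carefully, since $g_1=-\mathfrak u$ on $\mathcal S_-$ introduces a minus sign in $(X_1)_{21}=\sum F_2 g_1$; the paper's displayed identity $\chi_-\partial\bJ\chi_-=\tfrac12 f_2 g_1$ appears to absorb the same sign, so your endpoint matches the stated theorem, but it is worth writing that step out explicitly.
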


\begin{cor}
The function 
\beqq \begin{split}
	U(\tau, x):= 2\partial_{xx} \ln \det(\mb I - \mb K_z^{(1)}|_{\frac{x}{2},\frac{\tau}{2}})
\end{split} \eeqq
satisfies the Korteweg--de Vries equation (KdV) equation\footnote{The scale $\tau\mapsto 3\tau$ changes to a standard form of the KdV equation $U_{\tau} + U_{xxx} + 6 U U_x =0$.} 
\beqq
	3 U_{\tau} + U_{xxx} + 6 U U_x =0.
\eeqq
\end{cor}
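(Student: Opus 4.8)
The plan is to derive the KdV equation for $U$ directly from the mKdV equation for $R$ established in Theorem~\ref{thm:intflat}, via the classical Miura transformation. Recall that Theorem~\ref{thm:intflat} gives $\partial_x \ln\det(\mb I-\mb K_z^{(1)}|_{\frac x2,\frac\tau2}) = \tfrac12 R + \tfrac12 Q$ with $Q_x = -R^2$, so that
\[
	U(\tau,x) = 2\partial_{xx}\ln\det(\mb I - \mb K_z^{(1)}|_{\frac x2,\frac\tau2}) = R_x + Q_x = R_x - R^2.
\]
This is precisely the Miura map $U = R_x - R^2$ taking a solution of the defocusing mKdV equation to a solution of KdV. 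So the proof is a one-line identity followed by a short computation.

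First I would record the expression $U = R_x - R^2$ from Theorem~\ref{thm:intflat}. Then I would differentiate: compute $U_\tau = R_{x\tau} - 2RR_\tau$, $U_x = R_{xx} - 2RR_x$, $U_{xx} = R_{xxx} - 2R_x^2 - 2RR_{xx}$, and $U_{xxx} = R_{xxxx} - 6R_xR_{xx} - 2RR_{xxx}$. Next I would substitute the mKdV equation $3R_\tau = -R_{xxx} + 6R^2R_x$ (equivalently $R_\tau = -\tfrac13 R_{xxx} + 2R^2 R_x$) to eliminate the $\tau$-derivatives. Assembling $3U_\tau + U_{xxx} + 6UU_x$ and expanding $6UU_x = 6(R_x - R^2)(R_{xx} - 2RR_x)$, all terms cancel: the mechanism is that $3U_\tau + U_{xxx} + 6UU_x = (\partial_x - 2R)(3R_\tau + R_{xxx} - 6R^2R_x)$, which vanishes because the factor on the right is exactly the defocusing mKdV equation for $R$. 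I would present the identity in this factored form, since it makes the cancellation transparent and avoids a page of bookkeeping.

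The only genuinely subtle point, rather than a true obstacle, is that these identities hold for $(\tau,x)$ and $z$ in the open full-measure set where the relevant Fredholm determinant is nonzero (equivalently, where the associated discrete RHP for $X$ is solvable), exactly as in the step-initial-condition case discussed after Corollary~\ref{thm:integrable_structure}; I would state this caveat explicitly. A secondary point worth a sentence is smoothness: $U$ is a second $x$-log-derivative of an analytic Fredholm determinant, hence smooth in $x$ on that set, and the $\tau$-dependence enters analytically through the kernel, so term-by-term differentiation is justified and $R$, $Q$ inherit the needed regularity from Theorem~\ref{thm:intflat}. With those remarks in place the corollary follows immediately from the Miura identity and the mKdV equation for $R$.

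\begin{proof}
By Theorem~\ref{thm:intflat}, on the open full-measure set of parameters $(\tau, x, z)$ where $\det(\mb I - \mb K_z^{(1)}|_{\frac x2,\frac\tau2})\neq 0$ we have $\partial_x\ln\det(\mb I - \mb K_z^{(1)}|_{\frac x2,\frac\tau2}) = \tfrac12 R + \tfrac12 Q$ with $Q_x = -R^2$, so
\[
	U = 2\partial_{xx}\ln\det(\mb I - \mb K_z^{(1)}|_{\tfrac x2,\tfrac\tau2}) = R_x + Q_x = R_x - R^2 .
\]
Since $U$ is a second logarithmic $x$-derivative of a Fredholm determinant that is analytic in all parameters, $U$ is smooth in $x$ and differentiable in $\tau$ on this set, and likewise $R$ is smooth by Theorem~\ref{thm:intflat}. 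A direct computation using only the product rule gives the operator identity
\[
	3 U_\tau + U_{xxx} + 6 U U_x = (\partial_x - 2R)\bigl( 3 R_\tau + R_{xxx} - 6 R^2 R_x \bigr) .
\]
Indeed, from $U = R_x - R^2$ one computes $U_\tau = R_{x\tau} - 2R R_\tau$, $U_{xxx} = R_{xxxx} - 6 R_x R_{xx} - 2 R R_{xxx}$, and $6 U U_x = 6(R_x - R^2)(R_{xx} - 2 R R_x) = 6 R_x R_{xx} - 12 R R_x^2 - 6 R^2 R_{xx} + 12 R^3 R_x$; adding these and regrouping yields exactly $\partial_x(3R_\tau + R_{xxx} - 6R^2 R_x) - 2R(3R_\tau + R_{xxx} - 6R^2 R_x)$. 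By Theorem~\ref{thm:intflat}, $R$ solves the defocusing mKdV equation $3 R_\tau + R_{xxx} - 6 R^2 R_x = 0$, so the right-hand side vanishes and
\[
	3 U_\tau + U_{xxx} + 6 U U_x = 0 ,
\]
which is the asserted KdV equation.
\end{proof}
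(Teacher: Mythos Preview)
Your proof is correct and follows exactly the same route as the paper: derive $U = R_x - R^2$ from Theorem~\ref{thm:intflat} and then invoke the Miura transformation from mKdV to KdV. The paper simply states the Miura step as well known, whereas you supply the explicit factorization $(\partial_x - 2R)(3R_\tau + R_{xxx} - 6R^2 R_x)$; the content is the same.
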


\begin{proof}
We have $U= R_x + Q_x= R_x-R^2$. It is straightforward to check that if $R$ satisfies the mKdV equation, then $U$ satisfies the KdV equation. Indeed, this is a well-known Miura transformation.
\end{proof}

We prove the above theorems in the following subsections.

%%%%%%%%%%%%%%%%%%%%%%%%%%%%%%%%%%%%%%%%%%
\subsection{Large $\tau$ limit}

For $x_\tau$ as in \eqref{eq:x_scaling_large_tau}, we use \eqref{def:limiting_distribution_flat}--\eqref{def:function_Psi} and write
\beqq
	\FF ( x_\tau/2; \tau/2 ) 
	=\oint e^{\frac{x_\tau}{2} A_1(z)+ \frac{\tau}{2} A_2(z)+A_3(z)+ B(z)}
	\det(\I -  \mb K_z^{(1)}|_{\frac{x_\tau}{2}, \frac{\tau}{2}}) \frac{dz}{2\pi i z},
\eeqq
emphasizing the explicit expression
\beqq
	\bm K_z^{(1)}|_{\frac{x_\tau}{2}, \frac{\tau}{2}} (\xi_1, \xi_2)=  \frac{e^{\frac12 \Phi_\tau(\xi_1)+ \frac12 \Phi_\tau(\xi_2)}}{\xi_1(\xi_1+\xi_2)}, 
	\qquad \xi_1, \xi_2\in \mathcal S_- ,
\eeqq
with $\Phi_\tau(\xi):=\Phi(\xi; x_\tau, \tau)$ as in \eqref{eq:leading_term_Phi}.

As in Lemma~\ref{lem:kernel_sum_1}, let $\mathcal S_-=\{u_k\}_{k\in \Z}$ and let $\ell_1, \cdots, \ell_N$ be distinct integers. 
Corollary~\ref{prop:estimate_real_phi_uk_u0} and Lemma \ref{lem:uuplusu} imply that 
\beqq
	\sum_{k=1}^N \left| \bm K_z^{(1)}|_{\frac{x}{2}, \frac{\tau}{2}} (\xi, u_{\ell_k}) \right|^2
	\le \frac{1}{c^2} e^{\re \Phi_\tau(\xi)}  \sum_{k=-\infty}^\infty \frac{e^{\re \Phi_\tau(u_k)}}{|k|+2},\quad \xi\in \mathcal S_-,
\eeqq
and we further estimate
\beqq
\sum_{k=-\infty}^\infty \frac{e^{\re \Phi_\tau(u_k)}}{|k|+2}
	\leq \frac{e^{0.4734 \tau}}{2} + 2 \sum_{k=1}^\infty e^{-1.9366|k|^{3/2} \tau}
	\le e^{0.4734 \tau}
\eeqq
for all large enough $\tau$ and every $z$ as in Corollary~\ref{prop:estimate_real_phi_uk_u0}. Thus, there are $\tau_0$ and $C>0$ such that 
\beqq
	\sum_{k=1}^N \left| \bm K_z^{(1)}|_{\frac{x}{2}, \frac{\tau}{2}} (\xi, u_{\ell_k}) \right|^2
	\le C e^{ \re \Phi_\tau(\xi)+0.4734 \tau}
\eeqq
uniformly for $\xi\in \mathcal S_-$ and $\tau\ge \tau_0$; this estimate is similar to Lemma~\ref{lem:kernel_sum_1}.
Similar arguments as used in the proof of Proposition~\ref{thm:localization_fred_det} now show that
\beqq
	\det(\I - \mb K_z^{(1)}|_{\frac{x}{2}, \frac{\tau}{2}}) = 1 - \Tr (\mb K_z^{(1)}|_{\frac{x}{2}, \frac{\tau}{2}}) 
	+ \Boh(e^{-0.255 \tau})
\eeqq
as $\tau\to \infty$, uniformly for $1-\delta \le |z|\le 1-\delta_1$ for some $\delta\in (0,1)$ and any fixed $\delta_1\in (0,\delta)$. 
Here, the exponent in the error is half the one from Proposition~\ref{thm:localization_fred_det} due to the change from $\Phi_\tau$ to $\frac12 \Phi_\tau$ in the kernel. 

We look now at the proof of Proposition \ref{prop:Flgtu}. 
The estimate \eqref{eq:estimate_factor} implies that for every $R\in (0,1-\delta_1)$,
\beqq
	\max_{|z|=R} \re\left(\frac{x_\tau}{2} A_1(z)+\frac{\tau}2 A_2(z)+A_3(z)+ B(z)\right)<0.254\tau
\eeqq
for sufficiently large $\tau$, and Proposition \ref{prop:Flgtu} changes to 
\beqq
	\FF(x_\tau/2, \tau/2)  = 1 - \oint e^{\frac{x_\tau}{2} A_1(z)+ \frac{\tau}{2} A_2(z)+A_3(z)+ B(z)}
	\Tr (  \mb K_z^{(1)}|_{\frac{x_\tau}{2}, \frac{\tau}{2}}) \frac{dz}{2\pi i z} + \Boh(e^{-0.001\tau})
\eeqq
as $\tau \to\infty$. From the formula of $\mb K_z^{(1)}$,  the version of Lemma~\ref{prop:large_tau_estimate_trace} now follows from the simpler estimate
$$
\Tr \mb K_z^{(1)}|_{\frac{x_\tau}{2}, \frac{\tau}{2}}=\frac{e^{\Phi_\tau(u_0(z))}}{2u_0(z)^2} +\sum_{k\in \Z\setminus\{0\}}\frac{e^{\Phi_\tau(u_k)}}{2u_k^2} =\frac{e^{\Phi_\tau(u_0(z))}}{2u_0(z)^2} +\Boh(e^{-1.9 \tau}),
$$
where we error term was obtained using Corollary~\ref{prop:estimate_real_phi_uk_u0} and Lemma~\ref{lem:uuplusu}.

Plugging in the formulas and using the notations \eqref{eq:ff12E}, 
$$
    \int  e^{ \frac{x_\tau}2 A_1(z)+ \frac{\tau}2 A_2(z) +A_3(z)+B(z)} \frac{e^{\Phi_\tau(u_0(z))}}{2u_0(z)^2} \frac{dz}{2\pi iz}
    = \frac{1}{4\pi i} \int \frac{E^{(1)}(z)e^{\frac{1}{2}\tau \ff_1(z)+\frac{1}{2} \tau^{1/2}\hat x \ff_2(z)}}{zu_0(z)^2} dz 
$$
where $ \hat x= \frac{\pi^{1/4} \rx}{2}$ as before, $\rx\in \R$ fixed, and 
\beqq
    E^{(1)}(z):=e^{A_3(z)+B(z)-Q(u_0(z))} =  \frac{E(z)^{1/2}}{(1-z)^{1/4}}, \qquad E(z)=e^{2B(z)-2Q(u_0(z))} \text{ as before}.
\eeqq 
The functions $\ff_1$ and $\ff_2$ have analytic extensions, $\ffRS_1$ and $\ffRS_2$, on the Riemann surface $\mathcal{R}$ (see Section \ref{sec:f12exRS}). 
On the other hand, the function $E(z)$ also extends analytically on $\mathcal{R}$ and has  a zero of order $3$ at $z=1$ (see Section \ref{sec:EextRS}).  
Combined with $(1-z)^{-1/4}$, we see that $E^{(1)}(z)$ extends analytically on $\mathcal R$ and it has a zero of order $1$ at $z=1$. 
We denote the extension by $\EERS^{(1)}(z)$. Recalling Lemma~\ref{prop:local_expansion_prefactor_gaussian}, $\EERS(z)\approx -4u_0(z)^4$ as $z\to 0^{(2)}$. It is also direct to verify that $(1-z)^{-1/4} = -i$ as $z\to 0^{(2)}$. Combining everything, 
$\EERS^{(1)}(z)= -2u_0(z)^2 (1+ \Boh (z^2))$ as $z\to 0^{(2)}$. 
Define the one form (cf. \eqref{eq:oneform_omega})
\begin{equation*}
    \omega_\tau^{(1)} :=\frac{\EERS^{(1)}(z) e^{\frac{\tau}{2} \ffRS_1(z)+ \frac{\tau^{1/2} \hat x}{2} \ffRS_2(z)}}{4\pi i z \URS_0(z)^2} dz
\end{equation*}
on the Riemann surface $\mathcal R$. 
Using this notation, we just verified that (cf. \eqref{eq:Finomega})
\beqq
	\FF(x_\tau/2, \tau/2)  = 1 - \int_{\Gamma} \omega_\tau^{(1)} + \Boh(e^{-0.001\tau})
\eeqq
where $\Gamma$ is the same contour that was used in~\eqref{eq:Finomega}.
We evaluate the integral following Section \ref{sec:asymppolyintg}. The contour is divided into three parts. 
The proofs of Lemma \ref{lem:integral_gamma3} and \ref{lem:GL_int_gamma1} for the parts $\Gamma_1$ and $\Gamma_2$ apply with minimal changes, and contributions from these parts tend to zero as $\tau\to \infty$. 
On the other hand, for the contour $\Gamma_3$, which is a small semicircle about the origin in the second sheet of $\mathcal R$, 
most of the proof of Lemma~\ref{lem:integral_gamma5} does not change, but in the last steps we find 
\beqq
	\lim_{\tau\to \infty} \int_{\Gamma_3} \omega_\tau^{(1)} = \frac1{2\pi i} \int_{-i\infty}^{i\infty}  e^{\frac{1}{2}\left(\frac{1}{8 \sqrt{\pi}} s^2  - \frac{\hat x}{\sqrt{2\pi}} s\right)}  \frac{ds}{s} 
	= \frac{1}{2\pi i}\int_{-\infty}^{\infty} e^{-\frac{u^2}{2}-i\frac{\rx}{\sqrt{2}} u}\frac{du}{u}=\frac{1}{\sqrt{2\pi}}\int_{\frac{\rx}{\sqrt{2}}}e^{-\frac{1}{2}u^2}du,
\eeqq
where for the second inequality we changed variables $s=i2^{3/2}\pi^{1/4}u$ and used that $\hat x= \pi^{1/4}\rx/\sqrt{2}$. Having in mind the explicit expression for $x_\tau$ in \eqref{eq:x_scaling_large_tau}, we just obtained that
$$
\lim_{\tau\to 0}\FF\left( -\frac{\tau}{2}+\frac{\pi^{1/4}}{\sqrt{2}} \frac{\rx}{\sqrt{2}} \left(\frac{\tau}{2}\right)^{1/2},\frac{\tau}{2}  \right)
=\lim_{\tau\to 0}\FF\left( -\tau+\frac{\pi^{1/4}}{\sqrt{2}} \frac{\rx}{\sqrt{2}} \tau^{1/2}, \tau \right)=\frac{1}{\sqrt{2\pi}}\int_{-\infty}^{\frac{\rx }{\sqrt{2}}}e^{-\frac{u^2}{2}}du,
$$
valid for any $\rx \in \R$ fixed. This finishes the proof of Theorem \ref{thm:limits_FF}.

   %%%%%%%%%%%%%%%%%%%%%%%%%%%%%%%%%%%%%%%%%%
\subsection{Small $\tau$ limit}

After a simple conjugation, we may replace the kernel $\bm K_z^{(1)}(\xi_1,\xi_2)$ in \eqref{eq:initial_kernel_K1} 
by (using the same notation)
\begin{equation*}
	\bm K_z^{(1)}(\xi_1,\xi_2) = - \frac{e^{\Psi(\xi_1)+\Psi(\xi_2)}}{\sqrt{-\xi_1}\sqrt{-\xi_2} (\xi_1+\xi_2)}, 
	\qquad \xi_1, \xi_2\in \mathcal S_-,
\end{equation*}
without changing the Fredholm determinant. 
Using \eqref{def:function_Psi} and the notation from \eqref{eq:kernel_K1}, we see that
\beq \label{eq:KflJ}
	\bm K_z^{(1)}\left(\xi,\eta; \frac{x}{2},\frac{\tau}{2}\right)
	= - \frac{e^{\frac{1}{2} \Phi(\xi; x, \tau) + \frac12 \Phi(\eta; x, \tau)}}{\sqrt{-\xi}\sqrt{-\eta} (\xi+\eta)}
	= - \bJ (\xi, \eta; x, \gamma=0, \tau),\quad \text{that is,}\quad \mb K_z^{(1)}|_{\frac{x}2, \frac{\tau}2}=-\mJ|_{\gamma=0} .
\eeq
Now from \eqref{def:operators_K1_K2} we have $\mJ|_{\gamma=0}=-  \mb G_0 \mb G_{0}^T$, and recalling that 
$\mb \Tgamma=\mb G_{-\gamma}^T\mb G_{-\gamma}$ we find that
\beq \label{eq:KflatT0}
	\det (\I - \mb K_z^{(1)}|_{\frac{x}2, \frac{\tau}2}) = \det(\I - \mb T_0)
\eeq
where $\mb T_0$ has kernel as in Lemma~\ref{lem:kernelK_kernelT} with $\gamma=0$. 
Thus, changing $x$ to $\tau^{1/3} x$, we find that
\beqq
	\FF \left( \frac{\tau^{1/3}x}{2}; \frac{\tau}2 \right)=\oint e^{\frac{\tau^{1/3}x}{2} A_1(z)+ \frac{\tau}{2} A_2(z)+A_3(z)+ B(z)}
	\det(\mb I-\mb A_{x,\tau}) \frac{dz}{2\pi i z}
\eeqq
where the operator $\mb A_{x,\tau}$ is defined by \eqref{def:kernel_operator_A}. 
Recall that $F_{\mathrm{GOE}}(x)= \det(\I - \mb A_{x})$. 
Lemma \ref{prop:convergence_kernels_tw_scaling} established the convergence of $\mb A_{x,\tau}$ to the operator $\mb A_{x}$ with an explicit error bound for the kernel. 
Hence, we find that the Fredholm determinant converges as well with the same error bound as in Corollary \ref{prop:convergence_fred_det_tw}. 
Taking the integral (see Section \ref{sec:comppfsmall}), we find that 
$$
	\left| \FF \left( \frac{\tau^{1/3}x}{2}; \frac{\tau}2 \right)-F_{\mathrm{GOE}}(x)\right|\leq C e^{-\epsilon \tau^{-2/3}}e^{-c(x-x_0)} . 
$$
Changing $\tau\mapsto 2\tau$, and $x\mapsto 2^{2/3}x$, and renaming the constants, we conclude the proof of Theorem~\ref{thm:smallflat}.

%%%%%%%%%%%%%%%%%%%%%%%%%%%%%%%%%%%%%%%%%%
\subsection{Right tail}

From \eqref{eq:KflatT0},
\beqq
	\FF ( x/2; \tau/2 ) 
	=\oint e^{\frac{x}{2} A_1(z)+ \frac{\tau}{2} A_2(z)+A_3(z)+ B(z)}
	\det(\I -  \mb T_0) \frac{dz}{2\pi i z} .
\eeqq
As in Section \ref{sec:large_tail}, we take the contour $|z|=e^{-\frac{x}{2\tau}}$. 
Proposition \ref{prop:LT_HSnorm_est} implies that we may change the Fredholm determinant to $1-\Tr(\mb T_0)$ plus an error term. 
The kernel is given by $\bm T_0(s,t)= \CT_{0}(x+ s+t)$ (see \eqref{eq:T_to_CA}). 
Proposition \ref{prop:asymptotics_CA} and~\ref{prop:bound_CA} allow us to replace $\CT_{0}(x+ s+t)$ by $\RA(x+s+t; 0)$ whose asymptotic formula is given in Lemma~\ref{lem:asymptotics_g_large_tail}. 
Thus, after performing the trivial $z$-integral, we find that 
\beqq
	1- \FF( x/2; \tau/2 ) 
	=\int_x^\infty \frac{1}{\tau^{1/3}} \ai \left( 2\tau^{-1/3} y  \right) dy \left( 1+ \Boh(e^{-cx^{1/2}}) \right) .
\eeqq
We have 
\beqq
	\int_x^\infty \frac{1}{\tau^{1/3}} \ai \left( 2\tau^{-1/3} y  \right) dy 
	= \int_x^\infty  \ai \left( 2 y  \right) dy = \Tr(\mb A_{x} ) =(  1- F_{\mathrm{GOE}}(x) )\left( 1+ \Boh(e^{-cx}) \right). 
\eeqq
Scaling $x$ and $\tau$ completes the proof  of Theorem~\ref{thm:right_tail_FF}. 

%%%%%%%%%%%%%%%%%%%%%%%%%%%%%%%%%%%%%%%%%%%%%%%%%%%%
\subsection{Integrable differential equations} 

From \eqref{eq:KflJ}, 
$$
	\det (\I - \mb K_z^{(1)}|_{\frac{x}2, \frac{\tau}2})  = \det( \I +  \mJ|_{\gamma=0}).
$$
The operator for the step initial condition case satisfies $\mb K_z= \mJ \mJ^T$ (see \eqref{eq:split_kernel_1}). 
Because of the symmetry $\mJ^T|_\gamma= \mJ|_{-\gamma}$ (see \eqref{eq:Jsymr}), we 
find that 
$$
	\mb K_z|_{\gamma=0}= (\mJ|_{\gamma=0})^2.
$$

For all the computations in the rest of this section we set $\gamma=0$, and in particular for simplicity we write $\mb K_z|_{\gamma=0}$ and $\mJ|_{\gamma=0}$ simply $\mb K_z$ and $\mJ$. Also, we use the notation $\partial=\partial_x$.

We have 
$$
	\partial \ln \det (\I - \mb K_z^{(1)}|_{\frac{x}2, \frac{\tau}2})  = \partial \ln \det(\mb I + \mJ) =  \Tr ((\mb I + \mJ)^{-1} \partial \mJ)
	= \Tr ((\mb I- \mJ^2)^{-1}(\mb I - \mJ) \partial \mJ).
$$
Using that $(\mb I-\mJ^2)^{-1}$ and $\mJ$ commute, we write 
\beq \label{eq:imjimjs} \begin{split}
	\partial \ln \det (\I - \mb K_z^{(1)}|_{\frac{x}2, \frac{\tau}2}) 
	&= \Tr ((\mb I- \mJ^2)^{-1}\partial \mJ) - \frac12 \Tr ((\mb I- \mJ^2)^{-1}(\mJ (\partial \mJ)+ (\partial \mJ) \mJ)) \\
	&= \Tr ((\mb I- \mJ^2)^{-1}\partial \mJ) + \frac12 \partial \ln \det( \mb I- \mJ^2) . 
\end{split} \eeq
From Lemmas \ref{lem:integrability_operator_H} and \ref{prop:RHPforH} we know that
$$
\partial \ln \det( \mb I- \mJ^2)= \partial \ln \det(\mb I - \mb K_z)=\partial \ln \det(\mb I - \mb H)= \Qp(\tau,0, x)
$$ 
Hence, it is enough to consider 
\beqq
	\Tr ((\mb I- \mJ^2)^{-1}\partial \mJ).
\eeqq

We use the following lemma. 

\begin{lem}
Let $\mathcal H_1$ and $\mathcal H_2$ be two Hilbert spaces. Suppose that $\mb K_1: \mathcal H_2 \to \mathcal H_1$, $\mb K_2:\mathcal H_2\to \mathcal H_1$, and $\mb K_3: \mathcal H_1\to \mathcal H_1$ are trace class operators.  
Set $\mathcal H=\mathcal H_1\oplus \mathcal H_2$. 
We extend $\mb K_1, \mb K_2, \mb K_3$ to operators on $\mathcal H$ in a natural way, and we use the same notations. 
Let $\mb P_1,\mb  P_2: \mathcal H\to \mathcal H$ be the projection operators to $\mathcal H_1$ and $\mathcal H_2$, respectively. 
Then,
\beqq
	\Tr ((\mb I - \mb K_1\mb K_2)^{-1}\mb K_3)_{\mathcal H_1}= \Tr ((\mb I- \mb P_1 \mb K_1 \mb P_2 - \mb P_2 \mb K_2 \mb P_1)^{-1} \mb P_1 \mb K_3 \mb P_1)_{\mathcal H}
\eeqq
\end{lem}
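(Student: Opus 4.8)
The plan is to prove the identity by a direct $2\times2$ block computation relative to the orthogonal decomposition $\mathcal H=\mathcal H_1\oplus\mathcal H_2$; no cyclicity of the trace is needed. First I would pin down the meaning of ``extend to $\mathcal H$ in a natural way'': writing operators on $\mathcal H$ as block matrices with respect to $\mathcal H_1\oplus\mathcal H_2$, the natural extensions are $\mb P_1\mb K_1\mb P_2=\left(\begin{smallmatrix}0&\mb K_1\\0&0\end{smallmatrix}\right)$, $\mb P_2\mb K_2\mb P_1=\left(\begin{smallmatrix}0&0\\\mb K_2&0\end{smallmatrix}\right)$ (so that $\mb K_2\colon\mathcal H_1\to\mathcal H_2$ and $\mb K_1\mb K_2$ makes sense on $\mathcal H_1$), and $\mb P_1\mb K_3\mb P_1=\left(\begin{smallmatrix}\mb K_3&0\\0&0\end{smallmatrix}\right)$. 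Setting $\mb L:=\mb P_1\mb K_1\mb P_2+\mb P_2\mb K_2\mb P_1=\left(\begin{smallmatrix}0&\mb K_1\\\mb K_2&0\end{smallmatrix}\right)$, one has $\mb L^2=\left(\begin{smallmatrix}\mb K_1\mb K_2&0\\0&\mb K_2\mb K_1\end{smallmatrix}\right)$, which is block diagonal.

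Next I would address invertibility. By hypothesis $\mb I-\mb K_1\mb K_2$ is invertible (in the application this is where $\det(\mb I-\mb K_z)\neq0$ enters). I would then recall the elementary fact that $\mb I-\mb K_2\mb K_1$ is invertible as well, a two-sided inverse being $\mb I+\mb K_2(\mb I-\mb K_1\mb K_2)^{-1}\mb K_1$, checked by multiplying out. Hence $\mb I-\mb L^2$ is invertible, being block diagonal with invertible blocks, and since $\mb I-\mb L^2$ commutes with $\mb L$ so does $(\mb I-\mb L^2)^{-1}$. Therefore $(\mb I+\mb L)(\mb I-\mb L^2)^{-1}$ is a two-sided inverse of $\mb I-\mb L$, i.e.
\[
(\mb I-\mb L)^{-1}=(\mb I+\mb L)(\mb I-\mb L^2)^{-1}
=\begin{pmatrix}\mb I&\mb K_1\\\mb K_2&\mb I\end{pmatrix}
\begin{pmatrix}(\mb I-\mb K_1\mb K_2)^{-1}&0\\0&(\mb I-\mb K_2\mb K_1)^{-1}\end{pmatrix}.
\]

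Then I would simply compose with $\mb P_1\mb K_3\mb P_1$. Multiplying the displayed formula on the right by the block-diagonal operator $\left(\begin{smallmatrix}\mb K_3&0\\0&0\end{smallmatrix}\right)$ annihilates the second block column, leaving
\[
(\mb I-\mb L)^{-1}\,\mb P_1\mb K_3\mb P_1
=\begin{pmatrix}(\mb I-\mb K_1\mb K_2)^{-1}\mb K_3&0\\\mb K_2(\mb I-\mb K_1\mb K_2)^{-1}\mb K_3&0\end{pmatrix}.
\]
This operator is trace class on $\mathcal H$ because $\mb K_3$ is trace class while $(\mb I-\mb L)^{-1}$ and the projections are bounded, so its trace over $\mathcal H=\mathcal H_1\oplus\mathcal H_2$ is the sum of the traces of the two diagonal blocks. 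The $(2,2)$ block is the zero operator on $\mathcal H_2$, so the trace collapses to $\Tr\big((\mb I-\mb K_1\mb K_2)^{-1}\mb K_3\big)_{\mathcal H_1}$, which is exactly the asserted identity.

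I do not expect a genuine obstacle here: the argument is bookkeeping with block matrices. The only points that need a line of care are the invertibility of $\mb I-\mb K_2\mb K_1$ (handled by the explicit inverse above), the commutation of $(\mb I-\mb L^2)^{-1}$ with $\mb L$ (immediate from $\mb L(\mb I-\mb L^2)=(\mb I-\mb L^2)\mb L$), and the trace-class membership of each operator whose trace is taken, which follows from ``bounded times trace class''. In the application one takes $\mathcal H_1=\ell^2(\mathcal S_-)$, $\mathcal H_2=\ell^2(\mathcal S_+)$, $(\mb K_1,\mb K_2)=(\mJ_1,\mJ_2)$ with the splitting $\mJ=\mJ_1\mJ_2$ from the proof of Lemma~\ref{lem:integrability_operator_H}, and $\mb K_3=\partial_x\mJ$, so that $\Tr\big((\mb I-\mJ^2)^{-1}\partial_x\mJ\big)_{\ell^2(\mathcal S_-)}$ is recast as a trace over $\ell^2(\mathcal S)$ that can be matched with the Riemann--Hilbert data of $\mb H$.
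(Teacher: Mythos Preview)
Your proof is correct and more explicit than the paper's. The paper does not carry out the computation; it simply asserts that ``it is enough to show that the non-zero eigenvalues of both operators are the same'' and declares this ``straightforward to check''. Your approach instead computes $(\mb I-\mb L)^{-1}$ explicitly via the factorization $(\mb I+\mb L)(\mb I-\mb L^2)^{-1}$, exhibits the block structure of $(\mb I-\mb L)^{-1}\mb P_1\mb K_3\mb P_1$, and reads off the trace directly from the vanishing $(2,2)$ block. This sidesteps any appeal to Lidskii-type eigenvalue arguments and makes the identity a pure block-matrix bookkeeping exercise; it also transparently handles the invertibility of $\mb I-\mb K_2\mb K_1$ and the trace-class issues, points the paper leaves implicit. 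You also correctly flag (and resolve) the evident typo in the stated domain of $\mb K_2$, consistent with how the lemma is actually applied with $\mJ_1,\mJ_2$.
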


\begin{proof}
It is enough to show that the non-zero eigenvalues of both operators are the same. 
This is straightforward to check, we skip the details.
\end{proof}

In the proof of Lemma \ref{lem:integrability_operator_H}, we set $\mb W: \ell^2(\mathcal S_+)\to \ell^2(\mathcal S_-)$ to be the reflection operator $(\mb W h)(u)=h(-u)$ and used the decomposition $\mb J^2= \mb K_z= \mb J_1 \mb J_2$ where $\mb J_1= \mb J \mb W$ and $\mb J_2= \mb W^T \mb J^T=\mb W^T\mb J$, with the last equality being valid because $\gamma=0$.  
Recall the definition $\mathcal S= \mathcal S_1 \cup \mathcal S_2$ and observe that $\ell^2(\mathcal S_-)\oplus \ell^2(\mathcal S_+)=\ell^2(\mathcal S)$. 
Denoting by $\mb P_\pm$ the projections onto $\mathcal S_\pm$, the above lemma implies that
\beqq
	\Tr ((\mb I- \mJ^2)^{-1}\partial \mJ)_{\ell^2(\mathcal S_-)}
	= \Tr ((\mb I- \mb P_- \mJ_1 \mb P_+ - \mb P_+\mJ_2 \mb P_- )^{-1} \mb P_- \partial \mJ \mb P_-)_{\ell^2(\mathcal S)}
	= \Tr ((\mb I - \mb H)^{-1} \mb P_- \partial \mJ \mb P_-)_{\ell^2(\mathcal S)}
\eeqq
where $\mb H= \mb P_- \mJ_1 \mb P_+ + \mb P_+\mJ_2 \mb P_-$ is the same operator as in Lemma \ref{lem:integrability_operator_H} (with $\gamma=0$). 

Since $\Phi(s)= -\frac13 \tau s^3+ x s- Q(s)$ and recalling that $\partial=\partial_x$, we find from direct differentiation that 
\beqq
	\chi_-(u) \partial \bJ (u,v) \chi_-(v) 
	= \frac12 \chi_-(u) \frac{e^{\frac12 \Phi(u) + \frac12 \Phi(v)}}{\sqrt{-u}\sqrt{-v}} \chi_-(v)
	= \frac12 f_2(u) g_1(v),
\eeqq
where $f_i$ and $g_i$ are the components of the vectors $\vec f$ and $\vec g$ in Lemma \ref{lem:integrability_operator_H}. Hence, 
\beqq \begin{split}
	\Tr ((\mb I- \mb J^2)^{-1}\partial \mb J)_{\ell^2(\mathcal S_-)}
	= \frac12 \Tr ( (\mb I - \mb H)^{-1} f_2 \otimes g_1) .
\end{split} \eeqq
Using the notation $\vec F := (\mb I- \mb J)^{-1}\vec f$, the above equation becomes 
\beqq \begin{split}
	\Tr ((\mb I- \mb J^2)^{-1}\partial \mb J)_{\ell^2(\mathcal S_-)}
	= \frac12 \Tr (F_2 \otimes g_1) 
	= \frac12  \sum_{s\in \mathcal S} F_2(s) g_1(s) .
\end{split} \eeqq
In formulas \eqref{eq:vecFGandresolv} and \eqref{eq:X1Fg}, we will show that $\sum_{s\in \mathcal S}\vec F(s)\vec g(s)^T=X_1$ is the residue matrix in \eqref{def:notation_X1}. Therefore,
\beqq \begin{split}
	\Tr ((\mb I- \mb J^2)^{-1}\partial \mb J)_{\ell^2(\mathcal S_-)}
	= \frac12  (X_1)_{21} .
%	= \frac12 r(x, \gamma=0, \tau)
\end{split} \eeqq
Recalling the notation $(X_1)_{21}=\Rp$ in \eqref{def:notation_X1} and combining with \eqref{eq:imjimjs}, we obtain 
\begin{equation*}
\begin{split}
	\partial_x \ln \det (\I - \mb K_z^{(1)}|_{\frac{x}2, \frac{\tau}2}) 
	&=\frac12 \Rp(\tau, 0, x)+ \frac12 \Qp(\tau, 0, x) .
\end{split}
\end{equation*}

The coupled system of mKdV equations \eqref{eq:coupledMKdV} and the symmetry $\Pp(\gamma)=-\Rp(-\gamma)$ in \eqref{eq:prgammaminus} implies that $R(\tau, x):= \Rp(\tau, 0, x)$ satisfies the mKdV equation 
\beqq
	3 R_{\tau} + R_{xxx} - 6 R^2 R_x =0.
\eeqq
On the other hand, the equation \eqref{eq:2qxpr2} implies that $Q(\tau, x):= \Qp(\tau, 0, x)$ satisfies 
\beqq
	Q_x= -R^2.
\eeqq
The proof of Theorem \ref{thm:intflat} is completed.

%%%%%%%%%%%%%%%%%%%%%%%%%%%%%%%%%%%%%%%%%%%%%%%%%
\appendix

%%%%%%%%%%%%%%%%%%%%%%%%%%%%%%%%%%%%%%%%%%%%%%%%%%%%%%%%
\section{Proof of Lemma \ref{prop:RHPforH}}\label{sec:proofofdeformation}

We prove the lemma. 
It is a fundamental property of integrable operators  \cite{Deift99b, Borodin_Deift_2000} that the vector 
\beq \label{eq:vecFGandresolv}
	\vec F := (\mb I- \mb H)^{-1} \vec f
\eeq
can be obtained from the RHP by the formula 
\beq \label{eq:definition_functions_F}
	\vec F(s)=\lim_{\xi\to s} X(\xi)\vec f(s) 
\eeq
where the limit converges. 
Now, the solution of the RHP satisfies the identity 
\begin{equation}\label{eq:solution_X_series}
	X(\xi)=I-\sum_{s\in\mathcal S}\frac{\vec F(s)\vec g(s)^T}{s-\xi} 
	\quad \text{for $\xi\in \C\setminus \mathcal S$},
\end{equation}
with $\vec F(s)$ given by \eqref{eq:definition_functions_F}.
This equation can be seen by checking directly that the right-hand side satisfies the conditions RHP-X (a), (b), and (c)  of the RHP for $X$, and using the uniqueness of the solution. For the condition RHP-X (b), note that 
$$
\res_{\xi=w}\left( I-\sum_{s\in\mathcal S}\frac{\vec F(s)\vec g(s)^T}{s-\xi} \right)=\vec F(w)\vec g(w)^T=\lim_{\xi\to w}X(\xi)\vec f(w)\vec g(w)^T=\lim_{\xi\to w}X(\xi)R_X(w) .
$$
Considering the asymptotic behavior as $\xi\to\infty$, \eqref{eq:solution_X_series} implies that 
\beq \label{eq:X1Fg}
	X_1 = \sum_{s\in \mathcal S} \vec F (s) \vec g(s)^T.
\eeq

From the formulas of  $\vec f$ and $\vec g$,  we see that 
\beqq
	\partial_x \vec f(u) = -\frac{u}2 \sigma_3 \vec f(u) \quad \text{and}\quad \partial_x \vec g(u) = \frac{u}2 \sigma_3 \vec g(u).
\eeqq
Thus, 
\beqq
	\partial_x \bm H (u, v) =  \frac{ \partial_x \vec f(u)^T \vec g(v)+ \vec f(u)^T \partial_x \vec{g}(v)}{u-v} 
	= - \frac12 \vec f(u)^T \sigma_3 \vec g(v),
\eeqq
and using \eqref{eq:vecFGandresolv}, 
\beqq \begin{split}
	\partial_x \log \det(\mb I-\mb H) 
	&=-\Tr((I-\mb H)^{-1}\partial_x \mb H)
	= \frac12 \Tr (( \mb I- \mb H)^{-1} \vec f \otimes (\sigma_3\vec g)) \\
	&= \frac12 \Tr ( \vec F \otimes (\sigma_3\vec g))
	= \frac12 \sum_{s\in \mathcal S} \vec F(s)^T \sigma_3 \vec g(s).
\end{split} \eeqq
On the other hand, from \eqref{eq:X1Fg}, 
\beqq
	\Tr (X_1 \sigma_3) = \sum_{s\in \mathcal S} \Tr (\vec F (s) \vec g(s)^T \sigma_3) 
	= \sum_{s\in \mathcal S} \vec g(s)^T \sigma_3 \vec F (s) 
	= \sum_{s\in \mathcal S} \vec F(s)^T \sigma_3 \vec g (s) .
\eeqq
Equating the last two formulas, we obtain the lemma. 

%%%%%%%%%%%%%%%%%%%%%%%%%%%%%%%%%%%%%%%%%%%%%%%%%%%%%%%%
\section{Proof of Proposition \ref{prop:qpreqs}}\label{sec:iiks_general}

We prove Proposition \ref{prop:qpreqs} focusing only on $X$. The proof for $Y$ is similar: we define $W=Ye^{-V\sigma_3/2}$ similarly as in \eqref{eq:XtoW} below and obtain the Lax equations in Lemma~\ref{lem:lax_equations} in the standard way for continuous Riemann-Hilbert problems. Having the Lax equations at hand, all the remaining arguments are algebraic and can be repeated for $Y$, line-by-line, as below, simply replacing $X$ and $X_k$ by $Y$ and $Y_k$. 

%%%%%%%%%%%%%%%%%%%%%%%%%%%%%%%%%%%%%%%%%%%%%%%%%%%%%%%%
\subsection{The Lax systems}

Define the new matrix function
\beq \label{eq:XtoW}
	W(\xi)=X(\xi)e^{-\frac{V(\xi)}{2}\sigma_3},\quad \xi\in \C\setminus \mathcal S.
\eeq
Then, $W$ satisfies the following RHP: 
\begin{enumerate}[\bf RHP-W 1:]
\item $W:\C\setminus \mathcal S \to \C^{2\times 2}$ is analytic.
\item The points in $\mathcal S$ are simple poles of $W$ with residue matrix
$$
R_W(s)=e^{\frac{V(s)}{2}\sigma_3}R_X(s)e^{-\frac{V(s)}{2}\sigma_3} 
=\begin{pmatrix} 0 &  \frac{1}{s} e^{Q(s)} \chi_+(s) \\  \frac{1}{s} e^{-Q(s)}\chi_-(s) &0 \end{pmatrix} \quad \text{for $s\in \mathcal S$.}
%= R_0(s)%=\frac{e^{-Q(s)}}{s}\chi_+(s)E_{12}+\frac{e^{Q(s)}}{s}\chi_-(s)E_{21}.
$$
\item As $\xi\to \infty$,
$$
W(\xi)=(I+\Boh(\xi^{-1}))e^{-\frac{V(\xi)}{2}\sigma_3}.
$$
\end{enumerate}

The jump matrix does not depend on the parameters $\tau, \gamma, x$. This has the following consequence. 
 
\begin{lem}[Lax equations]\label{lem:lax_equations}
The matrix $W$ satisfies linear differential equations 
\begin{equation}\label{eq:lax_system}
	\partial_\gamma W(\xi)=A(\xi)W(\xi), \qquad
	\partial_\tau W(\xi)=B(\xi)W(\xi), \qquad 
	\partial_x W(\xi)=D(\xi)W(\xi),
\end{equation}
for some polynomial matrices 
\begin{equation*}
	A(\xi)=\xi^2A_2+\xi A_1+A_0,\qquad
	B(\xi)=\xi^3B_3+\xi^2B_2+\xi B_1+B_0,\qquad 
	D(\xi)=\xi D_1+D_0
\end{equation*}
where $A_2=-\frac14 \sigma_3$, $B_3=\frac16 \sigma_3$, and $D_1= -\frac12 \sigma_3$, and other matrices $A_i, B_i, D_i$ do not depend on $\xi$. 
\end{lem}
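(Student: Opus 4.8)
The plan is to follow the standard ``dressing'' argument for Riemann--Hilbert problems. First I would use the fact, established in the RHP for $X$, that $\det X\equiv 1$ and that $X$ has the asymptotic expansion $X(\xi)\sim I+\sum_{k\ge 1}X_k\xi^{-k}$ as $\xi\to\infty$, uniformly away from $\mathcal S$. Since the residue matrix $R_X(s)=\vec f(s)\vec g(s)^T$ with the explicit $\vec f,\vec g$ from Lemma~\ref{lem:integrability_operator_H} factors as $R_X(s)=e^{-\frac12 V(s)\sigma_3}R_0(s)e^{\frac12 V(s)\sigma_3}$ with $R_0$ independent of the parameters $\tau,\gamma,x$, the matrix $W=Xe^{-\frac12 V\sigma_3}$ has residue matrix $R_W(s)=e^{\frac12 V(s)\sigma_3}R_X(s)e^{-\frac12 V(s)\sigma_3}=R_0(s)$ which is parameter-independent; one computes it directly to be $\left(\begin{smallmatrix}0 & s^{-1}e^{Q(s)}\chi_+(s)\\ s^{-1}e^{-Q(s)}\chi_-(s) & 0\end{smallmatrix}\right)$. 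This verifies RHP-W~1--3 as stated.

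Next I would differentiate $W$ with respect to each of $\gamma,\tau,x$ and show the logarithmic derivatives $(\partial_\bullet W)W^{-1}$ are entire. The key points: (i) $W$ is analytic off $\mathcal S$ and so is $\partial_\bullet W$, hence $(\partial_\bullet W)W^{-1}$ is analytic off $\mathcal S$; (ii) at each pole $s\in\mathcal S$, because the residue condition $\res_s W = \lim_{\xi\to s}W(\xi)R_W(s)$ has $R_W(s)$ \emph{independent} of $\gamma,\tau,x$, differentiating the local structure $W(\xi)=\big(\tfrac{W_{-1}(s)}{\xi-s}+W_0(s)+\cdots\big)$ with $W_{-1}(s)=W_0(s)R_W(s)$ shows that $(\partial_\bullet W)W^{-1}$ has a removable singularity at $s$ (this is the same computation as in Lemma~\ref{lem:residue_inverse}, now applied to the $\gamma,\tau,x$-derivative). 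Therefore $(\partial_\bullet W)W^{-1}$ is entire. To identify its polynomial degree I would examine the behaviour as $\xi\to\infty$: from RHP-W~3, $W(\xi)=(I+X_1\xi^{-1}+X_2\xi^{-2}+\cdots)e^{-\frac12 V(\xi)\sigma_3}$, and since $V(\xi)=-\frac{\tau}{3}\xi^3+\frac{\gamma}{2}\xi^2+x\xi$ we get
\begin{equation*}
	(\partial_\gamma W)W^{-1}=-\tfrac14\xi^2\sigma_3 + O(\xi),\quad
	(\partial_\tau W)W^{-1}=\tfrac16\xi^3\sigma_3 + O(\xi^2),\quad
	(\partial_x W)W^{-1}=-\tfrac12\xi\sigma_3 + O(1),
\end{equation*}
because $\partial_\gamma V=\tfrac12\xi^2$, $\partial_\tau V=-\tfrac13\xi^3$, $\partial_x V=\xi$, and the conjugation by $I+X_1\xi^{-1}+\cdots$ only produces lower-order corrections (the leading $\sigma_3$-term commutes through and the subleading terms contribute polynomially). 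By Liouville's theorem an entire function with polynomial growth is a polynomial, so $A(\xi):=(\partial_\gamma W)W^{-1}$, $B(\xi):=(\partial_\tau W)W^{-1}$, $D(\xi):=(\partial_x W)W^{-1}$ are polynomials of degrees $2,3,1$ respectively, with leading coefficients $A_2=-\tfrac14\sigma_3$, $B_3=\tfrac16\sigma_3$, $D_1=-\tfrac12\sigma_3$, and the remaining coefficients $A_i,B_i,D_i$ (for the lower powers of $\xi$) are $\xi$-independent matrices, expressible through $X_1,X_2,\dots$. This yields exactly the claimed Lax system \eqref{eq:lax_system}.

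The main obstacle, and the only genuinely subtle point, is the removable-singularity argument at the poles $s\in\mathcal S$: one must be careful that differentiating in a parameter does not create a higher-order pole, and this relies precisely on the \emph{same} algebraic identity $R_W^2=0$ (equivalently $\vec g(s)^T\vec f(s)=0$) together with the parameter-independence of $R_W(s)$. I would handle this by writing $W(\xi)=\widetilde W(\xi)\big(\tfrac{R_W(s)}{\xi-s}+I\big)^{-1}=\widetilde W(\xi)\big(I-\tfrac{R_W(s)}{\xi-s}\big)$ with $\widetilde W$ analytic and invertible near $s$ (using $R_W(s)^2=0$, cf.\ Lemma~\ref{lem:residue_inverse}), so that $(\partial_\bullet W)W^{-1}=(\partial_\bullet\widetilde W)\widetilde W^{-1}+\widetilde W(\xi)\big(\partial_\bullet(I-\tfrac{R_W(s)}{\xi-s})\big)\big(I-\tfrac{R_W(s)}{\xi-s}\big)^{-1}\widetilde W(\xi)^{-1}$; since $R_W(s)$ is parameter-free the second term vanishes, and the first is manifestly analytic at $s$. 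A further minor technical check is the uniformity of the $\xi\to\infty$ expansion of $X$ away from $\mathcal S$, which is part of {\bf RHP(c)} and guarantees we may differentiate the asymptotic series term by term; since the poles accumulate only at infinity along $\mathcal S$, one works in sectors avoiding $\mathcal S$ to make this rigorous. With these two points settled, the degree count and Liouville step are routine.
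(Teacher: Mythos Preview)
Your proposal is correct and follows essentially the same approach as the paper: factor $W$ locally via Lemma~\ref{lem:residue_inverse} using the parameter-independence of $R_W$ and $R_W^2=0$, conclude that $(\partial_\bullet W)W^{-1}$ is entire, then read off the polynomial degree and leading coefficient from the $\xi\to\infty$ asymptotics and apply Liouville. One small slip: with $\widetilde W$ analytic near $s$ as in Lemma~\ref{lem:residue_inverse}, the correct factorization is $\widetilde W(\xi)=W(\xi)\big(I-\tfrac{R_W(s)}{\xi-s}\big)$, hence $W(\xi)=\widetilde W(\xi)\big(I+\tfrac{R_W(s)}{\xi-s}\big)$; your displayed formula has the sign reversed, but this does not affect the argument since the only property used is that the right factor is independent of $\tau,\gamma,x$.
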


\begin{proof}
Since $\det X\equiv 1$, $\det W\equiv 1$. Hence, $W$ is invertible. 
Let $\partial$ be the partial derivative with respect either $\tau, \gamma$ or $x$. 
For $s\in \mathcal S$, Lemma \ref{lem:residue_inverse} implies that 
\beqq
	 Z(\xi):= W(\xi) \left( - \frac{R_W(s)}{\xi-s} + I \right) 
\eeqq
is analytic at $\xi=s$. 
Since $R_W(s)^2=0$, we have $\det Z(\xi)=1$, and hence $Z(\xi)^{-1}$ is also analytic at $\xi=s$. 
Since $R_W$ does not depend on the parameters, the derivative becomes
\beqq
	  \partial Z(\xi)= \partial W(\xi) \left( - \frac{R_W}{\xi-s} + I \right) .
\eeqq
Hence, 
\beqq
	(\partial W(\xi)) W(\xi)^{-1}=  \partial W(\xi) \left( - \frac{R_W}{\xi-s} + I \right)  \left( \frac{R_W}{\xi-s}+I \right) W(\xi)^{-1} = ( \partial Z(\xi)) Z(\xi)^{-1}
\eeqq
is analytic at $\xi=s$. Thus, $(\partial W(\xi)) W(\xi)^{-1}$ is an entire function.

The asymptotic condition, RHP-W 3, implies that
$$(\partial W(\xi)) W(\xi)^{-1}=\Boh(\xi^{-1}) -\frac{\partial V(\xi)}{2}  (I+\Boh(\xi^{-1})) \sigma_3  (I+\Boh(\xi^{-1}))$$
as $\xi\to \infty$.  
Since
$$
\partial_\gamma V(\xi)=\frac{\xi^2}{2},\quad \partial_\tau V(\xi)=-\frac{\xi^3}{3},\quad \partial_x V(\xi)=\xi, 
$$
the Liouville's theorem implies that $(\partial W(\xi)) W(\xi)^{-1}$ is a polynomial with the leading term $-\frac{\partial V(\xi)}{2}\sigma_3$. We thus obtain the result.
\end{proof}

We now insert \eqref{eq:XtoW} and the asymptotic series condition \eqref{eq:asymptotics_X} into the equations \eqref{eq:lax_system} to obtain sequences of relationships between $A_i, B_i, D_i$ and $X_k$. We find that
\begin{equation*} 
\begin{split}
	4A_1 &=-6B_2= 2D_0 = [\sigma_3, X_1] =: \cm_1 , \\
	4A_0 &=-6B_1= - [\sigma_3, X_1] X_1 + [\sigma_3, X_2] =: \cm_2 , \\
	 -6B_0 &= [\sigma_3,X_1]X_1^2 - [\sigma_3,X_2]X_1- [\sigma_3,X_1]X_2 + [\sigma_3,X_3] =: \cm_3
\end{split}
\end{equation*}
and for $k\ge 1$, 
\begin{align}
	4\partial_{\gamma}X_k &= -[\sigma_3,X_{k+2}] +\cm_1 X_{k+1} + \cm_2 X_k, \label{eq:A_system}\\
	-6 \partial_\tau X_k &= - [\sigma_3,X_{k+3}]+ \cm _1 X_{k+2}+ \cm_2 X_{k+1}+ \cm_3 X_k ,  \label{eq:B_system}\\
	2\partial_x X_k &= - [\sigma_3,X_{k+1}] + \cm_1 X_k  \label{eq:D_system}
\end{align}
where the bracket denotes the commutator $[P, Q]=PQ-QP$. 

Using \eqref{eq:D_system} when $k=1, 2$, we may remove the commutator terms in $\cm_2$ and $\cm_3$ and express them as 
\begin{equation}\label{eq:system_B_X}
	-\frac12 \cm_2=\partial_x X_1 \quad\text{and}\quad 
	-\frac12 \cm_3= (\partial_xX_1) X_1+ \partial_x X_2. 
\end{equation}

Recall the notation \eqref{def:notation_X1}. The diagonal entries of the equation \eqref{eq:D_system} with $k=1$ gives us the identity   
\begin{equation}\label{eq:qxpr}
	q_x=pr . 
\end{equation}
This proves \eqref{eq:2qxpr2} of Proposition \ref{prop:qpreqs}.

%%%%%%%%%%%%%%%%%%%%%%%%%%%%%%%%%%%%%%%%%%%%%%%%%%%%
\subsection{Coupled nonlinear heat equations}

We find a partial differential equation for $X_1$ in the variables $x$ and $\gamma$ and prove \eqref{eq:prxxxtauPDE} of Proposition \ref{prop:qpreqs}. 

The $k=1$ case of equation \eqref{eq:A_system} becomes, using \eqref{eq:system_B_X} for $\cm_2$,  
\beqq
	4\partial_\gamma X_1 = -2 (\partial_x X_1) X_1 + [\sigma_3,X_1]X_{2} - [\sigma_3,X_{3}] . 
\eeqq
We remove $[\sigma_3, X_3]$ using the $k=2$ case of the equation \eqref{eq:D_system}. 
The equation becomes 
\begin{equation}\label{eq:xgammaX2X1}
	2 \partial_\gamma X_1 = - ( \partial_x X_1)X_1 + \partial_x X_2. 
\end{equation}
This equation contains $X_2$ which we remove as follows. 
The $x$-derivative of the $k=1$ case of \eqref{eq:D_system} is 
$$
	[\sigma_3, \partial_x X_2]= - 2\partial_{xx} X_1 + \partial_x ([\sigma_3, X_1] X_1). 
$$
We solve for $\partial_x X_2$ in \eqref{eq:xgammaX2X1} and insert it to the above equation and arrive at an equation involving only $X_1$. Using the general commutator identity $[A, BC]=[A, B]C+ B[A, C]$, the equation can be written as
\begin{equation} \label{eq:xgammaX12}
	2 [\sigma_3, \partial_\gamma X_1]= -2 \partial_{xx} X_1 + [\sigma_3, X_1]  (\partial_x X_1) -  ( \partial_x X_1) [\sigma_3, X_1] . 
\end{equation}

We insert the entries $q, p, r$ of $X_1$ in \eqref{def:notation_X1} into \eqref{eq:xgammaX12}. 
The diagonal entries simply give the $x$-derivative of \eqref{eq:qxpr}. 
The off-diagonal entries give two non-trivial equations, which after inserting  $q_x=pr$ become \eqref{eq:prxxxtauPDE} of Proposition \ref{prop:qpreqs}.

%%%%%%%%%%%%%%%%%%%%%%%%%%%%%%%%%%%%%%%%%%%%%%%%%%%%%
\subsection{Coupled mKdV equations}

We find a partial differential equation for $X_1$ in the variables $x$ and $\tau$ and prove \eqref{eq:prxxgammaPDE} of Proposition \ref{prop:qpreqs}. 
In this section, we set
\begin{equation*}
	Q:= [\sigma_3, X_1] \quad \text{and} \quad R:= [\sigma_3, X_2]=  - 2 \partial_x X_1 + Q X_1 .
\end{equation*}
where the second equality in $R$ follows from the $k=1$ case of \eqref{eq:D_system}. 
Consider the $k=1$ case of \eqref{eq:B_system}. 
The equation contains $[\sigma_3, X_4]$. 
We remove it using the $k=3$ case of \eqref{eq:D_system}. 
Further inserting the formula \eqref{eq:system_B_X}, the $k=1$ case of \eqref{eq:B_system} becomes 
\begin{equation*}
	\partial_x X_3= ( \partial_x X_1)X_2 + (\partial_x X_2) X_1- (\partial_x X_1)X_1^2 -3 \partial_\tau X_1 .
\end{equation*}
The $k=2$ case of \eqref{eq:D_system} is 
\begin{equation}\label{eq:X3X2Qcomtmp}
	[\sigma_3, X_3]= - 2 \partial_x X_2 + Q X_2. 
\end{equation}
Taking the $x$-derivative of this equation, and then inserting the formula of $\partial_x X_3$ above, we find 
$$
	[\sigma_3, ( \partial_x X_1)X_2 + (\partial_x X_2) X_1] - [\sigma_3, (\partial_x X_1)X_1^2] -3 \partial_\tau Q
	= -2 \partial_{xx} X_2 + \partial_x (QX_2).
$$
We use the general commutator identity $[A, BC]=[A, B]C+ B[A, C]$ for the first term on the left-hand side and move the term $\partial_x (QX_2)$ to the left-hand side of the equation. 
Since $\Tr X_1=0$, the matrix $X_1^2$ is diagonal, and hence $[\sigma_3, X_1^2]=0$, implying that $[\sigma_3, (\partial_x X_1)X_1^2]= [\sigma_3, \partial_x X_1]X_1^2=(\partial_x Q )X_1^2$. 
Hence, recalling the notations $Q$ and $R$, the above equation can be written as 
\begin{equation*}
\begin{split}
	(\partial_x X_1) R + ( \partial_x R)  X_1+ (\partial_x X_2) Q - Q (\partial_x X_2) 
	 -  (\partial_x Q )X_1^2 -3  \partial_\tau Q &
	= -2 \partial_{xx} X_2 . 
\end{split} \end{equation*}
Taking the commutator with $\sigma_3$, 
\begin{equation}\label{eq:cmtp1}
\begin{split}
	[\sigma_3, (\partial_x X_1) R + ( \partial_x R)  X_1]+ [\sigma_3, (\partial_x X_2) Q - Q (\partial_x X_2)] 
	 - [\sigma_3, (\partial_x Q )X_1^2]  - 3 [\sigma_3,  \partial_\tau Q ] 
	= -2 \partial_{xx} R . 
\end{split} \end{equation}
We express $[\sigma_3, (\partial_x X_2) Q - Q (\partial_x X_2)]$ in terms of $ Q$ and $R$ using the next lemma. 

\begin{lem}
We have 
$$
	2[\sigma_3, (\partial_x X_2) Q] =  - QRQ 
	\qquad\text{and}\qquad 
	2[\sigma_3,Q (\partial_x X_2) ]   = Q^2R. 
$$
\end{lem}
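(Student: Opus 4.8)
The proof will be purely algebraic, exploiting the $\sigma_3$-grading of $2\times 2$ matrices. For a matrix $M$ write $M = M^{\mathrm d}+M^{\mathrm o}$ for its diagonal and off-diagonal parts. The elementary facts I will use repeatedly are: $[\sigma_3,M] = 2\sigma_3 M^{\mathrm o}$; $\sigma_3 N\sigma_3 = N$ when $N$ is diagonal and $\sigma_3 N\sigma_3 = -N$ when $N$ is off-diagonal; and, consequently, a product of two off-diagonal matrices is diagonal while the product of a diagonal and an off-diagonal matrix is off-diagonal. In particular $Q=[\sigma_3,X_1]$ and $R=[\sigma_3,X_2]$ are off-diagonal, and $Q^2$ is diagonal.

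The one substantive step is to identify the diagonal part of $\partial_x X_2$. First, from $R=[\sigma_3,X_2]=2\sigma_3 X_2^{\mathrm o}$ I get $X_2^{\mathrm o}=\tfrac12\sigma_3 R$. Next I take the diagonal part of the $k=2$ instance of \eqref{eq:D_system}, namely $2\partial_x X_2 = -[\sigma_3,X_3]+QX_2$: the commutator $[\sigma_3,X_3]$ is off-diagonal, and the diagonal part of $QX_2$ is $QX_2^{\mathrm o}$ (the off-diagonal-times-diagonal piece $QX_2^{\mathrm d}$ being off-diagonal). Hence $2(\partial_x X_2)^{\mathrm d} = QX_2^{\mathrm o} = \tfrac12 Q\sigma_3 R$, that is, $(\partial_x X_2)^{\mathrm d} = \tfrac14 Q\sigma_3 R$.

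With this in hand the two identities follow by direct manipulation. Since $Q$ is off-diagonal, the off-diagonal part of $(\partial_x X_2)Q$ is $(\partial_x X_2)^{\mathrm d}Q$, so $[\sigma_3,(\partial_x X_2)Q] = 2\sigma_3(\partial_x X_2)^{\mathrm d}Q = \tfrac12\sigma_3 Q\sigma_3 R Q = -\tfrac12 QRQ$, using $\sigma_3 Q\sigma_3=-Q$; multiplying by $2$ gives the first claim. Likewise the off-diagonal part of $Q(\partial_x X_2)$ is $Q(\partial_x X_2)^{\mathrm d}$, so $[\sigma_3,Q(\partial_x X_2)] = 2\sigma_3 Q(\partial_x X_2)^{\mathrm d} = \tfrac12\sigma_3 Q^2\sigma_3 R = \tfrac12 Q^2 R$, since $Q^2$ is diagonal and thus commutes with $\sigma_3$; multiplying by $2$ gives the second claim. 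There is no real obstacle here — the only thing to be careful about is using the \emph{correct} Lax relation \eqref{eq:D_system} (not \eqref{eq:A_system} or \eqref{eq:B_system}) to extract $(\partial_x X_2)^{\mathrm d}$, and keeping consistent track of which matrix products are diagonal and which are off-diagonal; a quick check with the explicit entries $Q=\bigl(\begin{smallmatrix}0&2p\\-2r&0\end{smallmatrix}\bigr)$ confirms the signs.
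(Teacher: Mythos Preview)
Your proof is correct and takes essentially the same approach as the paper: both use the $k=2$ instance of \eqref{eq:D_system} to express $2\partial_x X_2 = QX_2 - [\sigma_3,X_3]$, then exploit that off-diagonal times off-diagonal is diagonal (so the $[\sigma_3,X_3]$ contribution drops out of the commutator) together with the anticommutation $\sigma_3 Q = -Q\sigma_3$. The only difference is cosmetic: you phrase the argument via the explicit diagonal/off-diagonal decomposition and compute $(\partial_x X_2)^{\mathrm d}=\tfrac14 Q\sigma_3 R$, whereas the paper keeps everything in commutator form, writing $2[\sigma_3,(\partial_x X_2)Q]=[\sigma_3,QX_2Q]=-Q[\sigma_3,X_2]Q=-QRQ$ and similarly for the second identity.
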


\begin{proof}
The equation \eqref{eq:X3X2Qcomtmp} gives $2 \partial_x X_2 =  Q X_2- [\sigma_3, X_3]$. Observe that since $[\sigma_3, X_3]$ and $Q=[\sigma_3, X_1]$ have zero diagonal entries, $[\sigma_3, X_3] Q$ is a diagonal matrix, and hence $[\sigma_3, [\sigma_3, X_3] Q]=0$. Similarly we have $[\sigma_3, Q[\sigma_3, X_3]]=0$. 
Therefore, 
$$
	2[\sigma_3, (\partial_x X_2) Q] = [\sigma_3, Q X_2 Q- [\sigma_3, X_3] Q] = [\sigma_3, QX_2 Q]
$$
and 
$$
	2[\sigma_3,Q (\partial_x X_2) ] = [\sigma_3, Q^2 X_2 - Q[\sigma_3, X_3]] = [\sigma_3, Q^2X_2 ].
$$
Now, since $Q=[\sigma_3, X_1]$, we have $\sigma_3 Q+Q \sigma_3=0$.
Using this identity, we can write
$$
	[\sigma_3, QX_2 Q] = -Q[\sigma_3, X_2] Q = - QRQ
\qquad \text{and}\qquad 
	[\sigma_3, Q^2X_2 ] = Q^2[\sigma_3, X_2]  = Q^2R. 
$$
\end{proof}

The equation \eqref{eq:cmtp1} thus becomes an equation only of $X_1$: 
\begin{equation}\label{eq:X1xxxtauPDE}
\begin{split}
	2[\sigma_3, (\partial_x X_1) R + ( \partial_x R)  X_1]- Q(RQ+QR)
	 - 2[\sigma_3, (\partial_x Q )X_1^2] - 6[\sigma_3, \partial_\tau Q ] 
	&= -4 \partial_{xx} R .
\end{split} \end{equation}
We now insert the entries \eqref{def:notation_X1} of $X_1$. The entries are  
$$
	X_1=\begin{bmatrix} q & p \\ r & -q \end{bmatrix} \quad \text{and}\quad  
	Q=\begin{bmatrix} 0 & 2p \\ -2r & 0 \end{bmatrix} \quad \text{and}\quad 
	R= \begin{bmatrix} 0 & -2p_x - 2pq \\ -2 r_x - 2rq & 0 \end{bmatrix}
$$
where we used \eqref{eq:qxpr} for the diagonal entries of $R$.  It is useful to observe that 
$$
	RQ+QR= -4(r_xp-p_xr)I \quad\text{and}\quad X_1^2= (q^2+pr) I.
$$
Inserting these into  \eqref{eq:X1xxxtauPDE}, the diagonal entries of the equation give trivial identities. 
The off-diagonal entries, on the other hand, imply, using $q_x=pr$, \eqref{eq:prxxgammaPDE} of Proposition \ref{prop:qpreqs}. 

%%%%%%%%%%%%%%%%%%%%%%%%%%%%%%%%%%%%%%%%%%%%%%%%%%%%%%%%%%%%%%%%%%%%%%%%
%\bibliographystyle{alpha}   %%%% shows [BR01]
%\bibliographystyle{abbrv}  %%%%  shows [1]
%\bibliography{bibliography}

\def\cydot{\leavevmode\raise.4ex\hbox{.}}

\end{document}